%% file: KPZ_Convergence_Draft.tex
\documentclass[a4paper, 10 pt]{amsart}  
\PassOptionsToPackage{dvipsnames}{xcolor}      	
\usepackage{amsfonts, amssymb, amsmath, amsthm, tikz-cd, hyperref, esint}
\usepackage{mhequ}
\usepackage[cal=boondox]{mathalfa}
\usepackage{trees}
\usepackage{RhysAlphabets}
\input{macros}
\usetikzlibrary{decorations.pathmorphing, decorations.pathreplacing, decorations.shapes}

\DeclareFontFamily{U}{mathx}{}
\DeclareFontShape{U}{mathx}{m}{n}{<-> mathx10}{}
\DeclareSymbolFont{mathx}{U}{mathx}{m}{n}
\DeclareMathAccent{\widecheck}{0}{mathx}{"71}

\makeatletter
\def\@tocline#1#2#3#4#5#6#7{\relax
 \ifnum #1>\c@tocdepth % then omit
 \else
   \par \addpenalty\@secpenalty\addvspace{#2}%
   \begingroup \hyphenpenalty\@M
   \@ifempty{#4}{%
     \@tempdima\csname r@tocindent\number#1\endcsname\relax
   }{%
     \@tempdima#4\relax
   }%
   \parindent\z@ \leftskip#3\relax \advance\leftskip\@tempdima\relax
   \rightskip\@pnumwidth plus4em \parfillskip-\@pnumwidth
   #5\leavevmode\hskip-\@tempdima
     \ifcase #1
      \or\or \hskip 1em \or \hskip 2em \else \hskip 3em \fi%
     #6\nobreak\relax
   \hfill\hbox to\@pnumwidth{\@tocpagenum{#7}}\par% <---- \dotfill -> \hfill
   \nobreak
   \endgroup
 \fi}
\makeatother

\newtheorem{theorem}{Theorem}[section]

\newtheorem{corollary}[theorem]{Corollary}

\newtheorem{lemma}[theorem]{Lemma}
\newtheorem{definition}[theorem]{Definition}
\newtheorem{prop}[theorem]{Proposition}

\theoremstyle{remark}

\newtheorem{remark}[theorem]{Remark}

\def\mft{\mathfrak{t}}

\def\mfn{\mathfrak{n}}
\def\mfe{\mathfrak{e}}
\def\mfC{\mathfrak{C}}
\def\mcI{\mathcal{I}}
\def\mcT{\mathcal{T}}
\def\mbC{\mathbb{C}}
\def\eps{\varepsilon}
\def\mfp{\mathfrak{p}}

\def\mcK{\mathcal{K}}

\def\fs{\mathfrak{s}}
\def\id{\operatorname{Id}}
\def\hom{\operatorname{hom}}
\def\inh{\operatorname{inh}}

\def\ng{\not \ge}
\def\mcQ{\mathcal{Q}}
\def\mbR{\mathbb{R}}
\def\mfl{\mathfrak{l}}
\def\mfL{\mathfrak{L}}
\def\cT{\mathcal{T}}
\def\tcT{\tilde{\mathcal{T}}}
\def\mcG{\mathcal{G}}

\def\Hb{\mathcal{H}_{\<bsq>}}
\def\Hp{\mathcal{H}_{\<psq>}}
\def\tnorm#1{\lvert\!\lvert\!\lvert #1 \rvert\!\rvert\!\rvert}
\def\tC{{\tilde{C}}}
\def\dC{{C^\dagger}}
\def\tcT{\tilde{\mathcal{T}}}
\def\eps{\varepsilon}

\def\graftfv#1{\mathbin{\curvearrowright_{#1}^{\flat,v}}}
\def\graftsv#1{\mathbin{\curvearrowright_{#1}^{*,v}}}
\def\hgraftf#1{\mathbin{\widehat{\curvearrowright}_{#1}^{\flat}}}
\def\hgrafts#1{\mathbin{\widehat{\curvearrowright}_{#1}^{*}}}
\def\polinc#1{\uparrow_{v}^{#1}}

\def\below{\mathrel{\triangleleft}} 
\def\scal#1{\langle #1 \rangle}

\title[Sharp Strong Rate of Convergence to KPZ]{The Sharp Rate of Probabilistically Strong Convergence to the KPZ Equation}
\author{M\'at\'e Gerencs\'er, Yueh-Sheng Hsu, Rhys Steele}
\begin{document}

\begin{abstract}
One of the most common descriptions of solutions of singular SPDEs is their characterisation as the limit of solutions of renormalised smooth random PDEs. We quantify the speed of this convergence in the case of the KPZ equation. In particular, we show that the na\"ive guess that the rate is given by the distance of the noise regularity from the endpoint regularity for well-posedness is not correct. Instead, we obtain convergence at the larger rate $1/2$ and show that this rate is sharp. This is achieved by considering the equation satisfied by the rescaled error, which is critical for variance blowup, and showing that this equation has a limit given by an affine linear singular SPDE driven by a new, independent noise. This result can be alternatively interpreted as identifying the asymptotic size and law of fluctuations of the solutions of the KPZ equation driven by mollified noise around their singular limit.
\end{abstract}

\maketitle

\tableofcontents

\section{Introduction}
The theory of singular stochastic partial differential equations (SPDEs) has seen enormous progress recently. A large class of equations arising in mathematical physics, previously out of reach of mathematical rigour, can now be solved thanks to the theory of regularity structures \cite{H0}, paracontrolled distributions \cite{GIP}, or renormalisation group/flow approach \cite{Kup, Duch}.
As the equations written formally are typically ill-defined, the solutions in all of these approaches are most often characterised via approximations.
In the present work we aim to understand the convergence of these approximations in a quantitative way by finding their exact rate of strong convergence.

Determining the rate of convergence
will be particularly relevant for discrete approximation. For example, the original raison d'\^etre of SPDEs arising in stochastic quantisation was their proposed computational use in Euclidean quantum field theory \cite{PW}.
In the present paper, however, we remain focused on quantifying the convergence rate of the continuous approximations that are given as the solutions of renormalised PDEs driven by the smoothed out noise, and leave to future work the task of developing this strategy for the technically more demanding discrete approximations

Our main example will be the KPZ equation, introduced by Kardar, Parisi, and Zhang as a conjectured universal surface growth model \cite{KPZ}. The equation is formally given as
\begin{equ}\label{eq:KPZ}
(\partial_t-\partial_x^2)h=(\partial_x h)^2+\xi-\infty.
\end{equ}
The aforementioned characterisation of the regularity structure solution\footnote{In the particular case of the KPZ a simpler solution concept based on the Cole-Hopf transform is available \cite{BertiniGiacomin}. The regularity structure framework extends the Cole-Hopf notion to a far wider class of approximations (see e.g. \cite{HQ, HS17, mean-curv, Hai25, ASEP-RS}).} is given as follows.
Take a spatially symmetric mollifier, that is, a compactly supported smooth function $\rho:\mbR^2\to\mbR$ integrating to $1$, define, for $\eps>0$, $\rho_\eps(t,x)=\eps^{-3}\rho(\eps^{-2}t,\eps^{-1}x)$ and $\xi_\eps=\rho_\eps\ast\xi$. Then with an additional parameter $C_\eps$ consider the equations
\begin{equ}\label{eq:KPZ-eps}
(\partial_t-\partial_x^2)h_\eps=(\partial_x h_\eps)^2+\xi_\eps-C_\eps,
\end{equ}
on $[0,1]\times\mbT$, with $\mbT=\mbR/\mbZ$, with a given initial condition $\psi$ with positive H\"older regularity.
As the data is smooth, for any fixed $\eps>0$ the equation \eqref{eq:KPZ-eps} is classically globally well-posed. The main result of \cite{H-KPZ,H0} shows that there exists a choice of the constants $C_\eps$, diverging as $\eps\to 0$, such that $h_\eps$ converges to a limit that does not depend on the choice of the mollifier $\rho$. This limit $h$ is then \emph{defined} as the solution of \eqref{eq:KPZ}.

If one tracks the exponents, the convergence $h_\eps\to h$ in \cite{H-KPZ,H0} can be made quantitative, yielding a small strong rate\footnote{Understood here as a rate of convergence in probability, in the same form as in our main results below.} of convergence $1/10-\kappa$ for any $\kappa>0$.
To see this, let us \emph{very} briefly revisit some of the main steps of the solution theory of \cite{H0}:
\begin{itemize}
\item For a given smooth noise, the first $K$ Picard iterates of the renormalised equation are constructed probabilistically. This finite collection of random distributions takes values in a nonlinear space of so-called \emph{models} $\cM_{K,\alpha}$, equipped with a highly nontrivial norm\footnote{Since the space of models is nonlinear, $\|\cdot\|_{\cM_{K,\alpha}}$ is strictly speaking not a norm, but this does not play a role for the present discussion.}  $\|\cdot\|_{\cM_{K},\alpha}$. This is called a \emph{lift} of the noise.
\item  The index $\alpha$ denotes a degree of regularity on the space of models. As long as it satisfies
\begin{equ}\label{eq:subcrit}
(K+1)\alpha+2K>0,
\end{equ}
 the renormalised solution is a deterministic, locally Lipschitz continuous function of the model.

The condition \eqref{eq:subcrit} is a lower bound on $\alpha$ for any fixed $K$. It also implies a $K$-independent lower bound $\alpha>\alpha_c:=-2$.
\item On the other hand, if one takes the lifts of the noises $\xi_\eps$, they remain bounded uniformly in $\eps>0$ in $\cM_{K,\alpha}$ only if $\alpha<\alpha_0=-3/2$, which corresponds to the parabolic space-time regularity of $\xi$.
Moreover, due to a phenomenon called variance blowup, the construction of lifts presents the additional lower bound $\alpha>\alpha_v:=-7/4$ (as was identified in e.g. \cite{Hoshino}).
\item There is a folklore rule of thumb that one can ``trade regularity for rate of convergence''. In the present context this means that the lifts of $\xi_\eps$ not only stay bounded in $\cM_{K,\alpha}$ for $\alpha<\alpha_0$, but converge with any rate $\gamma<\alpha_0-\alpha$. In other words, the deterministic bound\footnote{If $\gamma > 1$, one should assume that $\int \rho(z)z^k dz = \delta_{0,k}$ for $|k|_\fs \le \lceil \gamma \rceil - 1$} $\|\xi-\xi_\eps\|_{\mathcal{C}^\alpha}\lesssim \eps^{\gamma}\|\xi\|_{\mathcal{C}^{\alpha+\gamma}}$ (which is just a much simpler incarnation of the aforementioned rule of thumb) persists on the level of the models.
\end{itemize}
In \cite{H-KPZ,H0} the minimal choice $K=4$ is taken. With this choice the minimal exponent $\alpha_{\min}$ allowed by \eqref{eq:subcrit} is $\alpha_{\min}=-8/5$, and the rule of thumb gives a rate $\gamma$ for any  $\gamma<\alpha_0-\alpha_{\min}=1/10$.
Pushing this argument to its limit and increasing the number of Picard iterates $K$, one could in principle reach any rate $\gamma<\alpha_0-(\alpha_c\vee\alpha_v)=1/4$. 

The reason why this argument is inefficient is that it does not distinguish between the norm on the space of models that is used to ensure the existence of the solution map and the metric with respect to which this solution map is Lipschitz continuous.
One can often allow the latter to be much weaker:
as a simple analogy, recall that the solution map $X\mapsto Y$ of Young differential equations
\begin{equ}
Y_t=\int_0^tF(Y_s)\,dX_s,
\end{equ}
with sufficiently nice $F$, is well defined and bounded on balls $B_M^\alpha:=\{|X|_{C^\alpha}\leq M\}$
only if $\alpha\in(1/2,1)$ but is Lipschitz continuous on $B_M^\alpha$ with respect to the weaker metric $d^\beta(X,\bar X)=|X-\bar X|_{C^\beta}$ as long as $\beta>1-\alpha$.

Our strategy to get more efficient, and in fact sharp, error bounds, is based on studying the equation that the rescaled error solves. More precisely, to obtain a convergence rate $\gamma>0$, we will establish a uniform in $0 < \delta \le \eps < 1$ bound on $w_{\eps,
\delta} =\eps^{-\gamma}(h_\eps-h_{\delta})$. To do this, we control (uniformly in $\delta \le \eps$) the system of equations that triple $(h_\eps,h_{\delta},w_{\eps,\delta})$ satisfies,  given by
\begin{equs}[eq:main_sys]
	(\partial_t - \partial_x^2) h_\eps &= (\partial_x h_\eps)^2 + \xi_\eps - C_\varepsilon
	\\ \nonumber
	(\partial_t - \partial_x^2) h_{\delta} &= (\partial_x h_{\delta})^2 + \xi_{\delta} - C_{\delta}
	\\ \nonumber
	(\partial_t - \partial_x^2) w_{\eps,\delta} &= \partial_x w_{\eps,\delta} (\partial_x h_\eps + \partial_x h_{\delta}) + \eps^{-\gamma} (\xi_\eps - \xi_{\delta}) - \eps^{-\gamma} (C_\varepsilon - C_{\delta}).
\end{equs}
From the aforementioned rule of thumb, we know that $\eta_{\eps, \delta} :=\eps^{-\gamma} (\xi_\eps - \xi_{\delta})$ stays bounded and in fact goes to $0$ in $\mathcal{C}^{-3/2-\gamma-\kappa}$ for any $\kappa>0$.
Viewing \eqref{eq:main_sys} as a singular SPDE driven by the noises $\xi_\eps$, $\xi_{\delta}$, and $\eta_{\eps, \delta}$, we therefore have a natural regularity assignment.
The main message of the paper is that for $\gamma<1/2$ this system can be solved by the black-box\footnote{We note that the application of the black-box is not immediate, in particular one has to verify that the counterterms for the rescaled error equation are consistent with those for $h_\eps$. This is implied by the results of Appendix~\ref{app:counterterms}.}\ theory of regularity structures \cite{H0, CH, BHZ, BCCH},
while for $\gamma=1/2$ the black-box fails but the sequence of triples $(h_\eps, h_{\delta},w_{\eps,\delta})$ nonetheless has a limit in law $(h,h,w)$, where $w$ is nontrivial if $\delta\ll \eps$.
This identifies the rate $1/2$ as the sharp strong rate of convergence.
The borderline case is much more delicate and is similar in spirit to recent works treating variance blow-up in singular SPDEs  regularity structures \cite{Hai25, MYS,MateFabio}.

\begin{theorem}\label{theo:main_theorem}
	Fix $T > 0$. Let $h_\eps$ denote the BPHZ solution of the KPZ equation with initial data $\psi \in C^\vartheta(\mathbb{T})$ for $\vartheta \in (0,1/2)$ driven by $\xi_\eps = \xi \ast \rho^\eps$ where $\xi$ is space-time white noise and $\rho$ is a non-negative symmetric standard mollifier. Then for any $\kappa > 0$
	\begin{align*}
		\limsup_{M \to \infty} \sup_{\eps > 0} \mathbb{P} ( \eps^{-1/2} \| h_\eps - h \|_{C^{-\kappa}([0,T] \times \mathbb{T})} > M) = 0.
	\end{align*}
	Furthermore, the rate $\eps^{1/2}$ implied by the above statement is sharp since, writing $w_\eps = \eps^{-1/2} (h_\eps - h)$, we have that $(h_\eps, h, w_\eps)$ converges in law in $C^{1/2-\kappa}((0,T] \times \mathbb{T}) \times C^{1/2-\kappa}((0,T] \times \mathbb{T}) \times C^{-\kappa}((0,T] \times \mathbb{T})$ to $(h, h, w)$ where $w$ is the BPHZ solution of
	\begin{align*}
		(\partial_t - \partial_x^2) w = 2 \partial_x w \partial_x h +  c_1 \tilde{\xi}_1+c_2\tilde{\xi}_2
	\end{align*}
	with initial data $0$ where $c_1$, $c_2$ are strictly positive constants and $\tilde{\xi}_1$, $\tilde{\xi}_2$ are partially correlated\footnote{the precise covariance matrix is provided in Proposition~\ref{prop:new_noise_conv} below} space-time white noises  independent from $\xi$.
\end{theorem}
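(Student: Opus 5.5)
We follow the strategy sketched before the statement. We study the triple $(h_\eps,h_\delta,w_{\eps,\delta})$ solving \eqref{eq:main_sys} with $\gamma=1/2$, uniformly in $0<\delta\le\eps<1$. We then pass $\delta\to0$ to recover $w_\eps=\eps^{-1/2}(h_\eps-h)$.

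The system is treated as a singular SPDE in a regularity structure generated by three noises $\xi_\eps$, $\xi_\delta$ and $\eta_{\eps,\delta}=\eps^{-1/2}(\xi_\eps-\xi_\delta)$. We assign $\eta$ the degree $-2-\kappa$, so that $w$ lives at degree $-\kappa$. This is precisely the critical threshold for variance blowup: subcriticality in the sense of power counting still holds, because $\partial_x w\,\partial_x h$ has degree $-3/2-2\kappa>-2$. The analytic fixed point theorem of \cite{H0} therefore applies once the model is controlled. The counterterm $\eps^{-1/2}(C_\eps-C_\delta)$ is matched by Appendix~\ref{app:counterterms}, which checks that BPHZ renormalisation of the $w$-equation is consistent with that of $h_\eps,h_\delta$. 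As a result, the renormalised constants appearing in the $w$-equation cancel the explicit constant, up to a bounded remainder that converges.

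The main obstacle is the stochastic estimates for the model at the critical point. For trees containing at least two $\eta$ leaves, or one $\eta$ and enough $\xi$'s, the naive Kolmogorov-type bounds fail logarithmically. One instance is the first Picard term $\mathcal{I}(\eta)$ paired against $\partial_x h$-type trees, whose second moments behave like $\int \eps^{-1}|\hat\rho_\eps-\hat\rho_\delta|^2$ at the borderline scale.

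We plan to follow \cite{Hai25, MYS, MateFabio}. First we show that $\eta_{\eps,\delta}$ itself converges in law in $\mathcal{C}^{-2-\kappa}$ only weakly, through its ``square'' objects: the products $\eta\cdot\partial_x\mathcal{I}(\xi_\eps)$ and $\eta\cdot\partial_x\mathcal{I}(\xi_\delta)$ (after renormalisation) converge in law to new white noises $c_1\tilde\xi_1, c_2\tilde\xi_2$. These limits are independent of $\xi$, by a fourth-moment/Wiener-chaos CLT (Nualart--Peccati) applied componentwise in a fixed chaos. This is the content of Proposition~\ref{prop:new_noise_conv}, which we would invoke, including its covariance computation.

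Concretely, we rewrite the $w$-equation so that $w=\mathcal{I}(\eta)+v$. The remainder $v$ solves
\begin{equ}
(\partial_t-\partial_x^2)v=\partial_x v(\partial_x h_\eps+\partial_x h_\delta)+\partial_x\mathcal{I}(\eta)(\partial_x h_\eps+\partial_x h_\delta)-\text{c.t.},
\end{equ}
and here the product term is exactly the object converging to $c_1\tilde\xi_1+c_2\tilde\xi_2$. The term $\mathcal{I}(\eta)$ itself tends to zero in $\mathcal{C}^{-\kappa}$, since $\eta\to0$ in $\mathcal{C}^{-2-\kappa'}$ for $\kappa'<\kappa$ by the rule of thumb. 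The $v$-equation is a genuinely subcritical affine SPDE driven by a noise of regularity $-3/2-\kappa$.

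The remaining trees, where $\partial_x h$ is expanded into its trees, must be shown to have bounded moments uniformly, and to converge jointly with the new noise. We would prove this by a convergence-in-law statement for the full model, via tightness plus identification of finite-dimensional chaos limits. Tightness uses moment bounds from \cite{CH}-style estimates with the extra $\eps^{-1/2}$ compensated by the $\eps^{1/2}$ gain from the mollifier difference. Joint convergence uses the CLT argument above, with the limiting model being the BPHZ model built from $(\xi,\tilde\xi_1,\tilde\xi_2)$.

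Given this, continuity of the solution map in the model yields tightness of $w_{\eps,\delta}$ in $C^{-\kappa}$ uniformly in $\delta\le\eps$. Letting $\delta\to0$, using $h_\delta\to h$ in probability from \cite{H-KPZ}, gives the first claim. It also gives convergence in law of $(h_\eps,h,w_\eps)$ to $(h,h,w)$, with $w$ the BPHZ solution of the affine equation $(\partial_t-\partial_x^2)w=2\partial_xw\,\partial_xh+c_1\tilde\xi_1+c_2\tilde\xi_2$ with zero initial data. Sharpness follows because $c_1,c_2>0$ make $w$ nonzero, so $\eps^{-1/2}\|h_\eps-h\|$ does not tend to $0$ in probability.
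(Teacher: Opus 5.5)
Your overall architecture is the paper's: the rescaled error system, a Da Prato--Debussche rewriting that promotes $\partial_x\mathcal{I}(\eta)$, $\partial_x\mathcal{I}(\eta)\,\partial_x\mathcal{I}(\xi_\eps)$ and $\partial_x\mathcal{I}(\eta)\,\partial_x\mathcal{I}(\xi_\delta)$ to noises, a fourth-moment CLT for the latter two, and the affine limit equation. The gap is in the step that carries most of the weight, namely the uniform moment bounds for the model.

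\emph{Model bounds.} Your proposed mechanism, ``\cite{CH}-style estimates with the $\eps^{-1/2}$ compensated by the $\eps^{1/2}$ gain'', is exactly the argument that fails. That gain is what puts $\eta$ at degree $-2-\kappa$. Then $\partial_x\mathcal{I}(\eta)\,\partial_x\mathcal{I}(\xi_\eps)$ has degree $-3/2-2\kappa<-|\fs|/2$, so power counting gives a covariance $|z|_\fs^{-3-4\kappa}$, which is not integrable at the diagonal. This is precisely the variance blow-up excluded by the hypotheses of \cite{CH} (and \cite{HS}).

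Promoting these products to noises removes the blow-up from the tree structure. But the new noises are non-Gaussian second-chaos functionals of $\xi$, correlated with $\xi_\eps$ and $\xi_\delta$, and they satisfy none of the black-box hypotheses by themselves. Re-expanding them in $\xi$ brings the blow-up back.

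The paper resolves this by running the \cite{HS} machinery with the spectral gap of the underlying $\xi$. This requires two new ingredients:
\begin{itemize}
\item a new base case, the Malliavin-derivative bounds of Lemma~\ref{lem:Malliavin_base}, obtained by hand from the $\eps^{1/2}$-smallness of $K\ast\xi_{\eps,\delta}$ in $L^p$ and a Besov reconstruction bound rather than from power counting;
\item a new proof that reconstructing the pointed modelled distributions gives the Malliavin derivative (Lemma~\ref{lem:Malliavin_ident}), via a coproduct cointeracting with $\Delta_r^-$.
\end{itemize}
The same machinery gives convergence, uniformly in $\eps$, of an auxiliary mollification. This bootstraps convergence of the new noises to convergence in law of the whole model (Proposition~\ref{prop:model_convergence}). ``Identification of finite-dimensional chaos limits'' does not by itself identify the limits of the higher trees.

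\emph{Missing consistency steps.}
\begin{itemize}
\item[(i)] The ``$-\text{c.t.}$'' in your $v$-equation must equal $\eps^{-1/2}(C_\eps-C_\delta)$ \emph{exactly}. No remainder is allowed, otherwise the reconstruction is not $\eps^{-1/2}(h_\eps-h_\delta)$. On the rewritten structure, BPHZ no longer contracts proper subtrees of the promoted products, whereas the original structure does. One must show those counterterms vanish: they have an odd number of kernel edges and vanish by spatial reflection (Lemmas~\ref{lem:combinatorial_identity} and~\ref{lem:transfer_1}). Even for the original $w$-equation, Appendix~\ref{app:counterterms} is not enough. Lowering the degree of $\eta$ to $-2-\kappa$ enlarges the negative sector, and the parity argument in Lemma~\ref{lem:recon_iden} is needed.
\item[(ii)] In $w=\mathcal{I}(\eta)+v$, one of two things must be handled. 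Either $v$ inherits nonzero initial data, or you need $\mathbf{1}_+\eta\to0$ in $\mathcal{C}^{-2-\kappa}$. The latter does not follow from the rule of thumb, because multiplication by $\mathbf{1}_+$ is not continuous below regularity $-2$. This is Lemma~\ref{lem:data_control}, used together with the modified expansion of Lemma~\ref{lem: DPD_rewrite}.
\item[(iii)] For general $\delta\le\eps$ the models are only bounded, not convergent, and the fixed-point map is only local in time. So ``continuity of the solution map'' does not give tightness on $[0,T]$. You need global existence, from Cole--Hopf for the $h$-components plus linearity of the $w$-equation. You also need a mechanism turning bounded moments into tightness. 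The paper lowers the noise degrees by $\zeta$ to obtain a compact embedding of the data space (Lemma~\ref{lem:compact_embedding}) and then applies Lemma~\ref{lem:compact_prob}.
\end{itemize}
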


\begin{remark}
Note that while $(\xi, c_1 \tilde{\xi}_1+c_2\tilde{\xi}_2)\stackrel{\mathrm{law}}{=}(\xi,c\tilde \xi)$ with another positive constant $c$ and a space-time white noise $\tilde\xi$ independent of $\xi$, it is not immediate that $(h,w)\stackrel{\mathrm{law}}{=}(h,\bar{w})$, with $\bar{w}$ being the BPHZ solution of
\begin{equ}
(\partial_t - \partial_x^2) \bar{w} = 2 \partial_x \bar{w} \partial_x h +  c \tilde{\xi}.
\end{equ}
Although the proof of this fact is not expected to be particularly difficult, it does not play much role for us, so we leave it to the interested reader.
\end{remark}

We mention here that in addition to establishing Theorem~\ref{theo:main_theorem}, many of the tools and techniques introduced in this paper are expected to be useful when establishing the sharp convergence rate for a wider class of equations including for example the $\Phi^4$ equation and the generalised Parabolic Anderson Model. A special feature of the KPZ equation that we make repeated use of in this paper is that its symmetries allow one to sidestep algebraic difficulties arising when renormalising the stochastic objects. Nonetheless, our expectation is that the overall strategy of considering the equation solved by the rescaled error should be sufficient to establish the sharp rate of convergence in much more general situations.

\subsection{Other Techniques}

The main advantages of working with (tree-based) regularity structures as a technical framework in this paper are that it allows us to take several ingredients `off the shelf' and that it is formulated at a sufficiently general level so that many of the new ingredients we provide here should be immediately applicable to other equations. We expect that the same results in the case of KPZ could be obtained using all pathwise approaches to singular SPDEs such as the paracontrolled calculus of \cite{GIP} (see \cite{KPZ-reloaded} for the case of KPZ), the flow approach of \cite{Duch} (see \cite{CF24}) or by adapting the multi-index-based approach to regularity structures \cite{OSSW, LOTT, BOS} to the case of KPZ. 

In addition, since one of the main challenges of this paper is related to the phenomenon of variance blow-up, it seems likely that our result could also be obtained via more classical probabilistic tools. Indeed, such tools have proven applicable in other situations involving variance blow-up; see for example \cite{Par25, MateKonstantinos}. It would also be interesting to investigate the same phenomenon in the context of dispersive equations where there has also been recent work on the phenomenon of variance blow-up; see e.g. \cite{LLOT25}.

\subsection{Strategy of Proof}

We now briefly provide an overview of our strategy of proof and of the structure of the paper. This is intended to introduce the main ideas of the paper, together with the challenges that must be overcome along the way, without carefully resolving each of those issues.

As mentioned in the introduction, our strategy is to treat the rescaled error equation \eqref{eq:main_sys} via the machinery of regularity structures in the case where $\gamma = 1/2$. An initial difficulty is to show that lifting the equation for $w_{\eps, \delta}$
 to a regularity structure and applying the BPHZ renormalisation of \cite{BHZ} produces the counterterm appearing in \eqref{eq:main_sys}. We establish this in Lemma~\ref{lem:recon_iden}. The most significant steps of the proof, namely showing consistency of counterterms under taking differences and rescaling amplitudes, are established at the level of generality of \cite{BCCH} in Appendix~\ref{app:counterterms}. The remaining more ad hoc ingredient is an application of symmetries of the KPZ equation to eliminate possible additional counterterms coming from the shift in regularity of the noise in the equation for $w$ in comparison to that for $h$. 

Having performed this lift of the equation, the main difficulty faced in this paper is that one must then construct appropriately renormalised versions of products such as $\eps^{-1/2} \partial_x K \ast \xi_{\eps, \delta} \partial_x K \ast \xi_{\eps}$ which should be stable in an appropriate topology as $\delta < \eps \to 0$ where $K$ is a truncation of the heat kernel. Formal power-counting suggests that this product should have regularity $-3/2 - \kappa$. In particular, it violates the `no-variance blow-up' condition present in all systematic constructions of models in regularity structures; see for example \cite[Definition 2.8]{CH}, \cite[Assumption 2.31]{HS} or \cite[Theorem 1]{BH23}. Thus, convergence of the associated models cannot be deduced from the usual machinery, and we are instead in a situation closer to that of \cite{Hai25,MYS}.
To circumvent this obstruction, in Sections~\ref{sec:reform} and \ref{sec:DPD_Struc}, our first step is to perform a variant of the Da Prato--Debussche trick, transforming the equation so that all stochastic objects lying at the regularity threshold for `variance blow-up' are treated as noises. This reformulation raises two further difficulties. Firstly, one must show that the renormalisation of the equation after performing the Da Prato--Debussche trick is consistent with that of the original equation. We emphasise that, perhaps surprisingly, this is not automatic. For example, for the BPHZ choice of renormalisation, this property would fail in the case of the $\Phi_3^4$ equation. In Lemma~\ref{lem:transfer_1}, we show that this issue does not arise for the system considered here, because symmetries of the underlying noise force certain counterterms to vanish. The second difficulty is that the standard implementation of the generalised Da Prato--Debussche trick, as provided by the black-box theory of \cite{BCCH}, introduces a contribution from the Da Prato--Debussche jet into the initial data. In particular, this would destroy the identity $w_{\eps, \delta} = \eps^{-1/2}(h_\eps - h_\delta)$. In Lemmas~\ref{lem: DPD_rewrite} and \ref{lem:data_control}, we introduce a variant of the Da Prato--Debussche trick at the level of regularity structures that avoids this problem.

Having reformulated the equation via this Da Prato--Debussche trick, we turn to controlling the corresponding model, which we view as the most interesting part of the paper. We first establish uniform $L^p$ bounds for the model by adapting the machinery of \cite{HS}. This requires estimating by hand the Malliavin derivatives of the stochastic objects exhibiting variance blow-up. This is similar to the treatment of the KPZ equation in the presence of variance blow-up given in \cite{Hai25}, but the base-case estimates required here are more involved. These bounds are provided in Section~\ref{sec:Uniform_Bounds}. In addition, one must verify that the more algebraic ingredients of \cite{HS} are not disturbed by the Da Prato--Debussche trick; see Subsection~\ref{sec:proof_Malliavin_ident}.

With uniform bounds in hand, we turn to convergence. We first establish convergence of the stochastic objects exhibiting variance blow-up to space-time white noises. Combining tightness, which follows from Section~\ref{sec:Uniform_Bounds}, with direct calculation of the limiting second and fourth cumulants, we obtain convergence in law from the Nualart--Peccati Fourth Moment Theorem \cite{NP05}. An advantage of having performed the aforementioned variation of the Da Prato--Debussche trick is that since we treat the objects exhibiting variance blow-up as noises, we can use the machinery of \cite{HS} to bootstrap the convergence of the noise to the convergence of the full model in law (but not in probability) via an argument based on introducing an additional mollification. 

The next obstruction comes from the deterministic solution theory. We cannot directly apply the standard black-box theory, both because of our non-standard variant of the generalised Da Prato--Debussche trick and because we need to globalise solutions using the Cole--Hopf solution of KPZ together with the linearity of the equation for $w_{\eps, \delta}$.
Another important detail is that when establishing the first part of Theorem~\ref{theo:main_theorem}, we will be in a situation where the models are only uniformly bounded but are not convergent in the topology under consideration. Therefore, we will need to transfer boundedness into a suitable tightness statement on solutions which will require a compact embedding argument at the level of the data for the solution theory. To resolve these details,
 we therefore adapt the techniques of \cite{H0} to our setting in Section~\ref{sec:solution_theory}.

With these ingredients in hand, we complete the proof of the main result in Section~\ref{sec:main_result}.

\subsection{Notations and Conventions}
Let us introduce the notation and conventions that will be used.

Throughout the paper, we will work on the space-time $\mbR \times \mbT$, where for technical convenience, we redefine $\mbT$ as the torus of length $c$ where $c$ is assumed to be sufficiently large and will be determined below. We emphasise that this assumption is only to avoid carefully enumerating periodisations and is not strictly required for the proofs. 
Any functions on $\mbT$ can then be identified as a $c$-periodic function on $\mbR$.
We will denote by $z = (t, x) \in \mbR \times \mbT$ a space-time variable.
For any space-time function $f$ on $\mbR \times \mbR$ we write $\check f(t,x) := f(-t,-x)$,
and we will also adopt the Fourier convention $\widehat{f}(\tau, k) = \int f(t, x) e^{-i(\tau, k)\cdot (t, x)}\, dt dx$.

We will work in parabolic scaling $\fs = (2, 1)$, which means that the norm of the variable $z$ is given by
\begin{equ}
	|z|_\fs := |t|^{1/2} + |x|.
\end{equ}
In particular, the space-time dimension is $|\fs| := 3$.
For $\lambda > 0$, we write $\lambda z = (\lambda^2t, \lambda x)$ to denote the variable rescaled by $\lambda$ under the scaling $\fs$.
With a slight abuse of notation, we will write for any multi-index $k = (k_0, k_1) \in (\mbN\cup\{0\})^2$,
\begin{equ}
	|k|_\fs := 2k_0 + k_1.
\end{equ}
Since it is always clear from the context that $z$ refers to a space-time variable and $k$ refers to a multi-index, this overload of notation can never lead to confusion.

For $\lambda > 0, z\in \mbR \times \mbT$, and a given space-time function $\varphi$, we write $\varphi^\lambda_z(z') = \lambda^{-|\fs|} \varphi(\lambda^{-1}(z' - z))$.
We will suppress the superscript $\lambda$ when $\lambda = 1$ and the subscript $z$ when $z = (0, 0)$. When $\lambda = 2^{-k}$ for $k \in \mbN$, we will sometimes use the shorthand $\varphi^{n}$ instead of $\varphi^{\lambda}$.

For $r \in \mbN$, define the space of test functions $\mcB^r$ to be the collection of $r$ times continuously differentiable functions $\varphi$ supported in $\{|z|_\fs \leq 1\}$ and such that $\sup_{k \in \mbN^2: |k|_\fs \leq r}\|D^k \varphi\|_{\infty} \leq 1$.
With a slight abuse of notation, the values of the exponent $r \in \mbN$ can vary from line to line, since occasionally we would like the derivatives of a test function also to be eligible test functions. This never causes problems since, so long as $r$ is taken to be large enough ($r \ge 3$ would suffice), in each instance where a choice of norm in this paper depends on a parameter $r$, a different choice of sufficiently large $r$ would yield an equivalent seminorm. Since we choose only finitely many such values, this amounts to a change of multiplicative constant.

In this work, we work with local Besov spaces as defined in \cite{BL22} which we recall below. For $\alpha < 0$, $p, q \in [1, \infty]$, $r > |\alpha|$, we define $\mcB^\alpha_{p, q}$ to be the set of distributions $f$ such that for each compact set $\mfK$, the following seminorm is finite:
\begin{equ}
	\|f\|_{\mcB^\alpha_{p, q}; \mfK} := \left\| \left\| \sup_{\varphi \in \mcB^r} \left|\frac{f(\varphi^{2^{-n}}_z)}{2^{-n\alpha}}\right| \right\|_{L^p(\mfK; dz)} \right\|_{\ell^q(n)} < \infty.
\end{equ}
If $\alpha \ge 0$, the seminorm is replaced by
\begin{equ}
	\|f\|_{\mcB^\alpha_{p, q}; \mfK} := \left\| \sup_{\varphi \in \mcB^r} \left|f(\varphi_z)\right| \right\|_{L^p(\mfK; dz)} + \left\| \left\| \sup_{\varphi \in \mcB^r_{\lfloor\alpha\rfloor}} \left|\frac{f(\varphi^{2^{-n}}_z)}{2^{-n\alpha}}\right| \right\|_{L^p(\mfK; dz)} \right\|_{\ell^q(n)} < \infty,
\end{equ}
where $\mcB^r_{\lfloor\alpha\rfloor}$ is the subspace of $\mcB^r$ consisting of those test functions $\varphi$ such that $\int z^k \varphi(z) dz = 0$ for all $0 \leq |k|_\fs \leq \lfloor\alpha\rfloor$. Note that the definition of these spaces at integer levels $\alpha \in \mbN \cup \{0\}$ is carefully chosen. Amongst the several inequivalent notions of Besov space, we choose the one in which the test functions in the definition of $\mcB^n_{p, q}$ are required to annihilate polynomials of up to degree $n$ (instead of $n-1$). This guarantees several properties which are described in Appendix~\ref{app:Besov}.
In the special case $p = q = \infty$, we write $\mcC^\alpha = \mcB^\alpha_{\infty, \infty}$ and refer to these spaces as (Besov-)H\"older spaces.

We will denote by $P(t, x) = \mathbf{1}_{t > 0} (4\pi t)^{-1/2}e^{-|x|^2/(4t)}$ the heat kernel on the full space-time $\mbR \times \mbR$. By Lemma 7.7 of \cite{H0}, one has the decomposition $P \ast f = K\ast f + R\ast f$ for any space-time function $f$ that is $c$-periodic in the space variable, where $K$ is the truncated heat kernel that is non-anticipative, supported on $\{|z|_\fs \leq 1\}$  and satisfies the Assumptions 5.1 and 5.4 of \cite{H0}. On the other hand, $R$ is smooth, non-anticipative, and compactly supported. Now we can impose that the period $c$ to be large enough so that the integrals involving $K$ over the domain $\mbR \times \mbT$ appearing in Section \ref{sec:new_noise} can be replaced by integrals over $\mbR \times \mbR$ without changing their values. For example, $c=10$ would be sufficient.

In this work, we will use freely the notation of the theory of regularity structures: we refer the readers to the seminal work \cite{H0} for the definition of regularity structures, models, (singular) modelled distributions, the abstract integration kernel as well as the relevant norms of models and modelled distributions. We also refer the readers to \cite{BHZ} for the construction of BPHZ models and to \cite{BCCH} for a summary of the black-box theory.
We remark that the models appearing in this paper are all constructed in association with the truncated kernel $K$. We will use gothic symbols such as $\mfT$ to denote regularity structures and calligraphic symbols such as $\mcT$ to denote the canonical basis of the model space of a regularity structure. We will use $\scal{S}$ to denote the free vector space over $S$ (which we will identify as the span when $S$ is a subset of a vector space). 

Note that the models appearing in this work $\Pi_z$ often carry a space-time base-point $z \in \mbR \times \mbR$. When not explicitly fixed, this $z$ is an arbitrary such point.

\subsubsection*{Acknowledgments}
The first and second authors are funded by the European Union (ERC, SPDE, 101117125). Views and opinions expressed
are however those of the author(s) only and do not necessarily reflect those of the European Union
or the European Research Council Executive Agency. Neither the European Union nor the granting
authority can be held responsible for them.

The third author gratefully acknowledges funding by the Deutsche Forschungsgemeinschaft (DFG, German Research Foundation) - CRC/TRR 388 ``Rough Analysis, Stochastic Dynamics and Related Fields'' - Project ID 516748464.
\subsubsection*{Statement on AI use}
During the preparation of this paper, large language models (ChatGPT 5.4 - 5.6) were used in a minor capacity. In particular, these models were used for searching the literature, for calculation in special cases during the exploratory stages such as verification of special cases of \eqref{eq:identities} and for typesetting: both in the form of generation of Tikz code and for typographical checks.
The overall strategy, the proofs and the presentation of the paper are not AI-generated.
\section{Lifting the Equations to a Regularity Structure}\label{sec:reform}

We begin by lifting the system \eqref{eq:main_sys} to the level of a regularity structure by the same procedure\footnote{However in this article, we will not need the extended decoration $\mathfrak{o}$ appearing in \cite{BHZ}} as in \cite{BHZ,BCCH} (see also \cite[Section 15.2]{FH20}). 

\begin{definition}
Fix $0<\kappa \le \kappa_0$ for $\kappa_0$ sufficiently\footnote{For example, $\kappa_0 = 1/100$ would do} small. We write $\mfT$ for the regularity structure generated by the noise types $\mathfrak{L}_- = \{\<b0>, \<p0>, \<0t>\}$ and kernel types $\mfL_+ = \{\<k>, \<bk>, \<pk>\}$ where the rule (in the sense of \cite[Definition 5.7]{BHZ}) is the one naturally associated to the system \eqref{eq:main_sys}. In particular, each $\mathcal{I}_{\diamond}$ for $\diamond \in \mfL_+$ is a degree $2$ integration operator and
\begin{equ}\label{eq:homogeneity}
	|\<b0>|_\fs = |\<p0>|_\fs = - 3/2 - \kappa, \qquad |\<0t>|_\fs = - 2- \kappa.
\end{equ}
Further, we write $\mcT$ for the canonical basis of the model space of $\mfT$, which is therefore henceforth denoted by $\scal{\mcT}$.
\end{definition}
\begin{remark}
As a slight abuse of notation, we use $\<b0>$, $\<p0>$, and $\<0t>$ both for the types of negative degree and the noises of those types.
On the other hand, for aesthetic reasons,the symbols $\<bsq>$, $\<psq>$, and $\<sq>$ will sometimes be used as indices representing elements of $\mfL_-$ and $\mfL_+$. Which is meant should always be clear from the context.
\end{remark}

In what follows, we will typically not distinguish between $\mathcal{I}_{\<bsq>}, \mathcal{I}_{\<psq>}, \mathcal{I}_{\<sq>}$, since each of them is associated to the same kernel $K$. Pictorially, we will use $\<k>$ and $\<dk>$ to represent $\mathcal{I}$ and $\mathcal{I}^\prime$ respectively. As is usual, we will also assume that $\mathcal{I}, \mathcal{I}^\prime$ annihilate the symbols $X^k$ so that every node in a tree consisting of more than one edge either has an outgoing integral edge or is a noise. 

\begin{definition}\label{def:cT_noise_ass}
	For $0 < \delta \le \eps \le 1$, we let $\pi^{\eps, \delta} : \mfL_- \to C^\infty(\mathbb{R}^2)$ be the smooth, stationary random noise assignment defined by
	\begin{align*}
		\pi^{\eps,\delta} \<b0> = \xi_\eps, \quad \pi^{\eps,\delta} \<p0> = \xi_{\delta}, \quad \pi^{\eps,\delta} \<0t> = \eps^{-1/2} \xi_{\eps, \delta}
	\end{align*}
	where we have written $\xi_{\eps, \delta} = \xi_\eps - \xi_{\delta}$. 

	We let $Z^{\eps, \delta} = (\Pi^{\eps, \delta}, \Gamma^{\eps, \delta})$ denote the corresponding BPHZ model, as introduced in \cite[Theorem 6.18]{BHZ}.
\end{definition}

We now lift the system \eqref{eq:main_sys} to a fixed-point problem at the level of modelled distributions on $\mfT$. At first, we tackle only the algebraic problem of showing that the reconstruction of the natural lift of the system of PDEs \eqref{eq:main_sys} to spaces of modelled distributions does indeed have reconstructions which solve the associated PDEs. This requires us to check that the renormalisation of the equation for $w_{\eps,\delta}$ imposed by the machinery of regularity structures is compatible with the renormalisation for the equations for $h_\varepsilon, h_{\delta}$.

\begin{lemma}\label{lem:recon_iden}
	Let $(\gamma, \eta) = (2 + 2 \kappa, \vartheta)$ and $(\tilde{\gamma}, \tilde{\eta}) = (3/2 + 2 \kappa, - \kappa)$ where $\vartheta \in (0, 1/2 - \kappa)$. Suppose that $(\Hb, \Hp, \mathcal{W}) \in \mcD^{\gamma, \eta} \times \mcD^{\gamma, \eta} \times \mcD^{\tilde{\gamma}, \tilde{\eta}}$ is a solution of the fixed-point problem
	\begin{equs}[eq:system-in-lemma]
		\Hb &= \mathcal{P}_\gamma \mathbf{1}_+ [(\partial_x \Hb)^2 + \<b0>] + \mathcal{G} \psi
		\\
		\Hp &= \mathcal{P}_\gamma \mathbf{1}_+ [(\partial_x \Hp)^2 + \<p0>] + \mathcal{G} \psi
		\\
		\mathcal{W} &= \mathcal{P}_{\tilde{\gamma}} \mathbf{1}_+ [\partial_x \mathcal{W} (\partial_x \Hb + \partial_x \Hp) + \<0t>]
	\end{equs}
	with respect to the BPHZ model $Z^{\eps,\delta}$ and that $\mathcal{G}\psi$ is the usual lift of the caloric extension of the initial data $\psi \in C^\vartheta(\mathbb{T})$ to a modelled distribution.
	
	Then $(\mathcal{R}\Hb, \mathcal{R} \Hp, \mathcal{R} \mathcal{W})$ solves the system \eqref{eq:main_sys}.
\end{lemma}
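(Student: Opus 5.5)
The plan is to invoke the black-box renormalised-equation theorem of \cite{BCCH} (their Theorem~2.21, together with the consistency results of Appendix~\ref{app:counterterms}). This gives that the reconstruction of any solution of \eqref{eq:system-in-lemma} with respect to the BPHZ model $Z^{\eps,\delta}$ solves the classical system driven by $(\xi_\eps,\xi_\delta,\eps^{-1/2}\xi_{\eps,\delta})$. The renormalised right-hand side has the form
\begin{equ}
F_i + \sum_{\tau} \frac{\ell^{\eps,\delta}_{\mathrm{BPHZ}}[\tau]}{S(\tau)}\, \Upsilon_i[\tau],
\end{equ}
where $\tau$ ranges over trees of negative degree in $\mcT$ with the appropriate output type. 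The task is therefore purely algebraic: enumerate these trees and show that the resulting counterterms are $-C_\eps$, $-C_\delta$ and $-\eps^{-1/2}(C_\eps-C_\delta)$ respectively.

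For the first two components there is nothing new to do. The subsystem for $(\Hb,\Hp)$ is decoupled from $\mathcal{W}$. Trees containing $\<0t>$ never appear in the expansion of $\Hb$ or $\Hp$, since the rule forbids it. Moreover, the BPHZ character restricted to trees built from $\<b0>$ alone coincides with the BPHZ character of the KPZ model driven by $\xi_\eps$. This follows from the construction via $\mathbb{E}$ and the twisted antipode, which only involves subtrees of the same type. By \cite{H-KPZ,H0} and the definition of $C_\eps$, the reconstructions then solve the first two equations of \eqref{eq:main_sys}. The joint law of $(\xi_\eps,\xi_\delta)$ does not matter here, because mixed trees do not occur.

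For the $\mathcal{W}$ equation I would proceed as follows. The nonlinearity is linear in $\partial_x\mathcal{W}$, and $\mathcal{W}$ contains exactly one $\<0t>$-noise per tree. Hence every tree $\tau$ with $\Upsilon_{\mathcal{W}}[\tau]\neq0$ contains exactly one $\<0t>$-leaf, any number of $\<b0>$- and $\<p0>$-leaves, and its degree is shifted down by $1/2$ relative to KPZ. The key observation, proved in Appendix~\ref{app:counterterms} at BCCH generality, is the following. Formally, $\mathcal{W}$ is the $\eps^{-1/2}$-rescaled difference of the two copies of $H$, i.e.\ a directional derivative of the system with respect to the noise. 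The counterterms for the $\mathcal{W}$ equation are then the corresponding difference of counterterms, $\eps^{-1/2}$ times ``$\Upsilon$ with one $\<b0>$ replaced by $\<0t>$''.

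The one genuinely new point is \emph{degree shifting}. Since $|\<0t>|_\fs=-2-\kappa$ rather than $-3/2-\kappa$, more trees acquire negative degree than in KPZ. Examples are $\<0t>$ itself at degree $-2-\kappa$, the products $\mathcal{I}'(\<0t>)\,\mathcal{I}'(\<b0>)$ and $\mathcal{I}'(\<0t>)\,\mathcal{I}'(\<p0>)$, and trees with additional $\partial_x$ or polynomial decorations. So the main obstacle is to show that these extra trees either have $\Upsilon_{\mathcal{W}}=0$ or vanishing BPHZ constant. For this I plan to use the symmetry $x\mapsto -x$. The noises $\xi_\eps,\xi_\delta$ are jointly invariant in law under spatial reflection, because the mollifier is symmetric. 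Every extra tree has an odd total number of spatial derivatives, or an odd $X_1$ power, so its expectation vanishes. Trees like $\mathcal{I}'(\<0t>)\mathcal{I}'(\<b0>)$ are the only ones of even parity. For these, $\Upsilon_{\mathcal{W}}$ pairs $\partial_x\mathcal{W}$ with $\partial_x\Hb+\partial_x\Hp$, and the counterterm is a constant equal to $\eps^{-1/2}\,\mathbb{E}[\partial_xK\ast\xi_{\eps,\delta}\,\partial_xK\ast(\xi_\eps+\xi_\delta)]$. This equals $\eps^{-1/2}(C_\eps-C_\delta)$ plus a cross term that cancels by symmetry of the $\eps\leftrightarrow\delta$ cross covariance, which is exactly the desired counterterm. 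I would check this last identity by direct computation, and I expect it to be the step requiring the most care.
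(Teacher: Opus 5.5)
Your overall frame is the paper's: BCCH for the renormalised equation, Appendix~\ref{app:counterterms} for the difference structure, and a separate argument for the trees made negative by $|\<0t>|_\fs=-2-\kappa$. However, the degree-shift step fails as you state it. The trees at issue are those with one $\<0t>$ and $|\tau|_\fs\in[-1/2,0)$. They are not $\<0t>$ or $\mathcal{I}'(\<0t>)\mathcal{I}'(\<b0>)$, which are negative under either grading. Let $N$ be the number of noises, $\ell_K$ the number of nodes carrying a single $\mathcal{I}'$ edge, and $|\mathfrak{n}|$ the polynomial degree. Then $|\tau|_\fs=\ell_K+(\tfrac12-\kappa)N-\tfrac52+|\mathfrak{n}|$, so these trees satisfy $\ell_K+|\mathfrak{n}|+N/2=5/2$. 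Hence $N$ is always odd, whereas the spatial parity ($\ell_K$ plus the power of $X_1$, modulo $2$) can be even. For instance:
\begin{itemize}
\item $\mathcal{I}'(\mathcal{I}'(\mathcal{I}'(\<0t>)\mathcal{I}'(\<b0>))\mathcal{I}'(\<b0>))\,\mathcal{I}'(\mathcal{I}'(\<b0>)\mathcal{I}'(\<b0>))$ has degree $-5\kappa$, eight spatial derivatives and no polynomial decoration;
\item $\mathcal{I}'(\mathcal{I}'(\<0t>))$ has degree $-\kappa$ and two derivatives.
\end{itemize}
Both have $\Upsilon_{\mathcal{W}}\neq0$, and spatial reflection says nothing about them. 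What kills them is Gaussianity. The BPHZ character vanishes on every tree with an odd number of centred jointly Gaussian noises (induct through the twisted antipode). This is exactly the paper's argument.

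Your final step is also wrong as stated. The even trees of negative degree in the $\mathcal{W}$-equation are not only $\mathcal{I}'(\<0t>)\mathcal{I}'(\<b0>)$ and $\mathcal{I}'(\<0t>)\mathcal{I}'(\<p0>)$. Four-noise trees such as $\mathcal{I}'(\mathcal{I}'(\<0t>)\mathcal{I}'(\<b0>))\,\mathcal{I}'(\mathcal{I}'(\<p0>)\mathcal{I}'(\<p0>))$ have degree $-1/2-4\kappa$, even parity and nonzero BPHZ values. They must be present, because the BPHZ constant $C_\eps$ of KPZ is not $\mathbb{E}[(\partial_xK\ast\xi_\eps)^2]$ alone. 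It also contains the logarithmically divergent values on the two four-noise KPZ trees. So your expression $\eps^{-1/2}\mathbb{E}[\partial_xK\ast\xi_{\eps,\delta}\,\partial_xK\ast(\xi_\eps+\xi_\delta)]$ only reproduces the leading part of $\eps^{-1/2}(C_\eps-C_\delta)$.

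The full identity has to come from Lemma~\ref{lem:counterterms_app}, but that lemma does not apply to $Z^{\eps,\delta}$ directly. Its hypotheses are that the set of negative trees is stable under recolouring the difference noise, and that $\ell$ of such a tree is the difference of $\ell$ of its two recolourings. These fail for two reasons: the factor $\eps^{-1/2}$, and the degree mismatch (the five-noise tree above is negative while its recolourings have degree $1/2-5\kappa$). The paper bridges this with two intermediate BPHZ solutions:
\begin{itemize}
\item one driven by $\xi_{\eps,\delta}$ without prefactor, related to yours by Corollary~\ref{cor:homogeneity} through $\ell(\tau)=\eps^{-[\tau]/2}\check{\ell}(\tau)$, where $[\tau]$ is the number of $\<0t>$'s;
\item one where $\<0t>$ also gets degree $-3/2-\kappa$, which has the same counterterm by the Gaussianity argument above.
\end{itemize}
Only for the latter is Lemma~\ref{lem:counterterms_app} applicable, and it then yields $-C_\eps+C_\delta$, including all four-noise contributions.
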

\begin{proof}
Let us introduce the homogeneity assignment $|\cdot|_\mfs^{\text{\tiny $\wedge$}}$ differing from \eqref{eq:homogeneity} only in $|\<0t>|_\mfs^{\text{\tiny $\wedge$}}=-3/2-\kappa$ and another noise assignment $\hat{\pi}^{\eps,\delta}$ differing from Definition \ref{def:cT_noise_ass} only in $\hat{\pi}^{\eps,\delta}\,\<0t>=\xi_{\eps,\delta}$. Let  $(\hat{\Hb}, \hat{\Hp}, \hat{\mathcal{W}})$ be the corresponding BPHZ solution of \eqref{eq:system-in-lemma}. Furthermore, take the homogeneity assignment $|\cdot|_{\mfs}$ but the noise assignment $\hat{\pi}^{\eps,\delta}$ and let $(\check{\Hb}, \check{\Hp}, \check{\mathcal{W}})$ be the corresponding BPHZ solution of \eqref{eq:system-in-lemma}.
We begin by recalling that one of the main results of \cite{BCCH} (see also \cite{BB26}) is that
$\mcR \Hb=\mcR \hat{\Hb}=\mcR \check{\Hb}=h_\eps$
and 
$\mcR \Hp=\mcR \hat{\Hp}=\mcR \check{\Hp}=h_\delta$,
where $h_\eps$ and $h_\delta$ are the solutions of the first and second component of \eqref{eq:main_sys} with initial data $\psi$, respectively,  and furthermore
$\mcR\mcW$, $\mcR\hat{\mcW}$, and $\mcR\check{\mcW}$ each satisfy affine linear equations of the type
\begin{equ}
(\partial_t-\partial_x^2)w=\partial_x w(\partial_x h_\eps+\partial_x h_\delta)+\mathfrak{W}
\end{equ}
with initial condition $0$, with different choices of $\mathfrak{W}$.
Using the notation
$\Upsilon[\tau]$ defined in \cite[Equation~(2.12)]{BCCH}, 
$S(\tau)$ for the symmetry factor of $\tau$,
and $\ell$, $\hat\ell$, and $\check{\ell}$ for the BPHZ characters, the term $\mathfrak{W}$ for the cases of $\mcR\mcW$, $\mcR\check{\mcW}$, and $\mcR\hat{\mcW}$, respectively, is given by
\begin{equs}
\mathfrak{W}_1&= \eps^{-1/2}\xi_{\eps,\delta} + \sum_{|\tau|_\fs < 0} \ell(\tau) \frac{\Upsilon_w[\tau]}{S(\tau)},\\
\mathfrak{W}_2&= \xi_{\eps,\delta} + \sum_{|\tau|_\fs < 0} \check{\ell}(\tau) \frac{\Upsilon_w[\tau]}{S(\tau)},\\
\mathfrak{W}_3&= \xi_{\eps,\delta} + \sum_{|\tau|_\fs^{\text{\tiny $\wedge$}} < 0} \hat{\ell}(\tau) \frac{\Upsilon_w[\tau]}{S(\tau)},
\end{equs}
respectively. To relate the counterterm of $\mcR\mcW$ to the $C_\eps-C_\delta$, we rely on two general steps established in Appendix \ref{app:counterterms} and one ad hoc step.

First note that by the multi-homogeneity of the BPHZ character, we have that $\ell(\tau)=\eps^{-[\tau]/2}\check{\ell}(\tau)$ where $[\tau]$ is the number of instances of the noise $\<0t>$ in $\tau$. Therefore the general homogeneity property of affine linear components on singular SPDEs formulated in Corollary \ref{cor:homogeneity} applies and one gets $\mathfrak{W}_1=\eps^{-1/2}\mathfrak{W}_2$. 
Next, one can see $\mathfrak{W}_2=\mathfrak{W}_3$ from an ad-hoc argument: for $\tau$ with $|\tau|_{\mfs}<0$ one has $\check{\ell}(\tau)=\hat{\ell}(\tau)$, and the set $\{|\tau|_{\mfs} < 0,|\tau|_\fs^{\text{\tiny $\wedge$}} \ge 0\}$ only contains trees with odd number of noises, and by Gaussianity the BPHZ character vanishes on such trees. 
Finally, by the general ``consistency under taking difference'' property of singular SPDEs formulated in Lemma \ref{lem:counterterms_app} one has $\mathfrak{W}_3=\xi_{\eps,\delta}-C_\eps+C_\delta$.
Therefore, we get $\mathfrak{W}_1=\eps^{-1/2}\big(\xi_{\eps,\delta}-C_\eps+C_\delta\big)$, which finishes the proof.
\end{proof}

We now reformulate the system of Lemma~\ref{lem:recon_iden} in a form that is more amenable to fixed-point methods. The main potentially delicate point is that, as also occurs in the treatment of $\Phi_3^4$ in \cite[Section 9]{H0}, $\mathbf{1}_+ \<0t>$ does not have a canonical reconstruction since $|\<0t>|_\fs < - 2$. Applying the general solution theory of \cite{BCCH} would amount to circumventing this issue by expanding $w_{\varepsilon, \delta}$ around a particular solution of $(\partial_t - \partial_x^2) Z = \eps^{-1/2} (\xi_\eps - \xi_{\delta})$ (modulo smooth functions) which does not vanish at time $0$. Therefore, this choice would lead to the initial data for the equation for $w_{\eps, \delta}$ differing from the difference of the initial data for $h_\eps$ and $h_{\delta}$ by the value of the solution of this linear equation at time $t = 0$. Since it is essential that $w_{\eps, \delta} = \eps^{-1/2}(h_\eps - h_{\delta})$, this issue means that we cannot apply the solution theory given in \cite{BCCH} and must instead expand $w_{\eps, \delta}$ around the solution of the linear equation started with vanishing initial data at time $0$. We must then separately control this solution of the linear equation. We gather the tools required to perform this expansion over the course of the next two lemmas. 

\begin{lemma}\label{lem: DPD_rewrite}
	Suppose that $(\Pi, \Gamma)$ is a smooth model, that $|\tau|_\fs < - 2$, that $\Gamma \tau = \tau$ for all $\Gamma \in \mcG$. Suppose that we postulate that $\mcR \mathbf{1}_\pm \tau = \mathbf{1}_\pm \Pi_z \tau \in C^{|\tau|_\fs}$. Then we have that 
	\begin{align*}
		\mathbf{1}_+ \mcP_\gamma \mathbf{1}_+ \tau = \mathbf{1}_+ \mcI \tau +  \mcV_\tau
	\end{align*}
	where $\mcV_\tau \in \mcD^{\gamma, |\tau|_\fs + 2}$ is defined by
	\begin{align*}
		\mcV_\tau = \sum_{|k|_\fs < \gamma} \frac{X^k}{k!} \mathbf{1}_+ \left (  D^k R \ast \mathcal{R} \mathbf{1}_+ \tau -D^k K \ast \mathcal{R} \mathbf{1}_- \tau \right ).
	\end{align*}
	Furthermore, $\mcV_\tau$ depends continuously on $\mathbf{1}_\pm \Pi_z \tau \in C^{|\tau|_\fs}$. 
\end{lemma}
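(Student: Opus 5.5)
We unwind the definition of $\mcP_\gamma$ from \cite[Section~7]{H0}: for a singular modelled distribution $f$ whose reconstruction is given,
\begin{align*}
	\mcP_\gamma f = \mcK_\gamma f + R_\gamma \mcR f ,
\end{align*}
where $\mcK_\gamma f = \mcI f + \mcJ f + \mcN f$ and $R_\gamma g = \sum_{|k|_\fs<\gamma} \frac{X^k}{k!} D^k R \ast g$. We apply this with $f = \mathbf{1}_+ \tau$ and $\mcR f = \mathbf{1}_+\Pi_z\tau$, which is the postulated reconstruction. The $R$-part then immediately gives the first summand of $\mcV_\tau$. The key step is computing $\mcJ(\mathbf{1}_+\tau) + \mcN(\mathbf{1}_+\tau)$.

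Since $\Gamma\tau = \tau$ for all $\Gamma$, the function $\Pi_z\tau =: \zeta$ does not depend on $z$. Hence $\mcJ_z\tau = \sum_{|k|_\fs<|\tau|_\fs+2} \frac{X^k}{k!} D^kK\ast\zeta(z)$, and this sum is empty because $|\tau|_\fs+2<0$. For $\mcN$ we use
\begin{align*}
	\mcN f(z) = \sum_{|k|_\fs<\gamma} \frac{X^k}{k!} D^kK\ast(\mcR f - \Pi_z f(z))(z).
\end{align*}
For $z$ with $t>0$ we have $\mathbf{1}_+(z)=1$, so $\mcR f - \Pi_z f(z) = \mathbf{1}_+\zeta - \zeta = -\mathbf{1}_-\zeta$. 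For $t\le 0$ we have $f(z)=0$, and $K$ is non-anticipative, so we only need to multiply by $\mathbf{1}_+$ at the end. Altogether this gives
\begin{align*}
	\mathbf{1}_+\mcP_\gamma\mathbf{1}_+\tau = \mathbf{1}_+\mcI\tau + \mathbf{1}_+\sum_{|k|_\fs<\gamma}\frac{X^k}{k!}\bigl(D^kR\ast\mathbf{1}_+\zeta - D^kK\ast\mathbf{1}_-\zeta\bigr),
\end{align*}
which is the claimed identity. One point needs care. We must check that $\mcN$ is well defined, i.e.\ that the expression $\mathbf{1}_-\zeta$ at points of $\{t>0\}$ is convolved with a kernel evaluated away from the singularity. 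This is fine: $D^kK\ast\mathbf{1}_-\zeta$ is smooth on $\{t>0\}$ because $\mathbf{1}_-\zeta$ is supported in $\{t\le 0\}$. Near $t=0$ it blows up like $t^{(|\tau|_\fs+2-|k|_\fs)/2}$.

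Next we show $\mcV_\tau\in\mcD^{\gamma,|\tau|_\fs+2}$. Write $F = R\ast\mathbf{1}_+\zeta - K\ast\mathbf{1}_-\zeta$, restricted to $t>0$. Then $\mcV_\tau$ is simply the polynomial lift (Taylor jet) of $F$ up to order $\gamma$, multiplied by $\mathbf{1}_+$. The $R$ part is globally smooth, with bounds by $\|\zeta\|_{C^{|\tau|_\fs}}$, since $R$ is smooth and compactly supported. For the $K$ part we use the standard singular Schauder estimate: for $z$ with $|t|^{1/2}=:\lambda$, decompose $K=\sum_n K_n$ with $K_n$ supported at scale $2^{-n}$. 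Only scales $2^{-n}\gtrsim\lambda$ contribute, because $\mathbf{1}_-\zeta$ lives in $t\le 0$ and $K$ is non-anticipative. Testing the distribution $\mathbf{1}_-\zeta$ against $D^kK_n(z-\cdot)$ gives a bound $2^{-n(|\tau|_\fs+2-|k|_\fs)}$. Summing over $2^{-n}\gtrsim\lambda$ gives $|D^kF(z)|\lesssim\lambda^{|\tau|_\fs+2-|k|_\fs}$, since the exponent is negative. The bound on the increments of the Taylor jet, required by the $\mcD^{\gamma,\eta}$ seminorm with $\eta=|\tau|_\fs+2$, then follows from Taylor's theorem on pairs $z,z'$ with $|z-z'|_\fs\le\frac12(|t|\wedge|t'|)^{1/2}$, using the bound on the derivatives of order $\lceil\gamma\rceil$.

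The main obstacle is making sense of $\mathbf{1}_-\zeta$ as an element of $C^{|\tau|_\fs}$. Since $|\tau|_\fs<-2$, multiplication by an indicator function is not canonical, and this is exactly why the lemma postulates it. Everything we need from it is that it is a distribution of that regularity with the stated support. Continuity of $\mcV_\tau$ in $\mathbf{1}_\pm\Pi_z\tau$ follows because all the estimates above are linear in $(\mathbf{1}_+\zeta,\mathbf{1}_-\zeta)$ and bounded by their $C^{|\tau|_\fs}$ norms. Applying them to differences yields Lipschitz continuity.
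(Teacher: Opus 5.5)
Your proposal is correct and follows the paper's proof essentially step for step. It uses the same splitting $\mcP_\gamma=\mcK_\gamma+R_\gamma$, the same appeal to non-anticipativity of $K$ to turn $\mcR\mathbf{1}_+\tau-\mathbf{1}_+\Pi_{z}\tau$ into $-\mathbf{1}_-\Pi_z\tau$ on $\{t>0\}$, the same dyadic estimate $|D^kK\ast\mathbf{1}_-\Pi_z\tau(z)|\lesssim t^{(|\tau|_\fs+2-|k|_\fs)/2}$ from the scales $2^{-n}\gtrsim t^{1/2}$, and the same linearity argument for continuity. You also make explicit two points the paper leaves implicit: the vanishing of the $\mathcal{J}$-term because $|\tau|_\fs+2<0$, and the Taylor-remainder step giving the $\mcD^{\gamma,|\tau|_\fs+2}$ increment bounds. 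One cosmetic slip: the $R$-part should be bounded by $\|\mathbf{1}_+\Pi_z\tau\|_{C^{|\tau|_\fs}}$ rather than $\|\Pi_z\tau\|_{C^{|\tau|_\fs}}$.
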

\begin{proof}
	We write $\mcP_\gamma = \mcK_\gamma + R_\gamma$ for the usual decomposition of the integration map $\mcP_\gamma$. Then, by non-anticipativity of $K$, we have that
	\begin{align*}
		\mcK_\gamma \mathbf{1}_+ \tau(z') =& \mathbf{1}_+(z') \mcI \tau + \sum_{|k|_\fs < \gamma} \frac{X^{k}}{k!} D^{k} K \ast (\mcR \mathbf{1}_+ \tau - \mathbf{1}_+(z') \Pi_{z'} \tau)(z')
		\\
		=& \mathbf{1}_+(z') \mcI \tau - \sum_{|k|_\fs < \gamma} \frac{X^k}{k!} \mathbf{1}_+(z') D^{k} K \ast ( \mathbf{1}_- \Pi_{z'} \tau)(z').
	\end{align*}
	We note that here we used the fact that $K \ast (\mathbf{1}_- \Pi_{z'} \tau)$ is a smooth function on positive times since if $y_0 > 0$ then the singularity of $K(y - \cdot)$ is separated from the support of $\mathbf{1}_- \Pi_{z'} \tau$. 

	By definition of $R_\gamma$, we also have that
	\begin{align*}
		R_\gamma \mathbf{1}_+ \tau(z') =& \sum_{|k|_\fs < \gamma} \frac{X^k}{k!} D^{k} R \ast [\mathbf{1}_+ \Pi_{z'} \tau] (z').
	\end{align*}
	This completes the proof of the first part of the result. It remains to show the continuous dependence. By linearity, it suffices to show boundedness. For $\mathcal{R}_\gamma \mathbf{1}_+ \tau$, this is essentially the contents of \cite[Lemma 7.3]{H0} since the proof there only uses bounds on $\mathcal{R} \mathbf{1}_+ \tau$ as a distribution. Therefore, we focus on the term $\sum_{|k|_\fs < \gamma} \frac{X^k}{k!} \mathbf{1}_+ D^k K \ast \mathbf{1}_- \Pi_{z'} \tau$. For $z'$ such that $z_0' > 0$, we write $2^{-n_{z'}} \sim |z_0'|^{1/2}$. We then have the bound
	\begin{align*}
		|D^kK \ast \mathbf{1}_- \Pi_{z'} \tau (z') | \le& \sum_{n \le n_{z'}} |D^kK_n \ast \mathbf{1}_- \Pi_{z'} \tau (z')| 
		\\
		\lesssim & \sum_{n \le n_{z'}} 2^{-n (|\tau|_\fs + 2 - |k|_\fs)} \| \mathbf{1}_- \Pi_{z'} \tau \|_{C^{|\tau|_\fs}} 
		\\
		\lesssim & |y_0|^{\frac{|\tau|_\fs + 2 - |k|_\fs}{2}} \| \mathbf{1}_- \Pi_{z'} \tau \|_{C^{|\tau|_\fs}}.
	\end{align*}
	This bound immediately implies the desired modelledness bounds for \begin{align*}
	\sum_{|k|_\fs < \gamma} \frac{X^k}{k!} \mathbf{1}_+ D^k K \ast \mathbf{1}_- \Pi_{z'} \tau 
	\end{align*}
	in $\mathcal{D}^{\gamma, |\tau|_\fs + 2}$. 
\end{proof}
\begin{lemma}\label{lem:data_control}
	Suppose that $\xi$ is a space-time white noise. Then $\eps^{-1/2} \mathbf{1}_\pm (\xi_\eps - \xi_{\delta}) \to 0$ in $L^p(\Omega; C^{-2 - \kappa})$ as $\eps, \delta \to 0$ with $\delta < \eps$ for every $p < \infty$.
\end{lemma}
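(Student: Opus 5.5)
Since $\mathbf{1}_\pm(\xi_\eps-\xi_\delta)$ is a Gaussian random distribution, we reduce via Gaussian hypercontractivity and a Kolmogorov-type criterion in the local Besov scale to a second-moment bound on single wavelet/test-function tests. Concretely, we aim to show that for every $\varphi\in\mcB^r$, $z$ in a compact set, and $\lambda=2^{-n}\le 1$,
\begin{equ}
\mathbb{E}\big|\eps^{-1/2}\langle \mathbf{1}_\pm(\xi_\eps-\xi_\delta),\varphi^\lambda_z\rangle\big|^2 \lesssim \eps^{\theta}\lambda^{2(-2-\kappa/2)}
\end{equ}
for some $\theta>0$ depending on $\kappa$. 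By equivalence of Gaussian moments, this gives the same bound for all $p$-th moments with $\eps^{p\theta/2}$. A standard Kolmogorov argument (sup over test functions handled by a finite wavelet family, then summing over the $O(2^{3n})$ dyadic points at scale $n$ with $p$ large, losing $\kappa/2$ in regularity) then yields $\mathbb{E}\|\cdot\|_{C^{-2-\kappa}}^p\lesssim \eps^{p\theta/2}\to0$.

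For the second moment, write $\langle \mathbf{1}_+\xi_\eps,\varphi^\lambda_z\rangle=\langle \xi,\rho_\eps\ast(\mathbf{1}_+\varphi^\lambda_z)\rangle$ (with $\rho_\eps$ reflected; it is symmetric in space, and the temporal reflection is harmless). The variance is then $\|(\rho_\eps-\rho_\delta)\ast(\mathbf{1}_+\varphi^\lambda_z)\|_{L^2}^2$, and $\mathbf{1}_-$ is treated identically. Set $f=\mathbf{1}_+\varphi^\lambda_z$. The key point is that $f$ has a jump in time at $t=0$, so it has parabolic regularity just below $1/2$ in $L^2$-Besov sense ($B^{1/2}_{2,\infty}$ in parabolic scaling, since a time-jump costs $t$-regularity $1/4$, i.e.\ parabolic $1/2$). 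Quantitatively, $\|f\|_{L^2}^2\lesssim\lambda^{-3}$ and, for a mollifier at scale $\eps$,
\begin{equ}
\|\rho_\eps\ast f-f\|_{L^2}^2\lesssim \min\{\lambda^{-3},\ \eps\,\lambda^{-4}\}.
\end{equ}
The term $\eps\lambda^{-4}$ comes from two contributions. Away from the jump, $f$ is smooth with derivative of size $\lambda^{-3/2-1}$, contributing $\eps^2\lambda^{-5}$. In the strip $|t|\lesssim\eps^2$, the error is of size $\lambda^{-3}$ on a region of volume $\eps^2\lambda$ (the region is $\eps^2$ thin in time and $\lambda$ wide in space), contributing $\eps^2\lambda^{-5}$ as well. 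Checking this more carefully, the strip contribution is $\lambda^{-6}\cdot\eps^2\cdot\lambda=\eps^2\lambda^{-5}$, which is even better than needed, and combined with $\min$ with $\lambda^{-3}$ gives $\lesssim \eps\lambda^{-4}$ by interpolation when $\eps\le\lambda$. If the strip computation turns out worse (for instance if $\varphi$ does not vanish near $t=0$, the jump is of full size $\lambda^{-3}$), the $\eps\lambda^{-4}$ bound is exactly what the half-derivative heuristic predicts, and that suffices. Applying the triangle inequality with $\delta\le\eps$ gives variance $\lesssim \eps\lambda^{-4}\wedge\lambda^{-3}$.

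Multiplying by $\eps^{-1}$ we get $\lambda^{-4}\wedge \eps^{-1}\lambda^{-3}$. When $\lambda\le\eps$ this is $\le \lambda^{-4}\cdot(\lambda/\eps)\le \lambda^{-4}$, and when $\lambda\ge\eps$ it is $\lambda^{-4}$. So in both regimes we only get $\lambda^{-4}$, i.e.\ boundedness in $C^{-2-}$ without a gain. To extract decay we interpolate: $\min(a,b)\le a^{1-s}b^s$ with small $s$ gives $\eps^{-1}\cdot(\eps\lambda^{-4})^{1-s}(\lambda^{-3})^s$, but this has a factor $\eps^{-s}$. The gain must instead come from the smooth part, whose contribution is $\eps^{2}\lambda^{-5}$ rather than $\eps\lambda^{-4}$, while only the jump strip is critical. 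Here I expect the main obstacle. The jump strip exactly saturates $C^{-2}$, and decay in $C^{-2-\kappa}$ should come from sacrificing $\kappa$ of regularity: $\eps^{-1}\min(\eps\lambda^{-4},\lambda^{-3})\le \lambda^{-4}\min(1,\lambda/\eps)$. This is small relative to $\lambda^{-4-2\kappa}$ precisely when $\lambda\ll\eps$, but not uniformly for $\lambda\ge\eps$. So I would compute the strip contribution precisely, aiming for $\eps^2\lambda^{-5}$ (the $t$-thickness $\eps^2$ times the spatial width $\lambda$, against a squared amplitude $\lambda^{-6}$), which gives $\eps^{-1}\cdot\eps^2\lambda^{-5}=\eps\lambda^{-5}\le \eps^{\kappa}\lambda^{-4-\kappa}$ for $\lambda\ge\eps$. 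If that precise computation fails, I would fall back on showing the jump-strip mass is $o(1)\cdot\lambda^{-4}$ after trading $\kappa$ regularity.
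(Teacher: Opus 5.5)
Your final argument is correct and is essentially the paper's proof. Hypercontractivity plus Kolmogorov reduce everything to the variance $\eps^{-1}\|(\rho_\eps-\rho_\delta)\ast(\mathbf{1}_\pm\varphi^\lambda_z)\|_{L^2}^2$. For $\lambda\le\eps$ a crude Young bound suffices, and for $\lambda\ge\eps$ you split into the smooth part and the time-strip of thickness $\eps^2$, each contributing $\eps^2\lambda^{-5}$. The paper first reduces to $\delta=\eps/2$, which your triangle inequality with $\rho_\eps\ast f-f$ and $\rho_\delta\ast f-f$ makes unnecessary.

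You should, however, drop the ``half-derivative'' heuristic and the claim that the jump strip saturates $C^{-2}$. A time jump has parabolic $L^2$-regularity $1$, not $1/2$. Your own computation $\lambda^{-6}\cdot\eps^2\cdot\lambda=\eps^2\lambda^{-5}$, which is exactly the paper's bound for the jump term, already closes the argument, so no fallback is needed.
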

\begin{proof}
For convenience, we assume that $\eps, \delta$ are dyadic length-scales. We note that it suffices to prove the result with $\delta = \eps/2$. Indeed, one then recovers the general case by applying the elementary estimate
\begin{align*}
	\lim_{\eps \to 0} \Bigl \| \eps^{-1/2} \sum_{n = n_\eps}^{n_\delta} 2^{-n/2} a_n \Bigr \|_{L^p(\Omega; C^{-2-\kappa})} \le \lim_{\eps \to 0} \sup_{n \ge n_\eps} \|a_n\|_{L^p(\Omega; C^{-2-\kappa})}
\end{align*}
to the sequence $a_n = 2^{n/2} \mathbf{1}_\pm (\xi_n - \xi_{n+1})$. Therefore, in the rest of the proof we will take $\delta = \eps/2$.

By Gaussian hypercontractivity and a Kolmogorov criterion, it suffices to show that 
\begin{align*}
	\mu^{2 + \kappa} \mathbb{E}^{1/2}[ (\mathbf{1}_\pm \eps^{-1/2} (\xi_\eps - \xi_{\eps/2})) \ast \phi^\mu(z)^2] \to 0
\end{align*}
locally uniformly in $x$ as $\eps \to 0$. To do this, we use the covariance structure of $\xi$ to write 
\begin{align*}
	\mathbb{E}[ (\mathbf{1}_\pm \eps^{-1/2} (\xi_\eps - \xi_{\eps/2})) \ast \phi^\mu(z)^2] = \eps^{-1} \| \check{\varrho}^{\, \eps, \eps/2} \ast (\mathbf{1}_\pm \phi_z^\mu) \|_{L^2}^2
\end{align*}
where $\varrho^{\eps, \eps/2} = \varrho^\eps - \varrho^{\eps/2}$.
For $\mu < \eps$, by Young's convolution inequality we have that
\begin{align*}
	\eps^{-1} \| \check{\varrho}^{\, \eps, \eps/2} \ast (\mathbf{1}_\pm \phi_z^\mu) \|_{L^2}^2 \le \eps^{-1} \| \varrho^{\eps, \eps/2} \|_{L^2}^2 \| \phi^\mu \|_{L^1}^2 \lesssim \eps^{-4}.
\end{align*}
On the other hand, for $\mu \ge \eps$, we can use the fact that $\int \varrho^{\eps, \eps/2} = 0$ to write
\begin{align*}
	 &\| \check{\varrho}^{\, \eps, \eps/2} \ast (\mathbf{1}_\pm \phi_z^\mu) \|_{L^2}
	 \\ &= \left ( \int \left ( \int \varrho^{\eps, \eps/2} (z_1) (\mathbf{1}_\pm (z_1+z_2) \phi_z^\mu(z_1+z_2) - \mathbf{1}_\pm(z_2) \phi_z^\mu(z_2)) dz_1 \right )^2 dz_2 \right )^{1/2}
	 \\
	 & \lesssim \int |\varrho^{\eps, \eps/2} (z_1)| \|  \mathbf{1}_\pm (z_1+z_2) \phi_z^\mu(z_1+z_2) - \mathbf{1}_\pm(z_2) \phi_z^\mu(z_2) \|_{L^2(dz_2)} dz_1
\end{align*}
where the second line follows by Minkowski's integral inequality. 
We now estimate the $L^2$ norm with respect to $z$. We write
\begin{align*}
\mathbf{1}_\pm(z_1+z_2) \phi_z^\mu(z_1+z_2) - \mathbf{1}_\pm(z_2) \phi_z^\mu(z_2) =& (\mathbf{1}_\pm(z_1 + z_2) - \mathbf{1}_\pm(z_2)) \phi_z^\mu(z_1+z_2) 
\\
&+ \mathbf{1}_{\pm}(z_2) (\phi_z^\mu(z_1+z_2) - \phi_z^\mu(z_2))
\end{align*}
and estimate each of the resulting contributions separately. For the contribution from the second of these terms, we note that the resulting integrand is supported in $B(0, 2\mu)$ since $|z_1| < \eps \le \mu$ and by the Mean Value Theorem, we have
\begin{align*}
	| \phi_z^\mu(z_1+z_2) - \phi_z^\mu(z_2)| \lesssim \mu^{-4} \eps
\end{align*}
on that set. Therefore
\begin{align*}
	\int |\varrho^{\eps, \eps/2}(z_1)| \left ( \int \mathbf{1}_\pm(z_2) (\phi_z^\mu(z_1+z_2) - \phi_z^\mu(z_2))^2 dz_2 \right )^{1/2} dz_1 \lesssim \mu^{-5/2} \eps.
\end{align*}
For the remaining contribution, we note that for any fixed $z_1$, $\phi_z^\mu(z_1+z_2) (\mathbf{1}_\pm(z_1+z_2) - \mathbf{1}_\pm(z_2))$ is supported in a slab of size $\mu$ in the spatial direction and size $\eps^2$ in the time direction. Furthermore, its supremum is bounded by $\mu^{-3}$. Therefore, we obtain 
\begin{align*}
	\int |\varrho^{\eps, \eps/2}(z_1)| \left ( \int ((\mathbf{1}_\pm(z_1 +z_2) - \mathbf{1}_\pm(z_2)) \phi_z^\mu(z_1+z_2))^2 dz_2 \right )^{1/2} dz_1 \lesssim \mu^{-5/2} \eps.
\end{align*}
In total, we have derived the estimate
\begin{align*}
	\eps^{-1/2} \| \check{\varrho}^{\, \eps, \eps/2} \ast (\mathbf{1}_\pm \phi_z^\mu) \|_{L^2} \lesssim \min\{\eps^{-2}, \eps^{1/2} \mu^{-5/2} \}
\end{align*}
which implies the desired convergence result.
\end{proof}

With these results in hand, we obtain the following reformulation of the system of Lemma~\ref{lem:recon_iden}; that is essentially an appropriate version of the Da Prato--Debussche trick at the level of the regularity structure. It is this formulation that we will eventually solve.

\begin{lemma}\label{lem: DPD_1}
	Suppose that $(\Hb, \Hp, \bar{\mathcal{W}})$ is a solution of the system
	\begin{align*}
		\Hb &= \mathcal{P}_\gamma \mathbf{1}_+ [(\partial_x \Hb)^2 + \<b0>] + \mathcal{G} \psi
		\\
		\Hp &= \mathcal{P}_\gamma \mathbf{1}_+ [(\partial_x \Hp)^2 + \<p0>] + \mathcal{G} \psi
		\\
		\bar{\mathcal{W}} &= \mathcal{P}_{\tilde{\gamma}} \mathbf{1}_+ [(\partial_x \bar{\mathcal{W}} + \partial_x \mathcal{V}_{\<0t>} )(\partial_x \Hb + \partial_x \Hp) + \<1t> ( \partial_x \Hb - \<b1> + \partial_x \Hp - \<p1>) + \<b2t> + \<p2t>].
	\end{align*}
	Then if $\mathcal{W} = \bar{\mathcal{W}} + \mathcal{P}_{\tilde{\gamma}} \mathbf{1}_+ \<0t>$, we have that $(\Hb, \Hp, \mathcal{W})$ is a solution of the system of Lemma~\ref{lem:recon_iden}.
\end{lemma}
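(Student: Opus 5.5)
The proof is a direct substitution, using the linearity of $\mcP_{\tilde{\gamma}}$ and Lemma~\ref{lem: DPD_rewrite}. The first two equations of the system are the same in both lemmas, so only the third needs checking. Set $\mathcal{W} = \bar{\mathcal{W}} + \mcP_{\tilde\gamma}\mathbf{1}_+\<0t>$. Since $\mcP_{\tilde{\gamma}}$ is linear, the equation
\[
\mathcal{W} = \mcP_{\tilde\gamma}\mathbf{1}_+[\partial_x\mathcal{W}(\partial_x\Hb+\partial_x\Hp) + \<0t>]
\]
is equivalent to
\[
\bar{\mathcal{W}} = \mcP_{\tilde\gamma}\mathbf{1}_+[\partial_x\mathcal{W}(\partial_x\Hb+\partial_x\Hp)].
\]
It therefore suffices to show that, as modelled distributions,
\[
\mathbf{1}_+\,\partial_x\mathcal{W}\,(\partial_x\Hb+\partial_x\Hp)
\]
equals $\mathbf{1}_+$ times the bracket in the equation for $\bar{\mathcal W}$. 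Throughout, products are understood with the same truncation (projection onto degrees below the relevant threshold) as in the original fixed-point problem.

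The key step is to compute $\mathbf{1}_+\partial_x \mcP_{\tilde\gamma}\mathbf{1}_+\<0t>$.
\begin{itemize}
\item Both $K$ and $R$ are non-anticipative, and the argument of $\mcP_{\tilde\gamma}$ vanishes for negative times. Hence $\mcP_{\tilde\gamma}\mathbf{1}_+\<0t>$ vanishes there, and in particular $\mathbf{1}_+\mcP_{\tilde\gamma}\mathbf{1}_+\<0t> = \mcP_{\tilde\gamma}\mathbf{1}_+\<0t>$.
\item The symbol $\<0t>$ is a noise, so $\Gamma\<0t>=\<0t>$ for every $\Gamma \in \mcG$. We adopt the convention $\mcR\mathbf{1}_\pm\<0t> = \mathbf{1}_\pm \Pi_z\<0t>$, which is controlled by Lemma~\ref{lem:data_control}.
\item With these two facts, Lemma~\ref{lem: DPD_rewrite} (applied with $\gamma$ replaced by $\tilde\gamma$) gives
\[
\mcP_{\tilde\gamma}\mathbf{1}_+\<0t> = \mathbf{1}_+\mcI\<0t> + \mcV_{\<0t>}.
\]
\end{itemize}
Differentiating, using that the abstract derivative commutes with multiplication by $\mathbf{1}_+$ away from $t=0$ and that $\partial_x\mcI = \mcI'$, we get
\[
\partial_x\mathcal{W} = \partial_x\bar{\mathcal{W}} + \partial_x\mcV_{\<0t>} + \mathbf{1}_+\<1t>
\]
on positive times.

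Multiplying by $\partial_x\Hb+\partial_x\Hp$ then gives
\[
(\partial_x\bar{\mathcal W} + \partial_x\mcV_{\<0t>})(\partial_x\Hb+\partial_x\Hp) + \<1t>(\partial_x\Hb+\partial_x\Hp).
\]
In the last term we add and subtract the abstract trees $\<b1>$ and $\<p1>$:
\[
\<1t>\,\partial_x\Hb = \<1t>(\partial_x\Hb - \<b1>) + \<b2t>,
\]
and similarly for $\Hp$, where $\<b2t> = \<1t>\<b1>$ and $\<p2t> = \<1t>\<p1>$ are products of trees in the regularity structure. Collecting terms reproduces exactly the integrand of the $\bar{\mathcal W}$ equation, which proves the claim.

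The expected obstacle is bookkeeping rather than anything conceptual. First, one must check that $\partial_x\mcV_{\<0t>}$, with singular weight $|\<0t>|_\fs+1 = -1-\kappa$ at $t=0$, together with the subtracted differences $\partial_x\Hb - \<b1>$, lands in spaces where the products and truncations match those used in Lemma~\ref{lem:recon_iden}. Second, the terms $\<1t>\<b1>$ must survive the projection unchanged; this holds because their degree, $-1-2\kappa$, lies below the truncation level. Both points are checks of exponents (via \cite[Prop.~6.12]{H0}) rather than genuinely new estimates. The identity itself is purely algebraic once Lemma~\ref{lem: DPD_rewrite} is in place.
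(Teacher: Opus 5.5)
Your proposal is correct and matches the argument the paper intends; the paper gives no separate proof and treats the lemma as immediate from Lemma~\ref{lem: DPD_rewrite}, via the same steps: substitute $\mathcal{W}=\bar{\mathcal{W}}+\mathcal{P}_{\tilde\gamma}\mathbf{1}_+\<0t>$, use $\mathcal{P}_{\tilde\gamma}\mathbf{1}_+\<0t>=\mathbf{1}_+\mathcal{I}\<0t>+\mathcal{V}_{\<0t>}$, differentiate, and add and subtract $\<b1>$ and $\<p1>$. One harmless slip: the products $\<1t>\<b1>$ and $\<1t>\<p1>$ have degree $(-1-\kappa)+(-1/2-\kappa)=-3/2-2\kappa$ (the degree later given to the corresponding noise symbols), not $-1-2\kappa$, and this is still below the truncation threshold, so your conclusion stands.
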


Whilst we could solve the equations for $(\Hb, \Hp, \bar{\mathcal{W}})$ directly, it will be convenient from the perspective of identifying a limiting model to formalise the Da Prato--Debussche trick at the level of the regularity structure by treating $\<1t>, \<b2t>, \<p2t>$ as noise symbols. This is because (for appropriate choices of $\delta = \delta(\eps)$) they will converge to $0, \tilde{\xi}_{\<bsq>}$ and $\tilde{\xi}_{\<psq>}$ respectively as $\eps \to 0$ where $\tilde{\xi}_{\<bsq>}, \tilde{\xi}_{\<psq>}$ are correlated space-time white noises that are independent of $\xi$.

\section{Reformulating the Equation via a Da Prato--Debussche Ansatz}\label{sec:DPD_Struc}
We now introduce the regularity structure on which we will construct the BPHZ model. The motivation for the slightly convoluted definition below appears in the remark that follows immediately after.
\begin{definition}\label{def:tct}
	We introduce the set of noise types $\tilde{\mathfrak{L}}_- = \{ \<b0>, \<p0>, \<s(1)0>, \<bs(2)0>, \<ps(2)0> \}$ (with homogeneities $-3/2 - \kappa, -3/2 - \kappa, -1 - \kappa, -3/2 - 2\kappa, -3/2 - 2\kappa$ respectively) and kernel types $\tilde{\mathfrak{L}}_+ = \{ \<bk>, \<pk>, \<k> \}$. We let $\hat{\mfT}$ be the (reduced) regularity structure associated to the (completion of) the subcritical rule
	\begin{align*}
		R( \<bk> ) &= \{ (\<dbk>, \<dbk>), (\<dbk>), (\<b0>), () \},
		\\
		R( \<pk> ) &= \{ (\<dpk>, \<dpk>), (\<dpk>), (\<p0>), () \},
		\\
		R( \<k> ) &= \{ (\<dk>, \<dbk>), (\<dk>), (\<dbk>), (\<dk>, \<dpk>), (\<dpk>), (\<s(1)0>, \<dbk>), (\<s(1)0>), (\<s(1)0>, \<dpk>), (\<bs(2)0>), (\<ps(2)0>), ()\}
	\end{align*}
	where we have used the convention that thick lines such as $\<k>$ represent kernels carrying a vanishing derivative decoration whilst thin lines such as $\<dk>$ represent edges carrying a spatial derivative.

	We let $\tilde{\mfT}$ be the substructure generated by those trees in the natural basis for $\hat{\mfT}$ which do not contain either of 
	\begin{align*}
		\<s(1)b1>, \qquad \<s(1)p1>
	\end{align*}
	as subtrees. We equip this with the structure group described in Appendix~\ref{app:degree_shift} which can be identified with the elements of the character group of $\hat{\mcT}_+$ which vanish on trees containing one of the two trees excluded as subtrees above. 
\end{definition}
\begin{remark}
	The motivation for the definition above is that we wish to solve the fixed-point problem 
	\begin{align}\label{eq: tct_sys}
	\Hb =& \mcP_\gamma \mathbf{1}_+ [ (\partial_x \Hb)^2 + \<b0> ] + \mathcal{G}\psi_{\<bsq>},
	\\ \nonumber
	\Hp =& \mcP_\gamma \mathbf{1}_+ [ (\partial_x \Hp)^2 + \<p0> ] + \mathcal{G}\psi_{\<psq>},
	\\ \nonumber
	\tilde{\mcW} =& \mcP_{\tilde{\gamma}} \mathbf{1}_+ [ (\partial_x \tilde{\mcW} + \partial_x \mathcal{V}_{\<0t>}) (\partial_x \Hb + \partial_x \Hp) 
	\\& \qquad \qquad +  \<s(1)0> (\partial_x \Hb - \<b1> + \partial_x \Hp - \<p1>) + \<bs(2)0> + \<ps(2)0> ] + \mathcal{G}w_0
\end{align}
	which motivates the rule $R$. However any choice of rule that does not involve introducing extra kernel types (and thus rewriting the fixed-point to contain extra components) is unable to see the cancellation between $\partial_x \mcH_{\diamond}$ and $\<1d>$ and therefore has the unfortunate property that the trees that we exclude above appear alongside $\<bs(2)0>, \<ps(2)0>$. Since both represent the same stochastic object and we wish to promote these objects (which sit on the threshold for variance blow-up) to the role of noises, we pass to a substructure which does not itself correspond to a rule. This causes no problem since in the language of \cite[Definition 5.4]{BSS25}, the sector we work on is historic, which roughly speaking means that the recursive construction of the BPHZ model on $\tilde{\mfT}$ via preparation maps can be performed entirely within this sector.

	In practice, we will be interested in a setting for the fixed-point problem in which $\psi_{\<bsq>} = \psi_{\<psq>}$ and where $w_0 = 0$. However, in order to pass beyond short-time well-posedness, it is necessary to restart the system of equations which will necessitate the use of initial data that does not satisfy these restrictions. Therefore, for notational convenience, we include more general initial data here. Note that since the lift of this initial data takes values in the polynomial regularity structure, this does not change the definition of $\tilde{\mfT}$.
\end{remark}
\begin{definition}\label{def:tcT_noise_ass}
	Given regularisation scales $0 < \delta < \eps$, throughout this section, we define the smooth and stationary random noise assignment $\tilde{\pi}^{\eps, \delta}: \tilde{\mfL}_- \to C^\infty(\mbR^2)$
	\begin{align*}
		\tilde{\pi}^{\eps, \delta} \<b0> &= \xi_\eps, \quad \tilde{\pi}^{\eps, \delta} \<p0> = \xi_{\delta}, \quad \tilde{\pi}^{\eps, \delta} \<s(1)0> = \eps^{-1/2} \partial_x K \ast \xi_{\eps, \delta}
		\\
		\tilde{\pi}^{\eps, \delta} \<bs(2)0> &= \eps^{-1/2} \partial_x K \ast \xi_{\eps, \delta} \partial_x K \ast \xi_{\eps} - \ell(\<bs(2)0>), \\
		\tilde{\pi}^{\eps, \delta} \<ps(2)0> &= \eps^{-1/2} \partial_x K \ast \xi_{\eps, \delta} \partial_x K \ast \xi_{\delta} - \ell(\<ps(2)0>),
	\end{align*}
	where $\ell(\<bs(2)0>)$ and $\ell(\<ps(2)0>)$ are such that $\mathbb{E}[\tilde{\pi}^{\eps, \delta} \tau(0)] = 0$ for $\tau \in \{\<bs(2)0>, \<ps(2)0>\}$.

	We write $\tilde{Z}^{\eps, \delta} = (\tilde{\Pi}^{\eps, \delta}, \tilde{\Gamma}^{\eps, \delta})$ for the corresponding BPHZ model.
\end{definition}

In this section only, we will view $\eps, \delta$ as being fixed and thus will temporarily suppress them to lighten the density of notation.

We now aim to show that the system of equations \eqref{eq: tct_sys} is equivalent to the system of equations in Lemma~\ref{lem: DPD_1} in a suitable sense. To do this, we first define a map $\Phi$ that maps trees in $\tcT$ to trees in $\cT$ by `removing their boxes'. 
\begin{definition}\label{def:Phi}
	We define the linear map $\Phi: \scal{\tcT} \to \scal{\cT}$ through its action on basis elements given recursively via
	\begin{align*}
		\Phi(\<b0>) =& \, \<b0>, \qquad  \Phi(\<p0>) = \<p0>,
		\\
		\Phi(\<bs(2)0>) =& \, \<b2t>, \qquad \Phi(\<ps(2)0>) = \<p2t>, \qquad \qquad \Phi(\<s(1)0>) = \<1t>,
		\\
		\Phi(X^k) =& \, X^k, \qquad \Phi(\tau \sigma) = \Phi(\tau) \Phi(\sigma), \qquad \Phi(\mathcal{I}_\mathfrak{l}^k \tau)=  \mathcal{I}_\mathfrak{l}^k \Phi(\tau).
	\end{align*}
\end{definition}

The main result of this section is the following lemma which shows that $\Phi$ allows us to transfer the BPHZ solution of \eqref{eq: tct_sys} on $\tcT$ to the BPHZ solution of the system of Lemma~\ref{lem: DPD_1} on $\cT$. 

\begin{lemma}\label{lem:transfer_1}
	We have that
	\begin{align*}
		\Pi \circ \Phi = \tilde{\Pi}, \qquad \Gamma \circ \Phi = \Phi \circ \tilde{\Gamma}.
	\end{align*}
	  
	  Furthermore, if $(\Hb, \Hp, \tilde{\mcW})$ is a solution of the system \eqref{eq: tct_sys} on $\tilde{\mfT}$ with respect to the model $(\tilde{\Pi}, \tilde{\Gamma})$ a modelled distribution $\mcV_{\<0t>}$, and initial data $\psi_{\<bsq>} = \psi = \psi_{\<psq>}$ and $w_0 = 0$ then $(\Phi(\Hb), \Phi(\Hp), \Phi(\tilde{\mcW}))$ is a solution of the system of Lemma~\ref{lem: DPD_1} on $\mfT$ with respect to the model $(\Pi, \Gamma)$ and modelled distribution $\Phi(\mcV_{\<0t>})$.
\end{lemma}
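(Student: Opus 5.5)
The proof will be by induction on the size of trees, comparing the recursive (preparation-map) construction of the two BPHZ models. The starting point is the canonical lifts. Let $\boldsymbol{\Pi}$ and $\tilde{\boldsymbol{\Pi}}$ be the canonical (unrenormalised) models built from $\pi$ and $\tilde{\pi}$. On the noise symbols, $\Phi$ is compatible with these lifts up to constants:
\begin{equ}
\boldsymbol{\Pi}\,\Phi(\<s(1)0>) = \boldsymbol{\Pi}\,\<1t> = \eps^{-1/2}\partial_x K\ast\xi_{\eps,\delta} = \tilde{\boldsymbol{\Pi}}\,\<s(1)0>,
\end{equ}
while $\boldsymbol{\Pi}\,\<b2t>$ and $\tilde{\boldsymbol{\Pi}}\,\<bs(2)0>$ differ exactly by the constant $\ell(\<bs(2)0>)$, and similarly for $\<p2t>$. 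Since $\Phi$ is a morphism for products and integrations, we get $\boldsymbol{\Pi}\circ\Phi=\tilde{\boldsymbol{\Pi}}$ modulo these constant shifts.

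Next I would show that the BPHZ renormalisation of $\cT$, restricted to the image $\Phi(\tcT)$, absorbs precisely these shifts and nothing else. The plan is to express both BPHZ models through the negative twisted antipode, or equivalently through the recursive definition $\Pi_z\tau = (\Pi_z \otimes \ell)\Delta^-\tau$ with $\ell$ fixed by $\mathbb{E}\,\Pi\tau(0)=0$ for $|\tau|_\fs<0$. Then I would check that $\Phi$ intertwines the coproducts: $(\Phi\otimes\Phi_-)\tilde\Delta^{\pm} = \Delta^{\pm}\Phi$, where $\Phi_-$ extracts on forests. This is the algebraic heart. Every negative subtree of $\Phi(\tilde\tau)$ is either the image of a negative subtree of $\tilde\tau$, or it contains one of the unboxed trees $\<1t>$, $\<b2t>$, $\<p2t>$ only partially.

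The second case is where the exclusions in Definition~\ref{def:tct} and symmetry must enter, and I expect it to be the main obstacle. The subtrees $\<1t>\,\<b1>$ and $\<1t>\,\<p1>$ are excluded from $\tcT$, so the only extra negative subtrees in $\Phi(\tilde\tau)$ are of the form "box-crossing" trees: subtrees containing $\<0t>$ together with part of a box. For these I would argue that the BPHZ character $\ell$ vanishes by the same type of argument as in Lemma~\ref{lem:recon_iden}. Such trees either contain an odd number of noises, so Gaussianity kills them, or carry an odd number of spatial derivatives, so the spatial reflection symmetry $x\mapsto -x$ of $\rho$ and of $K$ kills them. This must be checked on the finite list of negative trees of $\cT$ with at most, say, four noises, which is where the paper's remark that the counterterms "vanish by symmetries" is used. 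Once $\ell\circ\Phi$ equals $\tilde\ell$, with the box trees assigned exactly the centring constants $\ell(\<bs(2)0>)$ and $\ell(\<ps(2)0>)$, the recursion gives $\Pi\circ\Phi=\tilde\Pi$. The positive coproduct intertwining then gives $\Gamma\circ\Phi=\Phi\circ\tilde\Gamma$; here the structure group of Appendix~\ref{app:degree_shift} is designed so that characters vanish on the excluded trees.

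Finally, for the fixed-point statement I would note that $\Phi$ commutes with $\mathcal{I}$, products, $\partial_x$ and multiplication by $\mathbf{1}_+$. Combined with the model identity, this means $\Phi$ commutes with the abstract integration map $\mcP_\gamma$, since the polynomial parts are given by reconstructions, which agree, and with reconstruction itself. It also maps $\mcD^{\gamma,\eta}$ (for $\tilde Z$) into $\mcD^{\gamma,\eta}$ (for $Z$), because $\Phi$ preserves degrees and intertwines $\Gamma$. Applying $\Phi$ to \eqref{eq: tct_sys} with $\psi_{\<bsq>}=\psi_{\<psq>}=\psi$ and $w_0=0$, and using $\Phi(\<s(1)0>)=\<1t>$ together with $\Phi(\<bs(2)0>)=\<b2t>$ and $\Phi(\<ps(2)0>)=\<p2t>$, yields exactly the system of Lemma~\ref{lem: DPD_1}.
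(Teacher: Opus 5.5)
Your proposal is correct and follows essentially the same route as the paper. The paper also runs an induction through the preparation-map recursion, in which the only discrepancy between $\Delta_r^-\Phi(\tau)$ and $(\Phi\otimes\Phi)\tilde\Delta_r^-\tau$ comes from cuts through edges inside the images of the boxed noises, and the extracted trees from those cuts have vanishing BPHZ character by spatial reflection symmetry; it then uses the exact intertwining of the positive coaction for $\Gamma$, and the commutation of $\Phi$ with $\mathcal{P}_\gamma$, products and $\partial_x$ for the fixed-point statement. Where you propose a finite case check, the paper uses a short counting argument: such an extracted tree has a vertex with exactly one outgoing kernel edge, which forces either positive degree or an odd number of derivative edges with vanishing node decoration, so only reflection symmetry, not Gaussianity, is needed.
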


The main subtlety in Lemma~\ref{lem:transfer_1} is that on $\tilde{\mfT}$, since noises such as $\<bs(2)0>$ and $\<ps(2)0>$ are treated as noise symbols, the renormalisation procedure does not allow for contractions that contract a strict subtree of the instance of $\<b2t>$ or $\<p2t>$ contained in the corresponding boxes. However, on $\mfT$ such a contraction is allowed. Therefore, we must show that contractions of this type correspond to a vanishing counterterm. This relies on the combinatorial fact established in Lemma~\ref{lem:combinatorial_identity} below. To formulate this combinatorial statement clearly, we first need to introduce some notation.
\begin{definition}
	A rooted tree is a finite connected acyclic graph $\tau$ with a distinguished root vertex, where each edge is typed with an element of a set $\tilde{\mathfrak{L}} = \tilde{\mathfrak{L}}_+ \sqcup \tilde{\mathfrak{L}}_-$ of edge types, and equipped with node decorations $\mathfrak{n} : N(\tau) \to \mathbb{N}^d$ and edge decorations $\mathfrak{e} : E(\tau) \to \mathbb{N}^d$. We write $E(\tau)$ for the set of edges of $\tau$ and $N(\tau)$ for the set of nodes of $\tau$. We write $L(\tau)$ for the set of leaves of $\tau$ and $E_\pm(\tau)$ for the set of edges of $\tau$ with kernel and noise types respectively. In particular, we emphasise that noises are represented by edges of noise type that are incident to a leaf. 
\end{definition}
\begin{definition}\label{def: cut}
	Given a rooted tree $\tau$, we say that $C \subset E_+(\tau)$ is a cut of $\tau$ if any path in $E(\tau)$ with the root of $\tau$ as an endpoint contains at most one edge in $C$. We denote by $\mathbb{C}[\tau]$ the set of all cuts of $\tau$.
	We recall that for a rooted tree $\tau$, there is a natural partial ordering on $E(\tau)$ in which $e \ge e^\prime$ iff $e^\prime$ lies on the unique path from $e$ to the root.
	
	 We write $\tau_{\ng C}$ for the subgraph consisting of those edges $e \in E(\tau)$ such that there does not exist $\tilde{e} \in C$ with $e \ge \tilde{e}$. 
\end{definition}

We now briefly recall the construction of the BPHZ model via the approach of preparation maps as introduced in \cite{Bru18}. Writing $\tau = (\tau, \mathfrak{n}, \mathfrak{e})$ for a rooted tree $\tau$ with node and edge decorations $\mathfrak{n}, \mathfrak{e}$ respectively, we introduce the map $\tilde{\Delta}_r^-:\scal{\tilde{\cT}}\to\scal{\tilde{\cT}_-}\otimes \scal{\tilde{\cT}}$  through its action on basis elements given by
	\begin{align*}
		\tilde{\Delta}_r^- \tau = \sum_{C \in \mathbb{C}[\tau]} \sum_{n_{\not \ge C}, \eps_C} \frac{1}{\eps_C!} {\mathfrak{n} \choose n_{\not \ge C}} \tilde{\mathfrak{p}}_- (\tau_{\ng C}, & n_{\ng C} + \pi \eps_C, \mathfrak{e}) 
		\\
		& \otimes (\tau / \tau_{\ng C}, [ \mathfrak{n} - n_{\ng C}]_C, \mathfrak{e} + \eps_C)
	\end{align*}
	where the sum over $n_{\ng C}$ is over node decorations $n_{\ng C}: N(\tau_{\ng C}) \to \mathbb{N}^d$, the sum over $\varepsilon_C$ is a sum over edge decorations $\varepsilon_C : C \to \mathbb{N}^d$, $\pi \varepsilon_C : N(\tau_{\ng C}) \to \mathbb{N}^d$ is the node decoration obtained via
	\begin{align*}
		\pi \varepsilon_C(v) = \sum_{e \in C: v \in e} \varepsilon_C(v),
	\end{align*}
	$\tilde{\mathfrak{p}}_-$ denotes the usual projection onto $\tcT_-$, $\tau/ \tau_{\ng C}$ denotes the quotient graph of $\tau$ by $\tau_{\ng C}$ and $[\mfn-n_{\ng C}]_C$ is the node decoration on this quotient graph defined via
	\begin{align*}
		[\mfn-n_{\ng C}]_C(v) = \sum_{w \in N(\tau): v = [w]} (\mfn-n_{\ng C})(w)
	\end{align*}
	where $[w]$ denotes the equivalence class of $w$ viewed as a vertex in the quotient graph. We define analogously the map $\Delta_r^- : \scal{\cT} \to \scal{\cT_-} \otimes \scal{\cT}$. Given $\tilde{\ell} : \tilde{\mcT}_{-} \to \mathbb{R}$, we define $\tilde{P} = (\tilde{\ell} \otimes \Id) \tilde{\Delta}_r^-$. Given $\tilde{P}$, we in turn define an associated model $(\tilde{\Pi}, \tilde{\Gamma})$ as the first and last components of the unique triple $(\tilde{\Pi}, \tilde{\Pi}^{\times}, \tilde{\Gamma})$ satisfying
	\begin{enumerate}
		\item $\tilde{\Pi}_z, \tilde{\Pi}_z^{\times} : \langle \tcT \rangle \to C^\infty(\mathbb{R}^d)$ and $\tilde{\Gamma}_{zz'} : \langle \tcT \rangle \to \langle \tcT \rangle$,
		\item $\tilde{\Pi}_z^{\times} \Xi_\mfl = \tilde{\Pi}_z \Xi_\mfl$ is given by the noise assignment of Definition~\ref{def:tcT_noise_ass} for each $\Xi_\mfl \in \{ \<b0>, \<p0>, \<s(1)0>, \<bs(2)0>, \<ps(2)0> \}$,
		\item $\tilde{\Pi}_z^{\times} X^k = (\cdot - z)^k$ for each $k$.
		\item $\tilde{\Pi}_z^{\times} \tau \sigma = \tilde{\Pi}_z^{\times} \tau \cdot \tilde{\Pi}_z^{\times} \sigma$,
		\item For each planted tree, $\mcI_\mfl^k \tau$,
		\begin{align}\label{eq:planted}
				\tilde{\Pi}_z^{\times} \mcI_\mfl^k \tau = D^k K \ast \tilde{\Pi}_z\tau - \sum_{|j|_{\mfs} < |\mcI_\mfl^k \tau|_{\mfs}} \frac{(\cdot - z)^j}{j!} D^{k+j} K \ast \tilde{\Pi}_z \tau(z),
		\end{align}
		\item $\tilde{\Pi}_z = \tilde{\Pi}_z^{\times} \tilde{P}$,
		\item $\tilde{\Gamma}_{zz'} = (\id \otimes (\tilde{g}_z)^{-1} (\tilde{g}_{z'})) \tilde{\Delta}$ where each $\tilde{g}_z \in \tilde{\mcG}$ satisfies $\tilde{g}_z(X_i) = -z_i$, and 
		\begin{align*}
			\tilde{g}_z(\mcI_\mfl^k \tau) = - \sum_{| j |_{\mfs} < |\mcI_\mfl^{k} \tau|_\mfs} \frac{(-z)^j}{j!} D^{j+k} K \ast \tilde{\Pi}_z \tau (z) .
		\end{align*}
		Here $\tilde{\mcG}$ denotes the structure group associated to $\tcT$ and $\tilde{\Delta}$ is the corresponding coaction as introduced in \cite[Section 8.1]{H0}.
		\end{enumerate}
		The BPHZ model is defined by taking the unique choice of $\tilde{\ell}$ such that if $\boldsymbol{\tilde{\Pi}}$ is defined in a similar manner to $\tilde{\Pi}$ as above but without the inclusion of the sum in the right-hand side of \eqref{eq:planted}, then $\mathbb{E}[\boldsymbol{\tilde{\Pi}}_0 \tau(0)] = 0$ for all $\tau \in \tcT_-$. The BPHZ model $(\Pi, \Gamma)$ on $\cT$ is defined analogously.

		The key combinatorial identity that allows us to identify the BPHZ model on $\tilde{\mfT}$ with the BPHZ model on $\mfT$ via $\Phi$ is the following fact about the commutator between maps of the form $\Delta_r^-$ and $\Phi$.
		
\begin{lemma}\label{lem:combinatorial_identity}
	For any $\tau \in \tilde{\cT}$, we have that
	\begin{align*}
		\Delta_r^- \Phi(\tau) - (\Phi \otimes \Phi) \tilde{\Delta}_r^- \tau \in \operatorname{span} \{ \varrho \otimes \sigma: K_{\varrho} \text{ is odd and } \mathfrak{n}_\varrho \equiv 0 \}
	\end{align*}
	where $K_\varrho$ denotes the number of kernel edges appearing in the tree $\varrho$ and $\mathfrak{n}_\varrho$ denotes the node decoration of $\varrho$. 
\end{lemma}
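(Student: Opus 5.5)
The plan is to compare the two coproducts cut by cut. Cuts of $\Phi(\tau)$ fall into two classes: those using only edges that are images under $\Phi$ of kernel edges of $\tau$, and those using at least one kernel edge lying inside one of the inserted ``boxes'' $\Phi(\<s(1)0>)=\<1t>$, $\Phi(\<bs(2)0>)=\<b2t>$, $\Phi(\<ps(2)0>)=\<p2t>$.

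\emph{First class.} Since noise edges are never cut, $C\mapsto\Phi(C)$ is a bijection from $\mathbb{C}[\tau]$ onto the cuts of $\Phi(\tau)$ avoiding box-internal edges. Under this bijection I will check the following.
\begin{enumerate}
\item Each box lies entirely in $\tau_{\ng C}$ or entirely in the quotient. Hence $\Phi(\tau_{\ng C})=\Phi(\tau)_{\ng \Phi(C)}$ and $\Phi(\tau/\tau_{\ng C})=\Phi(\tau)/\Phi(\tau)_{\ng\Phi(C)}$.
\item The node and edge decoration sums, together with their binomial and factorial coefficients, match. This is because $\Phi$ leaves all decorations outside the boxes untouched and the boxes carry no decorations.
\item $\Phi$ preserves degree. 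Indeed $|\<b2t>|_\fs=(-2-\kappa+1)+(-3/2-\kappa+1)=-3/2-2\kappa=|\<bs(2)0>|_\fs$, and similarly for $\<p2t>$ and $\<1t>$. Hence $\Phi$ commutes with the projections, $\mathfrak{p}_-\circ\Phi=\Phi\circ\tilde{\mathfrak{p}}_-$.
\end{enumerate}
So the first class reproduces exactly $(\Phi\otimes\Phi)\tilde\Delta^-_r\tau$, and the difference in the lemma is the sum over cuts $C$ of $\Phi(\tau)$ containing at least one box-internal edge.

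\emph{Second class.} Let $C$ contain a box-internal edge $e$. Such an edge is a derivative kernel edge $\mathcal{I}'$ whose upper endpoint carries either the noise $\<0t>$ or a noise $\<b0>$/$\<p0>$. In $\varrho:=\Phi(\tau)_{\ng C}$ that leg is therefore removed, and the box's root node survives with the extra decoration $\pi\eps_C$.

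I will classify the resulting $\varrho$ with $\mathfrak{p}_-\varrho\neq0$ by power counting, using the degrees from \eqref{eq:homogeneity} together with the rule $R$ (only $\partial_x\Hb,\partial_x\Hp,\partial_x\mathcal{W}$ and the noises appear as children). Cutting a box leg removes a subtree of degree $\le -1/2-\kappa$. Replacing it by a polynomial decoration raises the degree by at least $1/2+\kappa$ plus the decoration's degree. This should force two things: $\pi\eps_C$ and any $n_{\ng C}$ vanish whenever $\varrho$ has negative degree, and the number of remaining noises in $\varrho$ is odd.

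\emph{From odd noise count to odd $K_\varrho$.} In these trees every inner node has either zero or two children through kernel edges, except nodes of the type $\<1t>\cdot\partial_x\mathcal{H}$, which I will handle in the same count. So the number of kernel edges is tied to the number of noise leaves. A direct count then gives that an odd number of noises corresponds to an odd $K_\varrho$, which is the form needed later, where Gaussianity together with spatial reflection symmetry kills $\tilde\ell$- and $\ell$-values on such $\varrho$. The restriction to $\tilde{\mfT}$, that is, excluding $\<s(1)b1>$ and $\<s(1)p1>$, is used here: without it a cut inside $\<1t>$ adjacent to $\<b1>$ could produce an even, divergent $\varrho$ with non-vanishing renormalisation constant.

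\emph{Main obstacle.} The hard step is making the power-counting classification in the second class exhaustive, especially when several boxes are cut simultaneously or a cut box lies deep inside $\tau$. My plan is to first reduce to a single cut box by multiplicativity of $\Delta^-_r$ over the product structure of trees. I would then enumerate the finitely many negative-degree trees of $\mathcal{T}_-$, which has a short list since the rule is subcritical, and check the parity claim case by case.
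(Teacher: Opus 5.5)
Your first class of cuts is handled exactly as in the paper, where it is the term $A$: cuts of $\Phi(\tau)$ that avoid box-internal edges are in bijection with $\mathbb{C}[\tau]$, and the decorations, quotients and degrees all match. That part is fine.

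\textbf{The gap.} It lies in the second class. The parity mechanism you propose is wrong at both of its steps.
\begin{enumerate}
\item Power counting does \emph{not} force $\varrho=\Phi(\tau)_{\ng C}$ to contain an odd number of noises. Take $\tau=\mathcal{I}'(\<bs(2)0>)\,\mathcal{I}'(\<b0>)\in\tilde{\mathcal{T}}$, with root children $(\<dk>,\<dbk>)$ and degree $-1-3\kappa$. Then $\Phi(\tau)=\mathcal{I}'\bigl(\mathcal{I}'(\<0t>)\,\mathcal{I}'(\<b0>)\bigr)\,\mathcal{I}'(\<b0>)$. Cut only the leg into $\<0t>$, with $\eps_C=0$. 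This gives $\varrho=\mathcal{I}'\bigl(\mathcal{I}'(\<b0>)\bigr)\,\mathcal{I}'(\<b0>)$, which is non-planted, has degree $-2\kappa<0$, contains $N=2$ noises and has $K_\varrho=3$. Its BPHZ value is not killed by Gaussianity at all, only by spatial reflection.
\item Your ``direct count'' linking noise parity to $K_\varrho$ cannot work either. Write $\ell_K$ and $\ell_{KK}$ for the numbers of vertices with exactly one, respectively two, outgoing kernel edges. Then $K=\ell_K+2\ell_{KK}$ while $N=\ell_{KK}+1$, so $K\equiv\ell_K \pmod 2$ regardless of the parity of $N$. For instance, any tree whose inner vertices all have two kernel children has $K=2N-2$, which is even for every $N$.
\end{enumerate}

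\textbf{The missing idea.} It is exactly this count, used as in the paper. Cutting a box leg leaves the box root with a single outgoing kernel edge, so $\ell_K\ge 1$ in $\varrho$. Now use the identity $|\varrho|_\fs=\ell_K+(\tfrac12-\kappa)N-2-\tfrac12\mathbf{1}_{\<0t>\in\varrho}+|\mathfrak{n}_\varrho|_\fs$, together with $N\ge2$ for non-planted $\varrho$. If $K_\varrho$ is even, then $\ell_K\ge2$ and $|\varrho|_\fs\ge\tfrac12-2\kappa>0$. If $\mathfrak{n}_\varrho\not\equiv0$, then $|\varrho|_\fs>0$ as well. Hence every surviving term in $B$ has odd $K_\varrho$ and $\mathfrak{n}_\varrho\equiv0$. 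Your claim that decorations must vanish on negative-degree $\varrho$ is correct and is a consequence of this identity. No case-by-case enumeration is needed.

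\textbf{Two smaller points.}
\begin{enumerate}
\item Every tree of $\tilde{\mathcal{T}}$ contains at most one of $\<s(1)0>,\<bs(2)0>,\<ps(2)0>$, because the $\tilde{\mathcal{W}}$-equation is affine in $\tilde{\mathcal{W}}$. So the ``several boxes'' scenario never arises. The reduction you suggest via multiplicativity of $\Delta_r^-$ would not be available anyway, since $\mathfrak{p}_-$ does not factor over tree products.
\item The exclusion of $\<s(1)b1>$ and $\<s(1)p1>$ plays no role in the parity count: a cut inside $\<1t>$ next to $\<b1>$ also creates a unary kernel vertex. The exclusion is needed for injectivity of $\Phi$, since $\Phi(\<s(1)b1>)=\<b2t>=\Phi(\<bs(2)0>)$, and that injectivity is used later, for instance in the proof of Lemma~\ref{lem:Malliavin_ident}.
\end{enumerate}
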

\begin{proof}
	We note that $\Phi$ naturally induces injective maps $\Phi_\tau^E : E_+(\tau) \to E_+(\Phi(\tau))$ and $\Phi_\tau^N: N(\tau) \setminus L(\tau) \to N(\Phi(\tau))$. We note that since $\mathfrak{e}$ vanishes on edges of noise type and that $\mathcal{I}(X^k) \equiv 0$, elements of $L(\tau)$ are always incident to an edge of noise type. 
	 In particular, $\Phi(\tau)$ has edge and node decoration respectively given by the extensions of $\mathfrak{e} \circ (\Phi_\tau^E)^{-1}$ and $\mathfrak{n} \circ (\Phi_\tau^N)^{-1}$ by $0$. We also note that the induced operation on edges induces an inclusion of cuts 
	\begin{align*}
		\Phi_\tau^E(\mathbb{C}[\tau]) \subset \mathbb{C}[\Phi(\tau)].
	\end{align*}
	This inclusion is such that $C \in \mathbb{C}[\Phi(\tau)] \setminus \Phi_\tau^E(\mathbb{C}(\tau))$ if and only if $C$ intersects an edge in the image of an instance of an element of $\mathfrak{L}_-$ under $\Phi$. We can therefore write
	\begin{align*}
		\Delta_r^- \Phi(\tau) = A + B
	\end{align*}
	where 
	\begin{align*}
		A =& \sum_{\substack{C \in \Phi_\tau^E(\mathbb{C}[\tau]) \\ n_{\not \ge C}, \eps_C}} \frac{1}{\eps_C!} {\mathfrak{n} \circ (\Phi_\tau^N)^{-1} \choose n_{\not \ge C}} {\mathfrak{p}}_- (\Phi(\tau)_{\ng C}, n_{\ng C} + \pi \eps_C, \mathfrak{e} \circ (\Phi_\tau^E)^{-1}) \\
		& \quad \otimes (\Phi(\tau) / \Phi(\tau)_{\ng C}, [\mathfrak{n} \circ (\Phi_\tau^N)^{-1} - n_{\ng C}]_C, \mathfrak{e} \circ (\Phi_\tau^E)^{-1} + \eps_C),
		\\
		B =& \sum_{\substack{C \in \Phi(\mathbb{C}(\tau)) \setminus \Phi_\tau^E(\mathbb{C}(\tau)) \\ n_{\not \ge C}, \eps_C}} \frac{1}{\eps_C!} {\mathfrak{n} \circ (\Phi_\tau^N)^{-1} \choose n_{\not \ge C}} {\mathfrak{p}}_- (\Phi(\tau)_{\ng C}, n_{\ng C} + \pi \eps_C, \mathfrak{e} \circ (\Phi_\tau^E)^{-1}) \\
		& \quad \otimes (\Phi(\tau) / \Phi(\tau)_{\ng C}, [\mathfrak{n} \circ (\Phi_\tau^N)^{-1} - n_{\ng C}]_C, \mathfrak{e} \circ (\Phi_\tau^E)^{-1} + \eps_C).
	\end{align*}
	We claim that $A = (\Phi \otimes \Phi) \tilde{\Delta}_r^- \tau$ and that $B$ lies in the span required by the statement of the lemma. We first show that $A = (\Phi \otimes \Phi) \tilde{\Delta}_r^- \tau$. We fix a cut $C = \Phi_\tau^E(\tilde{C})$ and note that
	\begin{align*}
		\{ \eps_C : C \to \mathbb{N}^d \} =& \{ \eps_{\tilde{C}} \circ (\Phi_\tau^E)^{-1} : C \to \mathbb{N}^d \},
		\\
		\{ n_{\not \ge C} : {\mathfrak{n} \circ (\Phi_\tau^N)^{-1} \choose n_{\not \ge C}} \neq 0 \} =& \{ n_{\not \ge \tilde{C}} \circ (\Phi_\tau^N)^{-1} : {\mathfrak{n} \choose n_{\not \ge \tilde{C}}} \neq 0 \}.
	\end{align*}
	Furthermore, under this correspondence,
	\begin{align*}
		\pi(\eps_C) =& \pi \left ( \eps_{\tilde{C}} \circ (\Phi_\tau^E)^{-1} \right ) 
		\\
		=& \pi(\eps_{\tilde{C}}) \circ (\Phi_\tau^N)^{-1}.
	\end{align*}
	Finally, we note that by the definitions we have that $\Phi(\tau) / \Phi(\tau)_{\ng C} = \Phi(\tau / \tau_{\ng \tilde{C}})$ and that $[\mathfrak{n} \circ (\Phi_\tau^N)^{-1} - n_{\ng C} \circ (\Phi_\tau^N)^{-1}]_C = [\mathfrak{n} - n_{\ng \tilde{C}}]_{\tilde{C}} \circ (\Phi_\tau^N)^{-1}$. Substituting all of these identities into the definition of $A$ then yields the claim. 

	It remains to show that for $C \in \Phi(\mathbb{C}(\tau)) \setminus \Phi_\tau^E(\mathbb{C}(\tau))$, we have that $ (\Phi(\tau)_{\ng C}, n_{\ng C} + \pi \eps_C, \mathfrak{e} \circ (\Phi_\tau^E)^{-1})$ either has an odd number of kernel edges or is of positive degree. The starting point is to note that $C$ contains at least one edge in an instance of the image of one of $\<bs(2)0>, \<ps(2)0>$ under $\Phi$ so we may assume that $\tau_{\ng C}$ has at least one internal vertex whose unique outgoing edge is of kernel type. We now proceed by a simple counting argument.
Given a tree $\sigma$,
We write $\ell_K$ for number of internal vertices of $\sigma$ with only one outgoing kernel edge and $\ell_{KK}$ for the number of internal vertices with two outgoing kernel edges. Since every internal vertex has outdegree one or two, $\sigma$ necessarily contains $N = \ell_{KK} + 1$ noise edges. Furthermore, $\sigma$ contains $K_\sigma = \ell_K + 2 \ell_{KK} = \ell_K + 2N - 2$ kernel edges. Therefore, we have that
	\begin{align*}
		| \sigma |_\fs &= \ell_K + 2N - 2 - (\frac{3}{2}+ \kappa) N - \frac{1}{2} \mathbf{1}_{\<0t> \in N(\sigma)} + |\mathfrak{n}_\sigma|_\fs
		 \\
		 &= \ell_K + (\frac{1}{2}- \kappa) N - 2 - \frac{1}{2} \mathbf{1}_{\<0t> \in N(\sigma)} + |\mathfrak{n}_\sigma|_\fs.
	\end{align*}	
	If $\ell_K \neq 0$ then we see that either $K_\sigma$ is odd or $\ell_K \ge 2$. Furthermore, since $\sigma$ is not planted, we have that $N > 1$. Therefore, if $K_\sigma$ is not odd and $\ell_K > 0$, we have that $|\sigma|_{\mfs} > 0$. Applying this claim with $\sigma = \tau_{\ng C}$ for $C$ as above then yields the required result.
\end{proof}

\begin{proof}[Proof of Lemma~\ref{lem:transfer_1}]
	We prove the first part of the statement by induction with respect to the (well-founded) relation $\prec$ defined by saying $\tau \prec \sigma$ if $n_\tau < n_\sigma$ or if $n_\tau = n_\sigma$ and $|\tau|_\fs < |\sigma|_\fs$ where $n_\tau$ is the number of noise edges in $\tau$. Whilst the majority of the proof will be straightforward case-checking, there will be one step that is slightly more subtle that involves the interaction between $\Phi$ and renormalisation; as was noted earlier in this section. Since this is the first proof of this type in this paper, we also spell out the details of the more straightforward steps of the induction briefly for completeness. We will prove by induction the following set of claims. 
	\begin{enumerate}
		\item $\Pi_z^\times \Phi(\tau) = \tilde{\Pi}_z^\times \tau$.
		\item $\Pi_z \Phi(\tau) = \tilde{\Pi}_z \tau$.
		\item If $|\tau|_\fs < 0$, $\tau \not \in \tilde{\mathfrak{L}}_-$ and $\tau$ is not a planted tree then $\ell(\Phi(\tau)) = \tilde{\ell}(\tau)$ where $\ell, \tilde{\ell}$ denote the BPHZ functionals associated to $\Pi$ and $\tilde{\Pi}$ respectively.
		\item $\boldsymbol{\Pi}^\times \Phi \tau = \tilde{\boldsymbol{\Pi}}^\times \tau$.
		\item $\boldsymbol{\Pi} \Phi \tau = \tilde{\boldsymbol{\Pi}} \tau$.
	\end{enumerate}
	The base case for the induction is the case where $\tau$ is a polynomial (in which case all of the statements are trivial) and where $\tau \in \tilde{\mathfrak{L}}_-$. In this case, all claims apart from the third (which is trivial) follow from Definition~\ref{def:tcT_noise_ass}.
	
	It remains to consider separately the cases where $\tau = \mathcal{I}_\mathfrak{l}^k \sigma$ for some $\sigma$ and where $\tau$ is a product of planted trees. In the former case, the third of the induction hypotheses is trivial whilst the remaining four are immediate from the definitions of the objects involved. Therefore, we consider only the case where $\tau$ is a product of planted trees. 
	
	We first note that the first and the fourth of the induction hypotheses follow trivially in this case since $\Pi_z^\times, \tilde{\Pi}_z^\times, \boldsymbol{\Pi}^\times, \tilde{\boldsymbol{\Pi}}^\times$ and $\Phi$ are all multiplicative. We therefore turn to the third of the induction hypotheses. For this we may assume that $|\tau|_\fs< 0$. It follows from the definition of the BPHZ model that
	\begin{align*}
		0 =& \mathbb{E}[ \boldsymbol{\Pi} \Phi \tau (0) - \boldsymbol{\tilde{\Pi}} \tau (0)] 
		\\
		=& \mathbb{E} \bigl [ (\ell \otimes \boldsymbol{\Pi}^\times) \Delta_{r}^- \Phi(\tau) (0) - (\tilde{\ell} \otimes \boldsymbol{\tilde{\Pi}}^\times) \tilde{\Delta}_r^- \tau (0) \bigr ].
	\end{align*}
	By applying Lemma~\ref{lem:combinatorial_identity} and noting that $\ell(\varrho) = 0$ for any $\varrho$ such that $K_\varrho$ is odd and $\mathfrak{n}_\varrho \equiv 0$ by symmetry in law of space-time white noise under spatial reflection, we see that 
	\begin{align*}
		(\ell \otimes \boldsymbol{\Pi}^\times) \Delta_{r}^- \Phi(\tau)  = ((\ell \circ \Phi) \otimes (\boldsymbol{\Pi}^\times \circ \Phi)) \tilde{\Delta}_r^- \tau 
	\end{align*}
	Writing $\tilde{\Delta}_r^- \tau = \tau \otimes \mathbf{1} + \sum_{\varrho \otimes \sigma} \varrho \otimes \sigma$ with $\varrho, \sigma$ appearing earlier in the induction, we can therefore conclude that
	\begin{align*}
		0 =& (\ell \circ \Phi  - \tilde{\ell})(\tau) + \sum_{\varrho \otimes \sigma} (\ell \circ \Phi) (\varrho) \mathbb{E}[(\boldsymbol{\Pi}^\times \circ \Phi) (\sigma)] - \tilde{\ell}(\varrho) \mathbb{E}[(\boldsymbol{\tilde{\Pi}}^\times (\sigma))]
		\\
		=& (\ell \circ \Phi  - \tilde{\ell})(\tau)
	\end{align*}
	which establishes the third of the induction hypotheses. With this result in hand, the second and fifth of the induction hypotheses follow similarly to one another. We demonstrate only the second of them. We write
	\begin{align*}
		\Pi_z \Phi(\tau) =& (\ell \otimes \Pi_z^\times) \Delta_r^- \Phi(\tau)
		\\
		=& ((\ell \circ \Phi) \otimes (\Pi_z^\times \circ \Phi)) \tilde{\Delta}_r^- \tau
		\\
		=& (\tilde{\ell} \otimes \tilde{\Pi}_z^\times) \tilde{\Delta}_r^- \tau
		\\
		=& \tilde{\Pi}_z \tau
	\end{align*}
	where the second line followed by a similar application of Lemma~\ref{lem:combinatorial_identity} to the one used to establish the third induction hypothesis and the third line followed by the first and fourth induction hypotheses. This completes the induction step and therefore establishes the first part of the statement of the lemma with regards to $\tilde{\Pi}$. For the map $\tilde{\Gamma}$, we note that it follows from the recursive definitions of $\Phi$ and $\tilde{\Delta}$ that
	$$
	(\Phi \otimes \Phi) \tilde{\Delta} = \Delta \Phi.
	$$
	Therefore the statement for $\tilde{\Gamma}$ follows from the fact that $\tilde{g}_z \circ \Phi = g_z$ which is a consequence of the analogous relation for $\tilde{\Pi}$ and $\Pi$ established above.

	It remains to show that $(\Phi(\Hb), \Phi(\Hp),\Phi(\tilde{\mathcal{W}}))$ is a solution of the system of Lemma~\ref{lem: DPD_1}. This follows from the fact that the previous part of this lemma implies that $\Phi$ commutes with the operator $\mcP_\gamma$. In addition, it commutes with multiplication and differentiation in the obvious way by definition. The result follows by applying $\Phi$ to both sides of each equation in the system \eqref{eq: tct_sys} and then successively applying the commutation results mentioned above.
	\end{proof}

\section{Uniform Bounds for the BPHZ Model on $\tilde{\mfT}$}\label{sec:Uniform_Bounds}

In this section, we prove the following result.
\begin{theorem}\label{theo: bphz_uniform_bound}
	The BPHZ model $(\tilde\Pi^{\eps, \delta}, \tilde \Gamma^{\eps, \delta})$ on $\tilde{\mfT}$ associated to the noise assignment 
	given in Definition~\ref{def:tcT_noise_ass}
	is bounded in $L^p(d\mathbb{P}; \mathcal{M})$ uniformly in $0 < \delta < \eps < 1$ for all $p \in [1, \infty)$ where $\mathcal{M}$ is the space of models on $\tilde{\mfT}$.
\end{theorem}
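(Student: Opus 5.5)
We will deduce the theorem from the general criterion for convergence/boundedness of BPHZ models driven by non-Gaussian noises via Malliavin calculus developed in \cite{HS}. The reason for working on $\tilde{\mfT}$ is precisely that it lets this black box be used. The objects $\eps^{-1/2}\partial_x K\ast\xi_{\eps,\delta}\,\partial_x K\ast\xi_\eps$ that sit on the variance blow-up threshold are now noises $\<bs(2)0>,\<ps(2)0>$ of degree $-3/2-2\kappa$. All remaining trees have "subcritical for variance" power counting. By Lemma~\ref{lem:transfer_1} and the historic nature of the sector, the recursive BPHZ construction on $\tilde{\mfT}$ uses only the noises of Definition~\ref{def:tcT_noise_ass}.

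The argument runs in three steps.

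\emph{Base-case bounds on the noises.} We must show that the noise vector $(\xi_\eps,\xi_\delta,\eps^{-1/2}\partial_xK\ast\xi_{\eps,\delta},\tilde\pi\<bs(2)0>,\tilde\pi\<ps(2)0>)$ satisfies the assumptions of \cite{HS}, uniformly in $0<\delta<\eps<1$. These are uniform moment bounds in $\mcC^{|\mfl|_\fs}$, together with the corresponding Cameron--Martin/Malliavin derivative bounds: for each $k$, the $k$-th Malliavin derivative of each noise, tested against $\varphi^\lambda_z$ and paired with directions $h\in L^2$, is bounded by $\lambda^{|\mfl|_\fs+3k/2-\kappa'}\|h\|^k$ in $L^p$ (or the analogous $L^2$-based norm used in \cite{HS}). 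For the Gaussian noises this is immediate. For $\eps^{-1/2}\partial_xK\ast\xi_{\eps,\delta}$ it follows from the computation in Lemma~\ref{lem:data_control}: the covariance gains $\eps^{1/2}$ at scales $\lambda\ge\eps$, and the estimate is trivial below scale $\eps$.

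For the quadratic noises, the zeroth-chaos part vanishes by the choice of $\ell$. The second chaos is estimated by bounding its covariance kernel: it is $\eps^{-1}$ times the product of the covariances $\mathbb{E}[\partial_xK\ast\xi_{\eps,\delta}(z)\,\partial_xK\ast\xi_{\eps,\delta}(z')]$ and $\mathbb{E}[\partial_xK\ast\xi_\eps(z)\,\partial_xK\ast\xi_\eps(z')]$, plus the cross term. The first factor is of order $\eps^{-2}$ and concentrated on $|z-z'|\lesssim\eps$, with decay $\eps\,|z-z'|^{-3}$ away from the diagonal since $\int\varrho^{\eps,\delta}=0$. The second factor is $\lesssim|z-z'|^{-1}$ cut off at $\eps$. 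The product is then an approximate delta of mass $O(\eps)$, plus a tail $\eps\,|z-z'|^{-4}$ that is borderline integrable. This borderline tail is what produces the $2\kappa$ loss and the white-noise-type scaling $\lambda^{-3/2-2\kappa}$. The first Malliavin derivative, $\eps^{-1/2}(\partial_xK\ast h_{\eps,\delta}\,\partial_xK\ast\xi_\eps+\partial_xK\ast\xi_{\eps,\delta}\,\partial_xK\ast h_\eps)$, and the deterministic second derivative must be bounded in the $h$-weighted norms of \cite{HS}. This is done by hand, similarly to the base case in \cite{Hai25}, using the gain $\eps^{1/2}$ from $\xi_{\eps,\delta}$ at large scales and crude bounds at small scales, with $\delta\le\eps$ used throughout.

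\emph{Algebraic and inductive steps.} Next we verify that the algebraic hypotheses of \cite{HS} hold on $\tilde{\mfT}$ (Subsection~\ref{sec:proof_Malliavin_ident}). The key one is that the Malliavin derivative of $\tilde\Pi\tau$ can be expressed through the model on an extended structure in which one noise is replaced by a "Cameron--Martin" noise of degree shifted by $3/2$, as in the degree-shift construction of Appendix~\ref{app:degree_shift}. Because we pass to the substructure excluding $\<s(1)b1>,\<s(1)p1>$ and the structure group is restricted accordingly, we need this restriction to commute with the derivative/shift operations. This holds because the excluded subtrees can never be generated from allowed trees by differentiating a noise. We must also check the "no variance blow-up" power-counting condition (e.g. \cite[Assumption 2.31]{HS}) for all trees of negative degree in $\tilde{\mcT}$ and for the shifted trees. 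This is a finite check using $|\<s(1)0>|_\fs=-1-\kappa$ and $|\<bs(2)0>|_\fs=-3/2-2\kappa$. The dangerous products have been removed precisely by the exclusion of the subtrees above. The spectral-gap inductive argument of \cite{HS} then gives uniform $L^p$ bounds on $\tilde\Pi$, and hence on $\tilde\Gamma$ via the standard reconstruction/extension arguments.

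\emph{Main obstacle.} We expect the main obstacle to be the Malliavin derivative bounds for $\<bs(2)0>,\<ps(2)0>$, uniformly in both $\eps$ and $\delta$, in the precise norms demanded by \cite{HS}. These are only barely valid because the objects sit at the variance blow-up threshold, so the $\kappa$-margins must be tracked carefully. The first attempt would be to decompose $\xi_{\eps,\delta}$ dyadically, $\sum_n(\xi_{2^{-n}}-\xi_{2^{-n-1}})$, as in Lemma~\ref{lem:data_control}, and sum the weights $2^{-n/2}\eps^{-1/2}$.
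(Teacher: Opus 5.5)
Your overall architecture matches the paper's: run the spectral-gap induction of \cite{HS} with respect to the underlying white noise $\xi$, treat the objects at the variance blow-up threshold as noise symbols, check that every non-noise tree has $\gamma_\tau>0$, and re-establish the identification of Malliavin derivatives. The gap is in the base case for $\<bs(2)0>,\<ps(2)0>$. You correctly identify this as the crux, but the bound you set out to prove is false.

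The first Malliavin derivative contains the term $\eps^{-1/2}\,\partial_xK\ast h_\eps\,\partial_xK\ast\xi_{\eps,\delta}$. Fix a smooth $h$ with $\partial_xK\ast h$ bounded away from zero on an open set. For such $h$ the other term is negligible, since $\partial_xK\ast h_{\eps,\delta}=O(\eps^2)$. So on that set the derivative is essentially a multiple of $\tilde\Pi\<s(1)0>=\eps^{-1/2}\partial_xK\ast\xi_{\eps,\delta}$. Its pairing with a test function at scale $\lambda$ is of size up to $\lambda^{-1-\kappa}$, and is genuinely of order $\eps^{-1}$ at $\lambda\sim\eps$. Hence neither the pointwise bound $\lambda^{|\mfl|_\fs+3/2-\kappa'}=\lambda^{-2\kappa-\kappa'}$ nor its $L^2(dz')$-averaged version can hold uniformly in $\eps$, and no by-hand dyadic estimate will give it.

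The obstruction is structural. In \cite[Lemma 4.5]{HS} the derivative of a noise is the deterministic direction, $D_g\xi=g$, which corresponds to $H^{z,g}=0$ with reconstruction $D_g\tilde\pi\mfl$. Here $D_g\tilde\pi\<bs(2)0>$ is a random distribution of regularity $-1-\kappa$, so $H^{z,g}=0$ cannot work. Two smaller points: only first derivatives enter the spectral-gap argument, so higher-derivative bounds are unnecessary. Also, the covariance of $\tilde\pi\<bs(2)0>$ has mass of order one, not $O(\eps)$: the tail $\eps|z|_\fs^{-4}$ integrates to $O(1)$ over $|z|_\fs>\eps$, which is why the limit in Proposition~\ref{prop:new_noise_conv} is nontrivial.

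The missing idea is to expand this derivative over the regularity structure itself. Set $H^{z,g}_{\<bs(2)0>}(z')=\partial_xK\ast g_\eps(z')\,\<s(1)0>+\eps^{-1/2}\partial_xK\ast g_{\eps,\delta}(z')\,\<b1>$, similarly for $\<ps(2)0>$, and $H^{z,g}_{\<s(1)0>}(z')=\eps^{-1/2}\partial_xK\ast g_{\eps,\delta}(z')\mathbf{1}$. Then prove the recentred reconstruction bound
\[
\mathbb{E}^{1/q}\Bigl[\sup_{\|g\|_{L^2}\le1}\Bigl\|\sup_{\psi}\bigl\langle D_g\tilde\Pi_z\tau-\tilde\Pi_{z'}H^{z,g}_\tau(z'),\psi^\mu_{z'}\bigr\rangle\Bigr\|^q_{L^2(\mathfrak{K};dz')}\Bigr]\lesssim\mu^{-\kappa},
\]
in which the noises are multiplied only by differences $\partial_xK\ast g-\partial_xK\ast g(z')$. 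This is Lemma~\ref{lem:Malliavin_base}. Its proof is not a covariance computation; it uses:
\begin{itemize}
\item the annealed Schauder bound $\mathbb{E}^{1/p}\|K\ast\xi_{\eps,\delta}\|^p_{L^p}\lesssim\eps^{1/2}$ from Lemma~\ref{lem:annealed_schauder};
\item an integration by parts;
\item the Young-product reconstruction theorem of \cite{BL22};
\item $L^2$-based Besov Schauder estimates for $\partial_xK\ast g$.
\end{itemize}

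Because the base case now has nontrivial $H$, the identity $\mathcal{R}H^{z,g}_\tau=D_g\tilde\Pi_z\tau$ of \cite[Proposition 4.1]{HS} must also be re-proved. Your sketch replaces one noise by a shifted Cameron--Martin symbol and cites Appendix~\ref{app:degree_shift}, but that appendix concerns compact embeddings, not derivatives. More importantly, the sketch does not account for the fact that differentiating $\<bs(2)0>$ produces the symbols $\<s(1)0>$ and $\<b1>$ with $g$-dependent coefficients. The paper handles this with the coproduct $\tilde\Delta^\partial$ and its cointeraction with $\Delta_r^-$ (Lemma~\ref{lem:cointeract}). This is proved on $\mathfrak{T}$ and transported through $\Phi$, using that BPHZ constants vanish on trees with an odd number of kernel edges.
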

The proof will be based on the techniques developed in \cite{HS}. The main modification is that since the noise assignment introduced above does not come with the corresponding spectral gap inequality, we will be exploiting the spectral gap inequality of the underlying noise $\xi$. This requires some modifications to the arguments appearing in \cite{HS}, especially in the base case of the induction. In the definition that follows, we will use the spaces $\mcD_2^{\gamma_\tau, \deg_2 \tau; z}$ defined in \cite[Definition 3.9]{HS} where $\gamma_\tau = \alpha_\tau + |\mathfrak{s}|/2 - n_\tau \tilde{\kappa}$ for some $\tilde{\kappa} \ll 1$ where $n_\tau$ is the number of edges of noise type appearing in $\tau$, $\alpha_\tau$ denotes the lowest degree of a non-polynomial symbol in the smallest sector containing $\tau$ and where $\operatorname{deg}_2 \tau = |\tau|_\fs + |\mathfrak{s}|/2$.
\begin{definition}\label{def:pointed_mod}
	We recursively define pointed modelled distributions $H_{\tau; \eps, \delta}^{z, g} \in \mcD_2^{\gamma_\tau, \deg_2 \tau; z}$ for $\tau \in \tilde{\mathcal{T}}$, $g \in L^2(\mathbb{R} \times \mathbb{T})$ and a fixed space-time point $z$ by declaring the base cases
	\begin{align*}
		H_{\<b0>; \eps, \delta}^{z, g}(z') =& 0, \qquad H_{\<p0>; \eps, \delta}^{z, g}(z') = 0, \qquad H_{\<s(1)0>; \eps, \delta}^{z, g}(z') = \eps^{-1/2} \partial_x K \ast g_{\eps, \delta}(z'),
		\\
		H_{\<bs(2)0>; \eps, \delta}^{z, g}(z') =& \partial_x K \ast g_\eps(z') \<s(1)0> + \eps^{-1/2} \partial_x K \ast g_{\eps, \delta}(z') \<b1>,
		\\
		H_{\<ps(2)0>; \eps, \delta}^{z, g}(z') =&  \partial_x K \ast g_{\delta}(z') \<s(1)0> + \eps^{-1/2} \partial_x K \ast g_{\eps, \delta}(z') \<p1>,
		\\
		H_{X^k; \eps, \delta}^{z, g}(z') =& 0
	\end{align*}
	alongside the recursive relations
	\begin{align*}
		H_{\tau \bar{\tau}; \eps, \delta}^{z, g}(z') =& H_{\tau; \eps, \delta}^{z, g}(z') \tilde\Gamma_{z'z}^{\eps, \delta} \bar{\tau} + \tilde \Gamma_{z'z}^{\eps, \delta} \tau H_{\bar{\tau}; \eps, \delta}^{z, g}(z'),
		\\
		H_{\mathcal{I}_\mathfrak{l}^k \tau; \eps, \delta}^{z, g}(z') =& \partial_x^k \mathcal{K}_{\gamma_\tau, \operatorname{deg}_2 \tau, \mathfrak{l}}^{z,2} H_{\tau; \eps, \delta}^{z, g}(z')
	\end{align*}
	where $\mathcal{K}_{\gamma, \nu, \mathfrak{l}}^{z, 2}$ is the pointed integration operator defined in \cite[Definition 3.16]{HS} associated to the edge type $\mathfrak{l}$.
\end{definition}
\begin{remark}\label{rem:Malliavin_base_case}
	As in \cite{HS}, it may be that $\gamma_\tau < 0$ so that the definition of $H_{\mathcal{I}_\mathfrak{l}^k \tau; \eps, \delta}^{z, g}$ requires the specification of a suitable candidate reconstruction of $H_{\tau; \eps, \delta}^{z, g}$. Since we work here with smooth models, this candidate will be given by $\mathcal{R} H_{\tau; \eps, \delta}^{z, g}(z') = \Pi_{z'} H_{\tau; \eps, \delta}^{z, g}(z')(z')$ for $\tau \neq \<b0>, \<p0>$. In the latter two cases we will set
	$$
	\mathcal{R}H_{\<b0>; \eps, \delta}^{z, g}(z') = g_\eps(z'), \qquad \mathcal{R}H_{\<p0>; \eps, \delta}^{z, g}(z') = g_{\delta}(z').
	$$
	These choices collectively make the above definition unambiguous. In the cases where $\gamma_\tau < 0$, we will need to show that this definition comes with bounds that are uniform in $\eps, \delta$ by hand, which is analogous to the treatment of similar issues in \cite[Lemmas 4.5 and 4.6]{HS}. The difference is that whilst $\<b0>, \<p0>$ are already covered by \cite[Lemma 4.5]{HS}, $\<bs(2)0>, \<ps(2)0>$ are covered by neither of the above lemmas from \cite{HS} and will require new and more involved estimates.
\end{remark}
\begin{remark}
	Since $\tilde{\mfT}$ is not itself a regularity structure corresponding to a rule, one may worry that the inductive proof provided in \cite{HS} cannot be performed entirely within $\tilde{\mfT}$. Indeed, the property of its basis of trees being historic in the sense of \cite[Definition 5.4]{BSS25} is not enough for that purpose since it does not imply that all trees in the range of the modelled distributions $H_{\tau; \eps, \delta}^{z, g}$ appear in our chosen sector. However, this is the only potential pitfall not resolved by the property of being historic and it is an immediate consequence of the characterisation of $H_{\tau; \eps, \delta}^{z, g}$ via the maps $\tilde{\Delta}^\partial$ below that this does not cause an issue here.
\end{remark}

We note that as in \cite{HS}, it is not immediate that the reconstruction of the aforementioned modelled distributions is the same as the Malliavin derivative of the model. In addition, the proof given there is not directly applicable here since the base case of the argument has undergone significant changes. However, the result does still hold which is the content of the statement below. We will prove this by an adaptation of the argument used in a similar context in \cite[Section 7]{BSS25}.
\begin{lemma}\label{lem:Malliavin_ident}
	We have that $\mathcal{R} H_{\tau; \eps, \delta}^{z, g} = D_g \tilde \Pi_z^{\eps, \delta} \tau$.
\end{lemma}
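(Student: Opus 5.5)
We argue by induction over $\tilde{\mcT}$ with respect to the relation $\prec$ from the proof of Lemma~\ref{lem:transfer_1}: first by number of noise edges, then by degree. The plan is to follow \cite[Section 7]{BSS25}. We introduce the derivation-type map $\tilde{\Delta}^\partial$, which sends a tree to the formal sum over its noise leaves of the tree with that leaf "differentiated". For the boxed noises this derivative is defined by the Leibniz rule on the underlying stochastic object, that is,
\begin{align*}
\partial\, \<bs(2)0> = \<s(1)0> \otimes (\text{derivative of } \<b1>) + (\text{derivative of } \<s(1)0>) \otimes \<b1>,
\end{align*}
and similarly for $\<ps(2)0>$. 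We then show two things separately:
\begin{enumerate}
\item $H^{z,g}_{\tau;\eps,\delta}$ coincides with the modelled distribution obtained by evaluating $\tilde{\Delta}^\partial\tau$, with the differentiated leaf replaced by $g_\eps$, $g_\delta$ or $\eps^{-1/2}\partial_x K\ast g_{\eps,\delta}$ and the remaining factors recentred via $\tilde\Gamma_{z'z}$;
\item $D_g\tilde\Pi_z\tau$ equals the reconstruction of that object.
\end{enumerate}
This splits the problem into an algebraic identity and a renormalisation/commutation identity.

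\textbf{Base case.} For $\tau\in\tilde{\mfL}_-$ the claim is a direct computation from Definition~\ref{def:tcT_noise_ass}. We have $D_g\xi_\eps=g_\eps$ and $D_g(\eps^{-1/2}\partial_xK\ast\xi_{\eps,\delta})=\eps^{-1/2}\partial_xK\ast g_{\eps,\delta}$. For $\<bs(2)0>$, the constant $\ell(\<bs(2)0>)$ is deterministic, so the product rule gives
\begin{align*}
D_g \tilde\Pi_z\<bs(2)0> = \partial_xK\ast g_\eps\,\cdot\,\tilde\Pi_z\<s(1)0> + \eps^{-1/2}\partial_xK\ast g_{\eps,\delta}\,\cdot\,\tilde\Pi_z\<b1>.
\end{align*}
This is exactly $\mcR H^{z,g}_{\<bs(2)0>}$ as given in Definition~\ref{def:pointed_mod}, using the candidate reconstruction of Remark~\ref{rem:Malliavin_base_case}. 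The only point to check is that $\tilde\Pi_z\<b1>=\partial_xK\ast\xi_\eps$ exactly, since $\<b1>$ has negative degree and so needs no recentring; the same holds for $\<s(1)0>$.

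\textbf{Inductive step.} For products we use the Leibniz rule for $D_g$ together with the recursive definition $H_{\tau\bar\tau}=H_\tau\,\Gamma\bar\tau+\Gamma\tau\,H_{\bar\tau}$. Reconstruction at the base point gives $\Pi_{z'}(\Gamma_{z'z}\tau)(z')$, which must be compared with the renormalised product. For planted trees we use that the pointed integration operator $\mcK^{z,2}$ reconstructs to $K\ast$ minus a Taylor polynomial at $z$. The derivative of the recentring $g_z$ produces exactly this polynomial, because $D_g$ commutes with convolution and with evaluation at $z$.

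\textbf{Main obstacle.} The real difficulty is renormalisation. $\tilde\Pi_z=\tilde\Pi^\times_z\tilde P$, and $\tilde\ell$ is deterministic, so $D_g\tilde\Pi_z\tau=(\tilde\ell\otimes D_g\tilde\Pi_z^\times)\tilde\Delta_r^-\tau$. We must show this equals $\mcR H_\tau$, which requires a cointeraction identity of the form
\begin{align*}
(\Id\otimes\tilde{\Delta}^\partial)\tilde\Delta^-_r=(\tilde\Delta^-_r\otimes\Id)\tilde{\Delta}^\partial
\end{align*}
modulo terms in which the differentiated leaf sits inside the contracted subtree. Such terms would have to be killed by $\tilde\ell$: a tree with one leaf replaced by a deterministic $g$ is not summed over in $\tilde\ell$. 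Following \cite{BSS25}, the derivative is placed only on leaves outside $\tau_{\ng C}$, so these terms do not appear in the first place.

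The new issue relative to \cite{BSS25} is the boxed noises, where a derivative landing inside the box produces trees containing $\<s(1)b1>$ or $\<s(1)p1>$. Those trees are excluded from $\tilde{\mfT}$. We must therefore verify that these are identified with $\<s(1)0>\<b1>$-type products, consistent with the base-case formula. Moreover, no cut $C$ can separate the two factors of a box, since in $\tilde{\mfT}$ a box is a single leaf. This is where the characterisation via $\tilde{\Delta}^\partial$ also shows that $H_\tau$ stays inside $\tilde{\mfT}$. I expect this bookkeeping, together with checking that polynomial decorations transform consistently, to be the main work.
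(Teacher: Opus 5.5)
There is a genuine gap in the renormalisation step, and it sits exactly where you expect only bookkeeping. The exact cointeraction $(\operatorname{id}\otimes\tilde\Delta^\partial)\tilde\Delta_r^-=(\tilde\Delta_r^-\otimes\operatorname{id})\tilde\Delta^\partial$ is false on $\tilde{\mfT}$. Your observation that no cut can separate the two factors of a box is the reason it fails, not the cure.

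The base case forces $\tilde\Delta^\partial$ to split a box by the Leibniz rule, $\tilde\Delta^\partial\<bs(2)0>=\<s(1)0>\otimes\partial\,\<b1>+\<b1>\otimes\partial\,\<s(1)0>$, while $\tilde\Delta_r^-$ treats a box as an uncuttable leaf. So $(\tilde\Delta_r^-\otimes\operatorname{id})\tilde\Delta^\partial\tau$ contains negative trees in which a vertex that carried a box keeps only one half of it. Such trees never arise as some $\tau_{\ng C}$ on the other side.

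For a concrete instance, take $\tau=\mathcal{I}'(\<bs(2)0>)\,\<b1>$ with outer edge of type $\<k>$, so $|\tau|_\fs=-1-3\kappa$.
\begin{itemize}
\item One checks $\tilde\Delta_r^-\tau=\tau\otimes\mathbf 1+\mathbf 1\otimes\tau$, so the right-hand side is just $\mathbf 1\otimes\tilde\Delta^\partial\tau$.
\item But $\tilde\Delta^\partial\tau$ contains $\sigma\otimes\partial\,\<s(1)0>$ with $\sigma=\mathcal{I}'(\<b1>)\,\<b1>$. Since $|\sigma|_\fs=-2\kappa<0$, the left-hand side contains $\sigma\otimes\mathbf 1\otimes\partial\,\<s(1)0>$.
\item After evaluation this term drops out only because $\tilde\ell(\sigma)=0$. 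That holds because $\sigma$ carries three $\partial_x$-kernels and everything is symmetric under spatial reflection.
\end{itemize}
Your argument contains no such symmetry input, and without it the induction does not close. The trees containing $\<s(1)b1>$ that you worry about are a red herring: $\tilde\Delta^\partial$ never produces the product $\<s(1)0>\,\<b1>$. The problematic by-products are the half-box trees above.

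The missing idea is to prove the cointeraction on the unboxed structure $\mfT$ and then control the error of transferring back. The paper proceeds as follows.
\begin{itemize}
\item It shows $(\Phi\otimes\Phi)\tilde\Delta^\partial=\Delta^\partial\Phi$ (Proposition~\ref{prop:Phi_Delta-partial}).
\item It writes $\Delta^\partial$ on $\mfT$ as a sum over nonempty cuts $C$ and a marked edge $e\in C$, with the contracted tree carrying $\partial_e$ (Proposition~\ref{prop:Delta-partial-o_global}). This is a coproduct, not a derivation over noise leaves, and that coproduct structure is also what reproduces the truncations and Taylor polynomials in $H^{z,g}_{\tau;\eps,\delta}$.
\item It proves the exact cointeraction on $\mfT$ (Lemma~\ref{lem:cointeract}).
\item It controls the mismatch via Lemma~\ref{lem:combinatorial_identity}: $\Delta_r^-\Phi-(\Phi\otimes\Phi)\tilde\Delta_r^-$ only produces trees with an odd number of kernel edges and no node decoration. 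Combined with $\tilde\ell=\ell\circ\Phi$ from the proof of Lemma~\ref{lem:transfer_1} and reflection symmetry, these terms are killed.
\end{itemize}
This comparison must be made twice: once for $\tilde\Delta_r^-\tau$, and once for the left factors of $\tilde\Delta^\partial\tau$, which may still contain unsplit boxes.

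Your base case and the product and integration steps are fine. Note, though, that the induction must be run for $\tilde\Pi^\times$ as well, since the right factors of $\tilde\Delta_r^-\tau$ are evaluated with it.
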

Since the proof of Lemma \ref{lem:Malliavin_ident} is technical and necessitates some involved combinatorial arguments (in particular, a cointeraction-type property between the BPHZ coproduct $\Delta_r^-$ and a new coproduct introduced to describe the recursive construction of $H_\tau^{z, g}$), we postpone its proof to Subsection \ref{sec:proof_Malliavin_ident} below, which can be safely skipped for a first reading.

In preparation for the bound on the Malliavin derivative associated to $\<bs(2)0>, \<ps(2)0>$, we record the following simple result. 
\begin{lemma}\label{lem:annealed_schauder}
	For $p \in [2, \infty)$, we have that
	\begin{align*}
		\mathbb{E}^{1/p} \Bigl [ \fint_{B(0, \lambda)} |K \ast \xi(z') - K \ast \xi(0)|^p dy \Bigr ] \lesssim \lambda^{\frac12}
	\end{align*}
	where $\xi$ is the space-time white noise in one spatial dimension.
	
	In particular, it follows that for any compact set $\mathfrak{K}$ and any $\psi \in \mcB^r$ satisfying $\int \psi = 0$, we have that
	\begin{align*}
		\mathbb{E}^{1/p} [ \| K \ast \xi \ast \psi^\eps \|_{L^p(\mathfrak{K})}^p] \lesssim \eps^{1/2}.
	\end{align*}
\end{lemma}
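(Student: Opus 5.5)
The plan is to use Gaussianity to reduce to second moments, then make a direct variance computation for the increments of $K \ast \xi$. The fixed-point (or averaged) Hölder bound then gives the first claim, and the second follows by writing the convolution with a mean-zero test function as an average of increments.

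\emph{First claim.} For fixed $z'$, the increment $K\ast\xi(z') - K\ast\xi(0)$ is a centred Gaussian random variable. Hence its $L^p(\Omega)$ norm is bounded by a constant $C_p$ times its $L^2(\Omega)$ norm. By Fubini,
\begin{align*}
\mathbb{E}\Bigl[\fint_{B(0,\lambda)} |K\ast\xi(z')-K\ast\xi(0)|^p\,dz'\Bigr] = \fint_{B(0,\lambda)} \mathbb{E}|K\ast\xi(z')-K\ast\xi(0)|^p\,dz' ,
\end{align*}
so it suffices to show that $\mathbb{E}|K\ast\xi(z')-K\ast\xi(0)|^2 = \|K(z'-\cdot)-K(-\cdot)\|_{L^2}^2 \lesssim |z'|_\fs$.

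For this, decompose $K=\sum_{n\ge0}K_n$ as in \cite[Assumption 5.1]{H0}, where $K_n$ is supported at scale $2^{-n}$ and satisfies $\|D^kK_n\|_\infty\lesssim 2^{n(1+|k|_\fs)}$. Choose $n_0$ with $2^{-n_0}\sim|z'|_\fs$ and treat the two ranges of scales separately.
\begin{itemize}
\item For $n\ge n_0$, bound each of the two terms crudely: $\|K_n\|_{L^2}^2\lesssim 2^{2n}2^{-3n}=2^{-n}$, and summing over $n\ge n_0$ gives $\lesssim|z'|_\fs$.
\item For $n<n_0$, use the mean value theorem in parabolic scaling. This yields the pointwise bound $|K_n(z'-y)-K_n(-y)|\lesssim |z'|_\fs 2^{2n}$ (the time increment contributes $|t'|2^{3n}\le |z'|_\fs^2 2^{3n}\le|z'|_\fs2^{2n}$). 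The support has volume $2^{-3n}$, so the squared $L^2$ norm is $\lesssim|z'|_\fs^2 2^{n}$, which sums over $n<n_0$ to $\lesssim|z'|_\fs$.
\end{itemize}
Taking square roots gives the increment bound $|z'|_\fs^{1/2}\le\lambda^{1/2}$, and hence the first claim. The only care needed is in the time-increment contribution, where non-anticipativity and the scaling of $K_n$ must be respected.

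\emph{Second claim.} Since $\int\psi=0$,
\begin{align*}
K\ast\xi\ast\psi^\eps(z)=\int \psi^\eps(z-z')\,\bigl(K\ast\xi(z')-K\ast\xi(z)\bigr)\,dz' .
\end{align*}
Because $|\psi^\eps|\lesssim\eps^{-3}$ on a ball of radius $\eps$ around $z$, Jensen's inequality gives
\begin{align*}
|K\ast\xi\ast\psi^\eps(z)|^p\lesssim \fint_{B(z,\eps)}|K\ast\xi(z')-K\ast\xi(z)|^p\,dz' .
\end{align*}
Take expectations and integrate over $z\in\mathfrak{K}$. By stationarity of $\xi$, the expectation is translation invariant: for $z$ at positive distance from the boundary of the period, $K\ast\xi(\cdot+z)-K\ast\xi(z)$ has the same law as $K\ast\xi-K\ast\xi(0)$. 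We can therefore apply the first claim with $\lambda=\eps$ uniformly in $z$. Since $|\mathfrak{K}|<\infty$, this yields $\mathbb{E}\|K\ast\xi\ast\psi^\eps\|_{L^p(\mathfrak{K})}^p\lesssim\eps^{p/2}$, as required.
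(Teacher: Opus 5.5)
Your proposal is correct and follows essentially the same route as the paper: a dyadic decomposition $K=\sum_n K_n$ split at scale $2^{-n}\sim\lambda$, crude $L^2$ bounds $\lesssim 2^{-n/2}$ on each piece in the near field, a mean-value bound on the kernel increment in the far field, and then $\int\psi=0$, Jensen and stationarity for the second claim. The only difference is the order of steps. You apply Gaussian hypercontractivity pointwise first and then do the dyadic summation deterministically on $\|K(z'-\cdot)-K(-\cdot)\|_{L^2}$, which gives the slightly stronger pointwise bound $\mathbb{E}^{1/p}|K\ast\xi(z')-K\ast\xi(0)|^p\lesssim|z'|_\fs^{1/2}$. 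One small fix: sum the dyadic pieces with the triangle inequality on $L^2$ norms rather than by adding squared norms, since the latter would need orthogonality; this yields the same geometric sums.
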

\begin{proof}
	Using the decomposition $K = \sum_{n \ge 0} K_n$ and applying the triangle inequality, we see that it suffices to estimate
	\begin{align*}
		\sum_{n \ge 0} \mathbb{E}^{1/p} \Bigl [ \fint_{B(0, \lambda)} |K_n \ast \xi(z') - K_n \ast \xi(0)|^p dz' \Bigr ]. 
	\end{align*}
	We separate the sum into the near-field regime $n \ge n_\lambda$ and the far-field regime $n < n_\lambda$ where $2^{-n_\lambda} \sim \lambda$. 
	In the near-field regime, we apply triangle inequality, Fubini's Theorem and translation invariance to reduce the task of estimating the summand to that of  estimating
	\begin{align*}
		\mathbb{E}^{1/p}[|K_n \ast \xi(0)|^p].
	\end{align*}
	Now by using hypercontractivity and the covariance structure of $\xi$, we see that this term admits the bound
	\begin{align*}
		\mathbb{E}^{1/p}[|K_n \ast \xi(0)|^p] \lesssim 2^{-\frac n2}. 
	\end{align*}
	In the far-field regime, we first apply Fubini's Theorem and then hypercontractivity to reduce to the task of estimating
	\begin{align*}
		\fint_{B(0, \lambda)} \mathbb{E}^{1/2} \Bigl [  |K_n \ast \xi(z) - K_n \ast \xi(0)|^2  \Bigr ] dz & \lesssim \fint_{B(0, \lambda)} \| K_n(z - \cdot) - K_n(- \cdot) \|_{L^2} dz
		\\
		& \lesssim \fint_{B(0, \lambda)} |x| \| \nabla_x K_n \|_{L^2} + |t| \|\partial_t K_n \|_{L^2} dz
		\\
		& \lesssim \lambda 2^{n/2} + \lambda^2 2^{\frac{3n}{2}} 
	\end{align*}
	where we have written $z = (t,x)$.
	Combining the bounds for the two regimes and performing the sum yields the first part of the result. 
	
	To establish the second part of the result, we use translation invariance of $\xi$ to see that
	\begin{align*}
		\mathbb{E}^{1/p} [ \| K \ast \xi \ast \psi^\eps \|_{L^p(\bar{\mfK})}^p ] &\sim \mathbb{E}^{1/p}[|K \ast \xi \ast \psi^\eps(0)|^p] 
		\\
		& = \mathbb{E}^{1/p} \Bigl [ \Bigl ( \int (K \ast \xi(z') - K \ast \xi(0)) \psi^\eps(- z')  dz' \Bigr )^p \Bigr ].
	\end{align*}
	By applying Jensen's inequality, we obtain the bound 
	\begin{align*}
		\mathbb{E}^{1/p} [ \| K \ast \xi \ast \psi^\eps \|_{L^p(\bar{\mfK})}^p ] & \le \mathbb{E}^{1/p} \Bigl [  \fint_{B(0, \eps)} |K \ast \xi(z') - K \ast \xi(0)|^p dz'  \Bigr ] 
		\\
		& \lesssim \eps^{1/2}
	\end{align*}
	where the second inequality follows by the first part of the statement.
\end{proof}

Now we are ready to prove the bounds on the Malliavin derivatives associated to the noises $\<bs(2)0>, \<ps(2)0>$ alluded to by Remark \ref{rem:Malliavin_base_case}.
\begin{lemma}\label{lem:Malliavin_base}
	Let $\tau \in \{\<bs(2)0>, \<ps(2)0>\}$. Then for all $\kappa > 0$ and each compact set $\mfK$,
	\begin{align*}
		\mathbb{E}^{1/q} \Bigl [ \sup_{\|g\|_{L^2(\mathbb{R} \times \mathbb{T})} \le 1} \| \sup_{\psi \in \mathcal{B}^r} \langle D_g \tilde \Pi_z \tau  - \tilde \Pi_{z'} H_{\tau; \eps, \delta}^{z,g}(z'), \psi_{z'}^\mu \rangle \|_{L^2(\mfK; dz')}^q \Bigr ] \lesssim \mu^{- \kappa}
	\end{align*}
	uniformly over $0 < \delta < \eps < 1$ for every $q \in [2, \infty)$. 
\end{lemma}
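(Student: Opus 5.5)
The plan is to compute the difference appearing in the statement explicitly and then bound it directly. There is no induction here: this is a base case, and the difference is an explicit bilinear expression in $(\xi, g)$.

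\textbf{Identifying the difference.} I treat $\tau = \<bs(2)0>$; the case $\<ps(2)0>$ is identical with $\xi_\eps, g_\eps$ replaced by $\xi_\delta, g_\delta$. Since $\tilde\Pi_z\<bs(2)0>$ is the noise itself, the Malliavin derivative is
\[
D_g \tilde\Pi_z \tau = \eps^{-1/2}\,\partial_x K\ast g_{\eps,\delta}\;\partial_x K \ast \xi_\eps + \eps^{-1/2}\,\partial_x K \ast \xi_{\eps,\delta}\;\partial_x K \ast g_\eps .
\]
The subtracted constant $\ell(\tau)$ is deterministic and drops out. Both $\<s(1)0>$ and $\<b1>$ have negative degree, so neither is recentred: $\tilde\Pi_{z'}\<s(1)0> = \eps^{-1/2}\partial_x K\ast\xi_{\eps,\delta}$ and $\tilde\Pi_{z'}\<b1> = \partial_x K\ast \xi_\eps$. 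Hence, evaluated at $y$,
\[
\bigl(D_g \tilde\Pi_z\tau - \tilde\Pi_{z'}H^{z,g}_{\tau}(z')\bigr)(y) = \eps^{-1/2}\partial_xK\ast\xi_{\eps,\delta}(y)\,\delta_{z'}^y[\partial_xK\ast g_\eps] + \partial_xK\ast\xi_\eps(y)\,\eps^{-1/2}\delta^y_{z'}[\partial_xK\ast g_{\eps,\delta}],
\]
where $\delta^y_{z'}f = f(y)-f(z')$. Note that the base point $z$ plays no role.

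\textbf{Reducing to an operator-norm bound.} Each term, tested against $\psi^\mu_{z'}$, is a linear map $g\mapsto F(z')$ with a random kernel. The supremum over $\|g\|_{L^2}\le 1$ is therefore the $L^2\to L^2(\mfK)$ operator norm of that map. The supremum over $\psi\in\mcB^r$ I would first reduce to a countable family, for instance by a wavelet or Taylor decomposition of $\psi$ at scales $\ge\mu$ as in \cite[Sec.~3]{HS}, at the cost of $\mu^{-\kappa}$. I will bound the operator norm by a Schur/Minkowski argument rather than by a Hilbert--Schmidt bound.

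The basic deterministic inputs are the following.
\begin{enumerate}
\item By Young's inequality, $\|\partial_x K\ast (\rho_\eps-\rho_\delta)\|_{L^1}\lesssim \eps$, uniformly in $\delta<\eps$, because $\partial_xK$ is homogeneous of degree $-2$ in parabolic dimension $3$ and $\rho_\eps - \rho_\delta$ has zero mean. The same bound holds for the increments $\delta^y_{z'}$ with $|y-z'|\le\mu$.
\item For the $g_\eps$ increment, $\|\partial_xK_\eps(y-\cdot)-\partial_xK_\eps(z'-\cdot)\|_{L^1}\lesssim \min(|y-z'|_\fs,\eps)$, up to logarithms.
\end{enumerate}

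\textbf{Controlling the random factors.} The random factors $\partial_xK\ast\xi_\eps$ and $\eps^{-1/2}\partial_xK\ast \xi_{\eps,\delta}$ cannot be taken in sup norm, since the latter is of size $\eps^{-1/2}\delta^{-1/2}$. Instead I will keep them inside an $L^p(\mfK)$ norm in the $y$ (equivalently $z'$) variable and move the expectation inside. The point is that, by Lemma~\ref{lem:annealed_schauder} applied to $\partial_x(K\ast\xi)\ast(\rho_\eps-\rho_\delta)$ and differentiated, the annealed $L^p$ size is controlled uniformly in $\delta$ once it is paired with a kernel of $L^1$ mass $\eps$.

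Concretely, for the second term I write $F(z') = \int g(w)\,A(z',w)\,dw$ with
\[
A(z',w)=\eps^{-1/2}\int\psi^\mu_{z'}(y)\,\partial_xK\ast\xi_\eps(y)\,\bigl[\partial_xK_{\eps,\delta}(y-w)-\partial_xK_{\eps,\delta}(z'-w)\bigr]dy .
\]
I then apply the Schur test with $\sup_{z'}\int|A|\,dw$ and $\sup_w\int|A|\,dz'$. The first is replaced by its $L^p(dz')$ version and the second is averaged, giving moments of order
\[
\eps^{-1/2}\cdot\eps\cdot \mathbb{E}^{1/p}\|\partial_xK\ast\xi_\eps\|^p_{L^p(\mfK)}\lesssim \eps^{1/2}\eps^{-1/2-\kappa}.
\]
The first term is handled symmetrically: $\eps^{-1/2}\partial_xK\ast\xi_{\eps,\delta}$ is written as $\partial_x$ of $K\ast\xi\ast(\rho_\eps-\rho_\delta)$, integrated by parts onto $\psi^\mu_{z'}$ and onto the increment kernel, and Lemma~\ref{lem:annealed_schauder} supplies the factor $\eps^{1/2}$. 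Higher moments in $q$ then follow by Gaussian hypercontractivity, since the kernel is linear in $\xi$.

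\textbf{Main obstacle.} I expect the hard step to be exactly this scale bookkeeping, uniformly in $\delta<\eps$ and $\mu$. Both objects sit at the variance-blowup threshold, so every estimate is borderline. One must split into the regimes $\mu\le\eps$ and $\mu>\eps$, and, for the $\xi_{\eps,\delta}$ factor, sum dyadically over the scales between $\delta$ and $\eps$ as in the proof of Lemma~\ref{lem:data_control}, checking that each dyadic piece contributes a geometrically summable amount. Any residual logarithmic divergence would be absorbed into the allowed loss $\mu^{-\kappa}$ by trading a small power of $\min(\mu,\eps)/\eps$ against $\kappa$.
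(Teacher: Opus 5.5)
Your identification of the difference is correct. It matches the paper's split into two terms: the $g_{\eps,\delta}$-increment against $\partial_xK\ast\xi_\eps$, and the $g_\eps$-increment against $\eps^{-1/2}\partial_xK\ast\xi_{\eps,\delta}$. The integration by parts for the second term is also the paper's.

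\textbf{The gap: the piece where the derivative lands on the $g$-increment.} This piece is
\[
\eps^{-1/2}\bigl\langle K\ast\xi_{\eps,\delta}\;\partial_x^2K\ast g_\eps,\ \psi^\mu_{z'}\bigr\rangle .
\]
A Schur or Minkowski argument with absolute values of kernels needs $\|\partial_x^2K\ast\rho_\eps\|_{L^1}$. Since $\partial_x^2K$ is homogeneous of degree $-3=-|\mathfrak{s}|$, this norm is of order $\log(1/\eps)$. The random factor $\eps^{-1/2}K\ast\xi_{\eps,\delta}$ is exactly of order one in $L^p$ (its variance is of order one, so Lemma~\ref{lem:annealed_schauder} is sharp here). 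So there is no spare power of $\eps$. A $\log(1/\eps)$ cannot be absorbed into $\mu^{-\kappa}$ when $\mu$ is of order one and $\eps\to0$; your absorption device only handles logarithms of $1/\mu$ or $\eps/\mu$.

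\textbf{What is missing.} You need the $L^2$-boundedness of $g\mapsto\partial_x^2K\ast g$. This is a cancellation/orthogonality fact that absolute-value kernel bounds cannot see: the Fourier multiplier is bounded, or, as in the paper, use Lemma~\ref{lem:Schauder} in $\mathcal{B}^2_{2,2}$ together with $\mathcal{B}^0_{2,2}\simeq L^2_{\mathrm{loc}}$ from Lemma~\ref{lem:Bes_to_L2}. With it, H\"older and Young suffice: $\|\psi^\mu\|_{L^{p'}}\lesssim\mu^{-3/p}$, $\mathbb{E}^{1/p}\|\eps^{-1/2}K\ast\xi_{\eps,\delta}\|^p_{L^p}\lesssim 1$, and $\|\partial_x^2K\ast g_\eps\|_{L^2}\lesssim\|g\|_{L^2}$. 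The supremum over $g$ is then trivial, because $g$ enters only through a deterministic bounded operator. The supremum over $\psi$ needs no wavelet reduction.

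\textbf{Other repairs needed.} The underlying rule is that the only admissible loss is a power of $\mu$, produced by H\"older against $\psi^\mu_{z'}$, never a loss in $\eps$.
\begin{itemize}
\item Your displayed estimate ends at $\eps^{-\kappa}$, which is not $\lesssim\mu^{-\kappa}$ for $\mu\gg\eps$. It should read $\mu^{-3/p}$, using $\mathbb{E}^{1/p}\|\partial_xK\ast\xi_\eps\|^p_{L^p(\mathfrak{K})}\lesssim\eps^{-1/2}$ with no $\kappa$-loss.
\item A genuine Schur test needs suprema over $z'$ and $w$ of random quantities, and these produce exactly such $\eps$-dependent losses. Keep the random fields in $L^p$, not $L^\infty$.
\item Your input (2) is false for $|y-z'|_{\mathfrak{s}}>\eps$. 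The increment of $\partial_xK\ast\rho_\eps$ has $L^1$ norm of order $|y-z'|_{\mathfrak{s}}$ (up to a logarithm), not $\eps$. This error is harmless: with the correct bound, the term tested against $\mu^{-1}(\partial_x\psi)^\mu_{z'}$ costs only a small power of $\mu$. The paper handles that term with the Broux--Lee reconstruction bound for Young products instead.
\end{itemize}
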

\begin{proof}
    For notational simplicity, we will assume that $\eps, \delta$ are dyadic. We write
	\begin{align*}
		\| \sup_\psi \langle D_g \tilde \Pi_z \tau  - \tilde \Pi_{z'} H_{\tau; \eps, \delta}^{z, g}(z'), \psi_{z'}^\mu \rangle \|_{L^2(\mfK; dz')} \le& T_1 + T_2
	\end{align*}
	where
	\begin{align*}
		T_1 =& \| \sup_\psi \eps^{-1/2} \langle (\partial_x K \ast g_{\eps, \delta} - \partial_x K \ast g_{\eps, \delta}(z'))  \partial_x K \ast \xi_\eta, \psi_{z'}^\mu \rangle \|_{L^2(\mfK; dz')}
		\\
		T_2 =& \| \sup_\psi \eps^{-1/2} \langle \partial_x K \ast \xi_{\eps, \delta} (\partial_x K \ast g_\eta - \partial_x K \ast g_\eta(z')), \psi_{z'}^\mu \rangle \|_{L^2(\mfK; dz')}
	\end{align*}
	where $\eta \in \{\eps, \delta\}$ depends on the choice of $\tau$. 
	We bound each of $T_1$ and $T_2$ separately. Since the bound for $T_2$ will be of use when bounding $T_1$, we start with this term. 
	
	By integration by parts, we have that
	\begin{align*}
		T_2 \le& \| \sup_\psi \eps^{-1/2} \langle  K \ast \xi_{\eps, \delta} \, \partial_x^2 K \ast g_\eta, \psi_{z'}^\mu \rangle \|_{L^2(\mfK; dz')} 
		\\ &+ \mu^{-1} \| \sup_\psi \eps^{-1/2} \langle K \ast \xi_{\eps, \delta} (\partial_x K \ast g_\eta - \partial_x K \ast g_\eta(z')), (\partial_x \psi)_{z'}^\mu \rangle \|_{L^2(\mfK; dz')}. 
		\\ =& T_{21} + T_{22}.
	\end{align*}
	To bound $T_{21}$, we note that by Lemma~\ref{lem:Schauder}, we have that $\|K \ast g_\eta\|_{\mcB_{2,2}^2; \mathfrak{K}} \lesssim \|g\|_{L^2(\mathbb{R} \times \mathbb{T})}$. Therefore it follows that $\|\partial_x^2 K \ast g_\eta \|_{L^2(\mathfrak{K})} \lesssim \|g\|_{L^2(\mathbb{R} \times \mathbb{T})}$ since\footnote{This bound could also be proven by relating the Fourier multiplier of $K$ to the one of the full-heat kernel $P$ and appealing to the fact that $\partial_x^2 P$ has a bounded Fourier-multiplier.} $\mcB_{2,2}^0 \simeq L_\mathrm{loc}^2$ (see Lemma~\ref{lem:Bes_to_L2}). We note that by Young's convolution inequality followed by H\"older's inequality, for every $p < \infty$ we have that
	\begin{align*}
		T_{21} \lesssim \sup_{\psi \in \mcB^r} \| \psi^\mu \|_{L^{p^\prime}(\overline{\mathfrak{K}})} \| \eps^{-1/2} K \ast \xi_{\eps, \delta} \|_{L^p(\overline{\mathfrak{K}})} \| \partial_x^2 K \ast g_\eta \|_{L^2(\overline{\mathfrak{K}})} 
	\end{align*}
	where $p^\prime$ is the H\"older conjugate of $p$. Since $\sup_{\psi \in \mcB^r}\|\psi^\mu \|_{L^{p^\prime}} \lesssim \mu^{- |\fs|/p}$, it suffices to estimate the $L^p$-norm of $K \ast \xi_{\eps, \delta}$ for $p$ sufficiently large. By Lemma~\ref{lem:annealed_schauder}, since $\int \rho^{n,n+1} = 0$, we have that
	\begin{align*}
		\mathbb{E}^{1/p}[ \|K \ast \xi_{\eps, \delta}\|_{L^p(\mfK)}^p] \le \sum_{n = n_{\eps}}^{n_\delta -1} \mathbb{E}^{1/p}[ \|K \ast \xi_{n,n+1}\|_{L^p(\mfK)}^p] \lesssim \sum_{n = n_{\eps}}^{n_\delta -1} 2^{-n/2} \sim \eps^{1/2}.
	\end{align*}	
	This completes the bound on $T_{21}$.
	
	We now establish the required bound for $T_{22}$. We note that we have the chain of embeddings
	\begin{align*}
		L_\mathrm{loc}^p \hookrightarrow \mathcal{B}_{p, \infty}^0 \hookrightarrow \mathcal{B}_{\infty, \infty}^{- |\fs|/p}
	\end{align*}
	where the first embedding is nothing more than Young's convolution inequality. Therefore, by the previous calculation, we have that for every $\kappa > 0$, 
	\begin{align}\label{eq:Besov_check}
			\mathbb{E}^{1/p}[ \| \eps^{-1/2} K \ast \xi_{\eps, \delta} \|_{\mathcal{B}_{\infty, \infty}^{- \kappa}; \mathfrak{K}}^p] \lesssim 1
	\end{align}
	uniformly in $0 < \delta < \eps < 1$.

	To obtain the required bound on $T_{22}$, we then simply note that it follows from the Reconstruction Bound for Young Products given in \cite[Theorem 3.1]{BL22} that
	\begin{align*}
		\| \sup_\psi \eps^{-1/2}&  \langle K \ast \xi_{\eps, \delta} (\partial_x K \ast g_\eta - \partial_x K \ast g_\eta(z')), (\partial_x \psi)_y^\mu \rangle \|_{L^2(\mfK; dz')} 
		\\
		&\lesssim \| \eps^{-1/2} K \ast \xi_{\eps, \delta} \|_{\mathcal{B}_{\infty, \infty}^{-\kappa}; \overline{\mathfrak{K}}} \| \partial_x K \ast g_\eta\|_{\mathcal{B}_{2, \infty}^{1}; \overline{\mathfrak{K}}} \mu^{1-\kappa}.
	\end{align*}
	The required bound on $T_{22}$ then follows from the suboptimal embedding $L^2(\mbR \times \mbT) \hookrightarrow \mcB_{2, \infty}^{0}$ and the Schauder estimate given in Lemma~\ref{lem:Schauder}.
	
	It now remains to bound $T_1$. We first argue that we may take $\eta = \eps$. Indeed, for $\eta = \delta$, we may write
	\begin{align*}
		T_1 & \le \| \sup_\psi \eps^{-1/2} \langle (\partial_x K \ast g_{\eps, \delta} - \partial_x K \ast g_{\eps, \delta}(z'))  \partial_x K \ast \xi_\eps, \psi_{z'}^\mu \rangle \|_{L^2(\mfK; dz')} \\
		& \quad + \| \sup_\psi \eps^{-1/2} \langle (\partial_x K \ast g_{\eps} - \partial_x K \ast g_{\eps}(z'))  \partial_x K \ast \xi_{\eps,\delta} , \psi_{z'}^\mu \rangle \|_{L^2(\mfK; dz')}
		\\
		& \quad + \| \sup_\psi \eps^{-1/2} \langle (\partial_x K \ast g_{\delta} - \partial_x K \ast g_{\delta}(z'))  \partial_x K \ast \xi_{\eps,\delta}, \psi_{z'}^\mu \rangle \|_{L^2(\mfK; dz')}
	\end{align*}
	and note that the required bound for the latter two terms on the right hand side are the bounds we already acquired for $T_2$.
	
	Therefore, we must now obtain the required bound for $T_1$ only in the case where $\eta = \eps$. To this end, we note that since $\partial_x K \ast g \in \mcB_{2,\infty}^1$, we have that
	\begin{align*}
		\|\partial_x K \ast g_{\eps, \delta} \|_{L^2(\mfK)} \le \sum_{n = n_{\eps}}^{n_\delta - 1} \|\partial_x K \ast g_{n, n+1} \|_{L^2(\mfK)} \lesssim \sum_{n = n_{\eps}}^{n_\delta - 1} \|\partial_x K \ast g\|_{\mcB_{2,\infty}^1;\mathfrak{K}} 2^{-n} \sim \eps.
	\end{align*}
	Therefore, treating the terms arising from $\partial_x K \ast g_{\eps, \delta}$ and $\partial_x K \ast g_{\eps, \delta}(z')$ in $T_{1}$ separately, by a similar application of H\"older's inequality and in the former case of Young's convolution inequality to the one appearing in the bound for $T_{21}$, it suffices to show that
	$$\mathbb{E}[ \| \partial_x K \ast \xi_{\eps} \|_{L^p(\mfK)}^p]^{1/p} \lesssim \eps^{-1/2}.$$
	This follows by the earlier bound $\mathbb{E}^{1/p} [\|K \ast \xi \ast \psi^\eps \|_{L^p(\mfK)}^p] \lesssim \eps^{1/2}$ for $\psi$ that kills constants by placing the derivative on the test function.
\end{proof}

\begin{proof}[Proof of Theorem~\ref{theo: bphz_uniform_bound}]
	The proof of the Theorem now follows in the same way as the proof of \cite[Proposition 5.2]{HS}. The main difference is that applications of \cite[Lemma 4.5]{HS} for the trees $\<bs(2)0>, \<ps(2)0>$ should be replaced by Lemma~\ref{lem:Malliavin_base} and that we do not require any application of \cite[Lemma 4.6]{HS} since the only trees $\tau$ for which $\gamma_\tau \le 0$ are the noise symbols handled above. Note that the modelled distributions $H_{\tau; \eps, \delta}^{z, g}$ for $\tau \in \{\<bs(2)0>, \<ps(2)0>, \<s(1)0>\}$ are defined in the same way as the modelled distributions for the corresponding quantities with no box as would appear in \cite{HS}. The recursive construction provided in \cite{HS} would be sufficient to construct these modelled distributions (only failing at the next step of bounding their reconstruction) and therefore these modelled distributions satisfy the required bounds. The final ingredient that requires adaptation is the identification of $\mathcal{R} H_{\tau; \eps, \delta}^{z, g}$ with $D_g \tilde \Pi_z^{\eps, \delta} \tau$ which was the content of \cite[Proposition 4.1]{HS} and which is established in our context in Lemma~\ref{lem:Malliavin_ident}. With these modifications, the proof of \cite[Proposition 5.2]{HS} proceeds in the same way to yield the desired result.
\end{proof}

\subsection{Proof of Lemma \ref{lem:Malliavin_ident}}\label{sec:proof_Malliavin_ident}
In this subsection, we provide the algebraic ingredients that are used to show that the pointed modelled distributions used in the bulk of the paper have the correct reconstruction. The proof strategy is a more streamlined alternative to the strategy adopted in \cite[Section 4.1]{HS} that is inspired by the argument used in \cite[Section 7]{BSS25}. 

We recall that $\tilde{\mfT}$ is the regularity structure associated to the noise types $\mathfrak{L}_- = \{ \<b0>, \<p0>, \<s(1)0>, \<bs(2)0>, \<ps(2)0>\}$ and kernel types $\mathfrak{L}_+ = \{ \<k>, \<bk>, \<pk> \}$ and the rule $R$ which corresponds to the system \eqref{eq: tct_sys}.

We enrich our set of edge types by defining $\partial \mathfrak{L} = \mathfrak{L} \sqcup \{(\mfl, \partial): \mfl \in \mfL \setminus \{ \<bs(2)0>, \<ps(2)0> \}\}$. We extend the rule $R$ for $\mathfrak{L}$ to a rule $R^\partial$ for $\partial \mfL$ by setting $R^\partial(\mfl, \partial) = R(\mfl)$. In particular, edges of type $(\mfl, \partial)$ cannot appear in any tree that strongly conforms to the rule and can appear in trees that conform to the rule only as edges adjacent to the root.
\begin{definition}
	We define $\tcT_1$ to be the set of trees $\tau = \mcI_{(\mfl, \partial)}^k \sigma \cdot \eta$ where $\eta \in \mcT_+$ conforms to $R$ and $\mcI_{(\mfl, \partial)}^k \sigma$ conforms to $R^\partial$ and satisfies $o(\mcI_\mfl^k \sigma) := \max\{\gamma_{\mcI_\mfl^k \sigma}, |\mcI_\mfl^k \sigma|_\mfs\} > 0$.  We define $\mcT_1$ similarly.
	
	Furthermore, for a tree $\tau$ conforming to $R$ and an edge $e$ of $\tau$ that is adjacent to the root, we write $\partial_e \tau$ for the tree obtained by replacing the type $\mfl$ of $e$ with $(\mfl, \partial)$. When the edge $e$ is clear from context (for example when $\tau$ is planted), we will sometimes simply write $\partial$ to avoid unnecessarily introducing variable names for edges.
\end{definition}
Note that since the tree product is assumed to be commutative $\langle \tcT_1 \rangle$ is a bimodule over $\langle \tilde{\mcT}_+ \rangle$ in an obvious way.

\begin{definition}
	We define the linear map $\tilde\Delta^\partial : \scal{\tcT} \to \scal{\tcT} \otimes \scal{\tcT_1}$ recursively via
	\begin{align*}
		&\tilde \Delta^\partial \<bs(2)0> = \<s(1)0> \otimes \partial \, \<b1> + \<b1> \otimes \partial \, \<s(1)0>, \qquad && \tilde \Delta^\partial \<ps(2)0> = \<s(1)0> \otimes \partial \, \<p1> + \<p1> \otimes \partial \, \<s(1)0>,
		\\
		&\tilde \Delta^\partial \<b0> = 0, \qquad &&\tilde \Delta^\partial \<p0> = 0,
		\\
		& \tilde \Delta^\partial \<s(1)0> = \mathbf{1} \otimes \partial \, \<s(1)0>, \qquad && \tilde \Delta^\partial X^k = 0,
		\\
		&\tilde \Delta^\partial \tau \sigma = \tilde \Delta^\partial \tau \tilde \Delta \sigma + \tilde \Delta \tau \tilde \Delta^\partial \sigma,
		\\
		&\tilde \Delta^\partial \mcI_\mfl^k \tau = (\mcI_\mfl^k \otimes \id) \tilde \Delta^\partial \tau + \sum_{|j|_\mfs < o(\mcI_\mfl^k \tau) } \frac{X^j}{j!} \otimes \mcI_{(\mfl, \partial)}^{k+j} \tau.
	\end{align*}
	For each $x,y \in \mbR^d$, we also define a linear map ${\chi}_x^y : \langle \tcT_1 \rangle \to \mbR$ as follows. For each $\mfl \in \mfL_+, \mft \in \mfL_-$ we set
	\begin{align*}
		\chi_z^{z'} (\mcI_{(\mfl, \partial)}^k \sigma) &= k! \mcQ_{X^k} [ J^\mfl(z') H_{\sigma; \eps, \delta}^{z, g}(z') + \mathcal{N}_{\gamma_{\sigma}}^{\mfl} H_{\sigma; \eps, \delta}^{z, g}(z')] 
		\\
		& \qquad - \sum_{|l|_\mfs < |\mcI_{\mfl}^k \sigma|_\mfs} \frac{(z'-z)^l}{l!} D^{k+l} K_\mfl \ast \mcR H_{\sigma; \eps, \delta}^{z, g}(z).
	\end{align*}
	We also define
	\begin{align*}
		\chi_z^{z'}(\Xi_{(\mft, \partial)}) &= \begin{cases}
			g_\eps(z') \qquad & \text{ if } \mft = \<b0>,
			\\
			g_{\delta}(z') \qquad & \text{ if } \mft = \<p0>,
			\\
			\eps^{-1/2} \partial_x K \ast g_{\eps, \delta}(z') \qquad & \text{ if } \mft = \<s(1)0>.
		\end{cases}
	\end{align*}
	Finally, we extend this definition by declaring that for $\eta \in \mcT_+$ we have
	\begin{align*}
		\chi_z^{z'}(\mcI_{(\mfl, \partial)}^k \sigma \cdot \eta) = \chi_z^{z'}(\mcI_{(\mfl, \partial)}^k \sigma) \cdot \gamma_{z'z}(\eta)
	\end{align*}
	where $\gamma_{z'z}$ is the element of $\mathcal{G}_+$ corresponding to $\Gamma_{z'z}$. 
\end{definition}

\begin{prop}\label{prop:H_iden_1}
	For all $x,y \in \mbR^d$, we have that 
	\begin{align*}
		H_{\tau; \eps, \delta}^{z, g}(z') = (\mcQ_{< \gamma_\tau} \otimes \chi_z^{z'}) \tilde \Delta^\partial \tau.
	\end{align*}
\end{prop}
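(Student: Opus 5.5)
We prove the identity by induction on trees, using the same well-founded order $\prec$ as in the proof of Lemma~\ref{lem:transfer_1}: first by the number of noise edges, then by degree. Both sides are defined recursively through the same three operations: noise base cases, products, and planted trees $\mcI_\mfl^k$. It therefore suffices to check that the right-hand side $(\mcQ_{<\gamma_\tau}\otimes\chi_z^{z'})\tilde\Delta^\partial\tau$ obeys the recursion of Definition~\ref{def:pointed_mod}.

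\textbf{Base cases.} For $\tau=\<b0>,\<p0>,X^k$ we have $\tilde\Delta^\partial\tau=0$, matching $H=0$. For $\tau=\<s(1)0>$ the right-hand side is $\mathbf{1}\,\chi_z^{z'}(\Xi_{(\<s(1)0>,\partial)})=\eps^{-1/2}\partial_xK\ast g_{\eps,\delta}(z')$, as required. For $\tau=\<bs(2)0>$ we get
\begin{align*}
\<s(1)0>\,\chi_z^{z'}(\partial\<b1>)+\<b1>\,\chi_z^{z'}(\partial\<s(1)0>).
\end{align*}
The planted tree $\<b1>=\mcI'\<b0>$ has no polynomial part once $\mcQ_{<\gamma}$ is applied, and the pointed integration operator $\mcK^{z,2}$ acting on $H_{\<b0>}$ with candidate reconstruction $g_\eps$ gives $\partial_xK\ast g_\eps(z')$ after the Taylor corrections at $z$. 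So the first coefficient should reduce to $\partial_xK\ast g_\eps(z')$, using the definition of $\chi$ on $\mcI_{(\mfl,\partial)}$ together with $J^\mfl$ and $\mathcal{N}$ applied to $H_{\<b0>}=0$ with reconstruction $g_\eps$. The identity for $\<ps(2)0>$ is identical.

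\textbf{Product step.} The Leibniz form of $\tilde\Delta^\partial(\tau\bar\tau)$ gives
\begin{align*}
(\mcQ\otimes\chi)\bigl(\tilde\Delta^\partial\tau\,\tilde\Delta\bar\tau\bigr)+(\mcQ\otimes\chi)\bigl(\tilde\Delta\tau\,\tilde\Delta^\partial\bar\tau\bigr).
\end{align*}
Since $\chi_z^{z'}$ is a module map, $\chi(\sigma\cdot\eta)=\chi(\sigma)\gamma_{z'z}(\eta)$, and $(\id\otimes\gamma_{z'z})\tilde\Delta=\tilde\Gamma_{z'z}$. The first term is therefore $H_\tau\,\tilde\Gamma_{z'z}\bar\tau$ and the second is $\tilde\Gamma_{z'z}\tau\,H_{\bar\tau}$. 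It remains to check that truncating by $\mcQ_{<\gamma_{\tau\bar\tau}}$ is compatible with the truncations at $\gamma_\tau$ and $\gamma_{\bar\tau}$. This follows from the degree bookkeeping of \cite{HS}: the dropped terms have degree at least $\gamma_{\tau\bar\tau}$.

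\textbf{Integration step.} This step is the main obstacle. For $\tau=\mcI_\mfl^k\sigma$, the term $(\mcI_\mfl^k\otimes\id)\tilde\Delta^\partial\sigma$ yields the abstract-integration part $\mcI_\mfl^k H_\sigma(z')$. The polynomial sum $\sum_j \frac{X^j}{j!}\chi_z^{z'}(\mcI^{k+j}_{(\mfl,\partial)}\sigma)$ has to be matched with the polynomial part of $\partial_x^k\mcK^{z,2}H_\sigma$. Concretely, we will
\begin{itemize}
\item unfold \cite[Definition~3.16]{HS} to write the $X^j$ coefficient as $\mcQ_{X^j}$ of $J^\mfl(z')H_\sigma(z')+\mathcal{N}H_\sigma(z')$, minus the pointed Taylor subtraction at $z$;
\item verify that this coincides with $j!^{-1}$ times $\chi$ evaluated at $k+j$, using the convention that derivatives of $K$ shift the index;
\item check that the cutoff $|j|_\mfs<o(\mcI_\mfl^k\tau)$ agrees with truncation at $\gamma_\tau$.
\end{itemize}
The delicate point is the Taylor-subtraction term: it is present only when $|\mcI^k_\mfl\sigma|_\mfs>0$, and its range of summation must match the pointed operator exactly, including the degenerate case $\gamma_\sigma<0$. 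There we use the candidate reconstructions fixed in Remark~\ref{rem:Malliavin_base_case}, which are consistent with the definition of $\chi$ on noise edges.
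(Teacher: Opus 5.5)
Your proposal is correct and follows essentially the same argument as the paper. The paper also proceeds by induction over trees, reading the base cases off directly from the definition of $\chi_z^{z'}$. Its product step uses the Leibniz form of $\tilde\Delta^\partial$, the module property $\chi_z^{z'}(\sigma\cdot\eta)=\chi_z^{z'}(\sigma)\,\gamma_{z'z}(\eta)$, and the truncation compatibility, which the paper makes explicit via $|\sigma^{(1)}\mu^{(1)}|_\fs\ge\gamma_\sigma+\bar\alpha_\mu\ge\gamma_{\sigma\mu}$. Its integration step splits off $\mcI_\mfl^k H_{\sigma;\eps,\delta}^{z,g}(z')$ and notes that $\chi_z^{z'}$ on $\mcI^{k+j}_{(\mfl,\partial)}\sigma$ is defined precisely to reproduce the polynomial part of the pointed integration operator. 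The paper dispatches that last step in one line, so your checklist for it unpacks the same observation rather than taking a different route.
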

\begin{remark}
	In the recent work \cite{BM26}, it was shown that the auxiliary modelled
distribution $\tilde{f}_x^\tau$, introduced in \cite[Section 4.1]{HS} in the
proof of the analogue of Lemma~\ref{lem:Malliavin_ident} for smooth
Cameron--Martin directions, is related (up to truncations) at the algebraic
level to the description provided in \cite{BN24}. It is natural to conjecture
that the operation $\tilde \Delta^\partial$ admits a similar relation, although it does not rely on additional smoothness of the Cameron--Martin direction and is
formulated on a smaller regularity structure, without a symbol representing
the Cameron--Martin direction. Since the argument given here replaces
\cite[Section 4.1]{HS}, and we will not use such a relation in any way, we do
not pursue this point further.
\end{remark}

\begin{proof}
	We proceed by induction. The base case is $\tau = X_i$ which is immediate, $\tau = \<b0>, \<p0>$, $\tau = \<s(1)0>$ and $\tau = \<bs(2)0>, \<ps(2)0>$. In the case of $\tau = \<b0>$, we have that
	\begin{align*}
		H_{\<b0>; \eps, \delta}^{z, g}(z') = 0 = (\mathcal{Q}_{< 0} \otimes \chi_z^{z'}) \tilde \Delta^\partial \<b0>
	\end{align*}
	as required, and similarly for $\<p0>$. In the case of $\tau = \<s(1)0>$ we have that 
	\begin{align*}
		H_{\<s(1)0>; \eps, \delta}^{z, g}(z') = \eps^{-1/2} \partial_x K \ast g_{\eps, \delta}(z') \mathbf{1} = (\mcQ_{< 1/2} \otimes \chi_z^{z'}) (\mathbf{1} \otimes \partial \, \<s(1)0>)
	\end{align*}
	as required. 
	Since the remaining two cases are similar, we consider only $\tau = \<bs(2)0>$. For this $\tau$, we have that
	\begin{align*}
		H_{\<bs(2)0>; \eps, \delta}^{z, g}(z') =& \partial_x K \ast g_\eps(z') \<s(1)0> + \eps^{-1/2} \partial_x K \ast g_{\eps, \delta}(z') \<b1>
		\\
		=& \chi_z^{z'}(\<b1>) \<s(1)0> + \chi_z^{z'}(\<s(1)0>) \<b1> 
		\\
		=& (\mcQ_{< 0} \otimes \chi_z^{z'}) \tilde \Delta^\partial \<bs(2)0>.
	\end{align*}
	also as required.
	
	We now turn to the induction step where we treat the cases $\tau = \sigma \mu$ and $\tau = \mcI_\mfl^k \sigma$ separately. In the former of these cases, we write
	\begin{align*}
		(\mcQ_{< \gamma_\tau} \otimes \chi_z^{z'})\tilde \Delta^\partial (\sigma \mu) = (\mcQ_{< \gamma_\tau} \otimes \chi_z^{z'}) \Big [ \tilde \Delta^\partial \sigma \tilde \Delta \mu + \tilde \Delta^\partial \mu \tilde \Delta \sigma \Big ].
	\end{align*}
	We now write $\tilde \Delta^\partial \sigma = \sigma^{(1)} \otimes \sigma^{(2)}$ and $\tilde \Delta \mu = \mu^{(1)} \otimes \mu^{(2)}$ in Sweedler's notation. Then
	\begin{align*}
		(\mcQ_{< \gamma_\tau} \otimes \chi_z^{z'}) \Big [ \tilde \Delta^\partial \sigma \tilde \Delta \mu \Big ] &= \chi_z^{z'}(\sigma^{(2)} \mu^{(2)}) \mcQ_{<\gamma_\tau} \sigma^{(1)} \mu^{(1)}.
	\end{align*}
	Now we note that $\mcQ_{< \gamma_\tau} \sigma^{(1)} \mu^{(1)} = \mcQ_{< \gamma_\tau}  \Big [ \Big ( \mcQ_{<\gamma_\sigma} \sigma^{(1)} \Big ) \mu^{(1)} \Big ]$ since if $|\sigma^{(1)}|_\mfs \ge \gamma_\sigma$ then\footnote{Here $\alpha_\tau$ denotes the lowest degree of a non-polynomial tree in the smallest sector containing $\tau$ whilst $\bar{\alpha}_\tau$ denotes the lowest degree of a general tree in that sector.} $|\sigma^{(1)} \mu^{(1)}|_\mfs \ge \gamma_{\sigma} + \bar{\alpha}_{\mu} \ge \gamma_\tau$. 
	Since we also have that $\chi_z^{z'}(\sigma^{(2)} \mu^{(2)}) = \chi_z^{z'}(\sigma^{(2)}) \gamma_{z'z}(\mu^{(2)})$, we thus conclude from the induction hypothesis that
	\begin{align*}
		(\mcQ_{< \gamma_\tau} \otimes \chi_z^{z'}) \Big [ \tilde \Delta^\partial \sigma \tilde \Delta \mu \Big ] &= \mcQ_{<\gamma_\tau} \Big [ H_\sigma^{z, g}(z') \Gamma_{z'z} \mu \Big ].
	\end{align*}
	Since the term with $\tilde \Delta^\partial \mu \tilde \Delta \sigma$ is treated similarly, the result in this case of the induction step follows.
	
	It remains to consider the case where $\tau = \mcI_\mfl^k \sigma$. Here we write
	\begin{align*}
		(\mcQ_{<\gamma_{\tau}} \otimes \chi_z^{z'}) \tilde \Delta^\partial \tau = \mcQ_{< \gamma_{\mcI_\mfl^k \sigma}} \Bigg [ (\mcI_\mfl^k \otimes \id) (\id \otimes \chi_z^{z'}) \tilde \Delta^\partial \sigma \Bigg ] + \sum_{|j|_\mfs < \gamma_{\mcI_\mfl^k \sigma}} \frac{X^j}{j!} \chi_z^{z'}(\mcI_{(\mfl, \partial)}^{k+j} \sigma).
	\end{align*}
	We then have that
	\begin{align*}
		\mcQ_{< \gamma_{\mcI_\mfl^k \sigma}} \Bigg [ (\mcI_\mfl^k \otimes \id) (\id \otimes \chi_z^{z'}) \tilde \Delta^\partial \sigma \Bigg ] = \mcI_\mfl^k (\mcQ_{< \gamma_\sigma} \otimes \chi_z^{z'}) \tilde \Delta^\partial \sigma = \mcI_\mfl^k H_{\sigma; \eps, \delta}^{z, g}(z')
	\end{align*}
	by the induction hypothesis. Since $\chi_z^{z'}$ is defined exactly so that the remaining contribution to $(\mcQ_{\gamma_{\tau}} \otimes \chi_z^{z'}) \tilde \Delta^\partial \tau$ is the polynomial part in the definition of $H_{\tau; \eps, \delta}^{z, g}$, the result follows. 
\end{proof}

In light of Proposition \ref{prop:H_iden_1}, we will from now on extend the definition of $\tau \mapsto H_{\tau; \eps, \delta}^{z, g}$ to all of $\langle \tcT \rangle$ by linearity.

We now seek to give a non-recursive formulation of the coaction $\tilde \Delta^\partial$ in order to show that it cointeracts with $\Delta_r^-$. However, on $\tilde{\mfT}$ this is relatively painful due to the special case where cuts hit $\<bs(2)0>$ or $\<ps(2)0>$. Therefore, it is more convenient to aim for a more unified formula by working on $\mfT$. 

\begin{definition}
	We define the linear map $\Delta^\partial : \langle \mcT \rangle \to \langle \mcT \rangle \otimes \langle \mcT_1 \rangle$ recursively via
	\begin{align*}
		& \Delta^\partial \Xi_\mfl^j = \sum_{|k|_\mfs < o({\Xi_\mfl^j})} \frac{X^k}{k!} \otimes \Xi_{(\mfl, \partial)}^{k+j}, \qquad \Delta^\partial X_i = 0
		\\
		& \Delta^\partial \mcI_\mfl^k \tau = (\mcI_\mfl^k \otimes \id) \Delta^\partial \tau + \sum_{|j|_\mfs < o(\mcI_\mfl^k \tau) } \frac{X^j}{j!} \otimes \mcI_{(\mfl, \partial)}^{k+j} \tau
		\\
		& \Delta^\partial (\tau \sigma) = (\Delta^\partial \tau) (\Delta \sigma) + (\Delta^\partial \sigma) (\Delta \tau)
	\end{align*}
\end{definition}

\begin{prop}\label{prop:Phi_Delta-partial}
	We have that $(\Phi \otimes \Phi)\tilde \Delta^\partial = \Delta^\partial \Phi$ where $\Phi$ is extended to a map $\tilde{\mcT}_1 \to \mcT_1$ in the obvious way.
\end{prop}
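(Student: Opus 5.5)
Both $\tilde\Delta^\partial$ and $\Delta^\partial$ are given by recursions on the tree structure, and $\Phi$ is multiplicative and commutes with each $\mcI_\mfl^k$. So I would prove the identity $(\Phi\otimes\Phi)\tilde\Delta^\partial\tau=\Delta^\partial\Phi(\tau)$ by induction on $\tau\in\tcT$, ordered by the number of edges. Alongside it I would use the already established relation $(\Phi\otimes\Phi)\tilde\Delta=\Delta\Phi$ from the proof of Lemma~\ref{lem:transfer_1}. I would also record that $\Phi$ preserves degrees and the quantities $o(\cdot)$, i.e.\ $o(\mcI_\mfl^k\Phi(\sigma))=o(\mcI_\mfl^k\sigma)$. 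Both $|\cdot|_\fs$ and $\gamma_\cdot$ are defined through degrees and noise counts. A box counts as one noise edge in $\tcT$ but as several in $\cT$, so the $n_\tau\tilde\kappa$ corrections must be matched. I would either check this directly or note that $\tilde\kappa$ only needs to be small enough that no threshold $|j|_\fs<o$ is crossed, so the index sets of the polynomial sums coincide.

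The induction step is then formal. For a product, $\Phi(\tau\sigma)=\Phi(\tau)\Phi(\sigma)$, and multiplicativity of $\Phi\otimes\Phi$ combines the Leibniz rules on both sides with the hypothesis and the $\Delta$-relation. For planted trees, $(\Phi\otimes\Phi)(\mcI^k_\mfl\otimes\id)=(\mcI^k_\mfl\otimes\id)(\Phi\otimes\Phi)$. The extra term $\sum_j\frac{X^j}{j!}\otimes\mcI^{k+j}_{(\mfl,\partial)}\tau$ is mapped term by term, using $\Phi(X^j)=X^j$ and the extension $\Phi(\mcI_{(\mfl,\partial)}^{k+j}\tau)=\mcI_{(\mfl,\partial)}^{k+j}\Phi(\tau)$ together with the equality of the $o$'s. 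The noise $\<b0>$ and $\<p0>$ cases are where the two definitions differ in form: $\tilde\Delta^\partial\<b0>=0$, whereas $\Delta^\partial\<b0>=\sum_{|k|_\fs<o}\frac{X^k}{k!}\otimes\Xi^{k}_{(\mfl,\partial)}$. Here $o(\<b0>)=\max\{\gamma,|\<b0>|_\fs\}\le 0$ because $\gamma_{\<b0>}=-\tilde\kappa<0$, so the sum is empty and both sides vanish. The polynomial cases are trivial.

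The main obstacle is the base case of the boxed noises, where $\Phi$ unfolds a symbol into a genuine tree. For $\<s(1)0>$, $\Phi(\<s(1)0>)=\<1t>=\mcI'\<0t>$. The recursion for $\Delta^\partial$ gives $(\mcI'\otimes\id)\Delta^\partial\<0t>+\sum_{|j|_\fs<o(\<1t>)}\frac{X^j}{j!}\otimes\mcI^{j}_{(\cdot,\partial)}\<0t>$. The first term vanishes since $o(\<0t>)<0$. For the second, $|\<1t>|_\fs=-1-\kappa$ and $\gamma_{\<1t>}=1/2-\tilde\kappa$ (its sector contains only itself above polynomials), so $o\in(0,1)$ and only $j=0$ survives. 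This yields $\mathbf 1\otimes\partial\<1t>=(\Phi\otimes\Phi)(\mathbf 1\otimes\partial\<s(1)0>)$.

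For $\<bs(2)0>$, $\Phi$ gives $\<b2t>=\<1t>\cdot\<b1>$. Leibniz gives $\Delta^\partial\<1t>\,\Delta\<b1>+\Delta^\partial\<b1>\,\Delta\<1t>$. The first term is $(\mathbf 1\otimes\partial\<1t>)(\<b1>\otimes\mathbf 1+\mathbf 1\otimes\<b1>)$; by the same computation $\Delta^\partial\<b1>=\mathbf 1\otimes\partial\<b1>$, multiplied by $\<1t>\otimes\mathbf 1+\mathbf 1\otimes\<1t>$. The terms $\mathbf 1\otimes\partial\<1t>\cdot\<b1>$ and $\mathbf 1\otimes \partial\<b1>\cdot\<1t>$ are what must be reconciled with $\tilde\Delta^\partial\<bs(2)0>=\<s(1)0>\otimes\partial\<b1>+\<b1>\otimes\partial\<s(1)0>$. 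I expect to resolve this by noting that these terms either fail to be in $\mcT_1$ (the $\tilde{\mfT}$ structure group vanishes on such pieces, consistent with the exclusion in Definition~\ref{def:tct}), or are killed by the truncation convention implicit in the relevant target space. Failing that, the statement is presumably meant modulo such terms, which are annihilated by $\mcQ_{<\gamma_\tau}\otimes\chi$. This bookkeeping is the delicate step.
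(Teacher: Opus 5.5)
Your route matches the paper's: induction over $\tcT$, with polynomials, $\<b0>$, $\<p0>$ and $\<s(1)0>$ as base cases handled exactly as you do, and products and planting treated formally. Your remark that the thresholds $|j|_\fs<o(\cdot)$ must agree even though a box counts as one noise in $\tcT$ but two in $\mcT$ is a fair point that the paper passes over.

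The gap is the base case $\<bs(2)0>$. You leave it open, and the obstruction you found there comes from a miscomputed coaction. You used $\Delta\<b1>=\<b1>\otimes\mathbf 1+\mathbf 1\otimes\<b1>$ and $\Delta\<1t>=\<1t>\otimes\mathbf 1+\mathbf 1\otimes\<1t>$. But $|\<b1>|_\fs=-1/2-\kappa$ and $|\<1t>|_\fs=-1-\kappa$ are both negative, so neither tree lies in $\mcT_+$, which contains only $X^k$ and planted trees of positive degree. Hence $\Delta\<b1>=\<b1>\otimes\mathbf 1$ and $\Delta\<1t>=\<1t>\otimes\mathbf 1$. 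You also need $\Delta^\partial\<b1>=\mathbf 1\otimes\partial\<b1>$; this holds because $\Delta^\partial\<b0>=0$ and $o(\<b1>)=\gamma_{\<b1>}=1-\kappa-\tilde\kappa\in(0,1)$. The product rule then gives directly
\[
\Delta^\partial\Phi(\<bs(2)0>)=(\mathbf 1\otimes\partial\<1t>)(\<b1>\otimes\mathbf 1)+(\mathbf 1\otimes\partial\<b1>)(\<1t>\otimes\mathbf 1)=\<b1>\otimes\partial\<1t>+\<1t>\otimes\partial\<b1>=(\Phi\otimes\Phi)\tilde\Delta^\partial\<bs(2)0>,
\]
which is the paper's computation. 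The case $\<ps(2)0>$ is identical with $\<p1>$ in place of $\<b1>$.

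So the identity holds on the nose, and none of your three fallbacks is needed. As justified, they are also off target. The exclusion of $\<s(1)b1>$ and $\<s(1)p1>$ in Definition~\ref{def:tct} concerns $\tcT$, and the structure group plays no role here. The spurious terms are absent simply because of degree counting. Retreating to an identity "modulo terms annihilated by $\mcQ_{<\gamma_\tau}\otimes\chi$" would also prove a weaker statement than the one claimed.
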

\begin{proof}
	We proceed by induction on $\tcT$. The base cases are $X_i$ (which is trivial), $\<b0>$ which is straightforward from the definitions, $\<s(1)0>$ and $\<bs(2)0>, \<ps(2)0>$. 
	For $\<s(1)0>$ we have that 
	$$
	\Delta^\partial \Phi(\<s(1)0>) = \Delta^\partial \<1t> = \mathbf{1} \otimes \mathcal{I}_{(\mfl, \partial)}^\prime \<0t> = (\Phi \otimes \Phi) (1 \otimes \partial \, \<s(1)0>) = (\Phi \otimes \Phi) \tilde \Delta^\partial \<s(1)0>.
	$$
	Similarly, we have that
	\begin{align*}
		\Delta^\partial \Phi(\<bs(2)0>) =& \Delta^\partial \<b2t> = (\<b1> \otimes \mathbf{1})\Delta^\partial \<1t> + (\<1t> \otimes \mathbf{1}) \Delta^\partial \<b1> = \<b1> \otimes \<1t> + \<1t> \otimes \<b1> 
		\\
		=& (\Phi \otimes \Phi)(\<b1> \otimes \<s(1)0> + \<s(1)0> \otimes \<b1>) = (\Phi \otimes \Phi) \tilde \Delta^\partial \<bs(2)0>.
	\end{align*}
	
	Since the case of $\<ps(2)0>$ is essentially the same as that of $\<bs(2)0>$,
	it remains to show stability under the operations of integration and multiplication. These follow immediately from the fact that the recursive definitions of both maps $\tilde \Delta^\partial$ and $\Delta^\partial$ coincide and the fact that $\Phi$ commutes with multiplication and planting.
\end{proof}

In the sequel, we will freely use the notation introduced in Section~\ref{sec:DPD_Struc}. In addition, we will call a tuple $(\tau, \hat\tau, \mfn, \mfe)$ a \emph{coloured tree} if the subtree $\hat \tau \subset \tau$ contains the root of $\tau$. For any given coloured tree, define the contraction operator $\mfC$ by
\begin{equ}
	\mfC(\tau, \hat\tau, \mfn, \mfe) = (\tau/\hat\tau, [\mfn]_{\hat\tau}, \mfe).
\end{equ}
This notation of coloured trees and contraction operator will be of use later when we prove the cointeraction property Lemma \ref{lem:cointeract}.

\begin{prop}\label{prop:Delta-partial-o_global}
	For $\Delta^\partial : \langle \mcT \rangle \to \langle \mcT \rangle \otimes \langle \mcT_1 \rangle$, we can write
	\begin{align*}
		\Delta^\partial (\tau, \mfn, \mfe) =  \sum_{\substack{C \in \mbC[\tau] \\ C \neq \emptyset}} \sum_{\eps_C} \sum_{n_{\not \ge C}} \sum_{e \in C} \frac{1}{\eps_C!} {\mfn \choose n_{\not \ge C}} & (\tau_{\not \ge C}, n_{\not \ge C} + \pi \eps_C, \mfe) \\ & \otimes \mfp_{\cT_1} \partial_e \mfC (\tau, \tau_{\not \ge C} , \mfn -n_{\not \ge C}, \mfe + \eps_C).
	\end{align*}
	
	We note that the sums defining $\Delta^\partial$ are effectively finite due to the projection $\mfp_{\cT_1}$ and the binomial coefficient. 
\end{prop}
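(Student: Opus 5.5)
We prove the formula by structural induction on $\tau \in \mcT$, mirroring the standard proof that the recursive definition of the coaction $\Delta$ (or $\Delta^+$) coincides with its non-recursive cut-based description, as in \cite[Section 8]{H0} and \cite{BHZ}. Denote the right-hand side by $\hat\Delta^\partial(\tau,\mfn,\mfe)$. It suffices to show that $\hat\Delta^\partial$ satisfies the three recursive relations defining $\Delta^\partial$. We also use the cut-based formula for $\Delta$ itself, namely the same expression with $\emptyset$ allowed, no sum over $e$, and $\mfp_{\cT_+}$ applied to the quotient instead of $\mfp_{\cT_1}\partial_e$.

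\textbf{Base cases.} For $X_i$ there are no kernel or noise edges, so $\mbC[\tau]=\{\emptyset\}$ and both sides vanish. For a noise $\Xi^j_\mfl$, here regarded as a single edge of noise type with decoration $j$, we allow cuts through this root-adjacent edge in the $\Delta^\partial$ convention. The only nonempty cut is then $C=\{e\}$, and $\tau_{\ng C}$ is the root vertex alone. The sum over $\eps_C=k$ gives $\frac{X^k}{k!}\otimes \Xi^{j+k}_{(\mfl,\partial)}$, and the projection $\mfp_{\cT_1}$ enforces $|k|_\fs<o(\Xi^j_\mfl)$. This is exactly the defining formula, so the base case amounts to checking that the cut convention in $\mbC[\tau]$ is the one consistent with this.

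\textbf{Planting.} Let $\tau=\mcI^k_\mfl\sigma$ with trunk edge $e_0$. Cuts of $\tau$ split into $\{e_0\}$ and the cuts $C$ of $\sigma$ viewed inside $\tau$; note that $C=\emptyset$ for $\sigma$ contributes nothing. For $C=\{e_0\}$, the kept part is the root with $\pi\eps_C=j$ and $n_{\ng C}$ forced to be $0$ at the root, since the root of a planted tree carries no decoration. The quotient is $\mcI^{k+j}_{(\mfl,\partial)}\sigma$, and the projection $\mfp_{\cT_1}$ imposes $|j|_\fs<o$, which recovers the second term of the recursion. For a cut $C$ of $\sigma$, we have $\tau_{\ng C}=\mcI^k_\mfl(\sigma_{\ng C})$, and the quotient, the distinguished edge $e$ and the decorations are unchanged. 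This gives $(\mcI^k_\mfl\otimes\id)\hat\Delta^\partial\sigma$.

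\textbf{Products.} Let $\tau=\sigma\mu$. A cut of $\tau$ is a pair $C=C_\sigma\sqcup C_\mu$, and $\tau_{\ng C}=\sigma_{\ng C_\sigma}\cdot\mu_{\ng C_\mu}$, with the root decorations added. The binomial coefficient at the root factorises via the Vandermonde identity, while all other decorations and the $\eps_C$ sums factorise directly. The distinguished edge $e\in C$ lies either in $C_\sigma$ or in $C_\mu$. In the first case the $\mu$-factor is the cut-based formula for $\Delta\mu$ (with $C_\mu$ possibly empty) and the $\sigma$-factor is $\hat\Delta^\partial\sigma$; the second case is symmetric. The quotient $\mfC(\tau,\tau_{\ng C})$ is the tree product of the two quotients, and $\partial_e$ acts on a single factor. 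We then need $\mfp_{\cT_1}$ of this product to equal the $\cT_1\cdot\cT_+$ product of the separate projections, $\mfp_{\cT_1}$ on the $\partial$-factor and $\mfp_{\cT_+}$ on the other. This is the definition of $\cT_1$ as $\mcI_{(\mfl,\partial)}\sigma\cdot\eta$ with positivity imposed only on the $\partial$-branch and $\eta\in\cT_+$.

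\textbf{Main obstacle.} The delicate point is the bookkeeping of the root decoration in the product step. One must show that Vandermonde's identity for $\binom{\mfn}{n_{\ng C}}$ at the shared root is compatible with the way the recursive formula $(\Delta^\partial\tau)(\Delta\sigma)$ multiplies both the $X$-polynomials on the left factor and the root decorations on the right factor. One must also show that no extra positivity constraint arises from the joint projection. We would handle this exactly as in the proof of \cite[Proposition 8.8]{H0}. Finiteness of the sums follows from the binomial coefficient together with $\mfp_{\cT_1}$ bounding $|\eps_C|$.
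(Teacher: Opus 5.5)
Your proposal is correct and follows the paper's proof essentially step by step. In both, one checks that the cut-based expression satisfies the recursive relations for $X_i$, for $\Xi^j_\mfl$, for planting (separating the cut $\{e_*\}$ from cuts of the planted subtree, with the vanishing root decoration forcing $n_{\ng C}=0$ at the root), and for products (splitting $C=C_\tau\sqcup C_\sigma$ according to where $e$ lies, applying Chu--Vandermonde at the shared root, and using $\mfp_{\cT_1}\partial_e(\tau\sigma)=(\mfp_{\cT_1}\partial_e\tau)(\mfp_{\mcT_+}\sigma)$); your remark that the noise base case requires admitting the root-adjacent noise edge as a cut matches the convention the paper uses tacitly when it says there is ``exactly one cut''.
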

\begin{proof}
	It suffices to check that the combinatorial definition in the statement satisfies the recursive relations stated above. In the case of $X_i$, this follows since $X_i$ is a tree with no edges so that there are no non-empty cuts. In the case of $\Xi_\mfl^j$, there is exactly one cut and no node decoration so that the result is again immediate. We turn to the remaining two cases.
	
	In the case of integration, we write $e_*$ for the edge of $\mcI_\mfl^k \tau$ adjacent to the root. We then distinguish the cases where $C = \{e_*\}$ and where $C \subset E(\tau)$. In the former case, the sum appearing in the definition of $\Delta^\partial (\mcI_\mfl \tau, \mfn, \mfe + k\mathbf{1}_{e_*})$ is
	\begin{align*}
		\sum_{\eps_{e_*}}  \frac{1}{\eps_{e_*}!}  & (\mathbf{1},  \pi \eps_{e_*}, 0) \otimes \mfp_{\mcT_1}(\mcI_{(\mfl, \partial)} \tau, \mfn, \mfe + k \mathbf{1}_{e_*} + \eps_{e_*} ).
	\end{align*}
	which is of precisely the same form as the second term in the right hand side of the recursive formula for $\Delta^\partial \mcI_\mfl^k \tau$. We note that the constraint $|\eps_{e_*}|_\mfs < o({\mcI_\mfl^k \tau})$ is the same as the constraint $o({\mcI_{\mfl}^{k + \eps_{e_*}}} \tau) > 0$ so that this constraint is enforced by the projection onto $\mcT_1$. Meanwhile, it is straightforward to see that the contribution where $C$ lies above $e_*$ is the same as $(\mcI_\mfl^k \otimes \id) \Delta^\partial \tau$ since the binomial coefficient ${\mfn \choose n_{\ng C}}$ enforces that $n_{\ng C}$ vanishes at the root of $\mcI_\mfl^k \tau$. 
	
	It remains to treat the case of products. Here, we note that we can write $C = C_\tau \sqcup C_\sigma$ where $C_\tau, C_\sigma$ are cuts of $\tau$ and $\sigma$ respectively. We then distinguish the cases where $e \in C_\tau$ and $e \in C_\sigma$. We will treat only the former case which will contribute the term $(\Delta^\partial \tau) (\Delta \sigma)$ in the recursive expression for $\Delta^\partial(\tau \sigma)$. The other case contributes the other term by symmetry. We also set $\eps_C = \eps_{C_\tau} + \eps_{C_\sigma}$ where $\eps_{C_\tau} = \eps_C \mathbf{1}_{E(\tau)}$ and $n_{\ng C} = \mathring{n}_{\ng C_\tau} + \mathring{n}_{\ng C_\sigma} + n^\rho$ where $n^\rho =  n(\rho) 1_\rho$ and $\mathring{n}_{\ng C_\tau} = \mathbf{1}_{N(\tau) \setminus \{\rho_\tau\}} n$ where $\rho_\tau$ denotes the root of $\tau$. Then, in the case where $e \in C_\tau$, the contribution to $\Delta^\partial (\tau \sigma)$ can be rewritten as 
	\begin{align}\label{eq:a1}
		& \sum_{\substack{C_\tau \in \mbC[\tau] \\ C_\tau \neq \emptyset}} \sum_{C_\sigma \in \mbC[\sigma]}  \sum_{\eps_{C_\tau}, \eps_{C_\sigma}} \sum_{\substack{\mathring{n}_{\ng C_\tau}, \mathring{n}_{\ng C_\sigma} \\ n^\rho}}  \sum_{e \in C_\tau}   \frac{1}{\eps_{C_\tau} ! \eps_{C_\sigma}!} {\mfn^\tau \choose \mathring{n}_{\ng C_\tau}}  {\mfn^\sigma \choose \mathring{n}_{\ng C_\sigma}} {\mfn^\tau + \mfn^\sigma \choose n^\rho} 
		\\ \nonumber
		& (\tau_{\ng C_\tau} \sigma_{\ng C_\sigma}, \mathring{n}_{\ng C_\tau} + \mathring{n}_{\ng C_\sigma} + n^\rho + \pi \eps_{C_\tau} + \pi \eps_{C_\sigma}, \mfe_\tau + \mfe_\sigma) \\ \nonumber
		& \otimes \mfp_{\mcT_1} \partial_e \mfC (\tau \sigma, \tau_{\ng C} \sigma_{\ng C}, \mfn^\tau + \mfn^\sigma - \mathring{n}_{\ng C_\tau} - \mathring{n}_{\ng C_\sigma} - n^\rho, \mfe^\tau+ \mfe^\sigma + \eps_{C_\tau} + \eps_{C_\sigma})
	\end{align}
	where we wrote $(\mfn^\tau, \mfe^\tau)$ for the decorations of $\tau$ and similarly for $\sigma$. We note that by the Chu-Vandermonde identity, we have
	\begin{align*}
		{\mfn^\tau + \mfn^\sigma \choose n^\rho} = \sum_{n^\rho_\tau} {\mfn^\tau \choose n^\rho_\tau} {\mfn^\sigma \choose n^\rho - n^\rho_\tau}.
	\end{align*}
	Therefore, by writing $n^\rho = n_\tau^\rho + n_\sigma^\rho$ and defining $n_{\ng C_\tau} = \mathring{n}_{\ng C_\tau} + n_\tau^\rho$, $n_{\ng C_\sigma} = \mathring{n}_{\ng C_\sigma} + n_\sigma^\rho$ we see that \eqref{eq:a1} can be rewritten as
	\begin{align*}
		& \sum_{\substack{C_\tau \in \mbC[\tau] \\ C_\tau \neq \emptyset}} \sum_{C_\sigma \in \mbC[\sigma]} \sum_{\eps_{C_\tau}, \eps_{C_\sigma}} \sum_{{n}_{\ng C_\tau}, {n}_{\ng C_\sigma}} \sum_{e \in C_\tau}  \frac{1}{\eps_{C_\tau} ! \eps_{C_\sigma}!} {\mfn^\tau \choose {n}_{\ng C_\tau}}  {\mfn^\sigma \choose {n}_{\ng C_\sigma}} \\ &
		(\tau_{\ng C_\tau} \sigma_{\ng C_\sigma}, {n}_{\ng C_\tau} + {n}_{\ng C_\sigma} + \pi \eps_{C_\tau} + \pi \eps_{C_\sigma}, \mfe_\tau + \mfe_\sigma) \\ \nonumber
		& \otimes \mfp_{\mcT_1} \partial_e \mfC (\tau \sigma, \tau_{\ng C} \sigma_{\ng C}, \mfn^\tau + \mfn^\sigma - {n}_{\ng C_\tau} - {n}_{\ng C_\sigma}, \mfe^\tau+ \mfe^\sigma + \eps_{C_\tau} + \eps_{C_\sigma})
	\end{align*}
	which we recognise as being nothing other than $(\Delta^\partial \tau)(\Delta \sigma)$. Here we used the fact that if $e \in E(\tau)$ is adjacent to the root then $\mfp_{\cT_1} \partial_e \tau \sigma = (\mfp_{\cT_1} \partial_e \tau)(\mfp_{\mcT_+} \sigma )$.
\end{proof}
In order to show that $(\id \otimes \Delta^\partial)\Delta_r^- = (\Delta_r^- \otimes \id) \Delta^\partial$, we will need to work with iterated contractions. Since it will be convenient to work throughout with graphs prior to performing the contractions, we separate the contraction operation into a colouring and a later contraction of coloured subgraphs.
\begin{lemma}\label{lem:contrac_colour}
	If the definition of $\Delta^\partial : \langle \mcT \rangle \to \langle \mcT \rangle \otimes \langle \mcT_1 \rangle$ is extended to trees with a coloured subtree containing the root via
	\begin{align*}
		\Delta^\partial (\tau, \hat{\tau}, \mfn, \mfe) = \sum_{\substack{C \in \mbC[\tau] \\ C \cap \hat{\tau} = \emptyset \neq C}} \sum_{\eps_C} \sum_{n_{\ng C}} \sum_{e \in C} & \frac{1}{\eps_C!} {\mfn \choose n_{\ng C}} (\tau_{\ng C}, \hat{\tau}, n_{\ng C} + \pi {\eps_C}, \mfe) \\ &\otimes \mfp_{\mcT_1} \partial_e \mfC (\tau, \tau_{\ng C}, \mfn - n_{\ng C}, \mfe + \eps_C)
	\end{align*}
	where $n_{\ng C}$ may be supported on the coloured component, then the identity
	$\Delta^\partial \mfC = (\mfC \otimes \operatorname{id}) \Delta^\partial$ holds.
\end{lemma}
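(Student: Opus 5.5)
The plan is to compare the two sides term by term, using a bijection between the cuts that index them. By Proposition~\ref{prop:Delta-partial-o_global}, the left-hand side $\Delta^\partial \mfC(\tau,\hat\tau,\mfn,\mfe)$ is a sum over nonempty cuts $C'$ of the contracted tree $\tau/\hat\tau$. The right-hand side $(\mfC\otimes\id)\Delta^\partial(\tau,\hat\tau,\mfn,\mfe)$ is a sum over nonempty cuts $C$ of $\tau$ with $C\cap\hat\tau=\emptyset$. First, observe that contracting $\hat\tau$ identifies $E(\tau/\hat\tau)$ with $E(\tau)\setminus E(\hat\tau)$. Because $\hat\tau$ contains the root, a path from the root of $\tau/\hat\tau$ corresponds to a path in $\tau$ from the root that is shortened by edges of $\hat\tau$. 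Hence $C\mapsto C$ is a bijection between cuts of $\tau$ avoiding $\hat\tau$ and cuts of $\tau/\hat\tau$, and the distinguished edge $e\in C$ is carried along unchanged. The edge decorations $\eps_C$ live on $C$, which is disjoint from $\hat\tau$, so they also match trivially.

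Next, identify the trees. For a fixed cut $C$ disjoint from $\hat\tau$, we have $\hat\tau\subset\tau_{\ng C}$, and $(\tau/\hat\tau)_{\ng C}=\tau_{\ng C}/\hat\tau$. On the right factor, contracting $\tau_{\ng C}$ in $\tau$ gives the same graph as contracting $\tau_{\ng C}/\hat\tau$ in $\tau/\hat\tau$ (iterated quotients by nested subgraphs), and $\partial_e$ and $\mfp_{\mcT_1}$ commute with this identification. The node decoration on the right factor is $[\mfn-n_{\ng C}]$ summed over the class of the contracted root, which is insensitive to whether $\hat\tau$ was collapsed first. The left factor, after applying $\mfC$, is $(\tau_{\ng C}/\hat\tau,[n_{\ng C}+\pi\eps_C]_{\hat\tau},\mfe)$. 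This is where the extended definition, in which $n_{\ng C}$ may be supported on the coloured component, is needed.

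The main obstacle is matching the binomial sums over node decorations. On the left we sum over $n'$ on $N(\tau/\hat\tau)$ with weight $\binom{[\mfn]_{\hat\tau}}{n'}$. On the right we sum over $n$ on $N(\tau_{\ng C})$ with weight $\binom{\mfn}{n}$ and then push forward. Only the vertex $[\hat\tau]$ is affected, since away from it the nodes are in bijection and the weights agree. At $[\hat\tau]$ I would apply the multivariate Chu--Vandermonde identity
\begin{equ}
\binom{\sum_{v\in N(\hat\tau)}\mfn(v)}{m}=\sum_{\substack{(n_v)_{v\in N(\hat\tau)}\\ \sum_v n_v=m}}\prod_{v}\binom{\mfn(v)}{n_v},
\end{equ}
exactly as in the product case of the proof of Proposition~\ref{prop:Delta-partial-o_global}. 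Summing the right-hand side over the fibres of $n\mapsto[n]_{\hat\tau}$ then reproduces the left-hand coefficient. The remaining decorations are also consistent: $[\mfn-n]_{\hat\tau\cup\cdots}$ depends only on the pushed-forward total, and $\pi\eps_C$ evaluated on nodes of $\hat\tau$ pushes forward to $\pi\eps_C$ at $[\hat\tau]$ because $\pi$ is additive over the endpoints of edges in $C$. With these identifications the two sums agree term by term, which proves the lemma.
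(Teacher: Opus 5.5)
Your proposal is correct and takes essentially the same route as the paper's proof. Both arguments index the two sides by the nonempty cuts of $\tau$ disjoint from $\hat\tau$, split the binomial coefficient at the collapsed vertex with the Chu--Vandermonde identity, and match the second tensor factors via $[\mfn]_{\hat\tau^2}=[[\mfn]_{\hat\tau^1}]_{\hat\tau^2}$ for nested $\hat\tau^1\subset\hat\tau^2$. You additionally spell out two points the paper leaves implicit, namely the bijection of cuts and the compatibility of $\pi\eps_C$ with the pushforward to the contracted tree.
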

\begin{proof}
	We have that
	\begin{align}\label{eq:col_cop_1}
		(\mfC \otimes \id) \Delta^\partial(\tau, \hat{\tau}, \mfn, \mfe) = \sum_{\substack{C \in \mbC[\tau] \\ C \cap \hat{\tau} = \emptyset \neq C}} \sum_{\eps_C} \sum_{n_{\ng C}} \sum_{e \in C} & \frac{1}{\eps_C!} {\mfn \choose n_{\ng C}} (\tau_{\ng C}/\hat{\tau}, [n_{\ng C}]_{\hat{\tau}} + \pi {\eps_C}, \mfe) 
		\\ \nonumber
		&
		\otimes \mfp_{\mcT_1} \partial_e \mfC (\tau, \tau_{\ng C}, \mfn - n_{\ng C}, \mfe + \eps_C).
	\end{align}
	We also have that
	\begin{align*}
		\Delta^\partial \mfC (\tau, \hat{\tau}, \mfn, \mfe) =\sum_{\substack{C \in \mbC[\tau] \\ C \cap \hat{\tau} = \emptyset \neq C}} \sum_{\eps_C} \sum_{n_{\ng C}^{\tau/\hat{\tau}}} \sum_{e \in C} & \frac{1}{\eps_C!} {[\mfn]_{\hat \tau} \choose n_{\ng C}^{\tau/\hat{\tau}}} (\tau_{\ng C}/\hat{\tau}, n_{\ng C}^{\tau/\hat{\tau}} + \pi {\eps_C}, \mfe) 
		\\
		& \otimes \mfp_{\mcT_1} \partial_e \mfC (\tau/ \hat{\tau}, (\tau / \hat\tau)_{\ng C}, [\mfn]_{\hat{\tau}} - n_{\ng C}^{\tau/\hat{\tau}}, \mfe + \eps_C)
	\end{align*}
	where $n_{\ng C}^{\tau/\hat{\tau}}$ is supported on those vertices of $\tau/\hat{\tau}$ that do not lie above $C$. We now enumerate the vertices of $\hat{\tau}$ as $v_1, \dots, v_n$ and write $w$ for the vertex of $\tau / \hat{\tau}$ corresponding to the equivalence class $\{v_1, \dots, v_n\}$. Then we note that for $x \in N(\tau/\hat{\tau})$ such that $x \neq w$, we have
	\begin{align*}
		{[\mfn]_{\hat{\tau}}(x) \choose n_{\ng C}^{\tau/ \hat{\tau}}(x)} = {\mfn(x) \choose n_{
				\ng C}^{\tau / \hat{\tau}}(x)}
	\end{align*}
	whilst
	\begin{align*}
		{[\mfn]_{\hat{\tau}}(w) \choose n_{\ng C}^{\tau/ \hat{\tau}}(w)} &= {\mfn(v_1) + \dots \mfn(v_n) \choose n_{\ng C}^{\tau / \hat{\tau}}(w)} 
		\\
		& = \sum_{n_{\ng C}^{\tau / \hat{\tau}}(w) = \sum_{i=1}^n n_{\ng C}(v_i) } \prod_{i=1}^n {\mfn(v_i) \choose n_{\ng C}(v_i)}
	\end{align*}
	where the last line follows by the Chu-Vandermonde identity. Writing $n_{\ng C}^{\tau / \hat{\tau}}(x) = n_{\ng C}(x)$ also for $x \neq w$, this yields
	\begin{align*}
		\Delta^\partial \mfC (\tau, \hat{\tau}, \mfn, \mfe)  =\sum_{\substack{C \in \mbC[\tau] \\ C \cap \hat{\tau} = \emptyset \neq C}} \sum_{\eps_C} \sum_{n_{\ng C}} \sum_{e \in C} & \frac{1}{\eps_C!} {\mfn \choose n_{\ng C}} ((\tau/\hat{\tau})_{\ng C}, [n_{\ng C}]_{\hat{\tau}} + \pi {\eps_C}, \mfe) 
		\\ &
		\otimes \mfp_{\mcT_1} \partial_e \mfC (\tau/ \hat{\tau}, (\tau / \hat \tau)_{\ng C}, [\mfn - n_{\ng C}]_{\hat{\tau}}, \mfe + \eps_C).
	\end{align*}
	Since if $\hat{\tau}^1 \subset \hat{\tau}^2$, we have that $[\mfn]_{\hat{\tau}^2} = [[\mfn]_{\hat{\tau}^1}]_{\hat{\tau}^2}$, we can rewrite the second tree in the tensor product above as a single contraction of $\tau$ to conclude that the expression above coincides with the expression in \eqref{eq:col_cop_1} as required.
\end{proof}

We now recall also the definition
\begin{align*}
	\Delta_r^- (\tau, \mfn, \mfe) = \sum_{C \in \mbC[\tau] } \sum_{\eps_C} \sum_{n_{\not \ge C}} & \frac{1}{\eps_C!} {\mfn \choose n_{\not \ge C}}  \mfp_{\mcT_-} 
	(\tau_{\not \ge C}, n_{\not \ge C} + \pi \eps_C, \mfe) 
	\\
	& \otimes \mfC (\tau, \tau_{\not \ge C}, n-n_{\not \ge C}, \mfe + \eps_C).    \end{align*}

	We are now in a position to prove the required cointeraction-type property. The proof is computationally similar to that of \cite[Proposition 3.27]{BHZ} and \cite[Lemma 7.13]{BSS25}.
\begin{lemma}\label{lem:cointeract}
	We have the cointeraction property 
	\begin{align*}
		(\Delta_r^- \otimes \operatorname{id})\Delta^\partial = (\operatorname{id} \otimes \Delta^\partial) \Delta_r^-.
	\end{align*}
	for the operations on $\mcT$.
\end{lemma}
\begin{proof}
	By Proposition \ref{prop:Delta-partial-o_global} and the definition of $\Delta_r^-$, one can compute
	\begin{align}\label{eq:coint_1}
		& (\Delta_r^- \otimes \id) \Delta^\partial(\tau, \mfn, \mfe) 
		\\ \nonumber & = \sum_{\substack{C \below \tC \\ \tC \neq \emptyset}} \sum_{n_{\ng \tC}, n_{\ng C \cup \tC}} \sum_{\eps_C, \eps_{\tC}} \sum_{e \in \tC} \frac{1}{\eps_C!}\frac{1}{\eps_\tC !} {\mfn \choose n_{\ng \tC}} {n_{\ng \tC} + \pi \eps_\tC \choose n_{\ng C \cup \tC}} 
		\\ \nonumber & \qquad \mfp_{\mcT_-} (\tau_{\ng C \cup \tC}, n_{\ng C \cup \tC} + \pi \eps_C, \mfe) 
		 \otimes \mfC(\tau_{\ng \tC}, \tau_{\ng C}, n_{\ng \tC} - n_{\ng C \cup \tC} + \pi \eps_\tC, \mfe + \eps_C) \\ \nonumber &\qquad 
		 \otimes \mfp_{\mcT_1} \partial_e \mfC(\tau, \tau_{\ng \tC}, \mfn - n_{\ng \tilde C} , \mfe + \eps_{\tC})
	\end{align}
	where we write $C \below \tC$ if and only if for every $e \in C$ and $\tilde e \in \tC$, $e \ng \tilde e$ (which is the same as saying that $C$ is also a cut of $\tau_{\ng \tilde{C}}$).
	
	On the other hand, we apply Lemma~\ref{lem:contrac_colour} to pull all contractions to the left and we extend the definitions of the various projections to coloured trees in such a way that contractions commute the projection operators. This yields
	\begin{align}\label{eq:coint_2}
		& (\id \otimes \Delta^\partial) \Delta_r^-(\tau, \mfn, \mfe) 
		\\ \nonumber & = \sum_{\substack{\tC \ge C \\ \tC \neq \emptyset}} \sum_{n_{\ng \tC}, n_{\ng C}} \sum_{\eps_C, \eps_{\tC}} \sum_{e \in \tC} \frac{1}{\eps_C!}\frac{1}{\eps_\tC !} {\mfn \choose n_{\ng C}} {\mfn - n_{\ng C} \choose n_{\ng \tC}} \mfp_{\mcT_-} (\tau_{\ng C }, n_{\ng C} + \pi \eps_C, \mfe) 
		\\ \nonumber &\qquad  \otimes \mfC(\tau_{\ng \tC}, \tau_{\ng C}, n_{\ng \tC} + \pi \eps_\tC, \mfe + \eps_C) \\ \nonumber &\qquad 
		\otimes \mfp_{\mcT_1} \partial_e \mfC(\tau, \tau_{\ng \tC}, \mfn - n_{\ng C} - n_{\ng \tC}, \mfe + \eps_C + \eps_{\tC}),
	\end{align}
	where we write $C \ge \tC$ if and only if for every $e \in C$ there exists $\tilde{e} \in \tC$ with $e \ge \tilde{e}$ (one should think that the path connecting each edge of the cut $C$ to the root always intersects with one edge of the cut $\tC$; in particular, $C$ and $\tC$ can overlap).
	In the expression \eqref{eq:coint_2}, we note that we can write $C = C^* \sqcup C^\dagger$ where $C^\dagger = C \cap \tC$ and $C^* \below \tC$: indeed, if $e \in C^*$ and $\tilde e \in \tC$, then one has neither $e = \tilde e$ (otherwise, $C^*$ would intersect with $\tC$) nor $e > \tilde e$ (otherwise, since $\tilde C \ge C$, there exists $e' \in C$ such that $e > \tilde e \ge e'$ and thus $C$ would not be a cut), implying $e \ng \tilde e$. Furthermore $C^*, \tC$ uniquely specify $C^\dagger$ via $C^\dagger = \{e \in \tC: \nexists e^\prime \in C^*, e > e^\prime\}$. This shows that the map $(C, \tC) \mapsto (C^*, \tC)$ forms a bijection between the sets
	\begin{align*}
		\left\{ (C,\tC) \in (\mbC[\tau])^2 : \tC \ge C,\ \tC \neq \emptyset \right\}
		\;\longleftrightarrow\;
		\left\{ (C^*,\tC) \in (\mbC[\tau])^2 : C^* \below \tC,\ \tC \neq \emptyset \right\}
	\end{align*}
	We can also decompose $\eps_C= \eps_{C^*} + \eps_{C^\dagger}$ and note that $\tau_{\ng C} = \tau_{\ng C^* \cup \tC}$\footnote{ Note that $C^* \cup \tC$ is in general not a cut, but this is harmless since $\tau_{\ng S}$ is well-defined for any subset $S$ of the edges.}; in particular, the vertex sets of both subtrees coincide. In total, this allows us to rewrite \eqref{eq:coint_2} as
	\begin{align*}
		& (\id \otimes \Delta^\partial) \Delta_r^- (\tau, \mfn, \mfe) = 
		\\ & \sum_{\substack{C^* \below \tC \\ \tC \neq \emptyset}} \sum_{\substack{n_{\ng C^* \cup \tC} \\ n_{\ng \tC}}} \sum_{\substack{\eps_{C^*}, \eps_{C^\dagger} \\ \eps_{\tC}}} \sum_{e \in \tC}  \frac{1}{\eps_{C^*}!}\frac{1}{\eps_{C^\dagger}!}\frac{1}{\eps_\tC !} {\mfn \choose n_{\ng C^* \cup \tC}} {\mfn - n_{\ng C^* \cup \tC} \choose n_{\ng \tC}} 
		\\ &  \mfp_{\mcT_-} (\tau_{\ng C^* \cup \tC }, n_{\ng C^* \cup \tC} + \pi (\eps_{C^*} + \eps_{C^\dagger}), \mfe) 
		 \otimes \mfC(\tau_{\ng \tC}, \tau_{\ng C^* \cup \tC}, n_{\ng \tC} + \pi \eps_\tC, \mfe + \eps_{C^*})
		\\
		&   \otimes \mfp_{\mcT_1} \partial_e \mfC(\tau, \tau_{\ng \tC}, \mfn - n_{\ng C^* \cup \tC} - n_{\ng \tC}, \mfe + \eps_{C^\dagger} + \eps_{\tC})
	\end{align*}
	where we used the support conditions that $\eps_{C^*}$ vanishes on $\tau_{\ge \tC}$ and $\eps_{C^\dagger}$ vanishes on $\tau/\tau_{\ge \tC}$ to simplify the edge decoration in the latter two trees.
	
	We now substitute $\bar{\eps}_{\tC} = \eps_{\tC} + \eps_{C^\dagger}$, $\bar{n}_{\ng \tC} = n_{\ng C^* \cup \tC} + n_{\ng \tC}$ and $\bar{n}_{\ng C^* \cup \tC} = n_{\ng C^* \cup \tC} + \pi \eps_{C^\dagger}$. In total, this yields
	\begin{align*}
		& (\id \otimes \Delta^\partial) \Delta_r^- (\tau, \mfn, \mfe) = \\ & \sum_{\substack{C^* \below \tC \\ \tC \neq \emptyset}} \sum_{\substack{\bar{n}_{\ng C^* \cup \tC} \\ \bar{n}_{\ng \tC}}} \sum_{\substack{\eps_{C^*}, \eps_{C^\dagger} \\ \bar{\eps}_{\tC}}} \sum_{e \in \tC}  \frac{1}{\eps_{C^*}!}\frac{1}{\eps_{C^\dagger}!}\frac{1}{(\bar{\eps}_\tC - \eps_{C^\dagger}) !} {\mfn \choose \bar{n}_{\ng C^* \cup \tC} - \pi \eps_{C^\dagger}} 
		\\ &\qquad {\mfn - \bar{n}_{\ng C^* \cup \tC} + \pi \eps_{C^\dagger} \choose \bar{n}_{\ng \tC} -\bar{n}_{\ng C^* \cup \tC}+ \pi \eps_{C^\dagger}} 
		 \mfp_{\mcT_-} (\tau_{\ng C^* \cup \tC }, \bar{n}_{\ng C^* \cup \tC} + \pi \eps_{C^*}, \mfe) 
		 \\ 
		& \qquad \otimes \mfC(\tau_{\ng \tC}, \tau_{\ng C^* \cup \tC}, \bar{n}_{\ng \tC} - \bar{n}_{\ng C^* \cup \tC} + \pi \bar{\eps}_\tC, \mfe + \eps_{C^*})
		\\ 
		& \qquad  \otimes \mfp_{\mcT_1} \partial_e \mfC(\tau, \tau_{\ng \tC}, \mfn - \bar{n}_{\ng \tC}, \mfe + \bar{\eps}_{\tC})
	\end{align*}
	where we adopted the convention that $\frac{1}{k!} = 0$ unless $k \ge 0$ which enforces the constraints on the domain of summation resulting from the changes of variable.
	
	We then note that
	\begin{align*}
		{\mfn \choose \bar{n}_{\ng C^* \cup \tC} - \pi \eps_{\dC}}  {\mfn - \bar{n}_{\ng C^* \cup \tC} + \pi \eps_{\dC} \choose \bar{n}_{\ng \tC} - \bar{n}_{\ng C^* \cup \tC} + \pi \eps_{\dC}} =& {\mfn \choose \bar{n}_{\ng \tC}} {\bar{n}_{\ng \tC} \choose \bar{n}_{\ng C^* \cup \tC} - \pi \eps_{\dC}}, 
		\\
		\frac{1}{\eps_{\dC}!}\frac{1}{(\bar{\eps}_{\tC} -  \eps_{\dC})!} =& {\bar{\eps}_{\tC} \choose \eps_{\dC}} \frac{1}{\bar{\eps}_{\tC}!}.
	\end{align*}
	Substituting in these identities and noting that the trees now do not depend on $\eps_{C^\dagger}$, we see that the sum over $\eps_{C^\dagger}$ factors out in the form
	\begin{align*}
		\sum_{\eps_{C^\dagger}} {\bar{n}_{\ng \tC} \choose \bar{n}_{\ng C^* \cup \tC} - \pi \eps_{\dC}} {\bar{\eps}_\tC \choose \eps_{\dC}} = {\bar{n}_{\ng \tC} + \pi \bar{\eps}_\tC \choose \bar{n}_{\ng C^* \cup \tC}}
	\end{align*}
	where we made use of the Chu-Vandermonde identity (see \cite[Lemma 2.1]{BHZ}). Therefore, we have obtained that
	\begin{align*}
		& (\id \otimes \Delta^\partial) \Delta_r^- (\tau, \mfn, \mfe) = \\ & \sum_{\substack{C^* \below \tC \\ \tC \neq \emptyset}} \sum_{\substack{\bar{n}_{\ng C^* \cup \tC} \\ \bar{n}_{\ng \tC}}} \sum_{\substack{\eps_{C^*}, \bar{\eps}_{\tC}}} \sum_{e \in \tC}  \frac{1}{\eps_{C^*}!}\frac{1}{\bar{\eps}_\tC !} {\mfn \choose \bar{n}_{\ng \tC}} {\bar{n}_{\ng \tC} + \pi \bar{\eps}_{\tC} \choose \bar{n}_{\ng C^* \cup \tC}} 
		\\
		&  \mfp_{\mcT_-} (\tau_{\ng C^* \cup \tC }, \bar{n}_{\ng C^* \cup \tC} + \pi \eps_{C^*}, \mfe) 
		  \otimes \mfC(\tau_{\ng \tC}, \tau_{\ng C^* \cup \tC}, \bar{n}_{\ng \tC} - \bar{n}_{\ng C^* \cup \tC} + \pi \bar{\eps}_\tC, \mfe + \eps_{C^*})
		  \\ &
		\otimes \mfp_{\mcT_1} \partial_e \mfC(\tau, \tau_{\ng \tC}, \mfn - \bar{n}_{\ng \tC}, \mfe + \bar{\eps}_{\tC}).
	\end{align*}
	It remains to notice that as coloured trees $(\tau_{\ng \tC}, \tau_{\ng C^* \cup \tC}) = (\tau_{\ng \tC}, \tau_{\ng C^*})$ in the middle slot of the tensor product so that the right hand side of the above expression is the same as the right hand side of \eqref{eq:coint_1}.
\end{proof}

With the cointeraction property Lemma \ref{lem:cointeract} at hand, we are ready to prove Lemma \ref{lem:Malliavin_ident} as promised.
\begin{proof}[Proof of Lemma \ref{lem:Malliavin_ident}]
	We proceed by induction\footnote{or more accurately Noetherian induction since $\prec$ is a well-founded but not necessarily linear order} with respect to $\prec$ on $\tau$. We will prove the claim both for $\tilde \Pi$ and for the analogue $\tilde \Pi^\times$ (where we have suppressed again the dependency on the mollification scale $\eps, \delta$) of $\tilde \Pi$ that is multiplicative at the root of trees and is constructed alongside $\tilde \Pi$ in the definition of $\tilde \Pi$ via preparation maps. We write $\mathcal{R}^\times$ for the reconstruction operator associated to $\tilde \Pi^\times$. We note that for $\tau \neq \<bs(2)0>, \<ps(2)0>, \<b0>, \<p0>$, we have that $\gamma_\tau > 0$ so that $\mathcal{R} H_{\tau; \eps, \delta}^{z, g}(z') = \tilde \Pi_{z'} H_{\tau; \eps, \delta}^{z, g}(z')(z')$ is the unique candidate reconstruction of $H_{\tau; \eps, \delta}^{z, g}$. 
	
	The base case of the induction is the case where $\tau$ is a polynomial, in which case the claim is trivial, or the case in which $\tau \in \mathfrak{L}_-$. In the latter case, the claim follows directly from the definition of $H_{\tau; \eps, \delta}^{z, g}$, the noise assignment and in the case of $\<bs(2)0>, \<ps(2)0>, \<b0>$ and $\<p0>$ from the choice of reconstruction operator for the corresponding $H_{\tau; \eps, \delta}^{z, g}$. Furthermore, it is straightforward to check that the desired relation is stable under integration by definition of the pointed abstract integration operator, the definitions of $\tilde \Pi_z^{(\times)} \mathcal{I}_\mathfrak{l}^k \tau$ and the fact that the Malliavin derivative commutes with integration against deterministic kernels. Therefore, it suffices to check that the desired relation is stable under multiplication. 
	
	Since $\gamma_\tau$ is positive for every tree $\tau$ which is a product of planted trees, we have that $\mathcal{R} H_{\tau; \eps, \delta}^{z, g}(z') = \tilde \Pi_{z'} H_{\tau; \eps, \delta}^{z, g}(z')(z')$ for every such $\tau$. For the model that is multiplicative at the root, the desired relation is then stable since if $\tau = \prod_{i=1}^n \mathcal{I}_{\mathfrak{l}_i}^{k_i} \tau_i$ with $\tau_i$ appearing earlier in the induction, we have that
	\begin{align*}
		D_g \tilde \Pi_z^\times \tau (z') =& D_g \prod_{i=1}^n \tilde \Pi_z^\times \mathcal{I}_{\mathfrak{l}_i}^{k_i} \tau_i(z')
		\\ =& \sum_{j=1}^n D_g \tilde \Pi_z^\times \mathcal{I}_{\mathfrak{l}_j}^{k_j} \tau_j (z') \prod_{i \neq j} \tilde \Pi_z^\times \mathcal{I}_{\mathfrak{l}_i}^{k_i} \tau_i (z')
		\\ =& \sum_{j=1}^n  \mathcal{R}^\times H_{\mathcal{I}_{\mathfrak{l}_j; \eps, \delta}^{k_j} \tau_j}^{z, g}(z') \prod_{i \neq j} \mathcal{R}^\times (\Gamma_{\cdot, x} \mathcal{I}_{\mathfrak{l}_i}^{k_i} \tau_i) (z') = \mathcal{R}^\times H_{\tau; \eps, \delta}^{z, g}(z').
	\end{align*}
	It therefore remains only to deal with the effect of renormalisation. We begin by using the definition of $\tilde \Pi_z \tau$ to write
	\begin{align*}
		D_g \tilde \Pi_z \tau(z') = (\tilde{\ell} \otimes D_g \tilde \Pi_z^\times) \tilde\Delta_r^- \tau(z'). 
	\end{align*}
	Now, by the induction hypotheses, we have that
	\begin{align*}
		(\tilde{\ell} \otimes D_g \tilde \Pi_z^\times) \tilde\Delta_r^- \tau(z') =& (\tilde{\ell} \otimes \tilde \Pi_{z'}^\times) (H_{\tilde\Delta_r^- \tau; \eps, \delta}^{z, g}(z')) (z')
	\end{align*}
	where we extend $\tau \mapsto H_{\tau; \eps, \delta}^{z, g}(z')$ to a linear map on $\scal{\tilde \mcT}$. By use of Proposition~\ref{prop:H_iden_1}, we can write $H_{\tau; \eps, \delta}^{z, g}(z') = (\mcQ_{< \gamma_\tau} \otimes \chi_z^{z'}) \tilde \Delta^\partial \tau$. Therefore, we can write 
	\begin{align*}
		D_g \tilde \Pi_z \tau(z')
		=& (\tilde{\ell} \otimes \tilde \Pi_{z'}^\times \mcQ_{< \gamma_\diamond} \bullet (z') \otimes \chi_z^{z'}) (\id \otimes \tilde\Delta^\partial) \tilde\Delta_r^- \tau
	\end{align*}
	where $\gamma_\diamond$ is a placeholder for $\gamma_{\tau^{(2)}}$ if $\tilde{\Delta}_r^- \tau =  \tau^{(1)} \otimes \tau^{(2)}$ in Sweedler's notation.
	We note that $\tau^{(2)}$ as above can not be any of $\<bs(2)0>, \<ps(2)0>, \<b0>, \<p0>$ since each of those trees only appear as subtrees of $\tau$ in their planted form (where we use that we are not in the base case of the induction) and therefore $\gamma_\diamond > 0$ so that we can replace $\tilde \Pi_{z'}^\times \mcQ_{< \gamma_\diamond} \bullet (z')$ by $\tilde \Pi_{z'}^\times \bullet (z')$ in the above expression.
	
	We now want to commute $\tilde\Delta^\partial$ and $\tilde\Delta_r^-$. To do this, we make use of Lemma~\ref{lem:cointeract}, which requires us to first pass through the operator $\Phi$ defined in Definition~\ref{def:Phi}. We have that
	\begin{align*}
		(\tilde{\ell} \otimes & \tilde \Pi_{z'}^\times \bullet (z') \otimes \chi_z^{z'})  (\id \otimes \tilde\Delta^\partial) \tilde\Delta_r^- \tau
		\\
		=& (\tilde{\ell} \circ \Phi^{-1} \otimes \tilde \Pi_{z'}^\times \circ \Phi^{-1} \bullet (z') \otimes \chi_z^{z'} \circ \Phi^{-1}) (\Phi \otimes \Phi \otimes \Phi)(\id \otimes \tilde\Delta^\partial) \tilde\Delta_r^- \tau
	\end{align*}
	where we made use of injectivity of $\Phi$ to see that $\Phi^{-1}$ is well-defined on the range of $\Phi$. We extend $\Phi^{-1}$ off of the range of $\Phi$ by $0$ so that $\Phi^{-1} \Phi = \operatorname{Id}$ and $\Phi \Phi^{-1}$ is the projection onto the range of $\Phi$. 
	By Proposition~\ref{prop:Phi_Delta-partial} and Lemma~\ref{lem:combinatorial_identity}, we then obtain
	\begin{align*}
		D_g \tilde \Pi_z \tau(z') =& (\tilde{\ell} \circ \Phi^{-1} \otimes \tilde \Pi_{z'}^\times \circ \Phi^{-1} \bullet (z') \otimes \chi_z^{z'} \circ \Phi^{-1}) (\Phi \otimes \Phi \otimes \Phi)(\id \otimes \tilde \Delta^\partial) \tilde{\Delta}_r^- \tau
		\\
		=& (\tilde{\ell} \circ \Phi^{-1} \otimes \tilde \Pi_{z'}^\times \circ \Phi^{-1} \bullet (z') \otimes \chi_z^{z'} \circ \Phi^{-1}) (\id \otimes \Delta^\partial) [\Delta_r^- \Phi \tau + R]
	\end{align*}
	where $R$ is such that if we write $R = \varrho \otimes \sigma$ in Sweedler's notation then $\varrho$ is in the kernel of $\tilde{\ell} \circ \Phi^{-1}$. This is because either $\varrho$ is not in the image of $\Phi$ in which case $\Phi^{-1} \varrho = 0$ or $\varrho = \Phi(\tilde{\varrho})$ in which case $\tilde{\ell}(\tilde \varrho) = \ell(\varrho) = 0$ where the first equality was established as part of the proof of Lemma~\ref{lem:transfer_1} and the second equality follows from the fact that $\varrho$ has an odd number of kernel edges, by symmetry of the noises in $\cT$ in law under spatial reflections.
	
	By Lemma~\ref{lem:cointeract}, we therefore have that
	\begin{align*}
		D_g \tilde \Pi_z \tau(z') =& (\tilde{\ell} \circ \Phi^{-1} \otimes \tilde \Pi_{z'}^\times \circ \Phi^{-1} \bullet (z') \otimes \chi_z^{z'} \circ \Phi^{-1}) ( \Delta_r^- \otimes \id)  \Delta^\partial \Phi \tau
		\\
		=& (\tilde{\ell} \otimes \tilde \Pi_{z'}^\times \bullet (z') \otimes \chi_z^{z'}) (\tilde \Delta_r^- \otimes \id) \tilde \Delta^\partial \tau
		\\
		=& \tilde \Pi_{z'} H_{\tau; \eps, \delta}^{z, g}(z')(z') 
	\end{align*}
	as was required, where to reach the second line, we again used that the error when commuting $\Phi$ with $\Delta_r^-$ lies in the kernel of $\tilde{\ell} \circ \Phi^{-1} \otimes \id$ and where to reach the last line we have made use of the fact that $\gamma_\tau > 0$ to reinsert the required truncation. This completes the proof.
\end{proof}

\section{Convergence of the BPHZ Model on $\tilde{\mfT}$}

In this section, we upgrade the uniform bounds obtained in the previous section to convergence of the BPHZ model on $\tilde{\mfT}$ to the BPHZ model on $\tilde{\mfT}$ with respect to the noise assignment
\begin{align}\label{eq:lim_noise_ass}
	\tilde{\pi}^{0,0} \<b0> =& \xi, \qquad \tilde{\pi}^{0,0} \<p0> =  \xi, \qquad \tilde{\pi}^{0,0} \<bs(2)0> = {\tilde{\xi}_{\<bsq>}}, \qquad
	\tilde{\pi}^{0,0} \<ps(2)0> = {\tilde{\xi}_{\<psq>}}, \qquad
	\tilde{\pi}^{0,0} \<s(1)0> = 0
\end{align} 
 as $\eps \to 0$ with $\delta = \eps^\alpha$ for $\alpha > 1$ where {$\tilde{\xi}_{\<bsq>}$ and $\tilde{\xi}_{\<psq>}$} are correlated space-time white noises which are independent of $\xi$. 
In addition we will now fix $\delta = \eps^\alpha$ in what follows. In particular, we will now write $Z^\eps = (\Pi^\eps, \Gamma^\eps)$ for the model $Z^{\eps, \eps^\alpha}$ introduced in the previous subsections.

\subsection{Convergence of Driving Noises}\label{sec:new_noise}
The starting point is the convergence in law of $\tilde \Pi_z^\eps \<bs(2)0>, \tilde \Pi_z^\eps \<ps(2)0>$ to {$\tilde{\xi}_{\<bsq>}, {\tilde{\xi}_{\<psq>}}$}. Since we already have tightness by applying Lemma~\ref{lem:Malliavin_base} with a slightly smaller value of $\kappa$, it will suffice to characterise the limit by the Nualart-Peccati Fourth Moment Theorem \cite{NP05}. To do this, we begin by preparing the following result on the mollification of singular kernels.
\begin{lemma}\label{lem:moll_bound}
	Let $0 < \gamma < |\fs|$, let $N \geq 0$ be an integer, and let $G$ be a kernel on $\mbR \times \mbR$, smooth away from the origin, satisfying for all multi-indices $|k|_\fs \leq N + 1$
	\begin{equ}
		|D^k G(z)| \lesssim |z|_\fs^{-\gamma - |k|_\fs}\;.
	\end{equ}
	Let $(\varrho_\eps)_{\eps \in (0,1]}$ be a family of functions such that $\|\varrho_\eps\|_{L^1} \lesssim 1$.
	Then the following assertions hold:
	\begin{enumerate}
		\item \label{item:kernel_1} If $\|\varrho_\eps\|_{L^\infty} \lesssim \eps^{-|\fs|}$, then for all $z \in \mbR \times \mbR$,
		\begin{equ}
			|G \ast \varrho_\eps(z)| \lesssim \eps^{-\gamma}\;.
		\end{equ}
		\item \label{item:kernel_2} If $\varrho_\eps$ is supported in $\{|z|_\fs \leq \eps\}$ and satisfies
		\begin{equ}
			\int_{\mbR \times \mbR} z^k\, \varrho_\eps(z)\, dz = 0 \quad \text{for all } |k|_\fs < N\;,
		\end{equ}
		then for $|z|_\fs > 2\eps$
		\begin{equ}
			|G \ast \varrho_\eps(z)| \lesssim \eps^{N} |z|_\fs^{-\gamma - N}\;. 
		\end{equ}
	\end{enumerate} 
\end{lemma}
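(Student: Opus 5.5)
For part (1), I would split the convolution $G\ast\varrho_\eps(z)=\int G(z-y)\varrho_\eps(y)\,dy$ according to whether $|z-y|_\fs\le\eps$ or $|z-y|_\fs>\eps$. On the near region I use $\|\varrho_\eps\|_{L^\infty}\lesssim\eps^{-|\fs|}$ together with local integrability of $G$ (since $\gamma<|\fs|$):
\begin{equ}
\eps^{-|\fs|}\int_{|w|_\fs\le\eps}|w|_\fs^{-\gamma}\,dw\lesssim\eps^{-|\fs|}\eps^{|\fs|-\gamma}=\eps^{-\gamma}.
\end{equ}
On the far region $|G(z-y)|\lesssim\eps^{-\gamma}$ pointwise, and integrating against $|\varrho_\eps|$ gives at most $\eps^{-\gamma}\|\varrho_\eps\|_{L^1}\lesssim\eps^{-\gamma}$. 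Summing the two contributions yields the bound uniformly in $z$. Only the bound on $G$ itself (the case $k=0$) is needed here.

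For part (2), with $|z|_\fs>2\eps$ and $y$ in the support of $\varrho_\eps$, the segment between $z$ and $z-y$ stays in the region $|z-\theta y|_\fs\ge|z|_\fs/2$, so $G$ is smooth there. I would Taylor expand $y\mapsto G(z-y)$ around $y=0$ in the anisotropic sense, keeping all monomials $y^k$ with $|k|_\fs<N$. The moment conditions kill these polynomial terms after integration against $\varrho_\eps$. What remains is the standard anisotropic Taylor remainder, e.g.\ in the form of \cite[Proposition A.1]{H0}, bounded by a sum over multi-indices $k$ with $|k|_\fs\in\{N,N+1\}$ (more precisely, those $k$ with $|k|_\fs\ge N$ reached by a single additional step in some direction, whose scaled degree is at most $N+1$) of terms
\begin{equ}
|y|_\fs^{|k|_\fs}\sup_{\theta\in[0,1]}|D^kG(z-\theta y)|\lesssim\eps^{|k|_\fs}|z|_\fs^{-\gamma-|k|_\fs}.
\end{equ}
Since $\eps<|z|_\fs$, the terms with $|k|_\fs=N+1$ are dominated by $\eps^N|z|_\fs^{-\gamma-N}$. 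Integrating against $|\varrho_\eps|$, with $L^1$ norm $\lesssim1$, gives the claim.

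The only mildly delicate step is the anisotropic Taylor remainder. A time derivative has scaled degree $2$, so the remainder may involve derivatives of order $|k|_\fs=N+1$ rather than exactly $N$. This is precisely why the hypothesis assumes derivative bounds up to $|k|_\fs\le N+1$, and the extra power is harmless because $\eps/|z|_\fs<1/2$. I would simply invoke the anisotropic Taylor formula from \cite{H0} instead of reproving it.
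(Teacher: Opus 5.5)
Your proof is correct and follows the paper's argument essentially line by line: part (1) uses the same near/far split based on local integrability of $G$ and the $L^1$ bound on $\varrho_\eps$, and part (2) uses the vanishing moments to subtract the parabolic Taylor polynomial at $z$ and then bounds the anisotropic Taylor remainder from \cite{H0} by derivatives of scaled order $N$ and $N+1$ evaluated at points $w$ with $|w|_\fs \gtrsim |z|_\fs$. One cosmetic point: Hairer's remainder evaluates $D^kG$ at points of the box spanned by $y$ rather than along the segment from $z$ to $z-y$, but those points also satisfy $|z-h|_\fs > |z|_\fs/2$, so your bound is unchanged.
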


\begin{proof}
	For the item \eqref{item:kernel_1}, one has the elementary bound for all $z$,
	\begin{equ}
		|G \ast \varrho_\eps(z)| \leq \|\varrho_\eps\|_{L^\infty} \int_{|z - \bar z|_\fs \leq \eps} |G(z - \bar z)|\, d\bar z + \|\varrho_\eps\|_{L^1} \sup_{|z - \bar z|_\fs > \eps} |G(z - \bar z)| \lesssim \eps^{-\gamma},
	\end{equ}
	where we have used $\gamma < |\fs|$ so that $G$ is locally integrable.
	
	For the item \eqref{item:kernel_2}, let $|z|_\fs > 2\eps$. By the moment conditions we may subtract the parabolic Taylor polynomial of $G$ based at $z$ of all parabolic degrees $< N$:
	\begin{equ}
		G \ast \varrho_\eps(z) = \int_{\mbR \times \mbR} \Bigl( G(z - \bar z) - \sum_{|k|_\fs < N} \frac{(-\bar z)^k}{k!}\, D^k G(z) \Bigr)\, \varrho_\eps(\bar z)\, d\bar z\;.
	\end{equ}
	By the anisotropic Taylor formula \cite[Proposition~11.1]{H0}, for $|\bar z|_\fs \leq \eps < |z|_\fs/2$ the first factor of the integrand is bounded by $\sum_{k \in \partial_N} |\bar z|_\fs^{|k|_\fs} \sup_w |D^k G(w)|$, where $\partial_N$ is the finite set of minimal parabolic multi-indices with $|k|_\fs \geq N$ (these satisfy $N \leq |k|_\fs \leq N + 1$ for the parabolic scaling $\fs = (2,1)$) and $w$ ranges over points at parabolic distance $\lesssim |\bar z|_\fs$ from $z$, so that $|w|_\fs \gtrsim |z|_\fs$. Hence the first factor is bounded by $\sum_{N \leq |k|_\fs \leq N+1} |\bar z|_\fs^{|k|_\fs}\, |z|_\fs^{-\gamma - |k|_\fs} \lesssim \eps^N |z|_\fs^{-\gamma - N}$, using $|\bar z|_\fs \leq \eps < |z|_\fs/2$, and the claim follows upon integrating $\varrho_\eps$ and using $\|\varrho_\eps\|_{L^1} \lesssim 1$.
\end{proof}

For convenience, we will denote by $H_0$ the function $\partial_x P \ast \widecheck{(\partial_x P)}$, where $P$ denotes the heat kernel. In particular, one has
\begin{equ}
	\widehat{H_0}(\tau, k) = \frac{k^2}{\tau^2 + |k|^4} \geq 0\;.
\end{equ}

\begin{prop}\label{prop:new_noise_conv}
	Let $\alpha > 1$ and let $\tilde \Pi_z^\eps \<bs(2)0>$, $\tilde \Pi_z^\eps \<ps(2)0>$ be given as in the statement of Theorem~\ref{theo: bphz_uniform_bound} with $\delta = \eps^\alpha$. Define
	\begin{equs}
		\mfm_{\<bsq>} := \widehat{H_0}\, (\widehat \rho - 1)\, \overline{\widehat{\rho}}\;, &\quad
		\mfm_{\<psq>} := \widehat{H_0}\, (\widehat \rho - 1)\;, \label{eq:mfm_bp}\\
		\intertext{as well as}
		c_{\<bsq>} := 2 (2\pi)^{-2}\int_{\mbR \times \mbR} |\mfm_{\<bsq>}|^2(\tau, k) d\tau dk\;, &\quad
		c_{\<psq>} := 2 (2\pi)^{-2}\int_{\mbR \times \mbR} |\mfm_{\<psq>}|^2(\tau, k) d\tau dk\;, \label{eq:c_bp_def}\\
		c_{\<halfsq>} := 2 (2\pi)^{-2}& \int_{\mbR \times \mbR} (\mfm_{\<bsq>} \mfm_{\<psq>}) (\tau, k) d\tau dk\;. \label{eq:c_bp_cross}
	\end{equs}
	Then $(\xi_\eps, \tilde \Pi_z^\eps \<bs(2)0>, \tilde \Pi_z^\eps \<ps(2)0>)$ converges jointly in law in $(C^{-3/2- 2\kappa})^3$ to $(\xi, \tilde \xi_{\<bsq>}, \tilde \xi_{\<psq>})$ which is a jointly Gaussian triple of random distributions such that for test functions $\varphi_1, \varphi_2, \varphi_3$, the covariance matrix of $(\xi(\varphi_1), \tilde \xi_{\<bsq>}(\varphi_2), \tilde \xi_{\<psq>}(\varphi_3))$ is given by
	\begin{equ}[eq:covariance]
		\begin{pmatrix}
			\|\varphi_1\|_{L^2}^2 & 0 & 0 \\
			0 & c_{\<bsq>} \|\varphi_2\|_{L^2}^2 & c_{\<halfsq>} \langle \varphi_2, \varphi_3 \rangle_{L^2} \\
			0 & c_{\<halfsq>} \langle \varphi_2, \varphi_3 \rangle_{L^2} & c_{\<psq>} \|\varphi_3\|_{L^2}^2
		\end{pmatrix}\;.
	\end{equ}
	
	In particular, it holds that
	\begin{itemize}
		\item $c_{\<bsq>}, c_{\<psq>}$ are strictly positive;
		\item $c_{\<halfsq>}, c_{\<bsq>}, c_{\<psq>}$ depend only on the mollifier $\rho$ (in particular, they are independent of $\alpha$);
		\item $|c_{\<halfsq>}| < \sqrt{c_{\<bsq>} c_{\<psq>}}$ and $c_{\<bsq>} < c_{\<psq>}$ for every $\rho$.
	\end{itemize}
\end{prop}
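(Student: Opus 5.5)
The plan is to combine tightness with the Nualart--Peccati Fourth Moment Theorem \cite{NP05}, in its multivariate (Peccati--Tudor) form, applied to finitely many smeared quantities. Tightness of $(\xi_\eps, \tilde \Pi_z^\eps \<bs(2)0>, \tilde \Pi_z^\eps \<ps(2)0>)$ in $(C^{-3/2-2\kappa})^3$ follows from Theorem~\ref{theo: bphz_uniform_bound} (or directly from Lemma~\ref{lem:Malliavin_base} with a smaller $\kappa$) together with the compact embedding between Hölder--Besov spaces. It therefore suffices to identify finite-dimensional distributions.

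\textbf{Reduction.} Fix test functions $\varphi_1,\varphi_2,\varphi_3$. Then $\xi_\eps(\varphi_1)$ lies in the first Wiener chaos, while $\tilde\Pi^\eps\<bs(2)0>(\varphi_2)$ and $\tilde\Pi^\eps\<ps(2)0>(\varphi_3)$ lie exactly in the second chaos, because the constants $\ell(\cdot)$ remove the zeroth chaos. Cross-covariances between the first and second chaos vanish, so the limit has a block structure. The multivariate fourth moment theorem then reduces the problem to two facts:
\begin{enumerate}
\item the convergence of the $3\times3$ covariance matrix to \eqref{eq:covariance};
\item the vanishing of the fourth cumulants of the second-chaos components, or equivalently of the contraction norms $\|f_\eps\otimes_1 f_\eps\|_{L^2}$ of their kernels.
\end{enumerate}

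\textbf{Covariance.} Write the second-chaos kernel of $\tilde\Pi^\eps\<bs(2)0>(\varphi)$ as
\[
f_\eps(w_1,w_2)=\eps^{-1/2}\int\varphi(z)\,\partial_xK*\rho^{\eps,\delta}(z-w_1)\,\partial_xK*\rho^\eps(z-w_2)\,dz,
\]
symmetrised in $(w_1,w_2)$. The variance is then $2\|f_\eps\|^2$, which expands into two products of two-point functions of the form $G_{\eps}(z-z')=\eps^{-1}\,\partial_xK*\rho^{a}*\widecheck{(\partial_xK*\rho^{b})}$, integrated against $\varphi(z)\varphi(z')$.

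First replace $K$ by $P$; the error is smooth and vanishes after the factor $\eps^{-1/2}$. Next rescale $z\mapsto\eps z$. Since $\delta=\eps^\alpha\ll\eps$, we have $\rho^\delta\to\delta_0$ at the scale $\eps$, so the rescaled kernels become $H_0*(\rho-\delta)*\check\rho$ and so on. The products of two such correlation functions then have total integral converging to $\eps^{-3}\cdot\eps^{3}\cdot\ldots$, which gives $c\,\|\varphi\|_{L^2}^2$. The constant is expressed by Plancherel as $2(2\pi)^{-2}\int|\mfm|^2$, once one writes the Fourier transform of the scaled correlation as a product in Fourier space; this uses the symmetry of $\rho$, so that $\hat\rho$ is real.

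To justify the passage to a delta function I would use Lemma~\ref{lem:moll_bound}. Item~(1) controls the near region $|z|\lesssim\eps$. Item~(2), with $\varrho_\eps=\rho^{\eps}-\rho^\delta$ which has vanishing mean (and vanishing first moment by symmetry), gives the decay $\eps^{N}|z|^{-1-N}$ of $\partial_x P*\varrho_\eps$. This makes the product of two correlations integrable at scale $\eps$ and yields dominated convergence. The terms with a pure $\rho^\delta$ factor are handled identically, and their $\eps$-independence explains why the constants do not depend on $\alpha$. The cross term similarly yields $c_{\<halfsq>}$.

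For the listed properties:
\begin{itemize}
\item Strict positivity holds since $\widehat{H_0}>0$ for $k\neq0$ and $\hat\rho\not\equiv1$.
\item The strict Cauchy--Schwarz inequality holds because $\mfm_{\<bsq>}=\bar{\hat\rho}\,\mfm_{\<psq>}$ is not a constant multiple of $\mfm_{\<psq>}$.
\item The inequality $c_{\<bsq>}<c_{\<psq>}$ holds because $|\hat\rho|\le1$, with strict inequality on a set of positive measure.
\end{itemize}

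\textbf{Contractions (main obstacle).} One must show that $\|f_\eps\otimes_1 f_\eps\|_{L^2}\to0$. This is a four-kernel cyclic integral: by the power counting it is of order $\eps^{-2}\int$ of a chain of four correlation functions, each of size roughly $|z|^{-1}$ at scales $\gtrsim\eps$, closed against $\varphi^{\otimes 2}$ in a single loop. After rescaling, the factor $\eps^{-2}$ is compensated by $\eps^{3}$ from the loop variables, leaving $O(\eps)$. I would bound it by Young/Hölder-type convolution inequalities, using the $L^1$ and $L^2$ bounds of the rescaled correlation functions $\eps G_\eps(\eps\cdot)$, which are integrable by Lemma~\ref{lem:moll_bound}. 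The delicate points are the logarithmically borderline far-field region and the terms involving $\rho^\delta$ with $\delta\ll\eps$. Here I would again exploit the moment cancellation of $\rho^\eps-\rho^\delta$ to gain decay. Once the contractions vanish, the Peccati--Tudor theorem combined with tightness gives the claimed joint convergence.
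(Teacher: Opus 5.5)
Your architecture is the same as the paper's. Tightness comes from Lemma~\ref{lem:Malliavin_base}, and Peccati--Tudor reduces everything to covariances plus fourth cumulants of the second-chaos components. The covariance kernel is treated as an approximate identity via Lemma~\ref{lem:moll_bound}, rescaling and dominated convergence, the constants are identified by Plancherel, and the listed properties follow from $\widehat{H_0}>0$, $|\widehat\rho|\le 1$ and Cauchy--Schwarz.

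The genuine difference is the fourth cumulant. The paper writes $\kappa_4$ as $\eps^{-2}$ times four cyclic diagrams built from $Q^{(1)}_\eps=H*\psi_\eps*\check\psi_\eps$, $Q^{(2)}_\eps=H*\rho_\eps*\check\rho_\eps$ and $Q^{(3)}_\eps=H*\psi_\eps*\check\rho_\eps$, where $H=\partial_xK*\check{(\partial_xK)}$ and $\psi_\eps=\rho_\eps-\rho_{\eps^\alpha}$. It moves $\eps^{-1/2-\kappa}$ onto each type-3 edge and $\eps^{-1-2\kappa}$ onto each type-1 edge, so that every edge becomes an $\eps$-uniform kernel of order $\tfrac32+\kappa$, $2+2\kappa$ or $1$. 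It then bounds the graphs by Weinberg's theorem \cite[Proposition~2.3]{Hai18}, which leaves $\eps^{4\kappa}$. Your Young/H\"older route avoids that multiscale machinery and gives the better rate $O(\eps)$. It does work, but not for the reason you give.

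With four kernels of size $|z|_\fs^{-1}$ (four copies of $Q^{(2)}_\eps$), the cycle is of order one and nothing compensates $\eps^{-2}$. Also, the rescaled single-edge kernels $\eps Q^{(j)}_\eps(\eps\,\cdot)$, $j=2,3$, are not uniformly integrable; only the products forming the covariance are.

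The fact you must make explicit is this: each copy of the smeared noise carries exactly one $\psi_\eps$ branch and one $\rho_\eps$ branch. Hence every cycle has $n_1=n_2$ and $2n_1+n_3=4$, with $n_j$ the number of type-$j$ edges.

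Then work at the original scale, where $H$ is compactly supported. The pointwise bounds $|Q^{(1)}_\eps|\lesssim|z|_\fs^{-1}\wedge\eps^2|z|_\fs^{-3}$, $|Q^{(2)}_\eps|\lesssim\eps^{-1}\wedge|z|_\fs^{-1}$ and $|Q^{(3)}_\eps|\lesssim\eps^{-1}\wedge\eps|z|_\fs^{-2}$ from Lemma~\ref{lem:moll_bound} give
\[
\|Q^{(3)}_\eps\|_{L^1}\lesssim\eps,\quad \|Q^{(3)}_\eps\|_{L^2}\lesssim\eps^{1/2},\quad \|Q^{(1)}_\eps\|_{L^1}\lesssim\eps^2\log\eps^{-1},\quad \|Q^{(2)}_\eps\|_{L^2}\lesssim1 .
\]
Split each cycle into two pairs of consecutive edges and use $\|A*B\|_{L^2}\le\|A\|_{L^1}\|B\|_{L^2}$. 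This gives $\eps^{-2}(\eps\cdot\eps^{1/2})^2=\eps$ for $(3,3,3,3)$ and $O(\eps^{3/2}\log\eps^{-1})$ for $(3,3,2,1)$ and $(3,2,3,1)$. It gives $O(\eps^{2}\log^{2}\eps^{-1})$ for $(2,1,2,1)$. Hence $\kappa_4\lesssim\eps$.
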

\begin{remark}
	In Proposition \ref{prop:new_noise_conv} we used the assumption that $\rho$ is symmetric. The statement of this proposition however holds true also for general non-symmetric mollifiers, in which case $|\mfm_{b}|^2$, $|\mfm_{p}|^2$ and $\mfm_{\<bsq>} \mfm_{\<psq>}$ in \eqref{eq:c_bp_def}, \eqref{eq:c_bp_cross} would be replaced by $(\Re\,\mfm_{\<bsq>})^2$, $(\Re\,\mfm_{\<psq>})^2$ and $\Re\,\mfm_{\<bsq>} \Re\,\mfm_{\<psq>}$ respectively, where $\Re \mfm$ denotes the real part of $\mfm$.  In this case, the strict positivity of $c_{\<bsq>}, c_{\<psq>}$ would need an additional argument.
\end{remark}

\begin{proof}
	The proof is divided into 5 steps. We will treat the convergence of $\tilde\Pi_z^\eps \<bs(2)0>$ in steps 1 to 3 below. We adopt the shorthand $\tilde\xi_\eps := \tilde\Pi_z^\eps \<bs(2)0>$. By the Nualart-Peccati Fourth Moment Theorem, it suffices to show that
	\begin{equ}
		\mathbb{E}[\tilde \xi_\eps (\varphi)^2] \to c_{\<bsq>} \left\| \varphi \right\|_{L^2}^2
		\qquad\text{and}\qquad
		\kappa_4(\tilde \xi_\eps (\varphi)) \to 0\;,
	\end{equ}
	where for any random variable $X$, $\kappa_4(X)$ denotes the fourth cumulant of $X$.

	The argument for the convergence of $\tilde\Pi_z^\eps \<ps(2)0>$ is nearly identical up to minor modifications of the corresponding covariance kernels. Therefore, we will refrain from repeating all the details and only point out the modifications to be made in step 4.
	Finally, we will treat the claim on the covariance structure of the limiting noises in step 5.

	\emph{Step 1: covariance kernels and their bounds.}

	Set $\psi_\eps := \rho_\eps - \rho_{\eps^\alpha}$ and note the rescaling identity $\psi_\eps(z) = \eps^{-3}\psi^{(\eps)}(z/\eps)$ with the $\eps$-dependent $\psi^{(\eps)} := \rho - \rho_{\eps^{\alpha-1}}$. Let $H := \partial_x K \ast \check{(\partial_x K)}$, with the truncated heat kernel $K$; then $H$ is compactly supported, smooth away from the origin, and satisfies $|D^k H(z)| \lesssim |z|_\fs^{-1 - |k|_\fs}$ for every parabolic multi-index $k$. Note also that $H - H_0$ is uniformly bounded.
	
	By Wick's formula,
	\begin{equ}
		\mathbb{E}[\tilde \xi_\eps (\varphi)^2] = \iint_{(\mbR \times \mbR)^2} G_\eps(z - \bar z) \varphi(z) \varphi(\bar z) \,dz\, d\bar z
	\end{equ}
	where
	\begin{equs}
		G_\eps = \eps^{-1} &Q^{(1)}_\eps Q^{(2)}_\eps + \eps^{-1} Q^{(3)}_\eps \widecheck{(Q^{(3)}_\eps)},\label{eq:G}\\
		\intertext{with}
		Q^{(1)}_\eps = H \ast \psi_\eps \ast \check{\psi}_\eps\;, \quad
		&Q^{(2)}_\eps = H \ast \rho_\eps \ast \check{\rho}_\eps\;, \quad
		Q^{(3)}_\eps = H \ast \psi_\eps \ast \check{\rho}_\eps\;. \label{eq:Q}
	\end{equs}
	The kernels \eqref{eq:Q} can be bounded in the following manner: in the far-field $|z|_\fs > 2\eps$, we used the fact that $\psi_\eps$, $\psi_\eps \ast \check{\psi_\eps}$ satisfy the assumptions of Lemma~\ref{lem:moll_bound} item (2) with $N = 1, 3$, respectively; in the near-field $|z|_\fs \leq 2\eps$, we split $\psi_\eps$ into the $\rho$'s and apply Lemma~\ref{lem:moll_bound} item (1) to each term. This yields, uniformly in $\eps$ and $z$,
	\begin{equs}[eq:Q_bounds]
		|Q^{(1)}_\eps(z)| &\lesssim \eps^{-\alpha} \wedge |z|_\fs^{-1} \wedge \eps^2|z|_\fs^{-3}\;,\\
		|Q^{(2)}_\eps(z)| &\lesssim \eps^{-1} \wedge |z|_\fs^{-1}\;,\\
		|Q^{(3)}_\eps(z)| &\lesssim \eps^{-1} \wedge \eps|z|_\fs^{-2}\;.
	\end{equs}
	
	\emph{Step 2: $G_\eps$ as an approximation of identity.}
	Plugging \eqref{eq:Q_bounds} into \eqref{eq:G} and integrating separately over the regions $\{|z|_\fs \leq \eps^\alpha\}$, $\{\eps^\alpha < |z|_\fs \leq 2\eps\}$ and $\{|z|_\fs > 2\eps\}$ yields
	\begin{equ}[eq:G_uniform_L^1]
		\sup_\eps \|G_\eps\|_{L^1} < \infty\;, \qquad
		\int_{|z|_\fs > \delta} |G_\eps(z)|\, dz \lesssim \eps\, \delta^{-1} \quad \text{for all } \delta \in (0,1]\;.
	\end{equ}
	This shows that $G_\eps$ will be an approximation of identity provided we show that $\int G_\eps$ converges to a constant.
	
	To see this, we set $F^{(j)}_\eps(z) := \eps\, Q^{(j)}_\eps(\eps z)$. A change of variables gives
	\begin{equ}
		\int_{\mbR \times \mbR} G_\eps(z)\, dz = \int_{\mbR \times \mbR} F^{(1)}_\eps (z)\, F^{(2)}_\eps (z) + F^{(3)}_\eps (z)\, \check F^{(3)}_\eps (z)\, dz\;.
	\end{equ}
	By the scaling property of the heat kernel, one has $\eps H_0(\eps z) = H_0(z)$. Furthemore, the function $H - H_0$ is uniformly bounded. This follows by writing
	\begin{equ}
		H_0 - H = \partial_x(P-K) \ast \widecheck{\partial_x P} + \partial_x K \ast \widecheck{\partial_x (P-K)}
	\end{equ}
	and noticing that $P-K$ is vanishing on a neighbourhood of the origin and both $\partial_x K$ and $\partial_x P$ are decaying as $|z|^{-2}$ at infinity.
	It then follows that $\|\eps H(\eps\cdot) - \eps H_0(\eps \cdot)\|_\infty \lesssim \eps$. From Young's inequality one has $\|F^{(1)}_\eps - H_0 \ast \psi^{(\eps)} \ast  \check\psi^{(\eps)}\|_\infty \lesssim \eps$ and similarly for $F^{(2)}_\eps$ and $F^{(3)}_\eps$ (with $\psi^{(\eps)} \ast \check\psi^{(\eps)}$ replaced by the corresponding choice of mollifiers).
	Using that $H_0$ is continuous away from the origin and that $\psi^{(\eps)} = \rho - \rho_{\eps^{\alpha-1}} \to \rho - \delta$, it follows that for every $z \neq 0$,
	\begin{equs}
		F^{(1)}_\eps(z) &\to H_0 \ast (\rho - \delta) \ast  {(\check\rho - \delta)}(z)\;,\\
		F^{(2)}_\eps(z) &\to H_0 \ast \rho \ast \check{\rho}(z)\;,\\
		F^{(3)}_\eps(z) &\to H_0 \ast (\rho - \delta) \ast \check{\rho}(z)\;.
	\end{equs}
	Moreover, applying \eqref{eq:Q_bounds} to the rescaled functions $F^{(j)}_\eps$ yields
	\begin{equ}
		\bigl|F^{(1)}_\eps (z)\, F^{(2)}_\eps (z) + F^{(3)}_\eps (z)\, \check F^{(3)}_\eps (z)\bigr| \lesssim |z|_\fs^{-1} \wedge |z|_\fs^{-4}
	\end{equ}
	uniformly in $\eps$. Since the right-hand side is integrable, the Dominated Convergence Theorem gives
	\begin{equs}
		\int_{\mbR \times \mbR} G_\eps(z)\, dz \to \int_{\mbR \times \mbR} \bigl(H_0 \ast (\rho - \delta) &\ast (\check\rho - \delta)\bigr) \cdot \bigl(H_0 \ast \rho \ast \check\rho\bigr) \\
		&+ \int_{\mbR \times \mbR} \bigl(H_0 \ast (\rho - \delta) \ast \check\rho\bigr)\, \bigl(H_0 \ast (\rho - \delta) \ast \check\rho\bigr)^{\check{\vphantom{}}}\;.
	\end{equs}
	Note that by passing to Fourier variables, the right-hand side coincides with
	\begin{equ}
		(2\pi)^{-3}\int_{\mbR \times \mbR} \bigl(|\mfm_{\<bsq>}|^2 + \mfm_{\<bsq>}^2\bigr) = 2 (2\pi)^{-3}\int_{\mbR \times \mbR} |\mfm_{\<bsq>}|^2\;,
	\end{equ}
	which is nothing but $c_{\<bsq>}$. Here, we have used the assumption that $\rho$ is symmetric to deduce that $\widehat{\rho}$ is real.
	
	\emph{Step 3: the fourth cumulant.}
	Note that $\tilde \xi_\eps(\varphi)$ is the element of the second-order homogeneous Wiener chaos associated to the kernel $(z, z') \mapsto \eps^{-1/2} \int\varphi(x)\partial_x K \ast \psi_\eps (x - z) \partial_x K \ast \rho_\eps (x - z') dx$. To compute its higher moments, a standard computational tool is to represent Wick contractions of Wiener chaos as Feynman diagrams (see e.g. \cite{HP15, MYS} for similar calculations).
	
	Without repeating all the details of the diagram formulation, let us only recall that, by the diagram formula \cite[Theorem 7.1.3]{PT11}, the fourth cumulant $\kappa_4(\tilde\xi_\eps(\varphi))$ can be expressed by the sum over all Feynman diagrams formed by contracting four copies of $\tilde \xi_\eps(\varphi)$ under the following restrictions:
	\begin{itemize}
		\item no pair of noise nodes in the same copy of $\tilde \xi_\eps(\varphi)$ can be contracted;
		\item the resulting Feynman graph is connected.
	\end{itemize}

	Let us illustrate the kernel edges that can appear in our Feynman graphs produced by the Wick contraction. Imagine that we take two copies of $\tilde \xi_\eps$ and contract one pair of noise nodes between them. Writing $\varrho_1, \varrho_2 \in \{\psi_\eps, \rho_\eps\}$, one has the following graphical representation
	\begin{equ}
		\begin{tikzpicture}[scale=0.7]
			\node at (-2, 0) [dot] (low1) {};
			\node at (2, 0) [dot] (low2) {};
			\node at (-3, 2) [dot] (up1) {};
			\node at (-1, 2) [dot] (up2) {};
			\node at (1, 2) [dot] (up3) {};
			\node at (3, 2) [dot] (up4) {};
			\node at (0, 2) [dot, red] (center) {};
			
			\node[below] at (-2, 0) {$v_1$};
			\node[below] at (2, 0) {$v_2$};
			
			\draw[->] (up1) to node[left]{$\partial_x K$} (low1);
			\draw[->] (up2) to node[right]{$\partial_x K$} (low1);
			\draw[->] (up3) to node[left]{$\partial_x K$} (low2);
			\draw[->] (up4) to node[right]{$\partial_x K$} (low2);
			
			\draw[dashed, red, thick, <-] (up2) to node[below]{$\varrho_1$} (center);
			\draw[dashed, red, thick, ->] (center) to node[below]{$\varrho_2$} (up3);
			
			\draw[dashed, thick] (up1) to (-4, 2);
			\draw[dashed, thick] (up4) to (4, 2);
		\end{tikzpicture}
	\end{equ}
	where the red dashed line represents the convolution between the two mollifiers after the noise pair is contracted where the choice of $\varrho_i$ depends on the choice of noise pair. We will regard the line of edges joining the vertices $v_1$ and $v_2$ as representing an integral kernel, namely the function $\partial_x K \ast \varrho_1 \ast \check \varrho_2 \ast \widecheck{(\partial_x K)} (v_1 - v_2)$, which, depending on the pair $(\varrho_1, \varrho_2)$, coincides with one of the functions $Q^{(j)}(v_1 - v_2), j \in \{1, 2, 3\}$ defined previously. One has the following correspondence between the kernel type $Q^{(j)}$ and the pairing type $(\varrho_1, \varrho_2)$:
	\begin{equ}
		\text{Type $1$ ($\psi$-$\psi$)}: Q^{(1)}_\eps\;, \quad \text{Type $2$ ($\rho$-$\rho$)}: Q^{(2)}_\eps\;, \quad \text{Type $3$ ($\psi$-$\rho$)}: Q^{(3)}_\eps \text{ or } \check Q^{(3)}_\eps\;.
	\end{equ}
	Since in each copy of $\tilde{\xi}_\eps$, there is exactly one $\psi$-branch and exactly one $\rho$-branch, the resulting connected graphs must satisfy $n_1 = n_2$ and $2n_1 + n_3 = 4$, where $n_j$ is the number of $Q^{(j)}$ kernel edges, and that there are no two adjacent type-$1$ or type-$2$ edges. The diagrammatical computation yields, up to cyclic and reflection symmetries of graphs, that
	\begin{equ}[eq:fourth_cumulant]
		\kappa_4(\tilde\xi_\eps(\varphi)) = \eps^{-2} \sum_{\Gamma} c_\Gamma\, I_\Gamma\;, \quad
		I_\Gamma = \iiiint_{(\mbR \times \mbR)^4} \prod_{j = 1}^4 \varphi(z_j)\, Q^{(\Gamma_j)}_\eps(z_j - z_{j+1})\, dz_1 dz_2 dz_3 dz_4\;,
	\end{equ}
	where the sum runs over $\Gamma \in \{(3,3,3,3), (3,3,2,1), (3,2,3,1), (2,1,2,1)\}$ and the $c_\Gamma$'s are positive combinatorial factors, and we adopt the convention that $z_5 = z_1$.
	
	Now, the key observation is that the kernels $Q^{(1)}_\eps$ and $Q^{(3)}_\eps$ can absorb some negative powers of $\eps$. Fix $\kappa \in (0, \tfrac12)$ and distribute a prefactor $\eps^{-\frac12 - \kappa}$ to each type-$3$ edge and $\eps^{-1 - 2\kappa}$ to each type-$1$ edge; by $2n_1 + n_3 = 4$ the total absorbed prefactor is $\eps^{-2 - 4\kappa}$ for every diagram, leaving an overall factor $\eps^{4\kappa}$. It follows from \eqref{eq:Q_bounds} and the fact that $\alpha > 1$,
	\begin{equ}[eq:Q_bounds-2]
		\eps^{-1 - 2\kappa} |Q_\eps^{(1)}(z)| \lesssim (|z|_\fs + \eps^\alpha)^{-2 - 2\kappa}\;, \qquad
		\eps^{-\frac12 - \kappa} |Q_\eps^{(3)}(z)| \lesssim (|z|_\fs + \eps)^{-\frac32 - \kappa}\;, 
	\end{equ}
	uniformly in $\eps$ and $z$. Hence, one can rewrite $\kappa_4(\tilde\xi_\eps(\varphi)) = \eps^{4\kappa} \sum_\Gamma c_\Gamma \tilde I_\Gamma$, where each $\tilde I_\Gamma$ is now a labelled Feynman graph, represented by
	\begin{equ}
		\begin{tikzpicture}[scale=0.4,baseline=0cm]
			\node at (-2,-2)  [dot] (sw) {};
			\node at (-2,2) [dot] (nw) {};
			\node at (2,-2) [dot] (se) {};
			\node at (2,2)  [dot] (ne) {};
			\node at (0, 0) [root] (root) {};
			
			\draw[dist] (nw) to  (root);
			\draw[dist] (ne) to  (root);
			\draw[dist] (se) to  (root);
			\draw[dist] (sw) to  (root);
			
			\draw[generic] 	(nw) to node[labl,pos=0.5] {\tiny $\frac32$+$\kappa$} (sw);
			\draw[generic] 	(sw) to node[labl,pos=0.5] {\tiny $\frac32$+$\kappa$} (se);
			\draw[generic] 	(se) to node[labl,pos=0.5] {\tiny $\frac32$+$\kappa$} (ne);
			\draw[generic] 	(ne) to node[labl,pos=0.5] {\tiny $\frac32$+$\kappa$} (nw);
		\end{tikzpicture}
		\quad
		\begin{tikzpicture}[scale=0.4,baseline=0cm]
			\node at (-2,-2)  [dot] (sw) {};
			\node at (-2,2) [dot] (nw) {};
			\node at (2,-2) [dot] (se) {};
			\node at (2,2)  [dot] (ne) {};
			\node at (0, 0) [root] (root) {};
			
			\draw[dist] (nw) to  (root);
			\draw[dist] (ne) to  (root);
			\draw[dist] (se) to  (root);
			\draw[dist] (sw) to  (root);
			
			\draw[generic] 	(nw) to node[labl,pos=0.5] {\tiny $\frac32$+$\kappa$} (sw);
			\draw[generic] 	(sw) to node[labl,pos=0.5] {\tiny $\frac32$+$\kappa$} (se);
			\draw[generic] 	(se) to node[labl,pos=0.5] {\tiny 2+2$\kappa$} (ne);
			\draw[generic] 	(ne) to node[labl,pos=0.5] {\tiny 1} (nw);
		\end{tikzpicture}
		\quad
		\begin{tikzpicture}[scale=0.4,baseline=0cm]
			\node at (-2,-2)  [dot] (sw) {};
			\node at (-2,2) [dot] (nw) {};
			\node at (2,-2) [dot] (se) {};
			\node at (2,2)  [dot] (ne) {};
			\node at (0, 0) [root] (root) {};
			
			\draw[dist] (nw) to  (root);
			\draw[dist] (ne) to  (root);
			\draw[dist] (se) to  (root);
			\draw[dist] (sw) to  (root);
			
			\draw[generic] 	(nw) to node[labl,pos=0.5] {\tiny $\frac32$+$\kappa$} (sw);
			\draw[generic] 	(sw) to node[labl,pos=0.5] {\tiny 2+2$\kappa$} (se);
			\draw[generic] 	(se) to node[labl,pos=0.5] {\tiny $\frac32$+$\kappa$} (ne);
			\draw[generic] 	(ne) to node[labl,pos=0.5] {\tiny 1} (nw);
		\end{tikzpicture}
		\quad
		\begin{tikzpicture}[scale=0.4,baseline=0cm]
			\node at (-2,-2)  [dot] (sw) {};
			\node at (-2,2) [dot] (nw) {};
			\node at (2,-2) [dot] (se) {};
			\node at (2,2)  [dot] (ne) {};
			\node at (0, 0) [root] (root) {};
			
			\draw[dist] (nw) to  (root);
			\draw[dist] (ne) to  (root);
			\draw[dist] (se) to  (root);
			\draw[dist] (sw) to  (root);
			
			\draw[generic] 	(nw) to node[labl,pos=0.5] {\tiny 2+2$\kappa$} (sw);
			\draw[generic] 	(sw) to node[labl,pos=0.5] {\tiny 1} (se);
			\draw[generic] 	(se) to node[labl,pos=0.5] {\tiny 2+2$\kappa$} (ne);
			\draw[generic] 	(ne) to node[labl,pos=0.5] {\tiny 1} (nw);
		\end{tikzpicture}\;,
	\end{equ}
	where the label on each edge represents the singularity of the kernel function associated to that edge.
	By Weinberg's Theorem in the form given in \cite[Proposition 2.3]{Hai18}, all four diagrams produce integrals bounded uniformly in $\eps$. The prefactor $\eps^{4\kappa}$ then ensures that $\kappa_4(\tilde\xi_\eps(\varphi)) \to 0$. This completes the proof for $\tilde\Pi_z^\eps \<bs(2)0>$.
	
	\emph{Step 4: the case of $\tilde\Pi_z^\eps \<ps(2)0>$.} Let us now denote $\bar\xi_\eps := \tilde\Pi_z^\eps \<ps(2)0>$. In this case, the kernels $Q^{(2)}_\eps$, $Q^{(3)}_\eps$ in the covariance kernel $G_\eps$ of $\tilde \xi_\eps$ are replaced by
	\begin{equ}
		\bar Q^{(2)}_\eps := H \ast \rho_{\eps^\alpha} \ast \check\rho_{\eps^\alpha}\;, \quad
		\bar Q^{(3)}_\eps := H \ast \psi_\eps \ast \check\rho_{\eps^\alpha}\;,
	\end{equ}
	respectively. In place of the second and third bounds of \eqref{eq:Q_bounds}, Lemma~\ref{lem:moll_bound} gives, uniformly in $\eps$ and $z$,
	\begin{equ}[eq:Q_bounds_p]
		|\bar Q^{(2)}_\eps(z)| \lesssim \eps^{-\alpha} \wedge |z|_\fs^{-1}\;, \qquad
		|\bar Q^{(3)}_\eps(z)| \lesssim \eps^{-\alpha} \wedge |z|_\fs^{-1} \wedge \eps|z|_\fs^{-2}\;.
	\end{equ}
	Repeating step 2 with \eqref{eq:Q_bounds_p}, we deduce that the kernel $G_\eps$ appearing in this case is an approximation of identity with $\int G_\eps$ tending to
	\begin{equ}
		\int_{\mbR \times \mbR} \bigl(H_0 \ast (\rho - \delta) \ast (\check\rho - \delta)\bigr) \cdot H_0
		+ \int_{\mbR \times \mbR} \bigl(H_0 \ast (\rho - \delta)\bigr)\, \bigl(H_0 \ast (\rho - \delta)\bigr)^{\check{\vphantom{}}}\;.
	\end{equ}
	Expressed in Fourier variables, this constant is nothing but $c_{\<psq>}$.
	
	For the fourth cumulant of $\tilde\Pi_z^\eps \<ps(2)0>(\varphi)$, note that we have the bounds
	\begin{equ}
		\eps^{-\frac12 - \kappa} |\bar Q_\eps^{(3)}(z)| \lesssim (|z|_\fs + \eps^\alpha)^{-\frac32 - \kappa}\;, \qquad
		|\bar Q_\eps^{(2)}(z)| \lesssim (|z|_\fs + \eps^\alpha)^{-1}\;.
	\end{equ}
	With the above bounds at hand, the argument of step 3 applies verbatim and implies $\kappa_4(\tilde\Pi_z^\eps \<ps(2)0>(\varphi)) \to 0$.
	
	\emph{Step 5: covariance structure of the limiting noises.} Let us write
	\begin{equ}
		V_\eps := \bigl(\xi_\eps(\varphi_1),\, \tilde\Pi_z^\eps \<bs(2)0>(\varphi_2),\, \tilde\Pi_z^\eps \<ps(2)0>(\varphi_3)\bigr)\;.
	\end{equ}
	To show the desired claim on the joint convergence in law of $(\xi_\eps,\, \tilde\Pi_z^\eps \<bs(2)0>,\, \tilde\Pi_z^\eps \<ps(2)0>)$, it suffices to show that $V_\eps$ converges in law to a Gaussian vector with the desired covariance structure for any given test functions $\varphi_1, \varphi_2, \varphi_3$. To this end, since each component $V_\eps$ lives in a Wiener chaos, we will apply Peccati--Tudor fourth moment theorems \cite[Theorem~1, Proposition~1]{PT05} (which is a multidimensional generalisation of Nualart-Peccati \cite{NP05}). It is then sufficient to show that
	\begin{enumerate}
		\item the covariance matrix of $V_\eps$ converges to \eqref{eq:covariance};
		\item each component of $V_\eps$ converges in law to a Gaussian random variable.
	\end{enumerate}
	Item (2) is already known thanks to steps 3 and 4.
	
	We turn to item (1). The entries $\mathbb{E}[\xi_\eps(\varphi_1)\, \tilde\Pi_z^\eps \<bs(2)0>(\varphi_2)]$ and $\mathbb{E}[\xi_\eps(\varphi_1)\, \tilde\Pi_z^\eps \<ps(2)0>(\varphi_3)]$ vanish for every $\eps > 0$ by orthogonality of Wiener chaoses of different orders. For the diagonal entries, steps 1, 2 and 4 give the claimed limits.
	It thus remains to consider the entry $\mathbb{E}[\tilde\Pi_z^\eps \<bs(2)0>(\varphi_2)\, \tilde\Pi_z^\eps \<ps(2)0>(\varphi_3)]$, which by Wick's formula is
	\begin{equs}[eq:G_bp]
		\;&\iint_{(\mbR \times \mbR)^2} G^{\<halfsq>}_\eps(z - \bar z)\, \varphi_2(z)\, \varphi_3(\bar z)\, dz\, d\bar z\\
		\intertext{with}
		G^{\<halfsq>}_\eps =& \;\eps^{-1} \bigl(H \ast \rho_\eps \ast \check \rho_{\eps^\alpha}\bigr)\, Q^{(1)}_\eps
		+ \eps^{-1}\, (\check Q^{(3)}_\eps)\, \bar Q^{(3)}_\eps\;.
	\end{equs}
	Again by Lemma~\ref{lem:moll_bound}, one has $H \ast \rho_\eps \ast \check \rho_{\eps^\alpha} \lesssim \eps^{-1} \wedge |z|_\fs^{-1}$.
	The argument of steps 2 and 4 applied to $G^{\<halfsq>}_\eps$ then yields that \eqref{eq:G_uniform_L^1} holds for $G^{\<halfsq>}_\eps$, and that $\int G^{\<halfsq>}_\eps$ tends to
	\begin{equ}
		\int_{\mbR \times \mbR} \bigl(H_0 \ast (\rho - \delta) \ast (\check \rho - \delta)\bigr) \cdot \bigl(H_0 \ast \rho\bigr)
		+ \int_{\mbR \times \mbR} \bigl(H_0 \ast (\rho - \delta) \ast \check \rho\bigr)^{\check{\vphantom{}}}\, \bigl(H_0 \ast (\rho - \delta)\bigr)\;.
	\end{equ}
	Again in Fourier variables, the last constant equals $(2\pi)^{-3} \int_{\mbR \times \mbR} \bigl(\overline{\mfm_{\<bsq>}} + \mfm_{\<bsq>}\bigr)\, \mfm_{\<psq>}
	= c_{\<halfsq>}$. Consequently, $G^{\<halfsq>}_\eps$ is an approximation of unity and \eqref{eq:G_bp} converges to $c_{\<halfsq>} \langle \varphi_2, \varphi_3 \rangle_{L^2}$, establishing item (1).
	Since $\varphi_1, \varphi_2, \varphi_3$ were arbitrary, this concludes the joint convergence.
	
	The claims made in the `in particular' part of the statement follow from the observation that $\widehat{H_0}(\tau, k) > 0$ for $k \neq 0$, $|\widehat{\rho}| \leq 1$, $|\widehat{\rho}(s)| \to 0$ as $|s| \to \infty$, and the Cauchy-Schwarz inequality. This concludes the proof.
\end{proof}

\begin{lemma}\label{lem:triangle_lollipop}
	For every $p < \infty$, we have that $\tilde \Pi_z^\eps \<s(1)0> \to 0$ in $L^p(\Omega;C^{-1-2\kappa})$.
\end{lemma}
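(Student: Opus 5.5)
Observe that $\tilde \Pi_z^\eps \<s(1)0> = \eps^{-1/2} \partial_x K \ast \xi_{\eps, \eps^\alpha}$ does not depend on $z$, since it is a noise symbol, and that it is a Gaussian field in the first Wiener chaos. The plan is therefore the same Kolmogorov-type argument as in Lemma~\ref{lem:data_control}. By Gaussian hypercontractivity and a Kolmogorov criterion for Besov--H\"older spaces, it suffices to show that for some $\kappa' \in (0,\kappa)$
\begin{align*}
	\sup_{z}\sup_{\mu \in (0,1]} \mu^{1+\kappa'}\, \mathbb{E}^{1/2}\bigl[ \bigl(\eps^{-1/2} \partial_x K \ast \xi_{\eps, \eps^\alpha}\bigr)(\varphi_z^\mu)^2 \bigr] \lesssim \eps^{\theta}
\end{align*}
for some $\theta > 0$, uniformly over $\varphi \in \mcB^r$. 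By stationarity the supremum over $z$ is trivial, and the loss from $\kappa'$ to $2\kappa$ absorbs the Kolmogorov loss.

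To obtain this estimate, I would first reduce to $\delta = \eps/2$ dyadic, as in the proof of Lemma~\ref{lem:data_control}, writing $\xi_{\eps,\eps^\alpha} = \sum_{n = n_\eps}^{n_{\eps^\alpha}-1} \xi_{n,n+1}$. Since the prefactor $\eps^{-1/2}$ is at least as large as $2^{n/2}$ for the relevant $n$, it suffices to bound each dyadic piece with an extra factor $2^{-n\theta'}$ that is summable. For a single piece, the covariance is
\[
	2^{n}\,\| \check{\varrho}^{\,n,n+1} \ast \widecheck{\partial_x K} \ast \varphi_z^\mu \|_{L^2}^2,
\]
and I would split according to the relative size of $\mu$ and $2^{-n}$.

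For $\mu \le 2^{-n}$, I would use $\|\partial_x K \ast \varrho^{n,n+1}\|_{L^2} \lesssim 2^{-n(1 - 1 - 3/2)} = 2^{n/2}$ by scaling, since $\partial_x K$ has degree $1$ and $\varrho$ is an $L^2$ mollifier at scale $2^{-n}$. Young's inequality with $\|\varphi^\mu\|_{L^1} \le 1$ then bounds the standard deviation by $2^{n/2}\cdot 2^{n/2} = 2^{n}$. Multiplying by $\mu^{1+\kappa'}$ and using $\mu \le 2^{-n}$ gives $2^{-n\kappa'}$.

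For $\mu > 2^{-n}$, I would use $\int \varrho^{n,n+1} = 0$ together with the bound from Lemma~\ref{lem:annealed_schauder} in the form $\|K \ast \varrho^{n,n+1} \ast \psi\|$: this gains a factor $2^{-n}\mu^{-1}$ relative to the unmollified bound. The unmollified bound is $\|\partial_x K \ast \varphi^\mu\|_{L^2} \lesssim \mu^{-3/2}$. Combining these, the standard deviation is bounded by
\[
	2^{n/2}\cdot 2^{-n}\mu^{-1}\cdot \mu^{-3/2}\cdot\mu = 2^{-n/2}\mu^{-3/2},
\]
where the final factor of $\mu$ comes from the gain of $\partial_x K$ relative to a delta. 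After multiplying by $\mu^{1+\kappa'}$ this gives $2^{-n/2}\mu^{-1/2+\kappa'} \le 2^{-n\kappa'}$.

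The gain itself is obtained by the mean value argument used in Lemma~\ref{lem:data_control}, with no indicator term this time, or equivalently from Lemma~\ref{lem:moll_bound}(2) with $N=1$ applied to $\partial_x K$. Summing over $n \ge n_\eps$ then yields $\eps^{\kappa'} \to 0$.

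The main care point is bookkeeping the prefactor $\eps^{-1/2}$ against the dyadic pieces. I expect the gain from the vanishing mean to be precisely what makes the regularity $-1$ sit below threshold by a margin $\theta$. If the dyadic reduction proves awkward, I would instead compute the covariance directly in Fourier variables: there the multiplier is $\eps^{-1}|\widehat{\rho}(\eps\cdot)-\widehat{\rho}(\eps^\alpha\cdot)|^2 k^2/(\tau^2+k^4)$, and testing against $\varphi^\mu$ gives the same two regimes.
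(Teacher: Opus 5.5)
Your proposal is correct, but it takes a different route from the paper.

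\textbf{The paper's route.} The paper argues pathwise, in two steps:
\begin{itemize}
\item It invokes the deterministic ``regularity for rate'' bound $\|\eps^{-\gamma}(\xi_\eps-\xi_{\eps^\alpha})\|_{C^{-3/2-\kappa-\gamma}}\lesssim\|\xi\|_{C^{-3/2-\kappa}}$ for $\gamma<1$, with $\gamma=1/2+\kappa$.
\item It applies the Schauder estimate of Lemma~\ref{lem:Schauder} to $\partial_x K$.
\end{itemize}
This gives $\|\eps^{-1/2}\partial_xK\ast\xi_{\eps,\eps^\alpha}\|_{C^{-1-2\kappa}}\lesssim\eps^{\kappa}\|\xi\|_{C^{-3/2-\kappa}}$. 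Convergence in $L^p$ then follows from the moments of $\|\xi\|_{C^{-3/2-\kappa}}$.

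\textbf{Your route, and the comparison.} You do an annealed second-moment computation for a first-chaos Gaussian field, followed by Kolmogorov, exactly in the style of Lemma~\ref{lem:data_control}. Both arguments rest on the same mechanism: $\int(\rho_\eps-\rho_{\eps^\alpha})=0$ buys a factor $\eps/\mu$ at scales $\mu>\eps$. The paper's version is a one-liner, gives an almost sure bound with an explicit random constant, and reuses tools already in place. Yours is self-contained and avoids the deterministic mollification estimate in local Besov norms, which the paper asserts without proof. The cost is a Gaussian-specific covariance computation and a Kolmogorov step.

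\textbf{Slips in the write-up.} None of these affects the result, but they should be fixed.
\begin{itemize}
\item For $n\ge n_\eps$ one has $\eps^{-1/2}\le 2^{n/2}$, not the reverse. That is the direction you actually use.
\item The scaling exponent should read $2^{-n(1-3/2)}=2^{n/2}$.
\item The ``unmollified bound'' you need is $\|\partial_xK\ast\varphi^\mu\|_{L^2}\lesssim\mu^{-1/2}$. Your product $2^{n/2}\cdot2^{-n}\mu^{-1}\cdot\mu^{-3/2}\cdot\mu$ does use this value.
\item Lemma~\ref{lem:annealed_schauder} is not the right citation for the gain $2^{-n}\mu^{-1}$. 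What gives it is the mean value argument applied to $f=\partial_xK\ast\varphi^\mu$, using $\|\partial_x f\|_{L^2}\lesssim\mu^{-3/2}$ and $\|\partial_t f\|_{L^2}\lesssim\mu^{-5/2}$.
\end{itemize}

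\textbf{A simplification.} The dyadic decomposition is unnecessary, and it would need an extra boundary piece when $\eps,\eps^\alpha$ are not dyadic. Take $\varrho=\rho_\eps-\rho_{\eps^\alpha}$ and split directly at $\mu=\eps$:
\begin{itemize}
\item For $\mu\le\eps$, use $\|\varrho\|_{L^1}\lesssim1$ to get $\mu^{1+\kappa'}\eps^{-1/2}\mu^{-1/2}\le\eps^{\kappa'}$.
\item For $\mu>\eps$, use the mean-zero gain to get $\mu^{1+\kappa'}\eps^{-1/2}(\eps/\mu)\mu^{-1/2}\le\eps^{\kappa'}$.
\end{itemize}
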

\begin{proof}
	This follows from the fact that for every $\gamma < 1$, $\| \eps^{-\gamma}(\xi_\eps - \xi_{\eps^{\alpha}}) \|_{C^{-3/2 - \kappa - \gamma}} \lesssim \|\xi\|_{C^{-3/2 - \kappa}}$ uniformly in $\eps \in (0, 1]$ and the Schauder estimate given in Lemma~\ref{lem:Schauder}.
\end{proof}

As a consequence of Proposition \ref{prop:new_noise_conv} and Lemma \ref{lem:triangle_lollipop}, we deduce from Slutsky's Theorem that
\begin{equ}
	(\tilde \Pi_z^\eps \<b0>, \tilde \Pi_z^\eps \<p0>, \tilde \Pi_z^\eps \<bs(2)0>, \tilde \Pi_z^\eps \<ps(2)0>, \tilde \Pi_z^\eps \<s(1)0>) \to (\xi, \xi, \tilde \xi_{\<bsq>}, \tilde \xi_{\<psq>}, 0)
\end{equ}
jointly in law.

\subsection{Convergence of the Full Model}

In this subsection, we bootstrap the convergence of $\tilde \Pi_z^\eps \<bs(2)0>, \tilde \Pi_z^\eps \<ps(2)0>$ to $\tilde{\xi}$ to convergence of the full BPHZ model on $\tilde{\mfT}$ to the BPHZ model over the noise assignment given in \eqref{eq:lim_noise_ass}. To achieve this, we aim to use the same basic trick as in \cite{Hai25} (of which variants have appeared in several places earlier in the regularity structures literature; see e.g. \cite{HS17, HM18, GH22}). We introduce a secondary length-scale $\nu$ which we emphasise is distinct from the length-scale $\delta$ appearing earlier which has now been fixed as $\delta = \eps^\alpha$. We let $Z^{\eps, (\nu)} {= (\tilde \Pi^{\eps, (\nu)}, \tilde \Gamma^{\eps, (\nu)})}$ be the BPHZ model on $\tilde{\mfT}$ over the noise assignment

\begin{align*}
	\tilde{\pi}^{\eps, (\nu)} \<b0> =& \xi_\eps \ast \rho^\nu, \qquad \tilde{\pi}^{\eps, (\nu)} \<p0> = \xi_{\eps^\alpha} \ast \rho^\nu, 
	\\
	\tilde{\pi}^{\eps, (\nu)} \<bs(2)0> =& \eps^{-1/2} (\partial_x K \ast \xi_\eps \partial_x K \ast \xi_{\eps, \eps^\alpha}) \ast \rho^{\nu} - \ell(\<bs(2)0>),
		\\
		 \tilde{\pi}^{\eps, (\nu)} \<ps(2)0> =& \eps^{-1/2} (\partial_x K \ast \xi_{\eps^\alpha} \partial_x K \ast \xi_{\eps, \eps^\alpha}) \ast \rho^{\nu} - \ell(\<ps(2)0>), 
		 \\
		 \tilde{\pi}^{\eps, (\nu)} \<s(1)0> =& \eps^{-1/2} \partial_x K \ast \xi_{\eps, \eps^\alpha} \ast \rho^\nu.
\end{align*}
\begin{prop}\label{prop:model_convergence}
	We have that $Z^\eps \to Z$ in law in $\mcM$ as $\eps \to 0$ where $Z$ is the BPHZ model on $\tilde{\mfT}$ associated to the noise assignment given in \eqref{eq:lim_noise_ass}. 
\end{prop}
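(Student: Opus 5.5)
The plan is the standard ``additional mollification'' argument of \cite{Hai25}: a triangle inequality over the three models $Z^\eps$, $Z^{\eps,(\nu)}$ and the limit. Write $Z^{(\nu)}$ for the BPHZ model on $\tilde{\mfT}$ over the mollified limiting noise assignment $\tilde\pi^{0,0}\ast\rho^\nu$, that is, $(\xi\ast\rho^\nu, \xi\ast\rho^\nu, \tilde\xi_{\<bsq>}\ast\rho^\nu, \tilde\xi_{\<psq>}\ast\rho^\nu, 0)$. By the Portmanteau theorem with bounded Lipschitz test functions $F$ on $\mcM$, it suffices to establish the following three facts.
\begin{enumerate}
\item $\lim_{\nu\to0}\sup_{\eps}\mathbb{E}[d(Z^{\eps,(\nu)},Z^\eps)\wedge 1]=0$.
\item For each fixed $\nu>0$, $Z^{\eps,(\nu)}\to Z^{(\nu)}$ in law as $\eps\to0$.
\item $Z^{(\nu)}\to Z$ in probability as $\nu\to0$.
\end{enumerate}

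For (2), observe that for fixed $\nu$ the noise assignment $\tilde\pi^{\eps,(\nu)}$ consists of smooth functions that are continuous functionals of the distributional data $(\xi_\eps,\xi_{\eps^\alpha},\tilde\Pi^\eps\<bs(2)0>,\tilde\Pi^\eps\<ps(2)0>,\tilde\Pi^\eps\<s(1)0>)$ in the topology of $(C^{-3/2-2\kappa})^5$, since convolution with $\rho^\nu$ maps these spaces continuously into $C^k$ on compacts. For smooth noises, the canonical model is a continuous function of the noise in $C^k$. The BPHZ constants are expectations of polynomials in the noises, and these converge by the uniform moment bounds together with convergence in law, using uniform integrability from hypercontractivity. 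This is the point where one must be careful. The renormalisation constants for $Z^{\eps,(\nu)}$ involve only trees on $\tilde{\mfT}$ with at least two noises, and they depend continuously on the joint law of the first finitely many chaos components. By the joint convergence in law established after Lemma~\ref{lem:triangle_lollipop} and the continuous mapping theorem, together with convergence of the constants, we obtain (2).

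Item (3) is the classical convergence of BPHZ models for mollified Gaussian noises. The limiting triple $(\xi,\tilde\xi_{\<bsq>},\tilde\xi_{\<psq>})$ is a Gaussian vector of space-time white noises, since $\tilde\pi^{0,0}\<s(1)0>=0$. It therefore satisfies the spectral gap inequality, and its homogeneities are $-3/2-\kappa$ or $-3/2-2\kappa$. No tree in $\tilde{\mfT}$ built on these noises exhibits variance blow-up, because the box noises already absorb the critical trees. One can thus invoke \cite{HS}, or equivalently \cite{CH}, directly, or redo the proof of Theorem~\ref{theo: bphz_uniform_bound} with the $\nu$-difference inserted.

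The main obstacle is (1), which requires a bound uniform in $\eps$. I would reuse the machinery of Section~\ref{sec:Uniform_Bounds}, namely the spectral gap with respect to the underlying $\xi$, the pointed modelled distributions $H^{z,g}_\tau$, and Lemma~\ref{lem:Malliavin_ident}. I would apply it to the difference model, in the spirit of the convergence statement of \cite[Theorem 2.33]{HS}, showing that $\mathbb{E}\|Z^{\eps,(\nu)};Z^\eps\|^p\lesssim\nu^{\theta p}$ for some $\theta>0$, uniformly in $\eps$. The algebraic steps carry over verbatim, since $\rho^\nu$ commutes with everything in the noise assignment. The new input is a $\nu$-improved analogue of the base-case estimate in Lemma~\ref{lem:Malliavin_base}, together with moment bounds on $\tilde\Pi\<bs(2)0>-\tilde\Pi\<bs(2)0>\ast\rho^\nu$ in $C^{-3/2-3\kappa}$. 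I would obtain both by trading the $\kappa$-margin: tightness in $C^{-3/2-2\kappa}$ (Lemma~\ref{lem:Malliavin_base} with smaller $\kappa$) implies that the difference with the $\rho^\nu$-convolution is small in the slightly weaker norm, $\|f-f\ast\rho^\nu\|_{C^{\beta-\kappa}}\lesssim\nu^\kappa\|f\|_{C^\beta}$. For the Malliavin base case, the corresponding terms $T_1,T_2$ acquire a factor $\nu^{\kappa}$ by the same argument applied at the level of $L^2(dz')$. Once this is in place, the induction of \cite{HS} propagates the bound to all trees, and the BPHZ constants are handled by the same expectation identity used there.
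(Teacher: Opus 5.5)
Your architecture is the paper's: the same three arrows, combined by a triangle inequality. These are removal of the extra mollification uniformly in $\eps$, convergence in law at fixed $\nu$ via the joint noise convergence, and $\nu\to0$ for the Gaussian limit via \cite{HS}. Items (2) and (3) are fine, and your justification in (2) that the BPHZ constants converge (hypercontractivity plus uniform integrability) is more explicit than the paper's. The gap is in (1), in exactly the base case you call the main obstacle. Your mechanism there is trading the $\kappa$-margin via $\|f-f\ast\rho^\nu\|_{C^{\beta-\kappa}}\lesssim\nu^\kappa\|f\|_{C^\beta}$, and this covers only part of what has to be estimated.

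Write $H^{(\nu)}_\tau=H^{z,g}_{\tau;\eps,(\nu)}$ and take $\tau=\<bs(2)0>$. You must bound
\[
\bigl[\mathcal{R}^{(0)}H^{(0)}_{\tau}-\tilde\Pi^{(0)}_{z'}H^{(0)}_{\tau}(z')\bigr]-\bigl[\mathcal{R}^{(\nu)}H^{(\nu)}_{\tau}-\tilde\Pi^{(\nu)}_{z'}H^{(\nu)}_{\tau}(z')\bigr].
\]
Here you must set $\mathcal{R}^{(\nu)}H^{(\nu)}_\tau=\rho^\nu\ast\mathcal{R}^{(0)}H^{(0)}_\tau$ by hand for the four base noises, and then prove the analogue of Proposition~\ref{prop:H_iden_1} for the mollified model. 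With that choice, the coefficients of $H^{(\nu)}_{\<bs(2)0>}(z')$ are $\partial_xK\ast g_\eps\ast\rho^\nu(z')$ and $\eps^{-1/2}\partial_xK\ast g_{\eps,\eps^\alpha}\ast\rho^\nu(z')$. These are mollified first and only then evaluated at the base point. Mollification does not commute with this evaluation, so your claim that $\rho^\nu$ ``commutes with everything'' fails here. The difference therefore splits into two pieces. The first is $(\delta_0-\rho^\nu)\ast\bigl[\mathcal{R}^{(0)}H^{(0)}_\tau-\tilde\Pi^{(0)}_{z'}H^{(0)}_\tau(z')\bigr]$, which your argument handles (as does the paper's telescoping in $\rho^k-\rho^{k+1}$ combined with Lemma~\ref{lem:Malliavin_base}). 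The second consists of commutator terms such as
\[
\eps^{-1/2}\bigl(\partial_xK\ast g_\eps(z')-\partial_xK\ast g_\eps\ast\rho^\nu(z')\bigr)\,\bigl(\partial_xK\ast\xi_{\eps,\eps^\alpha}\ast\rho^\nu\bigr),
\]
together with its analogue with $(g_\eps,\xi_{\eps,\eps^\alpha})$ replaced by $(g_{\eps,\eps^\alpha},\xi_\eps)$; these are $T_3,T_4$ in the paper. They are not of the form $f-f\ast\rho^\nu$ for any $\nu=0$ object, so no regularity trading produces a factor $\nu^\kappa$ for them.

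The paper controls these commutator terms with a separate product estimate. One factor is the $L^2(dz')$-smallness, of order $\nu^{1-\kappa}$, of the coefficient increment, which follows from $\partial_xK\ast g\in\mathcal{B}^1_{2,\infty}$. The other is the bound, uniform in $\eps$, on $\eps^{-1/2}\partial_xK\ast\xi_{\eps,\eps^\alpha}$ in $\mathcal{B}^{-1-\kappa}_{\infty,\infty}$ from \eqref{eq:Besov_check}, tested at scale $\mu\vee\nu$. Together they give $\mu^{-3\kappa}\nu^\kappa$. This is the missing ingredient. The modelledness parts of the base case reduce to those of $\<b1>$ and $\<s(1)0>$, which \cite{HS} already covers. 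With the commutator estimate added, the induction of \cite{HS} goes through as you describe.
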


\begin{proof}
	We proceed according to the following diagram.
	\begin{center}
			\begin{tikzcd}
				Z^{\eps, (0)} \arrow[r, dotted, "\eps \to 0"']  & Z \\
				Z^{\eps, (\nu)} \arrow[u, "\nu \to 0"] \arrow[r, "\eps \to 0"] & Z^{0, (\nu)} \arrow[u, "\nu \to 0"] \;,
			\end{tikzcd}
		\end{center}
	More precisely, our goal is to show the convergence denoted by the dotted arrow and we will do this by instead showing that convergence as $\nu \to 0$ (denoted by the upwards arrows) is uniform in $\eps$ and that for fixed $\nu > 0$, $Z^{\eps, (\nu)}$ converges to $Z^{0, (\nu)}$ as $\eps \to 0$; albeit at a rate that may depend on $\nu$.

	We begin by establishing the bottom arrow of the diagram, which is the only non-dotted arrow in the diagram for which we only obtain convergence in law rather than in $L^p(d\mathbb{P})$. The main observation here is that $Z^{\eps, (\nu)}$ is the same as the BPHZ lift at mollification scale $\nu$ of the noise tuple
	\begin{align*}
		(\xi_\eps, \xi_{\eps^\alpha}, \eps^{-1/2} (\partial_x K \ast \xi_\eps \partial_x K \ast \xi_{\eps, \eps^\alpha}) - \ell(\<bs(2)0>), \eps^{-1/2} (\partial_x K \ast \xi_{\eps^\alpha} \partial_x K \ast \xi_{\eps, \eps^\alpha}) - \ell(\<ps(2)0>),
		\\ \eps^{-1/2} \partial_x K \ast \xi_{\eps, \eps^\alpha}).
	\end{align*}
	Since the BPHZ lift at fixed mollification scale is continuous in the driving noise, by Proposition~\ref{prop:new_noise_conv} we see that $Z^{\eps, (\nu)} \to Z^{0, (\nu)}$ as $\eps \to 0$ where $Z^{0, (\nu)}$ is the BPHZ model at mollification scale $\nu$ over the noise tuple $(\xi, \xi, \tilde{\xi}_{\<bsq>} , \tilde{\xi}_{\<psq>} , 0)$.

	In particular, it then follows\footnote{The fact that the noise assignment satisfies the spectral gap assumption in the precise form used in \cite{HS} follows by writing the noises as linear combinations of independent white noises and applying the spectral gap inequality for the vector of independent white noises.} from \cite{HS} that $Z^{0, (\nu)} \to Z$ as $\nu \to 0$. It remains to see that $Z^{\eps, (\nu)} \to Z^{\eps, (0)} = Z^\eps$ uniformly in $\eps$ as $\nu \to 0$.

	It follows by a similarly straightforward argument that $Z^{\eps, (\nu)} \to Z^{\eps, (0)}$ as $\nu \to 0$ for $\eps > 0$; albeit not necessarily uniformly in $\eps$. Therefore the missing detail is to show that $Z^{\eps, (\nu)}$ is Cauchy in $\nu$, uniformly in $\eps$. For this we will again apply the machinery of \cite{HS}. 

	The starting point is to define pointed modelled distributions via
	\begin{align*}
		H_{\<b0>; \eps, (\nu)}^{z, g}(z') =& 0, \qquad H_{\<p0>; \eps, (\nu)}^{z, g}(z') = 0, \qquad H_{\<s(1)0>; \eps, (\nu)}^{z, g}(z') = \eps^{-1/2} \partial_x K \ast g_{\eps, \eps^\alpha} \ast \rho^\nu (z'),
		\\
		H_{\<bs(2)0>; \eps, (\nu)}^{z, g}(z') =& \partial_x K \ast g_\eps \ast \rho^\nu (z') \<s(1)0> + \eps^{-1/2} \partial_x K \ast g_{\eps, \eps^\alpha} \ast \rho^\nu (z') \<b1>,
		\\
		H_{\<ps(2)0>; \eps, (\nu)}^{z, g}(z') =&  \partial_x K \ast g_{\eps^\alpha} \ast \rho^\nu (z') \<s(1)0> + \eps^{-1/2} \partial_x K \ast g_{\eps, \eps^\alpha} \ast \rho^\nu (z') \<p1>,
		\\
		H_{X^k; \eps, (\nu)}^{z, g}(z') =& 0.
	\end{align*}
	As in Definition~\ref{def:pointed_mod}, this definition can be recursively extended to a definition of $H_{\tau; \eps, (\nu)}^{z, g}$ for all $\tau \in \tcT$ as soon as we specify the reconstruction for $H_{\tau; \eps, (\nu)}^{z, g}$ for $\tau \in \{\<b0>, \<p0>, \<bs(2)0>, \<ps(2)0>\}$. Unlike in the case $\nu = 0$ considered in Section~\ref{sec:Uniform_Bounds}, none of these modelled distributions will be assigned the reconstruction given by diagonal evaluation of the model. Instead, in each of these cases we define
	\begin{align*}
		\mathcal{R}^{\eps, (\nu)} H_{\tau; \eps, (\nu)}^{z, g} =& \mathcal{R}^{\eps, (0)} H_{\tau; \eps, (0)}^{z, g} \ast \rho^\nu.
	\end{align*}
	The fact that despite this change, we still have that $\mathcal{R}^{\eps, (\nu)} H_{\tau; \eps, (\nu)}^{z, g}= D_g \tilde\Pi_z^{\eps, (\nu)} \tau$ follows by essentially the same argument as in the proof of Lemma~\ref{lem:Malliavin_ident} up to the minor modifications described in Section \ref{sec:Malliavin_ident_2} below. Indeed, the base case is still correct by definition and the inductive step is unchanged since it does not use the precise definition of the reconstruction or the model except through\footnote{We emphasise that here we make use of the fact that $\<b0>, \<p0>, \<bs(2)0>, \<ps(2)0>$ never appear at the root of a tree in $\tcT$. The reason that the additional mollification doesn't cause problems is that the base case is fixed by hand, and stability under integration uses only the value of the reconstruction operator. Once each of these trees have been integrated, they never appear again in the induction in their non-planted forms.} the induction hypothesis.  
	The starting point of the induction is to show that
	\begin{align*}
		& \sup_{\|g\|_{L^2} \leq 1} \tnorm{H_{\tau; \eps, (0)}^{z, g}, H_{\tau; \eps, (\nu)}^{z, g}}_{2, - \kappa, - \kappa; z} \to 0,
		\\
		& \sup_{\|g\|_{L^2} \leq 1} \tnorm{H_{\<s(1)0>; \eps, (0)}^{z, g},  H_{\<s(1)0> \eps, (\nu)}^{z, g}}_{2, 1/2 - \kappa, 1/2 - \kappa; z} \to 0,
		\\
		& \sup_{\|g\|_{L^2} \leq 1} \sup_{0 < \mu \le \lambda \le 1} \mu^{\kappa} \| \sup_{\psi \in \mcB^r} \langle \mathcal{R}^{\eps, (0)} H_{\tau; \eps, (0)}^{z, g} - \tilde\Pi_{z'}^{\eps, (0)} H_{\tau; \eps, (0)}^{z, g}(z') 
		\\
		& \qquad \qquad \qquad - \mathcal{R}^{\eps, (\nu)} H_{\tau; \eps, (\nu)}^{z, g} + \tilde\Pi_{z'}^{\eps, (\nu)} H_{\tau; \eps, (\nu)}^{z, g}(z'), \psi_{z'}^\mu \rangle \|_{L^2(B(z, \lambda); dz')} \to 0,
		\\
		& \sup_{\|g\|_{L^2} \leq 1} \sup_{0 < \mu \le \lambda \le 1} \mu^{-1/2 + \kappa} \| \sup_{\psi \in \mcB^r} \langle \mathcal{R}^{\eps, (0)} H_{\<s(1)0>; \eps, (0)}^{z, g} - \tilde\Pi_{z'}^{\eps, (0)} H_{\<s(1)0>; \eps, (0)}^{z, g}(z') 
		\\
		& \qquad \qquad \qquad - \mathcal{R}^{\eps, (\nu)} H_{\<s(1)0>; \eps, (\nu)}^{z, g} + \tilde\Pi_{z'}^{\eps, (\nu)} H_{\<s(1)0>; \eps, (\nu)}^{z, g}(z'), \psi_{z'}^\mu \rangle \|_{L^2(B(z, \lambda); dz')} \to 0,
	\end{align*}
	in $L^p(\Omega)$ for every $p < \infty$ uniformly in $\eps \in (0,1]$ where $\tau \in \{ \<b0>, \<p0>, \<bs(2)0>, \<ps(2)0> \}$ and where the seminorms on modelled distributions are as defined in \cite[Definition 3.9]{HS}. 

	Each of these claims in the cases where $\tau \neq \<bs(2)0>, \<ps(2)0>$ are established in \cite[Lemma 6.1, Lemma 6.4]{HS} so that it remains to treat the claims in the cases $\tau = \<bs(2)0>, \<ps(2)0>$. In fact, we further claim that only the reconstruction bounds have to be checked by hand. This is because 
	\begin{align*}
		\langle H_{\<bs(2)0>; \eps, (\nu)}^{z, g} - H_{\<bs(2)0>; \eps, (0)}^{z, g}, \<s(1)0> \rangle =& \langle H_{\<b1>; \eps, (\nu)}^{z, g} - H_{\<b1>; \eps, (0)}^{z, g}, \mathbf{1} \rangle
		\\
		\langle H_{\<bs(2)0>; \eps, (\nu)}^{z, g} - H_{\<bs(2)0>; \eps, (0)}^{z, g}, \<b1> \rangle =& \langle H_{\<s(1)0>; \eps, (\nu)}^{z, g} - H_{\<s(1)0>; \eps, (0)}^{z, g}, \mathbf{1} \rangle
	\end{align*}
	and furthermore the desired bound on the left-hand side of both lines above agrees with the required bounds on the right-hand side. Since we have already deduced the bounds on the right-hand side as a consequence of \cite{HS}, the bounds on the left-hand side follow. Therefore, we verify only the reconstruction bounds. Since the two bounds are similar, we further treat only the case of $\<bs(2)0>$.

	For $\nu \ge 0$, we have
	\begin{align*}
		\mathcal{R}^{\eps, (\nu)} H_{\<bs(2)0>; \eps, (\nu)}^{z, g} =& \eps^{-1/2} \rho^{\nu} \ast [ (\partial_x K \ast g_{\eps, \eps^\alpha}) (\partial_x K \ast \xi_\eps) + (\partial_x K \ast g_\eps) (\partial_x K \ast \xi_{\eps, \eps^\alpha}) ]
		\\
		\tilde\Pi_{z'}^{\eps, (\nu)} H_{\<bs(2)0>; \eps, (\nu)}^{z, g}(z') =& \eps^{-1/2} [(\rho^\nu \ast \partial_x K \ast g_\eps){(z')} (\rho^\nu \ast \partial_x K \ast \xi_{\eps, \eps^\alpha}) \\
		& \qquad \qquad +  (\rho^\nu \ast \partial_x K \ast g_{\eps, \eps^\alpha}){(z')} (\rho^\nu \ast \partial_x K \ast \xi_{\eps})].
	\end{align*}
	Therefore, we can write
	\begin{align*}
		\mathcal{R}^{\eps, (0)} H_{\<bs(2)0>; \eps, (0)}^{z, g}&- \tilde\Pi_{z'}^{\eps, (0)} H_{\<bs(2)0>; \eps, (0)}^{z, g}(z') - \mathcal{R}^{\eps, (\nu)} H_{\<bs(2)0>; \eps, (\nu)} + \tilde\Pi_{z'}^{\eps, (\nu)} H_{\<bs(2)0>; \eps, (\nu)}^{z, g}(z') 
		\\ =& - \sum_{i = 1}^4 T_i(z'; \cdot)
	\end{align*}
	where
	\begin{align*}
		T_1(z'; \cdot) =& \eps^{-1/2} (\rho^{\nu} - \delta_0) \ast [ (\partial_x K \ast g_{\eps, \eps^\alpha} - \partial_x K \ast g_{\eps, \eps^\alpha}(z'))(\partial_x K \ast \xi_\eps)]
		\\
		T_2(z'; \cdot) =& \eps^{-1/2} (\rho^{\nu} - \delta_0) \ast [ (\partial_x K \ast g_{\eps} - \partial_x K \ast g_\eps(z'))(\partial_x K \ast \xi_{\eps, \eps^\alpha})]
		\\
		T_3(z'; \cdot) =& \eps^{-1/2} [ (\partial_x K \ast g_{\eps, \eps^\alpha}(z') - \partial_x K \ast g_{\eps, \eps^\alpha} \ast \rho^\nu(z'))(\partial_x K \ast \xi_\eps \ast \rho^\nu)]
		\\
		T_4(z'; \cdot) =& \eps^{-1/2} [ (\partial_x K \ast g_\eps(z') - \partial_x K \ast g_\eps \ast \rho^\nu(z'))(\partial_x K \ast \xi_{\eps, \eps^\alpha} \ast \rho^\nu)].
	\end{align*}
	We estimate the contribution from each of these four terms separately. To obtain the required bound on $\|\sup_\psi \langle T_1(z'; \cdot), \psi_{z'}^\mu \rangle \|_{L^2(\mathfrak{K}; dz')}$, we write
	\begin{align*}
		& \|\sup_{\psi \in \mcB^r} \langle T_1(z'; \cdot), \psi_{z'}^\mu \rangle \|_{L^2(\mathfrak{K}; dz')}
		\\
		& \le \sum_{k = k_\nu}^\infty \| \sup_{\psi \in \mcB^r} \langle \eps^{-1/2} [ (G_{\eps, \eps^\alpha} - G_{\eps, \eps^\alpha}(z'))(\partial_x K \ast \xi_\eps)], (\rho^k - \rho^{k+1}) \ast \psi_{z'}^\mu \rangle \|_{L^2(\mathfrak{K}; dz')}
	\end{align*}
	where $G_{\eps, \eps^\alpha} = \partial_x K \ast g_{\eps, \eps^\alpha}$ and we have decomposed $\rho^\nu-\delta_0$ into a telescopic sum of $\rho^{k} - \rho^{k+1}$ over $k \ge k_\nu$ with $\rho^k = \rho^{2^{-k}}$ and $2^{-k_\nu} \asymp \nu$.
	We then obtain a bound of order $\mu^{-2\kappa} \nu^\kappa$ for this term by combining Lemma~\ref{lem:Malliavin_base} with the fact that by \cite[Proposition 14.11]{FH20} and the fact that $\int (\rho^k - \rho^{k+1}) = 0$, we have that 
	\begin{align*}
		(\rho^k - \rho^{k+1}) \ast \psi^\mu \in
		C \Bigl (\frac{2^{-k}}{\mu} \wedge 1 \Bigr )^\kappa(\mcB^r)^{\mu + 2^{-k}}
	\end{align*}
for some universal constant $C$.

	Since the required bound $\|\sup_\psi \langle T_2(z'; \cdot), \psi_{z'}^\mu \rangle \|_{L^2(\mathfrak{K}; dz')}$ follows similarly, we now focus on $T_3$ and $T_4$. Since both are treated via very similar arguments, we demonstrate the argument only for the more subtle of the two which is $T_4$. We write, for every $p < \infty$,
	\begin{align*}
		& \| \sup_{\psi \in \mathcal{B}^r} \langle T_4(z'; \cdot), \psi_{z'}^\mu \rangle \|_{L^2(\mathfrak{K}; dz')}
		\\
		& \le \sup_{z' \in \mathfrak{K}} \sup_{\psi \in \mathcal{B}^r} |\langle \eps^{-1/2} \partial_x K \ast \xi_{\eps, \eps^\alpha}, \psi_{z'}^\mu \ast \rho^\nu \rangle | \, \| \langle \partial_x K \ast g_\eps - \partial_x K \ast g_\eps(z'), \rho_{z'}^\nu \rangle \|_{L^2(\mathfrak{K}; dz')} 
		\\
		& \lesssim \| \eps^{-1/2} \partial_x K \ast \xi_{\eps, \eps^\alpha} \|_{\mathcal{B}_{\infty, \infty}^{-1-\kappa}; \mfK} (\mu \vee \nu)^{-1 - \kappa} \nu^{1 - \kappa} 
		 \lesssim \| \eps^{-1/2} \partial_x K \ast \xi_{\eps, \eps^\alpha} \|_{\mathcal{B}_{\infty, \infty}^{-1-\kappa}; \mfK} \mu^{- 3 \kappa} \nu^\kappa
	\end{align*}
	where we used \cite[Proposition 14.11]{FH20}. We then note that that $\eps^{-1/2} \partial_x K \ast \xi_{\eps, \eps^\alpha} \in \mcB_{\infty, \infty}^{-1 - \kappa}$ uniformly in $\eps$, as a consequence of \eqref{eq:Besov_check}.

	This completes our estimates in the base case of the induction. From here, the convergence of $Z^{\eps, (\nu)}$ to $Z^{\eps, (0)}$ uniformly in $\eps$ follows from repeated applications of \cite[Theorems 3.11, 3.19 and 3.21]{HS}. We note that since no trees outside of the base-case are such that $\gamma_\tau \le 0$, we do not need any further lemmas establishing the reconstruction bound for the Malliavin derivative by hand in special cases.
\end{proof}

\subsection{The reconstruction of $H_{\tau; \eps, (\nu)}^{z, g}$}
\label{sec:Malliavin_ident_2}
In this section, we would like to show
\[\mathcal{R}^{\eps, (\nu)} H_{\tau; \eps, (\nu)}^{z, g}= D_g \tilde\Pi_z^{\eps, (\nu)} \tau\]
which is the final missing piece of the proof of Proposition \ref{prop:model_convergence}. The claim follows line by line as in Lemma~\ref{lem:Malliavin_ident}, provided that we prove a counterpart of Proposition~\ref{prop:H_iden_1} for the pointed modelled distribution $H_{\tau; \eps, (\nu)}^{z, g}$.

For this purpose, let us provide the following definition.
\begin{definition}
	For each $z,z' \in \mbR^d$, we define a linear map ${\chi}_{z; \eps, (\nu)}^{z'} : \langle \tcT_1 \rangle \to \mbR$ by setting for each $\mfl \in \mfL_+, \mft \in \mfL_-$
	\begin{align*}
		\chi_{z; \eps, (\nu)}^{z'} (\mcI_{(\mfl, \partial)}^k \sigma) &= k! \mcQ_{X^k} [ J^\mfl(z') H_{\sigma; \eps, (\nu)}^{z, g}(z') + \mathcal{N}_{\gamma_{\sigma}}^{\mfl} H_{\sigma; \eps, (\nu)}^{z, g}(z')] \\ & \qquad - \sum_{|l|_\mfs < |\mcI_{\mfl}^k \sigma|_\mfs} \frac{(z'-z)^l}{l!} D^{k+l} K_\mfl \ast \mcR H_{\sigma; \eps, (\nu)}^{z, g}(z),\\		
		\chi_{z; \eps, (\nu)}^{z'}(\Xi_{(\mft, \partial)}) &= \begin{cases}
			g_\eps \ast \rho^{\nu}(z') \qquad & \text{ if } \mft = \<b0>,
			\\
			g_{\eps^\alpha} \ast \rho^{\nu}(z') \qquad & \text{ if } \mft = \<p0>,
			\\
			\eps^{-1/2} \partial_x K \ast g_{\eps, \eps^\alpha} \ast \rho^{\nu}(z') \qquad & \text{ if } \mft = \<s(1)0>,
		\end{cases}
	\end{align*}
	and for $\eta \in \mcT_+$ defining
	\begin{align*}
		\chi_{z; \eps, (\nu)}^{z'}(\mcI_{(\mfl, \partial)}^k \sigma \cdot \eta) = \chi_{z; \eps, (\nu)}^{z'}(\mcI_{(\mfl, \partial)}^k \sigma) \cdot \gamma_{z'z}(\eta)
	\end{align*}
	where $\gamma_{z'z}$ is the element of $\mathcal{G}_+$ corresponding to $\tilde \Gamma_{z'z}^{\eps, (\nu)}$. 
\end{definition}
With this definition, we have the following result whose proof is very similar to that of Proposition~\ref{prop:H_iden_1}. Therefore, we omit the proof.

\begin{prop}\label{prop:H_iden_2}
	For all $x,y \in \mbR^d$, we have that 
	\begin{align*}
		H_{\tau; \eps, (\nu)}^{z, g}(z') = (\mcQ_{< \gamma_\tau} \otimes \chi_{z; \eps, (\nu)}^{z'}) \tilde \Delta^\partial \tau.
	\end{align*}
\end{prop}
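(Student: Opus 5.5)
We follow the proof of Proposition~\ref{prop:H_iden_1} essentially verbatim, by induction on $\tau \in \tcT$ with respect to $\prec$. In every case the only change is that the model $(\tilde\Pi^{\eps,\delta},\tilde\Gamma^{\eps,\delta})$ is replaced by $(\tilde\Pi^{\eps,(\nu)},\tilde\Gamma^{\eps,(\nu)})$ and the character $\chi_z^{z'}$ by $\chi^{z'}_{z;\eps,(\nu)}$.

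\emph{Base cases.} For $X^k$, $\<b0>$ and $\<p0>$ both sides vanish. For $\<s(1)0>$ we have $\tilde\Delta^\partial\<s(1)0> = \mathbf{1}\otimes\partial\,\<s(1)0>$, so the right-hand side equals $\chi^{z'}_{z;\eps,(\nu)}(\partial\,\<s(1)0>)\mathbf{1} = \eps^{-1/2}\partial_x K\ast g_{\eps,\eps^\alpha}\ast\rho^\nu(z')\,\mathbf{1}$, which is $H^{z,g}_{\<s(1)0>;\eps,(\nu)}(z')$ by definition.

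For $\<bs(2)0>$, the defining formula gives $\tilde\Delta^\partial\<bs(2)0> = \<s(1)0>\otimes\partial\,\<b1> + \<b1>\otimes\partial\,\<s(1)0>$. We must therefore check that $\chi^{z'}_{z;\eps,(\nu)}(\partial\,\<b1>) = \partial_x K\ast g_\eps\ast\rho^\nu(z')$.

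\begin{itemize}
\item In $\partial\,\<b1> = \mcI'_{(\mfl,\partial)}\<b0>$ we have $\sigma=\<b0>$ and $H_{\<b0>}=0$.
\item The first bracket in the definition of $\chi$ therefore vanishes, except for the contribution of $\mathcal{N}$, which is built from the prescribed reconstruction $\mathcal{R}^{\eps,(\nu)}H_{\<b0>} = g_\eps\ast\rho^\nu$.
\item The polynomial correction term, which carries the factor $\mcR H$ at $z$, cancels against the corresponding part of $\mathcal{N}$, exactly as when $\nu=0$.
\end{itemize}

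It is cleanest to verify this through the recursive definition of $H_{\mcI\tau}$: that recursion defines the polynomial part of $H_{\<b1>}$, and that part is precisely $\chi(\partial\,\<b1>)$. The case $\<ps(2)0>$ is identical with $g_{\eps^\alpha}$ in place of $g_\eps$.

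\emph{Products.} For $\tau=\sigma\mu$ we use $\tilde\Delta^\partial(\sigma\mu) = \tilde\Delta^\partial\sigma\,\tilde\Delta\mu + \tilde\Delta^\partial\mu\,\tilde\Delta\sigma$ together with the truncation identity $\mcQ_{<\gamma_\tau}\sigma^{(1)}\mu^{(1)} = \mcQ_{<\gamma_\tau}[(\mcQ_{<\gamma_\sigma}\sigma^{(1)})\mu^{(1)}]$. We then use multiplicativity of $\chi^{z'}_{z;\eps,(\nu)}$ against $\gamma_{z'z}$, where $\gamma_{z'z}$ now corresponds to $\tilde\Gamma^{\eps,(\nu)}_{z'z}$. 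This yields $\mcQ_{<\gamma_\tau}[H_\sigma\,\tilde\Gamma^{\eps,(\nu)}_{z'z}\mu + \tilde\Gamma^{\eps,(\nu)}_{z'z}\sigma\,H_\mu]$, as required. This step is purely algebraic, and the mollification plays no role in it.

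\emph{Integration.} For $\tau=\mcI^k_\mfl\sigma$ we apply the induction hypothesis to $\sigma$, obtaining $\mcI^k_\mfl H^{z,g}_{\sigma;\eps,(\nu)}(z')$. The remaining polynomial sum $\sum_j \frac{X^j}{j!}\chi^{z'}_{z;\eps,(\nu)}(\mcI^{k+j}_{(\mfl,\partial)}\sigma)$ is, by construction of $\chi$, the polynomial part of the pointed integration operator $\mathcal{K}^{z,2}$, evaluated with the reconstruction $\mathcal{R}^{\eps,(\nu)}$.

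\emph{Main obstacle.} The only point needing care is this: whenever $\sigma\in\{\<b0>,\<p0>,\<bs(2)0>,\<ps(2)0>\}$, the term $\mcR H_\sigma$ appearing in $\chi$ and in $\mathcal{K}^{z,2}$ must be the same object, namely the hand-prescribed $\mathcal{R}^{\eps,(0)}H_{\sigma;\eps,(0)}\ast\rho^\nu$ rather than diagonal evaluation. We will therefore read $\mcR$ in the definition of $\chi^{z'}_{z;\eps,(\nu)}$ as $\mathcal{R}^{\eps,(\nu)}$ with exactly this convention. Since these trees never occur unplanted beyond the base case, consistency then holds automatically throughout the induction.
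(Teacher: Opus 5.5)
Your proposal is correct and matches what the paper intends. The paper omits this proof as a verbatim rerun of the induction for Proposition~\ref{prop:H_iden_1}, with $\tilde\Pi^{\eps,(\nu)},\tilde\Gamma^{\eps,(\nu)}$, the character $\chi^{z'}_{z;\eps,(\nu)}$, and the hand-prescribed reconstructions $\mathcal{R}^{\eps,(0)}H^{z,g}_{\sigma;\eps,(0)}\ast\rho^\nu$ in place of the unmollified objects.

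One small correction concerns your $\<bs(2)0>$ base case, where nothing cancels. The base-point sum in $\chi^{z'}_{z;\eps,(\nu)}(\partial\,\<b1>)$ runs over $|l|_\fs<|\<b1>|_\fs=-1/2-\kappa<0$ and is therefore empty. So $\chi^{z'}_{z;\eps,(\nu)}(\partial\,\<b1>)$ is just the $X^{(0,1)}$-coefficient of $\mathcal{N}$ applied to $H_{\<b0>}=0$ with reconstruction $g_\eps\ast\rho^\nu$, namely $\partial_x K\ast g_\eps\ast\rho^\nu(z')$.
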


\section{Deterministic Solution Theory}\label{sec:solution_theory}

We now turn to showing that the fixed-point problem given in Definition~\ref{def:tct} is well-posed, given the data $\mathcal{V}_{\<0t>}$, on $[0,T] \times \mathbb{T}$ without the assumption that $T$ is small. Whilst this will be an application of the tools developed in \cite{H0}, since several steps are specific to the set of equations considered here, we opt to provide the details.

For the purposes of the compactness argument in the proof of Theorem~\ref{theo:main_theorem}, we will develop the solution theory not only for the grading $|\cdot|_\fs$ of $\tilde{\mfT}$, but simultaneously for the family of weaker degree assignments
\begin{align*}
	|\tau|_\fs^{(\zeta)} = |\tau|_\fs - \zeta n_\tau, \qquad \zeta \in [0, \zeta_0),
\end{align*}
where $\zeta_0$ is to be fixed momentarily and $n_\tau$ denotes the number of noise edges of $\tau$.

In order to avoid annoying technicalities arising from the fact that the notion of structure group and hence the BPHZ model depend on the degree assignment, we start by appropriately truncating the regularity structure. Set $\gamma = 2+2\kappa$ and denote by $\tilde\mfT^\zeta$ the smallest historic\footnote{A historic sector is roughly speaking a sector on which one can recursively construct the BPHZ model; see \cite[Definition 5.4]{BSS25} for a precise definition} sector containing trees $\tau \in \tcT$ with degree $|\tau|_\fs^{(\zeta)} < \gamma$.
By Lemma \ref{lem:comp_sub} and the discussion that follows in Appendix \ref{app:degree_shift}, there exists $\zeta_0$ such that for all $\zeta \in [0, \zeta_0)$, the set $\tcT^\zeta$ does not depend on the choice of $\zeta$.

We first set-up the space in which we will solve the fixed-point problem.
In the sequel, let $\cM_{\mathrm{adm}}(\tilde{\mfT}^\zeta)$ be the space of admissible model on $\tilde\mfT^\zeta$.
Writing $n_\tau$ for the number of noise edges in a tree $\tau$, let us set $N = 1 + \max_{\tau\in\tilde{\mcT}^\zeta} n_\tau$ (which is an integer independent of $\zeta \in [0, \zeta_0)$). By making $\zeta_0$ smaller if necessary, we can and will assume that $N \zeta_0 < \kappa$ so that all regularity exponents of modelled distributions appearing in the argument below remain positive; even after shifting by $N\zeta$. We will write $\mathcal{D}_T^{\gamma, \vartheta}(\tilde \mfT^\zeta)$ for the space of singular modelled distributions (with respect to the time-zero hyperplane) over the regularity structure $\tilde{\mfT}^\zeta$ on the time interval $[0,T]$. 
\begin{definition}
	Given $T > 0$, $\vartheta \in (2\kappa, 1/2 - 2\kappa)$ and $\zeta \in [0, \zeta_0)$, we write $\mathcal{D}^{(\zeta)}_T(\vartheta)$ for the subset of $\mathcal{D}_T^{\gamma - N\zeta, \vartheta - N\zeta}(\tilde \mfT^\zeta) \times \mathcal{D}_T^{\gamma - N\zeta, \vartheta - N\zeta}(\tilde \mfT^\zeta) \times \mathcal{D}_T^{3/2 + 2 \kappa - N\zeta, - \kappa - N\zeta}(\tilde \mfT^\zeta)$ consisting of triples $(\Hb, \Hp, \tilde{\mathcal{W}})$ with the property that $\partial_x \Hb - \<b1>, \partial_x \Hp - \<p1>$ are valued in the sector spanned by the trees of $|\cdot|_\fs$-degree at least $- 2 \kappa$.

	When we wish to emphasise the dependence of $\mathcal{D}^{(\zeta)}_T(\vartheta)$ on the model $Z \in \cM_{\mathrm{adm}}(\tilde{\mfT}^\zeta)$ we will instead write $\mathcal{D}^{(\zeta)}_T(\vartheta;Z)$.

	In order to formulate continuity statements, we write $\mathcal{O}_T = [-1,T] \times \mathbb{T}$ and equip the total space
	\begin{align*}
		\mathfrak{D}^{(\zeta)}_{T} = \{(Z, \Hb, \Hp, \mathcal{W}) \in \cM_{\mathrm{adm}}(\tilde{\mfT}^\zeta) \times \mathcal{D}^{(\zeta)}_{T}(\vartheta; Z) \}
	\end{align*}
	with the metric
	\begin{align*}
		d_{\mathfrak{D}^{(\zeta)}_{T}} ((Z,\Hb, \Hp, \mathcal{W}),& (\bar{Z}, \bar{\mathcal{H}}_{\<bsq>}, \bar{\mathcal{H}}_{\<psq>}, \bar{\mathcal{W}})) =  \| Z ; \bar{Z} \|^{(\zeta)}_{\gamma; \mathcal{O}_T} + \| \mathcal{H}_{\<bsq>}; \bar{\mathcal{H}}_{\<bsq>}\|^{(\zeta)}_{\gamma - N\zeta, \vartheta - N\zeta; \mathcal{O}_T}
		\\
		&+  \| \mathcal{H}_{\<psq>}; \bar{\mathcal{H}}_{\<psq>}\|^{(\zeta)}_{\gamma - N\zeta, \vartheta - N\zeta; \mathcal{O}_T} + \| \mathcal{W}; \bar{\mathcal{W}}\|^{(\zeta)}_{3/2 + 2 \kappa - N\zeta, - \kappa - N\zeta; \mathcal{O}_T},
	\end{align*}
	where $\|\cdot \, ; \cdot\|$ denote the usual comparison quantities for pairs of models and for modelled distributions over possibly different models \cite{H0}, the superscript $(\zeta)$ indicating that the respective norms are defined on the regularity structure $\tilde\mfT^\zeta$.
\end{definition}

The next step is to define an appropriate set of data for the problem. This data must include the model $Z$, the initial data $\boldsymbol{\psi} = (\psi_{\<bsq>}, \psi_{ \<psq>}, w_0)$ and a choice of modelled distribution $\mcV \in \mathcal{D}^{3/2 + 2 \kappa, - \kappa}(\tilde \mfT_{\ge -\kappa})$ which takes value in a sector $\tilde \mfT_{\ge -\kappa}$ in $\tilde \mfT$ of regularity $- \kappa$ and which we assume for convenience is defined for all times.   

\begin{definition}\label{def:admissible_data}
	For $\zeta \in [0, \zeta_0)$ we write
	\begin{align*}
		E^\zeta_\vartheta = \{(Z, \boldsymbol{\psi}, \mcV) \in \cM_{\mathrm{adm}}(\tilde{\mfT}^\zeta) \times (C^{\vartheta - N\zeta})^3 \times \mathcal{D}^{3/2 + 2 \kappa - N\zeta, - \kappa - N\zeta} ({\tilde \mfT^\zeta_{\ge - 2\kappa}}) \},
	\end{align*}
	which we equip with the metric
	\begin{align*}
		d_{E^\zeta_\vartheta}&((Z, \boldsymbol{\psi}, \mcV), (\bar{Z}, \bar{\boldsymbol{\psi}}, \bar{\mcV})) \\
		&= \| Z; \bar{Z} \|^{(\zeta)}_{2+2\kappa; \mathcal{O}_T} + \| \boldsymbol{\psi} - \bar{\boldsymbol{\psi}} \|_{(C^{\vartheta - N\zeta})^3} + \| \mcV; \bar{\mcV} \|^{(\zeta)}_{3/2+2\kappa - N\zeta, -\kappa - N\zeta; \mathcal{O}_T}\;,
	\end{align*}
	with the dependence of this metric on the fixed terminal time $T$ suppressed.\\
	Furthermore, given $T > 0$, we define $E^\zeta_{\vartheta, T} \subset E^\zeta_\vartheta$ by
	$$E^\zeta_{\vartheta, T} = \{(Z, \boldsymbol{\psi}, \mcV) \in E^\zeta_\vartheta: \text{ there exists a unique solution to \eqref{eq: tct_sys} in } \mathcal{D}^{(\zeta)}_{T}(\vartheta; Z) \},$$
	where for $\zeta > 0$ all operations in \eqref{eq: tct_sys} are understood with respect to the grading $|\cdot|_\fs^{(\zeta)}$, with the truncation exponents lowered by $N\zeta$.
\end{definition}

With these notions in hand, we are prepared to formulate the output of the fixed-point approach of \cite{H0} in this context. We choose to formulate it in a way that is amenable to globalisation by comparison to the Cole-Hopf notion of solution for the KPZ equation, meaning that we develop the solution theory in a neighbourhood of a choice of data which is a priori known to yield long-time solutions.
\begin{lemma}\label{lem:sol_1}
	For each $T > 0$ and $\zeta \in [0, \zeta_0)$, $E^\zeta_{\vartheta,T}$ is an open subset of $E^\zeta_{\vartheta}$. Furthermore, the solution map
	\begin{align*}
		\mcS^\zeta_T : E^\zeta_{\vartheta,T} \to \mathfrak{D}^{(\zeta)}_T
	\end{align*}
	associated to \eqref{eq: tct_sys} is continuous.
\end{lemma}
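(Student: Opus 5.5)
The plan is to follow the standard fixed-point argument of \cite[Sections 7--9]{H0}. The two claims will come from a local well-posedness statement that is Lipschitz in the data, combined with a restarting (patching) argument. The key steps, in order, are as follows.

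\emph{Step 1: short-time contraction, uniformly near a given datum.} Fix $(Z,\boldsymbol{\psi},\mcV)$ and view \eqref{eq: tct_sys} as the fixed point of a map $\mathcal{M}_{Z,\boldsymbol{\psi},\mcV}$ on $\mathcal{D}^{(\zeta)}_{t}(\vartheta;Z)$ for small $t$.
\begin{itemize}
\item The nonlinearities $(\partial_x\Hb)^2$, $(\partial_x\mathcal{W}+\partial_x\mcV)(\partial_x\Hb+\partial_x\Hp)$ and $\<s(1)0>(\partial_x\Hb-\<b1>+\partial_x\Hp-\<p1>)$ are controlled by the multiplication and differentiation bounds for singular modelled distributions \cite[Prop.~6.12, 6.15]{H0}.
\item This is where the constraint built into $\mathcal{D}^{(\zeta)}_T(\vartheta)$ enters. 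Because $\partial_x\Hb-\<b1>$ takes values in degrees $\ge -2\kappa$, its product with $\<s(1)0>$ (degree $-1-\kappa$) has lowest degree above $-2$. The same holds for the product with $\partial_x\mcV$, with $\mcV$ in the sector of regularity $\ge -2\kappa$.
\item The noises $\<bs(2)0>$, $\<ps(2)0>$ enter only as $\mathbf{1}_+$ times a symbol of degree $>-2$. They therefore have a canonical reconstruction, and the integration bound \cite[Prop.~6.16]{H0} applies.
\item One checks that the output of $\mathcal{P}_\gamma\mathbf{1}_+$ again satisfies the sector constraint. The only term of $\partial_x\mathcal{P}_\gamma\mathbf{1}_+[\<b0>+\dots]$ of degree below $-2\kappa$ is exactly $\<b1>$.
\item Theorem~7.1 of \cite{H0} yields a factor $t^{\theta}$ with $\theta>0$, since all exponents retain a margin of order $\kappa-N\zeta>0$. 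This gives a contraction on a ball for $t$ small depending only on an upper bound for $\|Z\|$, $\|\boldsymbol{\psi}\|$ and $\|\mcV\|$.
\item All of this is done with degrees $|\cdot|^{(\zeta)}_\fs$ and exponents lowered by $N\zeta$. The choice $N\zeta_0<\kappa$ guarantees that every relevant inequality keeps a strict margin uniformly in $\zeta$.
\end{itemize}
Using the comparison norms $\|\cdot;\cdot\|$ for modelled distributions over different models \cite[Thm.~7.3]{H0}, the same estimates give that the local solution is Lipschitz in $(Z,\boldsymbol{\psi},\mcV)$ on bounded sets.

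\emph{Step 2: restarting to reach time $T$.} Suppose the datum $e$ lies in $E^\zeta_{\vartheta,T}$, with solution $(\Hb,\Hp,\tilde{\mathcal{W}})$. Its norm on $[0,T]$ is finite.
\begin{itemize}
\item The reconstructions $\mathcal{R}\Hb$, $\mathcal{R}\Hp$ and $\mathcal{R}\tilde{\mathcal W}$ at any time $s>0$ lie in $C^{\vartheta-N\zeta}$, the last one by its positive regularity away from $0$. Their norms are bounded uniformly over $s\in[0,T]$.
\item We cover $[0,T]$ by finitely many intervals of the uniform length $t_*$ from Step 1, chosen for a bound exceeding the sup over $[0,T]$ plus $1$.
\item On each interval we restart the system with initial data given by the time-$s$ reconstructions, using the model and $\mcV$ shifted in time. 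This is the reason general initial data $\psi_{\<bsq>},\psi_{\<psq>},w_0$ were allowed in \eqref{eq: tct_sys}.
\item The restarted solutions are patched via \cite[Prop.~7.11]{H0}. One must carry the polynomial part at time $s$ correctly, which follows the standard restarting argument for KPZ in \cite{H-KPZ}.
\end{itemize}

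\emph{Step 3: openness and continuity.} For $\bar e$ close to $e$, the local Lipschitz bounds applied inductively on the finitely many intervals show the following. On each interval the restarted data for $\bar e$ stay within distance $1$ of those for $e$. Hence the local solution exists on the full interval, and the distance between the two solutions is bounded by a constant times $d(e,\bar e)$. Uniqueness on each interval gives uniqueness on $[0,T]$, so $\bar e\in E^\zeta_{\vartheta,T}$ and $E^\zeta_{\vartheta,T}$ is open. The same chain of estimates gives continuity, indeed local Lipschitz continuity, of $\mcS^\zeta_T$.

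The main obstacle I expect is Step 1's sector bookkeeping. One must verify that the constraint defining $\mathcal{D}^{(\zeta)}_T(\vartheta)$ is preserved under the fixed-point map and under restarting, and that the product $\<s(1)0>\cdot(\partial_x\Hb-\<b1>)$ on the substructure $\tilde\mfT^\zeta$ (which excludes $\<s(1)b1>$) is well defined with the needed exponents. I would try first to write $\partial_x\Hb=\<b1>+\mathcal{U}$ with $\mathcal{U}$ in the sector of degree $\ge -2\kappa$, and to solve directly for $\mathcal{U}$. This makes the excluded trees never appear and reduces the product bounds to standard ones.
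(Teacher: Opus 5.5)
Your proposal is correct and follows essentially the same route as the paper: a short-time fixed point that is uniform over bounded sets of data, then restarting at finitely many overlapping times using the time-shifted model and $\mathcal{V}$ and the reconstructed time-$s$ data, with uniqueness in a sufficiently large ball used to glue the pieces and to identify them with the given solution. The differences are minor: the paper builds the neighbourhood of a datum topologically, as the iterated preimage $\mathcal{N}_k=\mathcal{N}_{k-1}\cap R_k^{-1}(B_{E_\vartheta}(R))$ under the continuous restart maps $R_k$, rather than through explicit Lipschitz constants, and it leaves the sector bookkeeping of your Step 1 implicit as a minor adaptation of \cite{H0}.
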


\begin{proof}
	Note that without loss of generality, we can assume that $\zeta_0$ and $N$ are chosen such that for all $\zeta \in [0, \zeta_0)$, the sector $\tilde \mfT^\zeta_{\ge - 2\kappa}$ contains the same set of trees as $\tilde \mfT_{\ge - \kappa}^0$.
	In particular, the proof would proceed almost identically for all values of $\zeta$ under consideration. The only point requiring any care is that $\zeta$ is chosen to be sufficiently small so that all operations are well-defined. This is guaranteed by our assumptions on $\zeta_0$, so for notational simplicity we write the argument only in the case $\zeta = 0$.
	We will thus suppress all superscript $\zeta$ in the sequel of the proof.
	
	We start by noting that a minor adaptation of the fixed-point argument given in \cite{H0} yields that, for any $R > 0$, there exists $M_0$ such that for all $M \ge M_0$ one can find a sufficiently short time horizon $\tau = \tau(R, M) > 0$ such that for any data triple in an open ball $B_{E_\vartheta}(R)$ of radius $R$ in $E_\vartheta$, the system \eqref{eq: tct_sys} has a solution in an open ball $B_{\mfD_\tau}(M_0)$ of radius $M_0$ in $\mfD_\tau$. Moreover, it holds that the solution is unique in $B_{\mfD_{\tau'}}(M)$ for all $\tau' \leq \tau$, and that
	\begin{equ}
		\mcS_\tau: B_{E_\vartheta}(R) \to B_{\mfD_\tau}(M)
	\end{equ}
	is continuous.
	Here we made use of the fact that the KPZ equation can be solved in a space of modelled distributions of the form $h_\mathbf{1} \mathbf{1} + \<1> + \<20> + h_X \cdot X + \dots$ to see that the solution is valued in $\mathfrak{D}_{\tau}$.
	
	Now suppose that $F = (Z,\boldsymbol{\psi},\mathcal{V}) \in E_{\vartheta,T}$ is fixed. Given $s \in [0,T]$, let $\Theta_s Z = (\Pi^{(s)},\Gamma^{(s)})$ be defined by
	\begin{align*}
		\Pi_z^{(s)}\tau(\phi) =
		\Pi_{z+(s,0)}\tau(\phi_{(s,0)}),
		\qquad
		\Gamma_{zz'}^{(s)} = \Gamma_{z+(s, 0), z'+(s, 0)}
	\end{align*}
	where $\phi_{(s,0)}(z')=\phi(z'-(s,0))$. We also let
	\begin{align*}
		\Theta_s\mathcal{V}(z)
		=
		\mathcal{V}(z+(s,0)).
	\end{align*}
	In addition, let $\mathcal{S}_T(F)$ denote the solution of \eqref{eq: tct_sys} corresponding to $F$ and write
	\begin{align*}
		\boldsymbol{\psi}^{(s)}
		=
		(\mathcal{R}^Z\mathcal{S}_T(F))(s,\cdot).
	\end{align*}
	Finally, define
	\begin{align*}
		F^{(s)}
		=
		(\Theta_s Z,\boldsymbol{\psi}^{(s)},\Theta_s\mathcal{V}).
	\end{align*}
	Note that here we have used the fact that $\mathcal{R}^Z\mathcal{H}_{\<bsq>}$, $\mathcal{R}^Z\mathcal{H}_{\<psq>}$, and $\mathcal{R}^Z\mathcal{W}$ belong to $C^{\vartheta}$ to see that $\boldsymbol{\psi}^{(s)}$ is an appropriate choice of initial data. This is true since, as a consequence of \eqref{eq: tct_sys}, each component is valued in a function-like sector whose worst non-polynomial component is of degree $1/2-2\kappa>\vartheta$.
	
	Given that $F \in E_{\vartheta, T}$, one can fix
	\[ R = 2\sup_{s \in [0, T]} \|F^{(s)}\|_{E_\vartheta} + 1 \quad \text{ and } \quad M \ge M_0(R) \vee \left(\sup_{s \in [0, T]} \|\Theta_s \mcS_T(F)\|_{\mfD_{T-s}} + 1\right) \]
	where both right-hand sides are finite constants.
	As a consequence, all time shifted data $F^{(s)}$, $s \in [0, T]$, fall in the same ball $B_{E_\vartheta}(R)$, and with $\tau = \tau(R, M)$ now fixed by the local theory, their corresponding local solutions $\mcS_\tau F^{(s)}$ are unique among functions of the same ball $B_{\mcD_\tau}(M)$.
	In particular, for every $r \in (0, T]$, we can choose $s_r, e_r$ such that $s_r < r < e_r$ with $e_r - s_r \leq \tau/2$.
	
	We define a cover of $[0,T]$ by the intervals $[0,\tau/2)$ and $(s_r,e_r)$ for $r \in (0,T]$.
	By compactness, we may extract a finite subcover. Up to relabelling, we thus obtain intervals
	\begin{align*}
		I_0 = [s_0, e_0), \quad I_i = (s_i,e_i),
		\qquad
		i = 1,\ldots,n,
	\end{align*}
	such that
	$0=s_0<s_1<\cdots<s_n$,
	$e_n>T$,
	$e_i - s_i \leq \tau/2$
	and
	$s_i<e_{i-1}<e_i$
	for $i=1,\ldots,n.$
	By construction, one has $F^{(s_i)} \in B_{E_\vartheta}(R)$ for all $i$ and the local solution map
	\begin{align*}
		\mathcal{S}_{\tau} : B_{E_\vartheta}(R) \to \mathfrak{D}_{\tau}
	\end{align*}
	is well-defined and continuous, with uniqueness in the ball $B_{\mathfrak{D}_{\tau}}(M)$.
	
	Now we want to prove by induction that for each $k \le n$, there exists an open neighbourhood $\mcN_k$ of $F$ in $E_{\vartheta}$ such that
	\begin{enumerate}
		\item for all $F' \in \mcN_k$, \eqref{eq: tct_sys} has solution $\mathcal{S}_{e_k}(F')$ on the interval $[0, e_k]$; \label{item:sol_1}
		\item $\mathcal{S}_{e_k} : \mcN_k \to \mfD_{e_k}$ is well-defined and continuous;\label{item:sol_2}
		\item $\mathcal{S}_{e_k}(F)$ coincides with $\mcS_T(F)$ restricted to $[0, e_k \wedge T]$.\label{item:sol_3}
	\end{enumerate}
	The result then follows by taking $k = n$ (since the desired statement is stable under replacing $e_n$ by $T < e_n$).
	
	For the initial step $k = 0$, let us set $\mcN_0 = B_{E_\vartheta}(R)$.
	By construction, one indeed has $F \in \mcN_0$, and the local solution theory associates each element in $\mcN_0$ to a unique solution of \eqref{eq: tct_sys} in $B_{\mfD_{\tau}}(M)$ in a continuous manner. Therefore, the map $\mathcal{S}_{e_0} : \mcN_0 \to \mfD_{e_0}$ is well-defined and continuous. Since both $\mathcal{S}_{e_0}(F)$ and $\mcS_T(F)$ restricted to $[0, e_0]$ live in the open ball $B_{\mfD_{e_0}}(M)$, they coincide by uniqueness.
	
	Moving to the induction step, we assume that the result has been proved up to $k-1$. In particular, we are given an open neighbourhood $\mcN_{k-1}$ of $F$ and a continuous solution map
	\begin{align*}
		\mathcal{S}_{e_{k-1}}:\mcN_{k-1}\to \mathfrak{D}_{e_{k-1}}.
	\end{align*}
	For $F'=(Z',\boldsymbol{\psi}',\mcV')\in \mcN_{k-1}$, define
	\begin{align*}
		R_kF'
		=
		\Big(
		\Theta_{s_k}Z',
		(\mathcal{R}^{Z'} \mathcal{S}_{e_{k-1}}(F'))(s_k,\cdot),
		\Theta_{s_k}\mcV'
		\Big).
	\end{align*}
	This is well-defined since $s_k<e_{k-1}$. By the continuity of fixed-time shifts and of $\mathcal{S}_{e_{k-1}}$, the map
	\begin{align*}
		R_k:\mcN_{k-1}\to E_{\vartheta}
	\end{align*}
	is continuous. Since $\mcS_{e_{k-1}} (F)$ coincides with $\mcS_T(F)$ restricted to $[0, e_{k-1}]$ by the induction hypothesis, one has
	\begin{align*}
		R_kF=F^{(s_k)}\in B_{E_\vartheta}(R)
	\end{align*}
	and since $B_{E_\vartheta}(R)$ is open, the set
	\begin{align*}
		\mcN_k
		=
		\mcN_{k-1}\cap R_k^{-1}(B_{E_\vartheta}(R))
	\end{align*}
	is an open neighbourhood of $F$.

	Now given $F^\prime \in \mcN_k$, $R_k F^\prime \in B_{E_\vartheta}(R)$ so that $\mathcal{S}_{\tau} (R_k F^\prime)$ is well-defined. We wish to define
	\begin{align*}
		\mathcal{S}_{e_k} (F^\prime)(s,x) =
		\begin{cases}
			\mathcal{S}_{e_{k-1}}(F^\prime)(s,x), &s < e_{k-1},
			\\
			\mathcal{S}_{\tau}(R_k F^\prime)(s - s_k, x), &s \ge s_k.
		\end{cases}
	\end{align*}
	Since the two domains of definition overlap in an open set $(s_k, e_{k-1}) \times \mbT$, this will define a tuple of modelled distributions in the appropriate spaces once we know that the two definitions coincide on their common domain of definition.
	
	By the same argument as in the proof of \cite[Proposition 7.11]{H0}, if
	\begin{align*}
		\mathcal{U}
		=
		\mathcal{P}_\gamma^Z \mathbf{1}_+ G
		+
		\mathcal{G}\psi,
	\end{align*}
	then
	\begin{align*}
		\Theta_s \mathcal{U}
		=
		\mathcal{P}_\gamma^{\Theta_s Z}\mathbf{1}_+ \Theta_s G
		+
		\mathcal{G}(\mathcal{R}^Z\mathcal{U}(s,\cdot)).
	\end{align*}
	Applying this identity componentwise to the fixed-point problem \eqref{eq: tct_sys}, and using that the products, spatial derivatives, and the additional input $\mathcal{V}$ commute with the fixed time shift in the obvious way, we see that
	\begin{align*}
		\Theta_{s_k}\mathcal{S}_{e_{k-1}}(F^\prime)
	\end{align*}
	restricted to $[0,e_{k-1}-s_k]$ solves the fixed-point problem with datum
	\begin{align*}
		R_kF^\prime
		=
		\Big(
		\Theta_{s_k}Z^\prime,
		(\mathcal{R}^{Z'} \mathcal{S}_{e_{k-1}}(F^\prime))(s_k,\cdot),
		\Theta_{s_k}\mathcal{V}^\prime
		\Big).
	\end{align*}
	This is the same fixed-point problem solved by $\mathcal{S}_{\tau}(R_kF^\prime)$.
	For consistency, we will also write $R_0F' = (Z',\boldsymbol{\psi},\mcV')$.
	
	It remains only to justify that we may invoke uniqueness for this restarted problem.
	One first notices that $\mathcal{S}_{\tau}(R_kF^\prime)$ belongs to the open ball $B_{\mfD_{\tau}}(M)$ by construction of the local fixed-point map.
	On the other hand, we have
	\begin{align*}
		\left\| \Theta_{s_k}\mathcal{S}_{e_{k-1}}(F^\prime) \right\|_{\mathfrak{D}_{e_{k-1}-s_k}}
		& =\left\| \Theta_{s_k - s_{k-1}}\mathcal{S}_{\tau}(R_{k-1}F^\prime) \right\|_{\mathfrak{D}_{e_{k-1}-s_k}}
		\\& \leq \left\| \mathcal{S}_{\tau}(R_{k-1} F^\prime) \right\|_{\mathfrak{D}_{\tau}}
		< M,
	\end{align*}
	where the shifted solution is restricted to $[0,e_{k-1}-s_k]$. Here, in the equality we have used the definition of $\mathcal{S}_{e_{k-1}}(F^\prime)(s, x)$ for $s \ge s_{k-1}$, in the first inequality we have used $|(s, x) - (s_k, x)|_\fs \leq |(s, x) - (s_{k-1}, x)|_\fs$ for all $s \ge s_k$ and the last strict inequality follows from the local solution theory and our choice $M \ge M_0$.
	Therefore uniqueness in $B_{\mfD_{\tau}}(M)$ implies that
	\begin{align*}
		\Theta_{s_k}\mathcal{S}_{e_{k-1}}(F^\prime)
		=
		\mathcal{S}_{\tau}(R_kF^\prime)
		\qquad
		\text{on } [0,e_{k-1}-s_k].
	\end{align*}
	By writing
	\begin{align*}
		\mathcal{S}_{e_{k-1}}(F^\prime)(s,x)
		=
		(\Theta_{s_k}\mathcal{S}_{e_{k-1}}(F^\prime))(s-s_k,x)
	\end{align*}
	for $s > s_k$, we therefore conclude that the two definitions above coincide on $(s_k,e_{k-1})$, as was required. Continuity of
	\begin{align*}
		\mathcal{S}_{e_k}:\mcN_k\to \mathfrak{D}_{e_k}
	\end{align*}
	follows from the continuity of $\mathcal{S}_{e_{k-1}}$, $\mathcal{S}_{\tau}$, $R_k$, fixed-time shifts, and restrictions.
	We have thus proved the assertion \eqref{item:sol_1} and \eqref{item:sol_2}.
	Finally, we prove the remaining assertion \eqref{item:sol_3}. By the same argument as above, we note that $\Theta_{s_k} \mcS_T(F)$ solves the restarted fix point problem with datum $F^{(s_k)}$ (which equals to $R_k F$ as noted earlier). Also, by our choice of $M > \sup_{s \in [0, T]} \|\Theta_s \mcS_T(F)\|_{\mfD_{T-s}}$, one has $\Theta_{s_k} \mcS_T(F) \in B_{\mfD_{\tau\wedge(T-s_k)}}(M)$, and hence it follows from the uniqueness that $\Theta_{s_k}\mathcal{S}_T(F) = \mcS_\tau (R_k F)$ on $[0, \tau \wedge (T - s_k)]$, which along with the induction hypothesis implies $\mathcal{S}_T(F) = \mcS_{e_k}(F)$ on $[0, e_k \wedge T]$.
	This completes the induction.
	
	To complete the proof, it remains to note that the construction of $\mathcal{S}_T$ as defined above ultimately does not depend on the choices of $R$ and $M$. Indeed, given a second solution associated to a fixed data $F$, one can appeal to the same argument with a larger choice of $M$ to see that this solution in fact coincides with the one constructed above. 
\end{proof}

Our next step is to show that the restriction to $E_{\vartheta, T}^\zeta$ in the previous statement is not a significant one.
Analogously to Definition \ref{def:admissible_data}, let us define the set of data $\tilde{E}^\zeta_{\vartheta, T}$ consisting of all tuples $(Z, \boldsymbol{\psi}, \mcV) \in E^\zeta_\vartheta$ for which there exists a unique solution to the system formed by the first two equations of \eqref{eq: tct_sys} in $\mathcal{D}_T^{\gamma - N\zeta, \vartheta - N\zeta} \times \mathcal{D}_T^{\gamma - N\zeta, \vartheta - N\zeta}$. We note that membership of $\tilde{E}^\zeta_{\vartheta, T}$ depends only on the restriction of $Z$ to the sector spanned by the trees containing only the noise types $\<b0>, \<p0>$ and on the first two components of $\boldsymbol{\psi}$.
The following result follows straightforwardly from the fixed-point approach of Hairer and the fact that the equation for $\tilde{\mathcal{W}}$ is linear in $\tilde{\mathcal{W}}$.
\begin{lemma}\label{lem:W_to_H_reduction}
	For any $T > 0$ and $\zeta \in [0, \zeta_0)$, we have that $\tilde{E}^\zeta_{\vartheta, T} = E^\zeta_{\vartheta, T}$.
\end{lemma}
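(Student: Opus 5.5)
The inclusion $E^\zeta_{\vartheta,T}\subset\tilde E^\zeta_{\vartheta,T}$ is immediate. The first two equations of \eqref{eq: tct_sys} do not involve $\tilde{\mathcal W}$, so the first two components of any solution in $\mathcal D^{(\zeta)}_T(\vartheta;Z)$ solve the $\mathcal H$-subsystem. Uniqueness for the subsystem also follows: if $(\Hb',\Hp')$ were a second solution, the linear equation below would give a matching $\tilde{\mathcal W}'$. One has to check that any subsystem solution automatically satisfies the structural constraint that $\partial_x\Hb-\<b1>$ and $\partial_x\Hp-\<p1>$ take values in degrees $\ge -2\kappa$. This holds because the fixed point forces $\Hb=\<1>+\dots$, up to polynomials and terms of higher degree. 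The real content is therefore the reverse inclusion: given a (unique) solution $(\Hb,\Hp)$ on $[0,T]$, construct a unique $\tilde{\mathcal W}$ on $[0,T]$.

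I would freeze $(\Hb,\Hp)$ and view the third equation as a linear fixed-point problem
\begin{align*}
\tilde{\mathcal W}=\mathcal P_{\tilde\gamma}\mathbf 1_+\bigl[\partial_x\tilde{\mathcal W}\,(\partial_x\Hb+\partial_x\Hp)\bigr]+\mathcal F,
\end{align*}
where $\mathcal F$ collects the $\tilde{\mathcal W}$-independent terms (the $\mathcal V$, $\<s(1)0>$, $\<bs(2)0>$, $\<ps(2)0>$ terms and $\mathcal G w_0$). By the multiplication and Schauder estimates of \cite{H0}, $\mathcal F\in\mathcal D^{\tilde\gamma-N\zeta,-\kappa-N\zeta}_T$, with norm controlled by the data and by $\|(\Hb,\Hp)\|$ on $[0,T]$. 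The same estimates show that the map $L:\tilde{\mathcal W}\mapsto\mathcal P\mathbf 1_+[\partial_x\tilde{\mathcal W}(\partial_x\Hb+\partial_x\Hp)]$ is linear. On an interval of length $\tau$ its operator norm is bounded by $C\tau^{\theta}(1+\|Z\|)(1+\|\Hb\|+\|\Hp\|)$ for some $\theta>0$. The gain $\tau^\theta$ comes from the short-time bound \cite[Theorem 7.1]{H0}, using that the product has total singularity exponent strictly greater than $-2$. Here $\partial_x\tilde{\mathcal W}$ has exponent about $-1-\kappa$ and $\partial_x\mathcal H$ has exponent about $-1/2-\kappa$, so their product has exponent about $-3/2-2\kappa>-2$ (shifted by $O(N\zeta)$, which is harmless since $N\zeta_0<\kappa$).

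The key point is that the constant $C$, and hence $\tau$, depends only on a priori bounds on the frozen quantities over all of $[0,T]$, and not on $\tilde{\mathcal W}$, because the problem is linear. I would fix $\tau$ so that $\|L\|\le 1/2$ on every window of length $\tau$ and obtain a unique local solution by contraction. Then I would restart at time $s_1<\tau$ using the time-shift identity from the proof of Lemma~\ref{lem:sol_1}: the restarted data are $\Theta_s Z$, the reconstructed traces at time $s$, and $\Theta_s\mathcal V$. The new initial trace $w^{(s)}=\mathcal R\tilde{\mathcal W}(s,\cdot)$ lies in $C^{\vartheta}$ because the $\tilde{\mathcal W}$ sector is function-like at positive times. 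Finitely many steps cover $[0,T]$ with the same $\tau$, and uniqueness on overlaps glues the pieces exactly as in Lemma~\ref{lem:sol_1}.

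The main obstacle is the restart bookkeeping. After a restart the initial datum for $\tilde{\mathcal W}$ has regularity $\vartheta$ rather than $0$. This changes the singular weight from $-\kappa$ to $\vartheta$, and one must check that the a priori constant does not deteriorate. The check is that the linear bound depends only on $\sup_s\|F^{(s)}\|$ restricted to the $(Z,\psi_{\<bsq>},\psi_{\<psq>},\mathcal V)$ components. Uniqueness in the full space, rather than in a ball, also comes from linearity: the difference of two solutions solves the homogeneous problem and vanishes window by window.
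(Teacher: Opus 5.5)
Your argument is correct and is essentially the paper's. The paper only remarks that the lemma follows from Hairer's fixed-point method and the linearity of the $\tilde{\mathcal W}$ equation, and your uniform step size from linearity, restart and gluing as in Lemma~\ref{lem:sol_1}, and uniqueness via the homogeneous problem are precisely the unpacking of that remark.

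Two minor corrections. First, the singular weight of $\partial_x\Hb$ at $t=0$ is $\vartheta-1-N\zeta$ rather than $-1/2-\kappa$, so the relevant product weight is roughly $\vartheta-2-\kappa$, which still exceeds $-2$ because of the lower bound on $\vartheta$. Second, restarting does not change the weight of $\tilde{\mathcal W}$, since $w_0\in C^{\vartheta-N\zeta}$ is already allowed in $E^\zeta_\vartheta$; your ``main obstacle'' therefore reduces to the uniform-in-$s$ bound on $\Theta_s Z$, $\Theta_s\Hb$, $\Theta_s\Hp$, $\Theta_s\mathcal V$ that you already use.
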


We write $\hat{\mathcal{M}}_\infty(\tilde{\mfT}^\zeta) \subset \mcM_{\mathrm{adm}}(\tilde\mfT^\zeta)$ for the set of smooth models in the orbit under the renormalisation group of the canonical lift under a smooth driving noise with the property that the corresponding counterterm in the renormalised PDE corresponding to the first two components of \eqref{eq: tct_sys} is a constant (rather than depending on $h$).
\begin{lemma}\label{lem:smooth_solutions}
	For any $T > 0$ and $\zeta \in [0, \zeta_0)$, we have that $\hat{\mathcal{M}}_\infty(\tilde{\mfT}^\zeta) \times (C^{\vartheta - N\zeta})^3 \times \mathcal{D}^{3/2 + 2 \kappa - N\zeta, - \kappa - N\zeta} \subset E^\zeta_{\vartheta, T}$.
\end{lemma}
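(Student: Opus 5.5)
By Lemma~\ref{lem:W_to_H_reduction} it suffices to show that for $Z \in \hat{\mathcal{M}}_\infty(\tilde{\mfT}^\zeta)$ and any $\boldsymbol{\psi}$, $\mcV$, the first two equations of \eqref{eq: tct_sys} have a unique solution in $\mathcal{D}_T^{\gamma - N\zeta, \vartheta - N\zeta} \times \mathcal{D}_T^{\gamma - N\zeta, \vartheta - N\zeta}$ on the whole interval $[0,T]$. These two equations are decoupled and driven only by the sector generated by $\<b0>$ or $\<p0>$ respectively. So I would treat a single KPZ-type equation $\mathcal{H} = \mcP_\gamma \mathbf{1}_+[(\partial_x\mathcal{H})^2 + \Xi] + \mathcal{G}\psi$, with a smooth model in the renormalisation orbit of a canonical lift. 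The data $\mcV$ and the third component of $\boldsymbol{\psi}$ play no role.

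First I will use the standard fact from \cite{H0} (the identification of reconstructed solutions for smooth renormalised models, cf. \cite[Theorem 8.44]{H0} and \cite{BCCH}) that any local solution $\mathcal{H}$ on $[0,\tau]$ satisfies $\mathcal{R}\mathcal{H} = u$, where $u$ is the classical solution of $(\partial_t - \partial_x^2)u = (\partial_x u)^2 + \zeta_{Z} - C_Z$. Here $\zeta_Z$ is the smooth noise underlying $Z$, and $C_Z$ is a constant by definition of $\hat{\mathcal{M}}_\infty$. The initial datum is $\psi \in C^{\vartheta - N\zeta}$. Conversely, given the classical solution $u$, the modelled distribution can be recovered: $\mathcal{H}$ is determined by $u$ through its local expansion, namely the constant coefficient $u$, the derivative coefficients built from $\partial_x u$, and the fixed trees.

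The key step is an a priori global bound on $u$. This follows from the Cole--Hopf transform: $v = e^{u}$ solves the linear equation $(\partial_t - \partial_x^2) v = v(\zeta_Z - C_Z)$ with smooth coefficients and positive initial datum $e^{\psi} \in C^{\vartheta - N\zeta}$, which is bounded away from $0$. Hence $v$ exists globally, is smooth for $t > 0$, and stays strictly positive by the maximum principle / Feynman--Kac. Therefore $u = \log v$ is a global classical solution on $[0,T]$ with $\sup_{t\in[0,T]} \|u(t)\|_{C^{\vartheta - N\zeta}} < \infty$. The only subtlety is near $t = 0$, where standard Schauder estimates give the weighted blow-up rate matching the $\vartheta - N\zeta$ singularity.

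I then globalise by the restarting argument from the proof of Lemma~\ref{lem:sol_1}. The local existence time $\tau$ depends only on the size of the model on $\mathcal{O}_T$, which is fixed, and on the $C^{\vartheta - N\zeta}$ norm of the initial datum. I restart at times $s_k$ with data $u(s_k,\cdot)$, which are uniformly bounded by the Cole--Hopf bound, so the step length is uniform and finitely many steps cover $[0,T]$. On each overlap, the local solutions agree by local uniqueness together with the identity for time-shifted $\mcP_\gamma$ used in Lemma~\ref{lem:sol_1}. Patching gives a solution on $[0,T]$. Uniqueness on $[0,T]$ follows because any two solutions agree on successive short intervals by local uniqueness.

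I expect the main obstacle to be the restart step. One has to check that the solution does not blow up in the modelled-distribution norm even though $\mathcal{R}\mathcal{H}$ stays bounded. To handle this, I would use that for smooth models the local fixed-point norm is controlled by the size of the model and the norm of the initial data, so a bound on $u(s_k)$ suffices. Some care is also needed for $\zeta > 0$, where truncations are lowered by $N\zeta$, but as in Lemma~\ref{lem:sol_1} this does not change the argument.
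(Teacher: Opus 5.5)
Your proposal is correct and follows essentially the same route as the paper: reduce via Lemma~\ref{lem:W_to_H_reduction} to the two decoupled KPZ components, and note that by the fixed-point theory of \cite{H0} and the restarting argument of Lemma~\ref{lem:sol_1} it suffices to have an a priori bound on $\sup_{0<s\le T}\|h(s,\cdot)\|_{C^\vartheta}$ for the classical solution with smooth noise and constant counterterm. The paper then obtains that bound, as you do, via Cole--Hopf; the only cosmetic difference is that it first absorbs the constant by $h\mapsto h+Ct$ and uses the explicit comparison functions $c_2e^{-Mt}\le e^{h}\le c_1e^{Mt}$ in place of your maximum principle/Feynman--Kac argument.
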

\begin{proof}
	By Lemma~\ref{lem:W_to_H_reduction}, it suffices to show the same inclusion for $\tilde{E}_{\vartheta, T}$ which means it suffices to show the existence of a unique solution to the first two components of \eqref{eq: tct_sys} on $[0,T]$. Since the proof is the same for each of these components, we focus on only one. By Hairer's fixed-point approach and a restarting argument similar to the one in the proof of Lemma~\ref{lem:sol_1}, it suffices to show that if $\xi$ is a smooth driving noise and $h$ solves
	\begin{align*}
		(\partial_t - \Delta) h = (\partial_x h)^2 + \xi - C
	\end{align*}
	with $C^\vartheta$ initial data then for any $T > 0$, $\sup_{0 < s \le T} \|h(s, \cdot)\|_{C^\vartheta} < \infty$. By replacing $h$ with $h + Ct$, we may assume without loss of generality that $C = 0$. 
	
	We now apply the Cole-Hopf transform $u = \exp(h)$ so that $u$ solves
	\begin{align*}
		(\partial_t - \Delta) u = u \xi
	\end{align*}
	with initial data $u_0 = \exp(\psi) > 0$. Since $\xi$ is smooth, we may assume that $- M \le \xi \le M$ on $[0,T] \times \mathbb{T}$. By continuity of $u_0$, we may assume that $c_1 \ge u_0 \ge c_2 > 0$. Define $\underline{u} = c_2e^{-Mt}$ and $\overline{u} = c_1e^{Mt}$. These are a sub- and supersolution respectively for the equation solved by $u$. Therefore, by the comparison principle for the heat operator, we have that 
	\begin{align*}
		c_2e^{-MT} \le \underline{u} \le u \le \overline{u} \le c_1e^{MT}
	\end{align*} 
	on $[0,T] \times \mathbb{T}$. Since $\xi$ is smooth, it is simple to see that $u$ is smooth on $[0,T] \times \mathbb{T}$. Since the logarithm is smooth on closed bounded intervals not containing $0$, the result then follows.
\end{proof}

The reason for introducing the weaker gradings is the following compact embedding between $E_\vartheta$ and $E^\zeta_\vartheta$, $\zeta > 0$.

\begin{lemma}\label{lem:compact_embedding}
	For every $\zeta \in (0, \zeta_0)$ and $T > 0$, there exists an embedding $\iota_\zeta : E_\vartheta \to E^\zeta_\vartheta$ maps bounded sets in $E_\vartheta$ to relatively compact sets in $E^\zeta_\vartheta$.
\end{lemma}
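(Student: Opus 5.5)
The plan is to build $\iota_\zeta$ componentwise and to prove compactness by an Arzelà--Ascoli type argument in each factor. Since $\tilde\mfT^\zeta$ has the same trees as $\tilde\mfT$ (Lemma~\ref{lem:comp_sub}), a model $Z=(\Pi,\Gamma)$ on $\tilde\mfT$ defines a model on $\tilde\mfT^\zeta$ without change. The only difference is that its seminorms are now measured against the weaker degrees $|\tau|_\fs^{(\zeta)} = |\tau|_\fs-\zeta n_\tau$. The identity map is therefore bounded, because each analytic bound $|\Pi_z\tau(\varphi^\lambda_z)|\lesssim\lambda^{|\tau|_\fs}$ with $\lambda\le1$ implies the bound with exponent $|\tau|_\fs^{(\zeta)}$. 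The same holds for $\Gamma$, provided the grading shift is compatible with the coproduct, and I will check this using Appendix~\ref{app:degree_shift}. On the initial data the map is the embedding $C^\vartheta\hookrightarrow C^{\vartheta-N\zeta}$. On $\mcV$ we use the inclusion $\mcD^{3/2+2\kappa,-\kappa}\hookrightarrow\mcD^{3/2+2\kappa-N\zeta,-\kappa-N\zeta}$ over the shifted grading. This is well defined because lowering both the regularity exponent and the weight only weakens the bounds, and because the shifted sector contains the same trees.

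For compactness I will treat the three factors separately, since a product of relatively compact sets is relatively compact. For the data $\boldsymbol\psi$ this is the classical compact embedding of Hölder spaces on the compact torus $\mathbb{T}$. For the models I take a bounded sequence $Z_n$ in $E_\vartheta$. First, $\Gamma^n_{zz'}$ is given by finitely many real coefficients $\langle\Gamma^n_{zz'}\tau,\sigma\rangle$, which are bounded by $|z-z'|_\fs^{|\tau|-|\sigma|}$ and are Hölder continuous in $z,z'$ (this follows from the algebraic relation $\Gamma_{zz'}=\Gamma_{zy}\Gamma_{yz'}$). Next, $\Pi^n_z\tau$ is bounded in $\mcC^{|\tau|_\fs}$ locally, and is equicontinuous in $z$ in a weaker space through $\Pi_{z'}=\Pi_z\Gamma_{zz'}$. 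By Arzelà--Ascoli applied on the compact set $\mathcal O_T$ and a diagonal extraction, I obtain a subsequence such that all these objects converge locally uniformly, respectively in distributions. The limit still satisfies the algebraic relations and the original bounds, so it is an admissible model. Convergence in the $\zeta$-norm then follows from the standard interpolation trick: the difference of two elements of the subsequence is small at scales $\lambda\ge\lambda_0$ by the weak convergence, and at scales $\lambda<\lambda_0$ it is at most $2C\lambda^{|\tau|_\fs}\le 2C\lambda_0^{\zeta n_\tau}\lambda^{|\tau|_\fs^{(\zeta)}}$. This is exactly where $n_\tau\ge1$ is needed for non-polynomial trees. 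Polynomial trees have $n_\tau=0$, but their $\Pi$ is fixed, so no gain is needed there. Admissibility is automatically preserved in the limit because the polynomial components are given by formulas that pass to the limit.

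For $\mcV$ I follow the same scheme. The components $\langle\mcV(z),\tau\rangle$ are finitely many functions on $(0,T]\times\mathbb{T}$, locally Hölder with the weights prescribed by the $\mcD^{\gamma,\eta}$ norm. Arzelà--Ascoli on compact subsets away from $t=0$ gives convergence along a subsequence. The loss $N\zeta$ in both the regularity exponent and the weight then lets me upgrade this to convergence in the comparison metric $\|\mcV;\bar\mcV\|$, even though the two modelled distributions live over different models $Z_n$. The upgrade uses the same small-scale/large-scale splitting, together with the weight near $t=0$ where the gain $t^{N\zeta/2}$ absorbs the boundary layer.

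I expect the main obstacle to be this last point: the comparison quantity mixes $\Gamma^n$ and $\Gamma^m$. I will handle it by first securing convergence of the models and then estimating $\mcV_n(z)-\Gamma^n_{zz'}\mcV_n(z')$ against its counterpart for $\mcV_m$ using the triangle inequality. A secondary subtlety is that the norms are only local (on $\mathcal O_T$), so the whole argument must stay on compact sets.
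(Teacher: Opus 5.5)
Your proposal is correct and takes the same route as the paper. The paper's proof reduces to the compact H\"older embedding for the initial data together with Proposition~\ref{prop:compact_embedding_models} (via Proposition~\ref{prop:comp_mod}), and the sketches of those results are exactly your scheme: a coefficientwise Arzel\`a--Ascoli extraction, followed by interpolation using the strict gain $n_\tau\zeta>0$ for models and the margin $N\zeta$ in both exponents for (singular) modelled distributions; compatibility of $\Gamma$ with the shifted grading is handled, as you anticipate, in Appendix~\ref{app:degree_shift} by restricting to a finite historic sector and the character group of the Hopf subalgebra it generates. One minor tidy-up: $E_\vartheta$ is not literally a product, since the space of $\mathcal{V}$ depends on $Z$, so the extraction must be joint (models first, then $\mathcal{V}$). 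That is in fact what you do, and it is what Proposition~\ref{prop:compact_embedding_models} asserts.
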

\begin{proof}
	This follows from the compact embeddings $\cC^\vartheta \hookrightarrow \cC^{\vartheta - \kappa}$, $\cM_{\mathrm{adm}}(\tilde \mfT) \hookrightarrow \cM_{\mathrm{adm}}(\tilde \mfT^\zeta)$ and $\mcD^{(0)}_T(\vartheta) \hookrightarrow \mcD_T^{(\zeta)}(\vartheta)$.
	The last two compact embeddings are the content of Proposition \ref{prop:compact_embedding_models}.
\end{proof}

\section{Proof of Theorem~\ref{theo:main_theorem}}\label{sec:main_result}
In preparation for the proof of the main result, we need one final elementary ingredient, that we state separately for the sake of clarity.

\begin{lemma}\label{lem:compact_prob}
	Let $\tilde{E}, E, F$ be metric spaces. Let $\iota : \tilde{E} \to E$ be a compact embedding. Let $E_{\mathrm{sol}} \subset E$ be open and let $f : E_{\mathrm{sol}} \to F$ be continuous. Suppose that $(X_\lambda)_{\lambda \in \Lambda}$ is a collection of $\tilde{E}$-valued random variables such that:
	\begin{enumerate}
		\item there exist $e \in \tilde{E}$ and $p > 0$ such that $\sup_{\lambda \in \Lambda} \mathbb{E}[ d_{\tilde{E}}(X_\lambda, e)^p ] < \infty$;
		\item for every $\lambda \in \Lambda$, $\iota(X_\lambda) \in E_{\mathrm{sol}}$ almost surely;
		\item every subsequential limit $\mu$ of the laws $(\iota(X_{\lambda}))_{\lambda \in \Lambda}$ satisfies $\mu(E_{\mathrm{sol}}) = 1$.
	\end{enumerate}
	Then for all $y_0 \in F$,
	\begin{equ}
		\limsup_{M \to \infty} \sup_{\lambda \in \Lambda} \mathbb{P}( d_F( f(\iota(X_\lambda)), y_0) > M) = 0\;.
	\end{equ}
\end{lemma}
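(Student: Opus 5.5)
We argue by contradiction via a subsequence and Prokhorov/Skorokhod-type reasoning. Suppose the conclusion fails. Then there exist $\eta > 0$, a sequence $M_n \to \infty$ and indices $\lambda_n \in \Lambda$ such that
\begin{equ}
\mathbb{P}\big( d_F( f(\iota(X_{\lambda_n})), y_0) > M_n\big) \ge \eta
\end{equ}
for all $n$. The first step is to establish tightness of the laws of $\iota(X_{\lambda_n})$ in $E$. By assumption (1) and Markov's inequality, $\mathbb{P}(d_{\tilde E}(X_\lambda, e) > R) \le C R^{-p}$ uniformly in $\lambda$. The closed ball $\bar B_{\tilde E}(e,R)$ is bounded, so its image under $\iota$ is relatively compact in $E$ by compactness of the embedding. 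Hence $\iota(X_{\lambda_n})$ lies in a compact set $\overline{\iota(\bar B_{\tilde E}(e,R))}$ with probability at least $1 - CR^{-p}$, which is tightness. Prokhorov's theorem (in the direction tight $\Rightarrow$ relatively compact, valid in any metric space) then yields a further subsequence, still denoted $\lambda_n$, along which the law of $\iota(X_{\lambda_n})$ converges weakly to some probability measure $\mu$ on $E$. By assumption (3), $\mu(E_{\mathrm{sol}}) = 1$.

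The second step transfers this to tightness of $f(\iota(X_{\lambda_n}))$ in $F$. Since $\mu(E_{\mathrm{sol}})=1$ and $\mu$ is tight, for $\eta$ as above there is a compact $K \subset E_{\mathrm{sol}}$ with $\mu(K) > 1 - \eta/4$. Because $E_{\mathrm{sol}}$ is open and $K$ is compact, there is $r > 0$ such that the closed $r$-neighbourhood $K_r$ of $K$ lies in $E_{\mathrm{sol}}$. By the Portmanteau theorem applied to the open set $K_r^\circ \supset K$, we get $\liminf_n \mathbb{P}(\iota(X_{\lambda_n}) \in K_r) \ge \mu(K) > 1 - \eta/4$.

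The main obstacle is that $K_r$ need not be compact, so the continuity of $f$ alone does not bound $f$ on it. To get around this, intersect with the compact set from the first step. Let $A = \overline{\iota(\bar B_{\tilde E}(e,R))}$, with $R$ chosen so that $CR^{-p} < \eta/4$. The set $A \cap K_r$ is compact and contained in $E_{\mathrm{sol}}$. Since $f$ is continuous on $E_{\mathrm{sol}}$, $f(A\cap K_r)$ is compact and hence bounded, say by $M_*$ around $y_0$. For $n$ large we then have
\begin{equ}
\mathbb{P}\big(\iota(X_{\lambda_n}) \in A \cap K_r\big) \ge 1 - \eta/4 - \eta/4 - o(1) > 1 - \eta/2 - o(1),
\end{equ}
so $\mathbb{P}\big(d_F(f(\iota(X_{\lambda_n})), y_0) > M_*\big) < \eta$ for large $n$. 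This contradicts the first display once $M_n > M_*$. Assumption (2) guarantees that $f(\iota(X_\lambda))$ is almost surely defined, so the probabilities above make sense.
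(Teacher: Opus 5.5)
Your proof is correct and follows the paper's skeleton: contradiction along a sequence $\lambda_n$, tightness from assumption (1) via Markov's inequality and the compact embedding, Prokhorov, then Portmanteau together with assumption (3). The difference is in how $\mu(E_{\mathrm{sol}})=1$ is turned into a contradiction.

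The paper never needs $f$ to be bounded on any set. It uses the sublevel sets $G_M=\{x\in E_{\mathrm{sol}}: d_F(f(x),y_0)<M\}$, which are open in $E$ (by continuity of $f$ and openness of $E_{\mathrm{sol}}$) and increase to $E_{\mathrm{sol}}$. Portmanteau on the closed sets $E\setminus G_M$ gives $\mu(E\setminus G_M)\ge\eta$ for every $M$, and letting $M\to\infty$ yields $\mu(E\setminus E_{\mathrm{sol}})\ge\eta$.

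You instead build a compact set $A\cap K_r\subset E_{\mathrm{sol}}$ carrying most of the mass of $\iota(X_{\lambda_n})$ for large $n$, and use that continuous functions are bounded on compacts. This uses compactness a second time and needs the $r$-neighbourhood to produce an open set for Portmanteau, so it is longer, but it is sound.

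The one step you state without justification is that $\mu$ is tight. This is not automatic, since $E$ need not be separable, but it holds here. Portmanteau for the closed set $A$ gives $\mu(A)\ge 1-CR^{-p}$. Intersecting $A$ with a closed $F\subset E_{\mathrm{sol}}$ of nearly full $\mu$-measure then gives the required $K$; such $F$ exists because closed inner regularity holds for any Borel probability measure on a metric space.

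You can also drop $K$ altogether. Replace $K_r$ by the closed set $\{x: d_E(x,E\setminus E_{\mathrm{sol}})\ge r\}\subset E_{\mathrm{sol}}$, and apply Portmanteau to the open set $\{d_E(\cdot,E\setminus E_{\mathrm{sol}})>r\}$, whose $\mu$-measure tends to $\mu(E_{\mathrm{sol}})=1$ as $r\downarrow 0$. The intersection with $A$ is still compact and contained in $E_{\mathrm{sol}}$.
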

\begin{proof}
	By assumption (1), the compact embedding $\iota$ and the Markov inequality, it follows that the family of laws $\{ \mathcal{L}(\iota(X_\lambda)) : \lambda \in \Lambda \}$ is tight.
	
	Suppose now that the conclusion fails. Then there exist $c > 0$, a sequence $(\lambda_k)$ in $\Lambda$ and $M_k \uparrow \infty$ such that for every $k$
	\begin{align*}
		\mathbb{P}( d_F( f(\iota(X_{\lambda_k})), y_0) > M_k ) \ge c \;.
	\end{align*}
	By tightness and Prokhorov's theorem\footnote{Let us remark that the direction of implication of Prokhorov's theorem employed here does not require the underlying metric spaces to be separable; see \cite[Theorem 5.1]{Billingsley}}, there exists a subsequence, still denoted by $(\lambda_k)$, for which $\iota(X_{\lambda_k})$ converges in law to a random variable $Y$ with law $\mu$.
	Notice that for all $M > 0$, the set
	\begin{equ}
		G_M = \{x \in E_{\mathrm{sol}}: d_F(f(x), y_0) < M\}
	\end{equ}
	is an open set in $E_{\mathrm{sol}}$ (and hence in $E$ since $E_{\mathrm{sol}}$ is open) by the continuity of $f$. Moreover, $G_M \uparrow E_{\mathrm{sol}}$ as $M \uparrow \infty$.
	Consequently, for each $M > 0$, the Portmanteau Theorem implies
	\begin{equ}
		\mathbb{P}(Y \notin G_M) \ge \limsup_{k} \mathbb{P}(\iota(X_{\lambda_k}) \notin G_M) \ge \limsup_{k} \mathbb{P}( d_F( f(\iota(X_{\lambda_k})), y_0) > M_k ) \ge c\;.
	\end{equ}
	Here in the second inequality, we have used the inclusion of the events
	\begin{equ}
		\{d_F( f(\iota(X_{\lambda_k})), y_0) > M_k\} \subset \{\iota(X_{\lambda_k}) \notin G_M\} \cup \{\iota(X_{\lambda_k}) \notin E_{\mathrm{sol}}\}
	\end{equ}
	for $M_k > M$, as well as the assumption (2) that $\{\iota(X_{\lambda_k}) \notin E_{\mathrm{sol}}\}$ is a $\mathbb{P}$-null set.
	It follows by monotone convergence that
	\begin{equ}
		\mu(E_{\mathrm{sol}}^\complement) = \lim_M \mathbb{P}(Y \notin G_M) \ge c
	\end{equ}
	which contradicts assumption (3) and therefore concludes the proof.
\end{proof}

We will apply Lemma~\ref{lem:compact_prob} with the following choices: $\tilde{E} = E_{\vartheta, T}$, equipped with the metric of $E_\vartheta$; $E = E^\zeta_\vartheta$ and $E_{\mathrm{sol}} = E^\zeta_{\vartheta, T}$, which is open by Lemma~\ref{lem:sol_1}; $\iota = \iota_\zeta$ which is the compact embedding provided by Lemma~\ref{lem:compact_embedding}; $F = C^{-2\kappa}([0,T] \times \mathbb{T})$ and $y_0 = 0$; and $f = g \circ \mcS^\zeta_T$, where $g(Z, \mcH_{\<bsq>}, \mcH_{\<psq>}, \mcW) = \mathcal{R}^Z \mcW$ and $\mathcal{R}^Z$ denotes the reconstruction operator with respect to $Z$, which is continuous on $E^\zeta_{\vartheta, T}$ by Lemma~\ref{lem:sol_1} and the continuity of reconstruction, and which is $F$-valued since the third component of $\mcD^{(\zeta)}_T$ lives on a sector of regularity $-2\kappa$.

\begin{proof}[Proof of Theorem~\ref{theo:main_theorem}]
	We first show that
	\begin{align*}
		\lim_{M \to \infty} \sup_{\eps > 0} \mathbb{P}(\eps^{-1/2} \|h_\eps - h \|_{C^{-2\kappa}([0,T] \times \mathbb{T})} > M) = 0.
	\end{align*}
	We note that since $h_\delta \to h$ in probability in $C^{-2\kappa}((0,T] \times \mathbb{T})$ as $\delta \to 0$, by the Portmanteau Theorem, it suffices to show that
	\begin{align*}
		\lim_{M \to \infty} \sup_{0 < \delta < \eps} \mathbb{P} ( \eps^{-1/2} \|h_\eps - h_\delta \|_{C^{-2\kappa}([0,T] \times \mathbb{T})} > M ) = 0.
	\end{align*}
	
	We note that in turn by Lemma~\ref{lem:recon_iden} it suffices to show that 
	\begin{align*}
		\limsup_{M \to \infty} \sup_{0 < \delta < \eps} \mathbb{P}( \| \mathcal{R}^{\eps,\delta} \mathcal{W} \|_{C^{-2\kappa}([0,T] \times \mathbb{T})} > M) = 0
	\end{align*}
	where $\mathcal{W}$ is a solution of the system in Lemma~\ref{lem:recon_iden} with respect to the BPHZ model on $\mfT$ with respect to the noise assignment of Definition~\ref{def:cT_noise_ass} and where $\mathcal{R}^{\eps, \delta}$ is the corresponding reconstruction operator. Furthermore, by Lemma~\ref{lem:data_control} and Lemma~\ref{lem: DPD_1} it suffices to show the same convergence with $\mathcal{W}$ replaced by $\bar{\mathcal{W}}$ where $\bar{\mathcal{W}}$ corresponds to the system of equations in Lemma~\ref{lem: DPD_1}. Then by Lemma~\ref{lem:transfer_1} it suffices to consider $\tilde{\mathcal{R}}^{\eps,\delta} \tilde{\mathcal{W}}$ where $\tilde{\mathcal{W}}$ corresponds to the system \eqref{eq: tct_sys} and $\widetilde{\mathcal{R}}^{\eps,\delta}$ is the reconstruction operator corresponding to the BPHZ model on $\tilde{\mfT}$ for the noise assignment of Definition~\ref{def:tcT_noise_ass}. By construction, we may establish this latter claim on $\tilde{\mfT}^\zeta$ for $\zeta < \zeta_0$ as in the previous section. We now do exactly this.
	
	Denoting by $\mathcal{S}_{T,3}$ the composition of $\mathcal{S}_T$ with the projection onto the third component, we have that $\tilde{\mathcal{W}} = \mathcal{S}_{T,3} (\tilde Z^{\eps,\delta}, \boldsymbol{\psi}, \mathcal{V}_{\<0t>}^{\eps, \delta})$ where $\boldsymbol{\psi} = (\psi, \psi, 0)$ and where $\mathcal{V}_{\<0t>}^{\eps, \delta}$ is the modelled distribution given in Lemma~\ref{lem: DPD_rewrite} associated to $\tau = \<0t>$ which we interpret as a modelled distribution on $\tilde{\mfT}$ by noting that it is valued in the polynomial sector and thus is a modelled distribution on $\tilde{\mfT}^\zeta$ for any admissible model. 
	
	We now verify the hypotheses of Lemma~\ref{lem:compact_prob}, with the choices listed after its proof, for the family
	\begin{align*}
		X_{(\eps, \delta)} = (\tilde Z^{\eps, \delta}, \boldsymbol{\psi}, \mathcal{V}_{\<0t>}^{\eps, \delta}), \qquad (\eps, \delta) \in \Lambda = \{ (\eps, \delta) : 0 < \delta < \eps \le 1 \}.
	\end{align*}
	Since the driving noises are smooth for $\eps > \delta > 0$, Lemmas~\ref{lem:W_to_H_reduction} and~\ref{lem:smooth_solutions}, applied at the gradings $0$ and $\zeta$, show that almost surely $X_{(\eps, \delta)} \in E_{\vartheta, T}$ and $\iota_\zeta(X_{(\eps, \delta)}) \in E^\zeta_{\vartheta, T}$, which is the second hypothesis of Lemma~\ref{lem:compact_prob}. The first hypothesis, with $e = (Z^{\mathrm{pol}}, \boldsymbol{\psi}, 0)$ for $Z^{\mathrm{pol}}$ the polynomial model, is a direct consequence of the moment bounds of Theorem~\ref{theo: bphz_uniform_bound} and Lemma~\ref{lem:data_control}, the latter entering through the continuous dependence of $\mathcal{V}_{\<0t>}$ on $\mathbf{1}_\pm \Pi_z^\eps \<0t> \in C^{-2-\kappa}$ established in Lemma~\ref{lem: DPD_rewrite}. Finally, the third hypothesis follows from Lemma~\ref{lem:W_to_H_reduction} and the Cole--Hopf solution theory of the KPZ equation.

	Lemma~\ref{lem:compact_prob} therefore applies and yields
	\begin{align*}
		\limsup_{M \to \infty} \sup_{0 < \delta < \eps \le 1} \mathbb{P} \bigl ( \| \mathcal{R} \, \mcS^\zeta_{T, 3}( \iota_\zeta (X_{(\eps, \delta)})) \|_{C^{-2\kappa}([0,T] \times \mathbb{T})} > M \bigr ) = 0.
	\end{align*}
	Since our definition of $\mfT^\zeta$ and choice of $\zeta$ were constructed in such a way that $\iota_\zeta$ preserves the BPHZ model (see the last part of Appendix~\ref{app:degree_shift} for more details) and does not change the coefficient of any tree in a given modelled distribution valued in $\mcT^\zeta$, we have that $\mathcal{R} \, \mcS^\zeta_{T,3}(\iota_\zeta(X_{(\eps, \delta)})) = \mathcal{R} \, \mathcal{S}_{T, 3}(X_{(\eps, \delta)}) = \tilde{\mathcal{R}}^{\eps, \delta} \tilde{\mathcal{W}}$. Since $\kappa > 0$ may be taken arbitrarily small, this is precisely the required tightness statement.
	
	It remains to show the convergence in law of the solutions above to $(h,h, w)$ which is defined as the BPHZ solution of the system
	\begin{align*}
		(\partial_t - \partial_x^2)h &= (\partial_x h)^2 + \xi
		\\
		(\partial_t - \partial_x^2)w &= 2 \partial_x w \partial_x h + {\tilde \xi_{\<bsq>} + \tilde \xi_{\<psq>}}.
	\end{align*}
	We first note that it suffices to show the same result with $w_\eps = \eps^{-1/2}(h_\eps - h_{\eps^{\alpha}})$ for any $\alpha > 1$. Indeed, by the uniform boundedness, we have that
	\begin{align*}
		\lim_{\eps \to 0}& \mathbb{P} (\eps^{-1/2} \| h_{\eps^\alpha} - h \|_{C^{-2\kappa}([0,T] \times \mathbb{T})} > M) = \lim_{\eps \to 0} \mathbb{P} ( (\eps^\alpha)^{-1/2} \| h_{\eps^\alpha} - h \|_{0-} > M \eps^{\frac{1 - \alpha}{2}})
		\\
		&\le \lim_{\eps \to 0} \sup_{\bar{\eps} > 0} \mathbb{P} ( \bar{\eps}^{-1/2} \| h_{\bar{\eps}} - h \|_{C^{-2\kappa}([0,T] \times \mathbb{T})} > M \eps^{\frac{1 - \alpha}{2}}) = 0.
	\end{align*}
	
	We now let $Z^\eps := Z^{\eps, \eps^\alpha}$ and recall from Proposition~\ref{prop:model_convergence} that $Z^\eps \to Z^0$ in law as $\eps \to 0$ where $Z^0$ is the BPHZ model on $\tilde{\mfT}$ associated to the noise assignment given in \eqref{eq:lim_noise_ass}.
	It follows immediately from the convergence in law of $Z^\eps \to Z^0$, the convergence in $L^p(\Omega, C^{-2-})$ of $\mathbf{1}_\pm \eps^{-1/2} (\xi_\eps - \xi_{\eps^\alpha})$ to $0$ (Lemma \ref{lem:data_control}), the fact that $\boldsymbol{\psi}$ is chosen independent of $\eps$, and the fact that $(Z^\eps, \boldsymbol{\psi}, \mathcal{V}_{\<0t>}^\eps), (Z^0, \boldsymbol{\psi}, 0) \in E_{\vartheta, T}$ (Lemma \ref{lem:smooth_solutions} and KPZ Cole-Hopf solution) that $\mathcal{S}_T(Z^\eps, \boldsymbol{\psi}, \mathcal{V}_{\<0t>}^\eps) \to \mathcal{S}_T(Z^0, \boldsymbol{\psi}, 0)$ in law in $\mathfrak{D}_T$. Since the reconstruction map is continuous on $\mathfrak{D}_T$, it only remains to show that if $(\mcH_{\<bsq>} , \mcH_{\<psq>} , \mcW)$ is the solution associated to $Z^0$, then $\mathcal{R}^{Z^0} \mcH_{\<bsq>} = \mathcal{R}^{Z^0} \mcH_{\<psq>}  = h$ where $h$ is the BPHZ solution of the KPZ equation and that $\mathcal{R}^{Z^0} \mcW = w$ where $w$ is the BPHZ solution of
	\begin{align*}
		(\partial_t - \partial_x^2) w = 2 \partial_x w \partial_x h + {\tilde \xi_{\<bsq>} + \tilde \xi_{\<psq>}}.
	\end{align*}
	Since $Z^0$ is the BPHZ model, we can find a sequence $\nu_n \to 0$ such that $Z^0 = \lim_{n \to \infty} Z^{0, (\nu_n)}$ almost surely where $Z^{0, (\nu)}$ is the BPHZ lift of $(\xi_\nu, \xi_\nu, {\tilde \xi_{\<bsq>, \nu}, \tilde \xi_{\<psq>, \nu}}, 0)$ {with $\tilde \xi_{\<bsq>, \nu} = \tilde \xi_{\<bsq>}  \ast \rho^\nu$, $\tilde \xi_{\<psq>, \nu} = \tilde \xi_{\<psq>}  \ast \rho^\nu$}. By continuity $\mathcal{S}_T(Z^0, \boldsymbol{\psi}, 0) = \lim_{n \to \infty} \mathcal{S}_T(Z^{0, (\nu_n)}, \boldsymbol{\psi}, 0)$ almost surely where the limit is taken in the appropriate space of distributions. Here we have used the fact that $(Z^{0, (\nu_n)}, \boldsymbol{\psi}, 0) \in E_{\vartheta, T}$ for fixed $n$ by the same argument as used for $(Z^\eps, \boldsymbol{\psi}, \mathcal{V}_{\<0t>}^\eps)$ above.  
	
	We recall that $\mathcal{S}_T(Z^{0, (\nu_n)}, \boldsymbol{\psi}, 0)$ is the unique solution to the system 
	\begin{equs}[eq:sys_identification]
		\Hb =& \mcP_{2+2\kappa} \mathbf{1}_+ [ (\partial_x \Hb)^2 + \<b0> ] + \mathcal{G} \psi
		\\
		\Hp =& \mcP_{2+2\kappa} \mathbf{1}_+ [ (\partial_x \Hp)^2 + \<p0> ] + \mathcal{G} \psi
		\\
		\tilde{\mcW} =& \mcP_{3/2+2\kappa} \mathbf{1}_+ [\partial_x \tilde{\mcW} (\partial_x \Hb + \partial_x \Hp) +  \<s(1)0> (\partial_x \Hb - \<b1> + \partial_x \Hp - \<p1>) + \<bs(2)0> + \<ps(2)0> ]
	\end{equs}
	with respect to $Z^{0, (\nu_n)}$. 
	We now claim that $\mathcal{R} \tilde{\mcW} = \mathcal{R} \mathcal{U}$ where $(\mcH_{\<bsq>} , \mcH_{\<psq>} , \mathcal{U})$ is the unique solution to the system \eqref{eq:sys_identification} with the equation of $\tilde{\mcW}$ replaced by
	\begin{align*}
		\mcU = \mcP_{3/2+2\kappa} \mathbf{1}_+ [\partial_x {\mcU} (\partial_x \mcH_{\<bsq>} + \partial_x \mcH_{\<psq>} )  + \<bs(2)0> + \<ps(2)0>].
	\end{align*}
	Since this property passes to the limit as $n \to \infty$ and $\mcR^{Z^0}\Hb = \mcR^{Z^0}\Hp = h$ with $h$ being the renormalised solution of KPZ, once this is proven\footnote{To be pedantic, one should then also transfer solutions to the new fixed-point problem to the regularity structure associated to the equations for $h,w$ by \cite{BCCH}. Since this amounts to restricting to the smallest sector needed for the fixed-point problem for $\mcH, \mcU$ and then renaming noises, we omit these details.} the desired identification follows. 
	
	To prove the desired property we define $\Psi : \tcT \to \tcT$ recursively via
	\begin{align*}
		\Psi(\<b0>) =& \, \<b0>, \qquad  \Psi(\<p0>) = \<p0>,
		\\
		\Psi(\<bs(2)0>) =& \, \<bs(2)0>, \qquad \Psi(\<ps(2)0>) = \<ps(2)0>, \qquad \qquad \Psi(\<s(1)0>) = 0,
		\\
		\Psi(X^k) =& \, X^k, \qquad \Psi(\tau \sigma) = \Psi(\tau) \Psi(\sigma), \qquad \Psi(\mathcal{I}_\mathfrak{l}^k \tau)=  \mathcal{I}_\mathfrak{l}^k \Psi(\tau)
	\end{align*} 
	and then extend its definition linearly to $\langle \tcT \rangle$.
	By an induction similar to the proof of Lemma~\ref{lem:transfer_1}, one easily obtains that the pullback of $Z^{0, (\nu)}$ by $\Psi$ is $Z^{0, (\nu)}$ for any $\nu > 0$. Therefore it follows that for any modelled distribution $\mathcal{U}$ with positive regularity, $\mathcal{R} \Psi(\mathcal{U}) = \mathcal{R} \mathcal{U}$. Similarly to the proof of the second statement of Lemma~\ref{lem:transfer_1} we have $(\Psi(\Hb), \Psi(\Hp), \Psi(\tilde{\mcW}))$ solves the fixed-point problem
	\begin{align*}
		\Psi(\Hb) =& \mcP_{2+2\kappa} \mathbf{1}_+ [ (\partial_x \Psi(\Hb))^2 + \<b0> ] + \mathcal{G} \psi
		\\ \nonumber
		\Psi(\Hp) =& \mcP_{2+2\kappa} \mathbf{1}_+ [ (\partial_x \Psi(\Hp))^2 + \<p0> ] + \mathcal{G} \psi
		\\ \nonumber
		\Psi(\tilde{\mcW}) =& \mcP_{3/2+2\kappa} \mathbf{1}_+ [\partial_x \Psi(\tilde{\mcW}) (\partial_x \Psi(\Hb) + \partial_x \Psi(\Hp)) + \<bs(2)0> + \<ps(2)0> ].
	\end{align*}
	{By uniqueness of solutions to the fixed-point problem associated to $(\Hb, \Hp, \mcU)$, we see that $(\Psi(\Hb), \Psi(\Hp), \Psi(\tilde{\mcW})) = (\Hb, \Hp, \mcU)$}. Since $\mathcal{R} \circ \Psi = \mathcal{R}$, this completes the proof. 
\end{proof}

\appendix

\section{On Differences of Counterterms}\label{app:counterterms}

The goal of this section of the appendix is to provide a proof that the `black-box' solution theory for parabolic subcritical singular SPDEs provided in \cite{BCCH} is compatible with taking the difference of two equations; meaning that if one first writes the formal equation satisfied by the difference of two solutions driven by different noises and then renormalises this equation, one obtains the same equation as by first renormalising the two original SPDEs and then taking their difference. Whilst this property is certainly expected, it is not immediately obvious that the algebraic machinery of \cite{BHZ, BCCH} assigns counterterms in a way that is compatible with formal operations at the level of the PDE. Since we are not aware of a proof of such a result in the case of differences in the literature, we choose to provide a proof of this result at the level of generality of \cite{BCCH} here for the sake of completeness.
\subsection*{Notational Set-Up}

We begin by recalling some notation from \cite{BCCH}. We fix a system of singular SPDEs
\begin{align}\label{eq:BCCH_sys}
   (\partial_t - \mathcal{L}_\mfl)u_\mfl = \sum_{\mft \in \mathfrak{L}_- \sqcup \{ \mathbf{0} \}} F_\mfl^\mft (\mathbf{u}) \xi_\mft, \qquad \mfl \in \mathfrak{L}_+
\end{align}
where $\mathfrak{L}_+$ and $\mathfrak{L}_-$ are finite sets of kernel types and noise types respectively and where $\xi_\mathbf{0} \equiv 1$. Here, writing $\mathfrak{L} = \mathfrak{L}_+ \sqcup \mathfrak{L}_-$ and $\mcO = \mathfrak{L}_+ \times \mathbb{N}^{1+d}$, we introduce placeholder variables $(\mcX_\mco)_{\mco \in \mcO}$ where $\mcX_{(\mfl, k)}$ is a stand-in for $\partial^k u_\mfl$. We assume that $F_\mfl^\mft$ is in the algebra $\mcC_{\mcO}$ of smooth real-valued functions\footnote{The solution theory of \cite{BCCH} restricts to a subalgebra $\mcP$ of $\mcC_\mcO$. This is important for the analytic ingredients but won't play a role in the algebraic computation performed here.} of $(\mcX_\mco)_{\mco \in \mcO}$ depending on only finitely many entries. This algebra is equipped with its usual derivative operator in direction $D_\mco$, alongside the derivatives in the spatial directions $\partial_i$ defined by $\partial_i \mcX_{(\mfl, k)} = \mcX_{(\mfl, k + e_i)}$ and
\begin{align*}
   \partial_i F_{\mfl}^\mft(\mcX) = \sum_{\mco \in \mcO} \partial_i \mcX_\mco D_\mco F_\mfl^\mft (\mcX).
\end{align*}

In order to formulate the main result of this appendix, we now triplicate the set $\mfL$. This is to accommodate taking two copies of the equation \eqref{eq:BCCH_sys} with potentially different driving noises and then introducing the equation formally solved by their difference. We set $\mfL^*=\mfL_-^*\sqcup\mfL_+^*:=(\mfL_-\times\{\<bsq>,\<psq>,\<sq>\})\sqcup(\mfL_+\times\{\<bsq>,\<psq>,\<sq>\})$ where $\<bsq>, \<psq>$ will label the two instances of the system \eqref{eq:BCCH_sys} and $\<sq>$ will label the equations associated to the difference. We note that since we do not multiply by a negative power of $\eps$ here, this is not identical to the setting of the main body of this paper and thus we introduce a distinct notation for clarity. We now turn to introduce the nonlinearity associated to the enlarged system. The derivatives of the solutions of the enlarged system are indexed by $\mcO_* = \mfL_+^* \times \mbN^{1+d} \simeq \mcO \times \{\<bsq>, \<psq>, \<sq>\}$. We then introduce $\mcX^*$ and $\mcC_{\mcO_*}$ in a similar way to $\mcX$ and $\mcC_{\mcO}$. We will make use of the identification $\mbR^{\mcO_*} = \mbR^\mcO \times \mbR^\mcO \times \mbR^\mcO$ so that an element $\mcY^*$ of $\mbR^{\mcO_*}$ may be written as $\mcY^* = (\mcY^{\<bsq>}, \mcY^{\<psq>}, \mcY^{\<sq>})$. We introduce the natural corresponding coordinate projections $\pi_\diamond : \mbR^{\mcO_*} \to \mbR_\diamond^\mcO$ for $\diamond \in \{\<bsq>, \<psq>, \<sq>\}$, alongside the corresponding projections $q_\diamond : \mbR_{\<bsq>}^\mcO \times \mbR_{\<psq>}^\mcO \to \mbR_\diamond^\mcO$ for $\diamond \in \{\<bsq>, \<psq>\}$.

Finally, we introduce the nonlinearities $G_{(\mft, \diamond_1)}^{(\mfl, \diamond_2)}$ coming from the formal action of duplicating the system \eqref{eq:BCCH_sys} and then identifying the nonlinearities of the PDEs formally solved by $u_{(\mco, \<bsq>)} - u_{(\mco, \<psq>)}$ (ignoring renormalisation). We note that given $F \in \mcC_\mcO$, there is a natural total derivative $dF : \mbR^\mcO \times \mbR^\mcO \to \mbR$ given by
\begin{align*}
	dF(\mcX, \bar{\mcX}) = \sum_{\mco \in \mcO} D_\mco F (\mcX) \bar{\mcX}_\mco.
\end{align*}
We define $\iota_\Delta: \mbR_{\<bsq>}^\mcO \times \mbR_{\<psq>}^\mcO \to \mbR^{\mcO_*}$ by $\iota_\Delta(\mcY^{\<bsq>}, \mcY^{\<psq>}) = (\mcY^{\<bsq>}, \mcY^{\<psq>}, \mcY^{\<bsq>} - \mcY^{\<psq>})$. As a consequence of the Fundamental Theorem of Calculus, we see that the total derivative defined above has the property that if we set
\begin{align*}
\mcQ F_{\mft}^{\mfl}
&:=
\int_0^1
dF \circ (\theta \pi_{\<bsq>} + (1-\theta) \pi_{\<psq>}, \pi_{\<sq>})
\,d\theta\, 
\end{align*}
then $(\mcQ F) \circ \iota_\Delta = F \circ q_{\<bsq>} - F \circ q_{\<psq>}$. This motivates the definition
\begin{align}\label{eq:nonlinear_diff}
G^{\mathbf 0}_{(\mft,\<sq>)}
&=
\mcQ F_{\mft}^{\mathbf 0},
&
G^{(\mfl,\<bsq>)}_{(\mft,\<sq>)}
& = \mcQ F_{\mft}^{\mfl}, 
&G^{\mathbf 0}_{(\mft,\diamond)}
&=
F^{\mathbf 0}_{\mft}\circ \pi_\diamond
&
G^{(\mfl,\<sq>)}_{(\mft,\<sq>)}
&=
F^{\mfl}_{\mft} \circ \pi_{\<psq>}
\\ \nonumber
G^{(\mfl,\diamond)}_{(\mft,\diamond)}
&=
F^{\mfl}_{\mft} \circ \pi_\diamond,
\qquad
 & & \diamond\in\{\<bsq>,\<psq>\}.
\end{align}
All other components of \(G\) are defined to be zero.
Note that the asymmetry in $\<bsq>, \<psq>$ in this definition comes from the inherent asymmetry for the error equation which arises due to the choice as to how to write the difference with a multiplicative noise. For example, if the right-hand side of the equation for $u$ is $u \xi^{\<bsq>}$ and the right-hand side of the equation for $v$ is $v \xi^{\<psq>}$ then the definitions above correspond to writing the nonlinearity for $w = u - v$ as $w\xi^{\<bsq>} + v \xi^{\<sq>}$ rather than in the equivalent form $w \xi^{\<psq>} + u \xi^{\<sq>}$.

Having fixed the nonlinearities, we now fix appropriate sets of trees. We write $\mcT^\flat$ for the set of trees with noise types $\mfL_-^\flat = \mfL_- \times \{\<bsq>, \<psq>\}$ and kernel types $\mfL_+$.
We write $\mcT^*$ for the set of trees with noise types $\mfL_-^*$ and kernel types $\mfL_+^*$.
For $\diamond \in \{\flat, *\}$, to keep notation informative, we often denote elements of $\mfL^\diamond$ as $\mfl^\diamond$ and keep $\mfl$ as a notation of a generic element of $\mfL$.
We will refer to the second component of a type $\mfl \in \mfL^\diamond$ as its colour.

Given $\tau\in\mcT^*$ and $\mfl^*\in\mfL_+^*$, the nonlinearity
$\Upsilon_{\mfl^*}[\tau]\in\mcC_{\mcO_*}$ is defined recursively as in \cite[Equation (2.12)]{BCCH} starting from the base case given by \eqref{eq:nonlinear_diff}.

Notice that we do not make any restriction on the admissible colourings of a given tree. This is justified by the following result.

\begin{lemma}\label{lem:admissible}
Let $\tau \in \mcT^*$ and $\mfl^* \in \mfL_+^*$. Then if
	$\Upsilon_{\mfl^*}[\tau] \neq 0$
there exists a unique (possibly trivial) maximal path $P_{\<sq>}$ in $\tau$ containing the root which consists of edges of colour $\<sq>$ with the property that for each edge $e \not \in P_{\<sq>}$, we have that $\tau_{\ge e}$ is monochromatic of colour $\<bsq>$ or $\<psq>$.
\end{lemma}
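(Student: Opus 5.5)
We argue by induction on the number of edges of $\tau$, following the recursive definition of $\Upsilon_{\mfl^*}[\tau]$ in \cite[Equation (2.12)]{BCCH}. Recall its shape: if $\tau = X^k \Xi_{\mft^*}\prod_{i} \mcI_{\mfl_i^*}^{k_i}\tau_i$ (where $\Xi_{\mft^*}$ is absent or a single noise, of type $\mathbf 0$ in the first case), then $\Upsilon_{\mfl^*}[\tau]$ is, up to combinatorial factors, $\partial^k \prod_i \Upsilon_{\mfl_i^*}[\tau_i]$ applied to the mixed derivative $\prod_i D_{(\mfl_i^*,k_i)} G^{\mfl^*}_{\mft^*}$. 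So $\Upsilon_{\mfl^*}[\tau]\neq 0$ forces two things: $G^{\mfl^*}_{\mft^*}\neq 0$, and each $D_{(\mfl_i^*,k_i)}$ derivative of it is nonzero, with $\Upsilon_{\mfl_i^*}[\tau_i]\neq0$ for every $i$.

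The first step is to tabulate the dependence structure of the nonlinearities in \eqref{eq:nonlinear_diff}. For $\diamond\in\{\<bsq>,\<psq>\}$, $G^{(\mfl,\diamond)}_{(\mft,\diamond)}$ and $G^{\mathbf 0}_{(\mft,\diamond)}$ depend only on $\mcX^\diamond = \pi_\diamond\mcX^*$. The colour-$\<sq>$ components are of two kinds:
\begin{itemize}
\item $G^{(\mfl,\<sq>)}_{(\mft,\<sq>)} = F^\mfl_\mft\circ\pi_{\<psq>}$ depends only on $\mcX^{\<psq>}$;
\item $G^{(\mfl,\<bsq>)}_{(\mft,\<sq>)}$ and $G^{\mathbf 0}_{(\mft,\<sq>)}$ are of the form $\mcQ F$. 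These are linear in $\mcX^{\<sq>}$ and depend otherwise on $\mcX^{\<bsq>},\mcX^{\<psq>}$.
\end{itemize}
All other components vanish. Consequently:
\begin{itemize}
\item a node whose noise/kernel-output pair has colour $\diamond\in\{\<bsq>,\<psq>\}$ can only have children edges of colour $\diamond$;
\item a node producing colour $\<sq>$ has, by linearity of $\mcQ F$ in $\mcX^{\<sq>}$ and since $F\circ\pi_{\<psq>}$ has no $\mcX^{\<sq>}$ dependence, \emph{at most one} child edge of colour $\<sq>$, together with any number of children of colours $\<bsq>$ or $\<psq>$.
\end{itemize}
Here we identify a child edge's colour with the second component of its kernel type $\mfl_i^*$, since $D_{(\mfl_i^*,k_i)}$ differentiates in $\mcX^{\text{colour}(\mfl_i^*)}$.

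Given this, the induction proceeds as follows. If $\mfl^*$ has colour $\diamond\in\{\<bsq>,\<psq>\}$, every child edge has colour $\diamond$. By the inductive hypothesis applied to each $\tau_i$ with $\mfl_i^*$ of colour $\diamond$, those subtrees have trivial $\<sq>$-path, hence are monochromatic of colour $\diamond$. The noise at the root is also of colour $\diamond$, because $G^{\cdot}_{(\mft,\diamond')}$ with $\diamond'\neq\diamond$ vanishes unless $\diamond'=\<sq>$, and $\<sq>$ noises only feed colour $\<sq>$ outputs. Thus $\tau$ is monochromatic and $P_{\<sq>}$ is trivial.

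If $\mfl^*$ has colour $\<sq>$, at most one child $\tau_{i_0}$ hangs off a colour-$\<sq>$ edge. We set $P_{\<sq>}$ to be that edge concatenated with the path for $\tau_{i_0}$ given by induction, or the empty path if there is no such child. Every other child edge has colour $\<bsq>$ or $\<psq>$, so by the previous case its subtree is monochromatic. For an edge $e$ off the path deeper in $\tau_{i_0}$, the property is inherited from the inductive hypothesis. The noise edges are handled by noting that a noise attached at a node is recorded via the lower index of $G$, so it is not an element of $E_+$ and poses no issue. For noises in monochromatic subtrees, the same colour argument as above applies.

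Uniqueness follows because maximality and the at-most-one-$\<sq>$-child property force the path: starting at the root, there is at each node at most one way to continue along $\<sq>$-coloured edges. The path stops exactly when no $\<sq>$ child exists.

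The main obstacle is bookkeeping rather than ideas: one must carefully match the recursive formula of \cite{BCCH} (including derivative decorations $\partial^k$ acting through $\partial_i\mcX_{(\mfl,k)}=\mcX_{(\mfl,k+e_i)}$, which preserve colour) with the colour dependence of each $G$. In particular, one must check that applying $\partial^k$ to $\mcQ F$ keeps it affine in $\mcX^{\<sq>}$ and keeps the colour-separation of the other components. This holds since $\partial_i$ acts colourwise.
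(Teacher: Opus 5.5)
Your route is the paper's: the $\<sq>$-equation nonlinearities are affine in the $\<sq>$-variables, which rules out two $\<sq>$-coloured children, and the projections $\pi_\diamond$ in \eqref{eq:nonlinear_diff} force $\diamond$-monochromatic subtrees. Your induction over kernel edges is correct.

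The gap is the sentence dismissing noise edges. In this paper noises \emph{are} edges of $\tau$ (edges of noise type incident to leaves). Both ``path consisting of edges of colour $\<sq>$'' and ``for each edge $e\notin P_{\<sq>}$'' range over all edges, not just $E_+$. With your $P_{\<sq>}$, built from kernel edges only, the conclusion fails. Take $\tau$ to be a single $\<sq>$-coloured noise edge and $\mfl^*$ of colour $\<sq>$. Then $\Upsilon_{\mfl^*}[\tau]$ is one of the entries $F\circ\pi_{\<psq>}$ of \eqref{eq:nonlinear_diff}, which is generically nonzero. Your path is empty, and for that noise edge $e$ the tree $\tau_{\ge e}$ has colour $\<sq>$, not $\<bsq>$ or $\<psq>$. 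For the same reason your path is not maximal whenever a $\<sq>$-noise is present, so the uniqueness argument is incomplete too. This is not cosmetic. The remark right after the lemma, that every relevant tree has at most one $\<sq>$-coloured noise edge, is what makes the splitting into $\mcT^*_0$, $\mcT^*_1$ and the maps $\Theta_\diamond$ exhaustive, and it is exactly the noise-edge content of the lemma. Your argument only shows that $\<sq>$-noises sit at vertices of the path, not that there is at most one of them.

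The missing step uses a fact you already tabulated. A $\<sq>$-noise can only hang off a vertex whose equation type has colour $\<sq>$; that type is the kernel edge joining the vertex to its parent, or $\mfl^*$ at the root. At such a vertex the nonlinearity is $F\circ\pi_{\<psq>}$, which does not depend on the $\<sq>$-variables at all, and $\partial^k$ preserves this since it acts colourwise. So such a vertex has no $\<sq>$-coloured kernel child and must be the terminal vertex of the $\<sq>$ kernel path. Define $P_{\<sq>}$ as that kernel path extended by this noise edge when present. This gives maximality, uniqueness, the off-path property for every edge, and at most one $\<sq>$-noise. The paper's proof gets all of this at once by putting the $\<sq>$-noise on the same footing as the $\<sq>$-variables: the $\<sq>$-equation is jointly affine in both, so any vertex with two outgoing $\<sq>$-coloured edges, of either kind, forces $\Upsilon_{\mfl^*}[\tau]=0$. (Minor: in \eqref{eq:nonlinear_diff} the noise is the upper index of $G$, not the lower one.)
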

\begin{proof}
If the colour $\<sq>$ edges do not form a path then some vertex has two outgoing $\<sq>$-coloured edges. Since the nonlinearity defined in \eqref{eq:nonlinear_diff} is affine in the $\<sq>$ variables, we would then have that $\Upsilon_{\mfl^*}[\tau] = 0$. Therefore it remains to show that if $e \not \in P_{\<sq>}$ then $\tau_{\ge e}$ is monochromatic. This follows because $\Upsilon_{(\mfl, \diamond)}[\tau] = 0$ for $\mfl \in \mfL_+$ and $\diamond \in \{\<bsq>, \<psq>\}$ unless $\tau$ is $\diamond$-monochromatic due to the projections $\pi_\diamond$ in \eqref{eq:nonlinear_diff}.
\end{proof}
A consequence of Lemma \ref{lem:admissible} is that any $\tau$ with a nontrivial contribution to renormalisation has at most one noise edge of colour $\<sq>$ in it. For $i=0,1$ we denote by $\mcT^*_i$ the set of trees with exactly $i$ noise edge of colour $\<sq>$.
 We define the maps $\Theta_\diamond:\mcT^*_1\to\mcT^*_0$ for $\diamond \in \{\<bsq>, \<psq>\}$, that switches the colour of the $\<sq>$ noise edge to $\diamond$, and the map $\Theta^\flat:\mcT^*_0\to\mcT^\flat$ that forgets all kernel edge colourings.

\subsection*{Some Consequences of the Chain Rule}
We note that as a consequence of the definition of $\iota_\Delta$, for $\mco \in \mcO$, we have the commutation relations
\begin{align}\label{eq:iota_comm1}
	D_{(\mco, \<bsq>)} \circ \iota_\Delta^* = \iota_\Delta^* [D_{(\mco, \<bsq>)} + D_{(\mco, \<sq>)}], \qquad 
	D_{(\mco, \<psq>)} \circ \iota_\Delta^* = \iota_\Delta^* [D_{(\mco, \<psq>)} - D_{(\mco, \<sq>)}]	
\end{align}
where $\iota_\Delta^*: \mcC_{\mcO_*} \to \mcC_{\mcO \times \{\<bsq>, \<psq>\}}$ is the pullback associated to $\iota_\Delta$.
Writing 
$D^{(k^{\<bsq>},k^{\<psq>})} = \prod_{\mco \in \mcO} D_{(\mco, \<bsq>)}^{k_\mco^{\<bsq>}} D_{(\mco, \<psq>)}^{k_\mco^{\<psq>}},$
 we obtain the following formula by a simple induction:
\begin{align*}
	&D^{(k^{\<bsq>},k^{\<psq>})} \circ \iota_\Delta^*=
	\iota_\Delta^* \left[
		\prod_{\mco\in\mcO}
		\left(
			D_{(\mco,\<bsq>)}
			+
			D_{(\mco,\<sq>)}
		\right)^{k_{\mco}^{\<bsq>}}
		\left(
			D_{(\mco,\<psq>)}
			-
			D_{(\mco,\<sq>)}
		\right)^{k_{\mco}^{\<psq>}} \right ] 
	.
\end{align*}

Since for any $F \in \mcC_\mcO$, $\mcQ F$ is linear in the components $(\mcX_{(\mco, \<sq>)})_{\mco \in \mcO}$, this in turn implies that
\begin{align*}
 D^{(k^{\<bsq>},k^{\<psq>})} \circ \iota_\Delta^* \circ \mcQ 
	= \iota_\Delta^* \circ \mcD_{k^{\<bsq>}, k^{\<psq>}} \circ \mcQ
\end{align*}
where we wrote
\begin{align*}
	 \mcD_{k^{\<bsq>}, k^{\<psq>}} = \Bigl [ D^{(k^{\<bsq>}, k^{\<psq>}, 0)} + \sum_{\mco \in \mcO} k_\mco^{\<bsq>}D^{(k^{\<bsq>}-e_\mco,k^{\<psq>},e_\mco)}
-k^{\<psq>}_\mco
D^{(k^{\<bsq>},k^{\<psq>}-e_\mco,e_\mco)} \Bigr ].
\end{align*}
This operator is well-defined even when one of the $k^\diamond$ is $0$ due to the fact that all ill-defined derivatives on the right-hand side then come with a vanishing prefactor.
We now recall the earlier consequence of the Fundamental Theorem of Calculus which can be restated as the operator identity 
\begin{align}\label{eq:comm_FTC}
	\iota_\Delta^* \circ \mcQ = q_{\<bsq>}^* - q_{\<psq>}^*.
\end{align}
 We also note that the projections $q_\diamond$ come with the commutation relation $D_{(\mco, \diamond)} \circ q_{\diamond^\prime}^* = \mathbf{1}_{\diamond = \diamond^\prime} q_{\diamond^\prime}^* \circ D_\mco$.

Putting everything together, we obtain the following three identities
\begin{align}\label{eq:identities}
	\iota_\Delta^* \circ \mcD_{k^{\<bsq>}, k^{\<psq>}} \circ \mcQ = \begin{cases}
	0, \qquad & k^{\<bsq>} \neq 0 \neq k^{\<psq>},
	\\
	q_{\<bsq>}^* \circ D^{k^{\<bsq>}}, \qquad & k^{\<bsq>} \neq 0 = k^{\<psq>},
	\\
	- q_{\<psq>}^* \circ D^{k^{\<psq>}}, \qquad & k^{\<psq>} \neq 0 = k^{\<bsq>}.
	\end{cases}
\end{align}

In the presence of polynomial decorations, we will also work with differential operators $\partial^k$ which do not kill affine elements of $\mcC_{\mcO_*}$ when $|k| > 2$. Our starting point will instead be the commutation between $\iota_\Delta^*$ and $\partial^k$. It suffices to prove this relation when $k = e_i$, in which case we write
\begin{align}\label{eq:iota_comm2.1}
	\partial_i \circ \iota_\Delta^* &= \sum_{\mco \in \mcO} (\mcX_{(\mco + e_i, \<bsq>)} D_{(\mco, \<bsq>)} \circ \iota_\Delta^* + \mcX_{(\mco + e_i, \<psq>)} D_{(\mco, \<psq>)} \circ \iota_\Delta^*)
	\\ &= \sum_{\mco \in \mcO}  (\mcX_{(\mco + e_i, \<bsq>)}\iota_\Delta^* \circ [ D_{(\mco, \<bsq>)} + D_{(\mco, \<sq>)}] + \mcX_{(\mco + e_i, \<psq>)}  \iota_\Delta^* \circ  [D_{(\mco, \<psq>)} -  D_{(\mco, \<sq>)}])
\end{align}
where we made use of the commutation relations \eqref{eq:iota_comm1}.
By identifying $\mcX_{(\mco, \diamond)}$ with the corresponding coordinate function, we see that 
\begin{align*}
	\iota_\Delta^* (\mcX_{(\mco, \diamond)}) = \begin{cases}
	\mcX_{(\mco, \diamond)}, \qquad & \diamond \in \{\<bsq>, \<psq>\},
	\\
	\mcX_{(\mco, \<bsq>)} - \mcX_{(\mco, \<psq>)}, \qquad & \diamond = \<sq>.
	\end{cases}
\end{align*}
Therefore, by regrouping terms in \eqref{eq:iota_comm2.1} and using multiplicativity of $\iota_\Delta^*$, we obtain that
\begin{align}\label{eq:iota_comm2}
	\partial^k \circ \iota_\Delta^* = \iota_\Delta^* \circ \partial^k.
\end{align}

\subsection*{Consistency of Counterterms}

We recall that one of the main results of \cite{BCCH} (see also \cite{BB26}) is that the counterterm associated to the component $\mfl^* \in \mfL_+^*$ in the system of singular SPDEs considered above is of the form 
\begin{align*}
	\mfC_{\mfl^*} = \sum_{\tau \in \mcT_-^*} \frac{\Upsilon_{\mfl^*}[\tau]}{S(\tau)} \ell(\tau)  \in \mcC_{\mcO_*}
\end{align*}
where $\mcT^*_-$ is a finite set, $S(\tau)$ is the size of the automorphism group of $\tau$ and $\ell$ is the element of the renormalisation group associated to the renormalised model. We also write $\mcT_{i, -}^* = \mcT_i^* \cap \mcT_-^*$ for $i = 0,1$. We assume that
\begin{enumerate}
	\item One has that $\Theta_\diamond \mcT_{1, -}^* \subset \mcT_{0,-}^*$ and $\ell(\tau) = \ell(\Theta_{\<bsq>} \tau) - \ell (\Theta_{\<psq>} \tau)$ for $\diamond \in \{\<bsq>, \<psq>\}$.
	\item $\mcT_{0,-}^*$ is a union of fibres of $\Theta^\flat$ and $\ell$ is constant on each fibre.
\end{enumerate}
In particular, these properties imply that $\ell$ descends to the quotient space $\mcT_-^\flat = \Theta^\flat \mcT_{0,-}^*$. We write $\ell^\flat(\Theta^\flat \tau) = \ell(\tau)$ for the resulting map. The main result of this appendix is then the following lemma.

\begin{lemma}\label{lem:counterterms_app}
For any $\mfl\in\mfL_+$, one has the identity
\begin{equ}
\mfC_{(\mfl,\<bsq>)} \circ \iota_\Delta -\mfC_{(\mfl,\<psq>)} \circ \iota_\Delta =\mfC_{(\mfl,\<sq>)} \circ \iota_\Delta.
\end{equ}
\end{lemma}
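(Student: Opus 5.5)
The plan is a tree-by-tree comparison: expand both sides over $\mcT^*_-$ and match them using the structure of the nonzero $\Upsilon$'s from Lemma~\ref{lem:admissible} together with the chain-rule identities \eqref{eq:identities} and \eqref{eq:iota_comm2}. First, by Lemma~\ref{lem:admissible}, $\Upsilon_{(\mfl,\diamond)}[\tau]\neq 0$ for $\diamond\in\{\<bsq>,\<psq>\}$ forces $\tau$ to be $\diamond$-monochromatic. Hence $\mfC_{(\mfl,\diamond)}\circ\iota_\Delta$ only involves trees in $\mcT^*_{0,-}$ of colour $\diamond$. Since $\Upsilon_{(\mfl,\diamond)}[\tau]=\Upsilon_\mfl[\Theta^\flat\tau]\circ\pi_\diamond$, a direct induction from the base case \eqref{eq:nonlinear_diff} identifies this with $q_\diamond^*$ of the counterterm of the original system \eqref{eq:BCCH_sys} evaluated on $\Theta^\flat\tau$ with weight $\ell^\flat$. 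So the left-hand side equals
\[
\sum_{\sigma\in\mcT^\flat_-}\frac{\ell^\flat(\sigma)}{S(\sigma)}\bigl(q_{\<bsq>}^*-q_{\<psq>}^*\bigr)\Upsilon_\mfl[\sigma],
\]
where $q_\diamond^*$ kills contributions of $\sigma$ that are not of colour $\diamond$. Fibre-constancy of $\ell$ (assumption (2)) ensures this is well defined.

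For the right-hand side, Lemma~\ref{lem:admissible} says each contributing $\tau$ has a $\<sq>$-coloured path $P_{\<sq>}$ from the root, with monochromatic subtrees hanging off it. I will prove by induction on $\tau$, via the recursion \cite[Eq.~(2.12)]{BCCH}, a formula for $\iota_\Delta^*\Upsilon_{(\mfl,\<sq>)}[\tau]$. The inductive step at a node of $P_{\<sq>}$ applies $\partial^k$, which commutes with $\iota_\Delta^*$ by \eqref{eq:iota_comm2}, and applies derivatives $D_{(\mco,\diamond)}$ to $\mcQ F$ for the off-path subtrees together with one $D_{(\mco,\<sq>)}$ for the continuing path edge. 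One should organise the multi-derivative through the operator $\mcD_{k^{\<bsq>},k^{\<psq>}}$ and then read off \eqref{eq:identities}. This gives three cases. Trees whose off-path subtrees mix colours $\<bsq>$ and $\<psq>$ at a node where $\mcQ$ is used contribute zero. If all off-path subtrees attached at a $\mcQ$-node have colour $\<bsq>$, or none are attached and the noise at the end of the path is $\<sq>$, the contribution equals $q_{\<bsq>}^*$ of the $\Upsilon$ of the tree recoloured to $\<bsq>$. The corresponding $\<psq>$ case carries a minus sign.

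The remaining bookkeeping concerns where the path terminates. If the path ends in a $\<sq>$-coloured noise, then $\tau\in\mcT^*_{1,-}$. Assumption (1), $\ell(\tau)=\ell(\Theta_{\<bsq>}\tau)-\ell(\Theta_{\<psq>}\tau)$, splits this contribution into two monochromatic-noise pieces. If instead the path ends in a kernel edge whose nonlinearity is $G^{\mathbf 0}_{(\mft,\<sq>)}=\mcQ F^{\mathbf 0}_\mft$, or a node using $\mcQ F$ with all subtrees of one colour, then $\tau\in\mcT^*_{0,-}$. In either case, for each monochromatic $\sigma\in\mcT^\flat_-$ of colour $\diamond$, I will sum over all $\tau$ with $\Theta^\flat\tau$ or $\Theta^\flat\Theta_\diamond\tau$ equal to $\sigma$, i.e.\ over the possible $\<sq>$-paths inside $\sigma$. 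The claim to verify is that these contributions telescope. Along a path, the asymmetric choice $G^{(\mfl,\<sq>)}_{(\mft,\<sq>)}=F\circ\pi_{\<psq>}$ versus $G^{(\mfl,\<bsq>)}_{(\mft,\<sq>)}=\mcQ F$ should implement a discrete Leibniz/telescoping decomposition of $F\circ q_{\<bsq>}-F\circ q_{\<psq>}$ applied recursively. The net result should be exactly $(q^*_{\<bsq>}-q^*_{\<psq>})\Upsilon_\mfl[\sigma]$, weighted by $\ell^\flat(\sigma)/S(\sigma)$.

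The main obstacle I expect is the combinatorics of symmetry factors. The automorphism groups of coloured trees $\tau$ differ from those of $\sigma$, since a coloured path breaks symmetries. To handle this, I would first pass to the unsymmetrised sum over planar or labelled trees, where $1/S$ disappears and the sum over paths becomes a sum over choices of a root-to-leaf path. Alternatively, I would use the orbit-stabiliser identity $\sum_{\tau\mapsto\sigma}S(\sigma)/S(\tau)=\#\{\text{paths}\}$. With this in place, the telescoping at each path node reduces to \eqref{eq:identities} and \eqref{eq:comm_FTC}.
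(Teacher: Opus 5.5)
Your treatment of the left-hand side is fine. Handling symmetry factors by summing over labelled colourings (an orbit--stabiliser count) is a legitimate substitute for the paper's adjoint pair $d,\delta$ with $\hat{\mfD}_{\mfl^*}=\Upsilon_{\mfl^*}\circ\delta$. The gap is on the right-hand side, in two places.

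\emph{The per-tree formula is false.} You plan to prove by induction over coloured trees a formula for $\iota_\Delta^*\Upsilon_{(\mfl,\<sq>)}[\tau]$. Take a $\mcQ$-node at the root with one off-path child of colour $\<psq>$ along an edge of type $\mco$. Then \eqref{eq:iota_comm1} gives $\iota_\Delta^*D_{(\mco,\<psq>)}\mcQ F=-q^*_{\<psq>}D_\mco F+\int_0^1 D_\mco F(\theta\mcY^{\<bsq>}+(1-\theta)\mcY^{\<psq>})\,d\theta$. So this tree is not $-q^*_{\<psq>}$ of its recolouring. For instance, with $F=\mcX_\mco$ its $\Upsilon_{(\mfl,\<sq>)}$ vanishes identically, while your rule predicts a nonzero value. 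Likewise, a childless node carrying a $\<sq>$-noise gives $\pm q^*_{\<psq>}\partial^kF$, not $q^*_{\<bsq>}$ of anything. The identity \eqref{eq:identities} concerns the combination $\mcD_{k^{\<bsq>},k^{\<psq>}}$. That combination only arises after you sum over which child continues the $\<sq>$-path \emph{and} insert the already-summed contribution of that child. So the induction has to run over fibre sums $\mfD_{\mfl^*}[\tau^\flat]$, not over single trees.

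\emph{Non-monochromatic fibres are never handled.} Your final bookkeeping only visits monochromatic $\sigma$. But $\mfC_{(\mfl,\<sq>)}$ also contains fibres over non-monochromatic $\tau^\flat$, for example root noise $(\mft,\<bsq>)$ with a $\<psq>$-monochromatic child. These are absent from the left-hand side and must be shown to vanish after composing with $\iota_\Delta$. This does not follow from \eqref{eq:identities} alone: at such a node $\iota_\Delta^*\mcD_{0,k^{\<psq>}}\mcQ F=-q^*_{\<psq>}D^{k^{\<psq>}}F\neq 0$. It is cancelled only by the alternative in which the path ends at that same node with a $\<sq>$-noise, with nonlinearity $F\circ\pi_{\<psq>}$ and weight $+1$ coming from $\Theta_{\<bsq>}$ and assumption (1). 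The ``telescoping'' you leave as the claim to verify is exactly where these cancellations live.

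\emph{What is needed.} Prove by induction over \emph{all} $\tau^\flat\in\mcT^\flat$ that $\iota_\Delta^*\hat{\mcE}_\mfl[\tau^\flat]=0$, where $\hat{\mcE}_\mfl=\hat{\mfD}_{(\mfl,\<sq>)}-\hat{\mfD}_{(\mfl,\<bsq>)}+\hat{\mfD}_{(\mfl,\<psq>)}$. At each node, sum simultaneously over the three colours of every outgoing edge and over the alternatives for the node's own noise.
\begin{itemize}
\item The combined nonlinearity at the node is $\mcQ F+F\circ\pi_{\<psq>}$, $-F\circ\pi_{\<psq>}$ or $\mcQ F^{\mathbf 0}$, according as the noise of $\tau^\flat$ there has colour $\<bsq>$, has colour $\<psq>$, or is $\mathbf 0$. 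By \eqref{eq:comm_FTC} its pullback under $\iota_\Delta$ is $q^*_{\<bsq>}F$, $-q^*_{\<psq>}F$ or $(q^*_{\<bsq>}-q^*_{\<psq>})F^{\mathbf 0}$ respectively.
\item For a child $\tau_i^\flat$ reached by an edge of type $\mco_i$, the inductive hypothesis (after applying $\iota_\Delta^*$ to the whole expression) turns the sum over its three colours into $\Upsilon_{(\mfl_i,\<bsq>)}[\delta\tau_i^\flat]\,(D_{(\mco_i,\<bsq>)}+D_{(\mco_i,\<sq>)})+\Upsilon_{(\mfl_i,\<psq>)}[\delta\tau_i^\flat]\,(D_{(\mco_i,\<psq>)}-D_{(\mco_i,\<sq>)})$.
\item \eqref{eq:iota_comm1} together with \eqref{eq:iota_comm2} then commutes $\iota_\Delta^*$ past every derivative. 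This yields the claim for monochromatic and mixed $\tau^\flat$ at once.
\end{itemize}
The paper packages exactly this through $\hat{\mfD}_{\mfl^*}=\Upsilon_{\mfl^*}\circ\delta$, the compatibility of $\delta$ with deformed grafting (Corollary~\ref{cor:pre-Lie}), and the multi-pre-Lie morphism property of $\Upsilon$.
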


In preparation for the proof of this result, we perform the following calculation. 
We note that
\begin{equs}
\mfC_{\mfl^*}&=\sum_{\tau\in \mcT^*_{0,-}}\frac{\Upsilon_{\mfl^*}[\tau]}{S(\tau)}\ell(\tau)+\sum_{\tau\in \mcT^*_{1,-}}\frac{\Upsilon_{\mfl^*}[\tau]}{S(\tau)}\ell(\tau)
\\
&=\sum_{\tau\in \mcT^*_{0,-}}\frac{\Upsilon_{\mfl^*}[\tau]}{S(\tau)}\ell(\tau)+\sum_{\tau\in \mcT^*_{1,-}}\frac{\Upsilon_{\mfl^*}[\tau]}{S(\tau)}\big(\ell(\Theta_{\<bsq>}\tau)-\ell(\Theta_{\<psq>}\tau)\big)
\\
&=\sum_{\tau^\flat\in\mcT^\flat_-}\ell^\flat(\tau^\flat)
\underbrace{
\Big(\sum_{\tau:\,\Theta^\flat\tau=\tau^\flat}\frac{\Upsilon_{\mfl^*}[\tau]}{S(\tau)}
+\sum_{\tau:\,\Theta^\flat\Theta_{\<bsq>}\tau=\tau^\flat}\frac{\Upsilon_{\mfl^*}[\tau]}{S(\tau)}
-\sum_{\tau:\,\Theta^\flat\Theta_{\<psq>}\tau=\tau^\flat}\frac{\Upsilon_{\mfl^*}[\tau]}{S(\tau)}\Big)}_{=:\mfD_{\mfl^*}[\tau^\flat]}.
\end{equs}
So it suffices to show that
\begin{align}\label{eq:counterterms_target}
	\mfD_{(\mfl,\<bsq>)}[\tau^\flat] \circ \iota_\Delta - \mfD_{(\mfl,\<psq>)}[\tau^\flat] \circ \iota_\Delta =\mfD_{(\mfl,\<sq>)}[\tau^\flat] \circ \iota_\Delta
\end{align}
for all $\tau^\flat$. It will in fact be convenient to show the same identity for $\hat{\mfD}_{(\mfl, \diamond)}[\tau^\flat] = S(\tau^\flat) \mfD_{(\mfl, \diamond)}[\tau^\flat]$. We note that if we define $d: \langle \mcT^* \rangle \to \langle \mcT^\flat \rangle$ to be the map that replaces each instance of $\Xi_{(\mft, \<sq>)}$ by $\Xi_{(\mft, \<bsq>)} - \Xi_{(\mft, \<psq>)}$ (where taking a difference of noises creates a difference of trees in the obvious way) and forgets the colouring of all kernel edges, then 
\begin{align*}
	\hat{\mfD}_{\mfl^*}(\tau^\flat) = \sum_{\tau \in \mcT^*} \frac{\Upsilon_{\mfl^*}[\tau]}{S(\tau)} \langle d \tau, \tau^\flat \rangle
\end{align*}
where we have used the inner product $\langle \tau, \sigma \rangle = S(\tau) \mathbf{1}_{\tau = \sigma}$.

We define $\delta : \langle \mcT^\flat \rangle \to \langle \mcT^* \rangle$ to be the map that replaces each kernel edge $\mcI_\mco$ by $\sum_{\diamond \in \{\<bsq>,\<psq>,\<sq>\}} \mcI_{(\mco, \diamond)}$, each noise $\Xi_{(\mft, \<bsq>)}$ by $\Xi_{(\mft, \<bsq>)} + \Xi_{(\mft, \<sq>)}$ and each noise $\Xi_{(\mft, \<psq>)}$ by $\Xi_{(\mft, \<psq>)} - \Xi_{(\mft, \<sq>)}$. We claim that $\delta$ is the adjoint of $d$ with respect to $\langle \cdot, \cdot \rangle$. Indeed, writing $\mathrm{Iso}_0(\sigma, \tau)$ for the isomorphisms of underlying rooted trees which preserve polynomial decorations but do not necessarily preserve edge types and noting that the inner product $\langle \cdot, \cdot \rangle$ can naturally be applied to single edges, we have
\begin{align*}
	\langle \sigma, \delta \tau^\flat \rangle &= \sum_{\phi \in \mathrm{Iso}_0(\sigma, \tau^\flat)} \prod_{e \in E(\sigma)} \langle e, \delta \phi(e) \rangle
	= \sum_{\phi \in \mathrm{Iso}_0(\sigma, \tau^\flat)} \prod_{e \in E(\sigma)} \langle d e, \phi(e) \rangle 
	= \langle d \sigma, \tau^\flat \rangle
\end{align*}
where the middle line follows by direct case checking by the definitions of $\delta$ and $d$. In particular, it follows that $\hat{\mfD}_{\mfl^*} = \Upsilon_{\mfl^*} \circ \delta$.

We now aim to show that the map $\delta$ interacts nicely with grafting of trees. We write $\sigma^\flat \graftfv{\mco} \tau^\flat$ for the grafting operator on $\mcT^\flat$ which grafts $\sigma^\flat$ onto $\tau^\flat$ by an edge of type $\mco$ at the vertex $v$. We write $\graftsv{(\mco, \diamond)}$ for the analogously defined grafting operator on $\mcT^*$ for $\diamond \in \{\<bsq>, \<psq>, \<sq>\}$. We also write $\polinc{k} \tau$ for the operator that increases the polynomial decoration of $\tau$ at the vertex $v$ by $k$. We note that the underlying graph of $\tau^\flat$ and of each $\tau$ appearing in $\delta(\tau^\flat)$ is the same so that we can apply $\polinc{k}$ to both $\tau^\flat$ and $\delta(\tau^\flat)$. Finally, we write 
\begin{align*}
	\sigma \graftsv{\mco} \tau = \sum_{\diamond \in \{\<bsq>, \<psq>, \<sq>\}} \sigma \graftsv{\mco,\diamond} \tau.
\end{align*}

\begin{lemma}\label{lem:counterterm_ident}
	We have that $\delta(\polinc{k} \tau^\flat) = \polinc{k} \delta(\tau^\flat)$. Furthermore, $\delta(\sigma^\flat \graftfv{\mco} \tau^\flat) = \delta(\sigma^\flat) \graftsv{\mco} \delta(\tau^\flat)$.
\end{lemma}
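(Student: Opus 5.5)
The plan is to argue directly from the definition of $\delta$ as a ``local'' substitution map: $\delta$ acts independently on each edge of $\tau^\flat$ (kernel edges are replaced by the sum over the three colours, noise edges by the signed sum $\Xi_{(\mft,\<bsq>)} + \Xi_{(\mft,\<sq>)}$ or $\Xi_{(\mft,\<psq>)} - \Xi_{(\mft,\<sq>)}$) and leaves the underlying rooted tree, together with its node and edge decorations, untouched. Concretely, I would first record the explicit formula
\begin{align*}
	\delta(\tau^\flat) = \sum_{c} \epsilon(c)\, \tau^\flat_c,
\end{align*}
where the sum runs over all colourings $c$ of the edges of $\tau^\flat$ that are compatible with the substitution rule. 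Here $\tau^\flat_c$ denotes the tree in $\mcT^*$ with the same underlying decorated tree as $\tau^\flat$ and with types determined by $c$, and $\epsilon(c) \in \{\pm 1\}$ is the product over noise edges coloured $\<sq>$ that originate from a $\<psq>$-noise of the factor $-1$. This formula is immediate from multilinearity of the substitution, since trees are determined by their edge data.

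The first claim, $\delta(\polinc{k}\tau^\flat) = \polinc{k}\delta(\tau^\flat)$, then follows at once. The operator $\polinc{k}$ changes only the node decoration at $v$, whereas neither the set of admissible colourings nor the signs $\epsilon(c)$ depend on node decorations. Hence both sides equal $\sum_c \epsilon(c)\, \polinc{k}\tau^\flat_c$.

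For the grafting identity, the key observation is that the edge set of $\sigma^\flat \graftfv{\mco}\tau^\flat$ is the disjoint union $E(\sigma^\flat)\sqcup E(\tau^\flat)\sqcup\{e_0\}$, where $e_0$ is the new kernel edge of type $\mco$. Colourings of the grafted tree therefore correspond bijectively to triples $(c_\sigma, c_\tau, \diamond)$, with $\diamond \in \{\<bsq>,\<psq>,\<sq>\}$ the colour of $e_0$, and the sign factorises as $\epsilon(c) = \epsilon(c_\sigma)\epsilon(c_\tau)$ because $e_0$ is a kernel edge. Moreover, $(\sigma^\flat\graftfv{\mco}\tau^\flat)_c = \sigma^\flat_{c_\sigma}\graftsv{(\mco,\diamond)}\tau^\flat_{c_\tau}$ as decorated trees. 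Summing over all colourings and using bilinearity of $\graftsv{(\mco,\diamond)}$ together with the definition of $\graftsv{\mco}$ as the sum over $\diamond$ yields $\delta(\sigma^\flat)\graftsv{\mco}\delta(\tau^\flat)$.

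I expect the only genuine subtlety to be bookkeeping of identifications rather than symmetry factors. Since $\delta$ is defined on basis trees rather than on labelled trees, no automorphism counts enter. One must only check that grafting on isomorphism classes is well defined in the same way on both sides, i.e.\ that the vertex $v$ of $\tau^\flat$ is identified with the corresponding vertex in each $\tau^\flat_{c_\tau}$, which holds because the underlying graph is shared. I would make this explicit by working throughout with a fixed concrete representative of $\tau^\flat$.
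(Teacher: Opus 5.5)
Your proposal is correct and follows the same route as the paper. Both claims rest on two facts: $\delta$ ignores node decorations, and $\delta$ acts edgewise on a fixed underlying tree. Your signed expansion over colourings, together with the bijection between colourings of the grafted tree and triples $(c_\sigma, c_\tau, \diamond)$, simply writes out explicitly the one-line locality argument the paper gives.
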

\begin{proof}
	The first claim follows from the fact that the substitutions involved in the definition of $\delta$ are unaffected by polynomial decorations. The second claim follows from the fact that the substitutions in the definition of $\delta$ are local in the sense that they act edgewise. Therefore, writing 
	$\delta(\mco) = \sum_{\diamond \in \{\<bsq>, \<psq>, \<sq>\}} (\mco, \diamond),$
	it is immediate that 
		$\delta(\sigma^\flat \graftfv{\mco} \tau^\flat) = \delta(\sigma^\flat) \graftsv{\delta(\mco)} \delta(\tau^\flat)$
	which is the desired claim.
\end{proof}
Since $\partial^k$ and $D_\mco$ do not commute, $\Upsilon_{\mfl^*} : \langle \mcT^* \rangle \to \mcC_{\mcO_*}$ is not a multi-pre-Lie morphism with respect to the graftings above. Instead, it is proven in \cite[Corollary 4.15]{BCCH} that these maps are multi-pre-Lie morphisms with respect to the deformed grafting
\begin{align*}
	\sigma \hgrafts{(\mco, \diamond)} \tau = \sum_{v \in N(\tau)} \sum_{\ell \le \mfn(v)} { \mfn(v) \choose \ell} \sigma \graftsv{(\mco, \diamond)} \polinc{-\ell} \tau.
\end{align*}
The deformed grafting $\hgraftf{\mco}$ is defined similarly. We also write $$\hgrafts{\mco} = \sum_{\diamond \in \{\<bsq>, \<psq>, \<sq>\}} \hgrafts{(\mco, \diamond)}.$$
Combining our previous lemma with the definition, the following corollary is immediate.
\begin{corollary}\label{cor:pre-Lie}
	We have that
	\begin{align*}
		\delta(\sigma^\flat \hgraftf{\mco} \tau^\flat ) = \delta(\sigma^\flat) \hgrafts{\mco} \delta(\tau^\flat).
	\end{align*}
\end{corollary}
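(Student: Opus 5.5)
The plan is to expand both sides by definition of the deformed grafting and then invoke Lemma~\ref{lem:counterterm_ident} term by term. By definition,
\begin{align*}
	\sigma^\flat \hgraftf{\mco} \tau^\flat = \sum_{v \in N(\tau^\flat)} \sum_{\ell \le \mfn(v)} {\mfn(v) \choose \ell} \sigma^\flat \graftfv{\mco} \polinc{-\ell} \tau^\flat,
\end{align*}
and $\delta$ is linear, so
\begin{align*}
	\delta(\sigma^\flat \hgraftf{\mco} \tau^\flat) = \sum_{v} \sum_{\ell \le \mfn(v)} {\mfn(v) \choose \ell} \delta(\sigma^\flat) \graftsv{\mco} \delta(\polinc{-\ell} \tau^\flat).
\end{align*}
Here the second claim of Lemma~\ref{lem:counterterm_ident} has been applied to each summand. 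The first claim of that lemma, applied with a negative increment, then gives $\delta(\polinc{-\ell} \tau^\flat) = \polinc{-\ell}\delta(\tau^\flat)$. This use of a negative increment is justified because the substitutions defining $\delta$ do not touch node decorations at all.

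The key structural observation is the following. Every tree $\tau$ appearing in $\delta(\tau^\flat)$ has the same underlying rooted tree as $\tau^\flat$. It also has the same node set and the same node decoration $\mfn$, since $\delta$ only recolours and retypes edges. The sums over $v \in N(\tau^\flat)$ and $\ell \le \mfn(v)$, with their binomial weights, therefore coincide exactly with those appearing in the definition of $\hgrafts{(\mco,\diamond)}$ applied to each such $\tau$.

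Exchanging the finite sums then yields
\begin{align*}
	\sum_{\diamond} \delta(\sigma^\flat) \hgrafts{(\mco,\diamond)} \delta(\tau^\flat) = \delta(\sigma^\flat) \hgrafts{\mco} \delta(\tau^\flat).
\end{align*}
Here the sum over $\diamond$ arises from writing $\graftsv{\mco} = \sum_\diamond \graftsv{(\mco,\diamond)}$.

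The only point requiring any care is this identification of the vertex and decoration index sets across the whole linear combination $\delta(\tau^\flat)$. It follows from the edgewise, decoration-blind nature of $\delta$, so I do not expect any real obstacle.
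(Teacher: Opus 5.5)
Your proposal is correct and is essentially the paper's argument. The paper simply declares the corollary immediate from Lemma~\ref{lem:counterterm_ident} together with the definition of the deformed grafting. That is exactly what you carry out: expand the deformed grafting, apply both parts of the lemma termwise, and regroup the sum over $\diamond$, using the fact (also noted in the paper just before the lemma) that every tree in $\delta(\tau^\flat)$ has the same underlying graph and node decorations as $\tau^\flat$.
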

With this property in hand, the proof of the main result of this appendix will now be a simple induction. 
\begin{proof}[Proof of Lemma~\ref{lem:counterterms_app}]
	We first show the desired identity \eqref{eq:counterterms_target} holds in the case in which $\tau^\flat = X^k \Xi_{\mft^\flat}$ for some $\mft^\flat \in \mfL_-^\flat \sqcup \{\mathbf{0}\} = (\mfL_- \times \{\<bsq>, \<psq>\}) \sqcup \{\mathbf{0}\}$. In this case, we proceed by direct computation. If $\mft^\flat = (\mft, \<bsq>)$, we need to show that 
\begin{align*}
\hat{\mfD}_{(\mfl, \<sq>)}(X^k \Xi_{\mft^\flat}) \circ \iota_\Delta = \Upsilon_{(\mfl, \<bsq>)}[X^k \Xi_{(\mft, \<bsq>)}]\circ \iota_\Delta.
\end{align*}
The right-hand side is $\iota_\Delta^* \circ \partial^k \circ \pi_{\<bsq>}^* (F_\mfl^\mft)$. The left-hand side can be computed as
\begin{align*}
	\iota_\Delta^* \circ \hat{\mfD}_{(\mfl, \<sq>)}(X^k \Xi_{\mft^\flat}) &= \iota_\Delta^* \circ \Bigl [ \Upsilon_{(\mfl, \<sq>)}[X^k \Xi_{(\mft, \<bsq>)}] + \Upsilon_{(\mfl, \<sq>)}^*[X^k \Xi_{(\mft, \<sq>)}] \Bigr ] 
	\\
	&= \iota_\Delta^* \circ \partial^k \circ \mcQ(F_\mfl^\mft)  +  \iota_\Delta^* \circ \partial^k \circ \pi_{\<psq>}^* (F_\mfl^\mft). 
\end{align*}
Therefore, it suffices to prove that $\iota_\Delta^* \circ \partial^k \circ [ \mcQ + \pi_{\<psq>}^* - \pi_{\<bsq>}^*] = 0$. This follows by combining the commutation between $\iota_\Delta^*$ and $\partial^k$ with the identity \eqref{eq:comm_FTC}.

A similar computation in the case where $\mft^\flat = (\mft, \<psq>)$ shows that the desired result is equivalent to the trivial relation
$
	\iota_\Delta^* \circ \partial^k \circ [ \pi_{\<psq>}^* - \pi_{\<psq>}^* + 0] (F_\mfl^\mft) = 0.
$
Finally the case where $\mft^\flat = \mathbf{0}$ follows by writing
\begin{align*}
	\iota_\Delta^* \hat{\mfD}_{(\mfl, \<sq>)}[ X^k \Xi_\mathbf{0}] &= \iota_\Delta^* \circ \partial^k \circ \mcQ (F_\mfl^\mathbf{0})
=\partial^k \circ \iota_\Delta^* \circ (\pi_{\<bsq>}^* - \pi_{\<psq>}^*) (F_\mfl^\mathbf{0})
\end{align*}
where the second equality follows by the commutation between $\iota_\Delta^*$ and $\partial^k$ alongside the identity \eqref{eq:comm_FTC}.

Since $\mcT^\flat$ is generated as a multi-pre-Lie algebra by trees of the form $X^k \Xi_{\mft^\flat}$ (see e.g. \cite[Proposition 4.21]{BCCH}), to complete the proof it suffices to show that the desired identity is stable under $\hgraftf{\mco}$. We write $\hat{\mcE}_\mfl = \hat{\mfD}_{(\mfl, \<sq>)} - \hat{\mfD}_{(\mfl, \<bsq>)} + \hat{\mfD}_{(\mfl, \<psq>)}$. Recalling that $\hat{\mfD}_{\mfl^*} = \Upsilon_{\mfl^*} \circ \delta$, by applying Corollary~\ref{cor:pre-Lie} and the multi-pre-Lie morphism property of $\Upsilon$, we obtain
\begin{align*}
	\iota_\Delta^* \circ \hat{\mcE}_{\mfm}[ \sigma^\flat \hgraftf{(\mfl, k)} \tau^\flat] = \iota_\Delta^* \Bigl [ \sum_{\diamond \in \{\<bsq>, \<psq>, \<sq>\}} \Upsilon_{(\mfl, \diamond)}[\delta \sigma^\flat] D_{((\mfl, k), \diamond)} \hat{\mcE}_\mfm [\tau^\flat] \Bigr ].
\end{align*}
By multiplicativity of $\iota_\Delta^*$ and the desired identity applied to $\sigma^\flat$, we obtain the relation
\begin{align*}
	\iota_\Delta^* \circ \hat{\mcE}_\mfm[ \sigma^\flat \hgraftf{(\mfl, k)} \tau^\flat] &= \Bigl [ \iota_\Delta^* \Upsilon_{(\mfl, \<bsq>)}[\delta \sigma^\flat] \iota_\Delta^* (D_{((\mfl, k), \<bsq>)} + D_{((\mfl, k), \<sq>)})
	\\
	& \quad +  \iota_\Delta^* \Upsilon_{(\mfl, \<psq>)}[\delta \sigma^\flat] \iota_\Delta^* (D_{((\mfl, k), \<psq>)} - D_{((\mfl, k), \<sq>)}) \Bigr ] \hat{\mcE}_\mfm[ \tau^\flat].
\end{align*}
By \eqref{eq:iota_comm1} and the fact that $\iota_\Delta^* \hat{\mcE}_\mfm[\tau^\flat] = 0$, we then obtain the desired result.
\end{proof}

\subsection*{Homogeneity of the Counterterms}
Another property of renormalisation that one certainly expects is a certain homogeneity in components of the system \eqref{eq:BCCH_sys} that have a certain affine linear structure, which in our case arises naturally in \eqref{eq:nonlinear_diff}. As in the case of differences, we are not aware of a proof and therefore we choose to provide one, but unlike in the case of differences, the proof is much shorter and follows fairly immediately from \cite{BCCH}, provided the notation is set-up.

To set-up said notation, we put ourselves again in the situation of \eqref{eq:BCCH_sys}. We further assume decompositions of the type-set $\mfL_+=\mfL_+^{\hom}\sqcup\mfL_+^{\inh}$ and $\mfL_-=\mfL_-^0\sqcup\mfL_-^1$. To include the treatment of $\mathbf{0}$, we will also denote $\hat\mfL_-^0=\mfL_-^0$ and\footnote{To avoid confusion, let us stress that $\mfL_-^i$ does not correspond to the noises that scale with exponent $i$ when rescaling the homogeneous components of the system. We choose our convention to make \eqref{eq:affine linear} have a simpler form.} $\hat\mfL_-^1=\mfL_-^1\sqcup\{\mathbf{0}\}$. Denote by $\mcC_{\mcO}^i$ the subset of $\mcC_{\mcO}$ consisting of functions that, as functions of $(\mcX_{(\mft,k)})_{\mft\in\mfL_+^{\hom},k\in\mathbb{N}^{1+d}}$, are $i$-th order homogeneous polynomials. 
By convention, $\mcC_{\mcO}^i=\{0\}$ for $i<0$.
We assume that 
\begin{equs}[eq:affine linear]
\mfl\in\mfL_+^{\hom},\mft\in\hat \mfL_-^i\,&\Rightarrow F_{\mfl}^{\mft}\in\mcC_{\mcO}^i,\qquad i=0,1,\\
\mfl\in\mfL_+^{\inh},\mft\in\hat \mfL_-^i\,&\Rightarrow F_{\mfl}^{\mft}\in\mcC_{\mcO}^{i-1},\qquad i=0,1.	
\end{equs}
\begin{remark}
Consider the nonlinearities defined in \eqref{eq:nonlinear_diff} and the decompositions $\mfL_+^*=\mfL_+^{*,\hom}\sqcup\mfL_+^{*,\inh}$, $\mfL_-^*=\mfL_-^{*,0}\sqcup\mfL_-^{*,1}$ with $\mfL^{*,\hom}_+=\mfL_+\times\{\<sq>\}$, $\mfL^{*,\inh}_+=\mfL_+\times\{\<bsq>,\<psq>\}$, $\mfL^{*,0}_-=\mfL_-\times\{\<sq>\}$, $\mfL^{*,1}_-=\mfL_-\times\{\<bsq>,\<psq>\}$.
One can easily check that they satisfy \eqref{eq:affine linear}.
\end{remark}
Finally let us denote by $[\tau]$ the ``number of $0$-noises'' in a tree $\tau$ by setting inductively
\begin{equ}
\mft\in\mfL_-^i\Rightarrow[\Xi_{\mft}]=1-i,\,\,[\tau\bar\tau]=[\tau]+[\bar\tau],\,\,[\mathcal{I}_{(\mft,k)}\tau]=[\tau].
\end{equ}
With this notation in hand, the homogeneity property can be stated as follows.
\begin{prop}
Assume \eqref{eq:affine linear}. If $\mfl\in\mfL_+^{\hom}$, then $\Upsilon_{\mfl}[\tau]\in\mcC_{\mcO}^{1-[\tau]}$, while if
$\mfl\in\mfL_+^{\inh}$, then $\Upsilon_{\mfl}[\tau]\in\mcC_{\mcO}^{-[\tau]}$.
\end{prop}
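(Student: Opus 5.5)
We argue by induction on the recursive construction of $\Upsilon$ from \cite[Equation (2.12)]{BCCH}, proving both claims at once. The proposition is purely algebraic. It only uses \eqref{eq:affine linear} and the fact that $\Upsilon$ is built from the $F_\mfl^\mft$ by three operations: products, derivatives $D_\mco$, and spatial derivatives $\partial^k$. It is convenient to set $\deg_\mfl := 1$ for $\mfl\in\mfL_+^{\hom}$ and $\deg_\mfl := 0$ for $\mfl\in\mfL_+^{\inh}$. The claim then reads uniformly as $\Upsilon_\mfl[\tau]\in\mcC_\mcO^{\deg_\mfl-[\tau]}$, with the convention that this space is $\{0\}$ when the exponent is negative.

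Before the induction we record how the three operations act on the graded pieces. First, $\mcC_\mcO^i\cdot\mcC_\mcO^j\subset\mcC_\mcO^{i+j}$. Second, $D_{(\mfm,k)}$ maps $\mcC_\mcO^i$ into $\mcC_\mcO^{i-1}$ if $\mfm\in\mfL_+^{\hom}$, and into $\mcC_\mcO^i$ if $\mfm\in\mfL_+^{\inh}$, since in the latter case it differentiates only in the coefficient variables. Third, $\partial_i$ preserves each $\mcC_\mcO^i$. To see this, note that $\partial_i=\sum_\mco \mcX_{\mco+e_i}D_\mco$, and each summand preserves the degree: a homogeneous variable $\mcX_{\mco+e_i}$ multiplies a $D_\mco$ that lowers the degree by one, while an inhomogeneous variable multiplies a $D_\mco$ that preserves it. In short, $\mcX_{\mco+e_i}D_\mco$ has degree $\deg_\mfm-\deg_\mfm=0$. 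Hence $\partial^k$ preserves every $\mcC^i_\mcO$.

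For the base case, $\tau=X^k\Xi_\mft$ gives $\Upsilon_\mfl[\tau]=\partial^kF_\mfl^\mft$. Here $[\Xi_\mft]=1-i$ for $\mft\in\hat\mfL_-^i$, with the convention $[\Xi_{\mathbf 0}]=0$ consistent with $\mathbf 0\in\hat\mfL^1_-$. By \eqref{eq:affine linear} we have $F_\mfl^\mft\in\mcC_\mcO^{\deg_\mfl+i-1}=\mcC_\mcO^{\deg_\mfl-[\tau]}$, and $\partial^k$ preserves this space by the third observation.

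For the inductive step, let $\tau=X^k\Xi_\mft\prod_{j=1}^n\mcI_{(\mfm_j,k_j)}\tau_j$. The recursion gives
\[
\Upsilon_\mfl[\tau]=\partial^k\Big(\prod_j\Upsilon_{\mfm_j}[\tau_j]\;D_{(\mfm_1,k_1)}\cdots D_{(\mfm_n,k_n)}F_\mfl^\mft\Big),
\]
or the deformed version of \cite{BCCH}, which has the same structure. By the induction hypothesis, the $j$-th factor lies in degree $\deg_{\mfm_j}-[\tau_j]$. The derivatives lower the degree of $F_\mfl^\mft$ by $\sum_j\deg_{\mfm_j}$. Adding up, the total degree is
\[
(\deg_\mfl+i-1)+\sum_j\bigl(\deg_{\mfm_j}-[\tau_j]\bigr)-\sum_j\deg_{\mfm_j}=\deg_\mfl-[\tau],
\]
because $[\tau]=1-i+\sum_j[\tau_j]$. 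If any intermediate exponent is negative, the corresponding factor vanishes, so the convention $\mcC^{<0}_\mcO=\{0\}$ is consistent. Finally, $\partial^k$ preserves the degree.

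We expect the main obstacle to be bookkeeping rather than substance. One must match precisely the form of the recursion (2.12) in \cite{BCCH}, including the polynomial-decoration and deformed-grafting conventions, and check that each term it produces is of the product–derivative–$\partial^k$ shape above. The third observation, that $\partial^k$ preserves the grading, is the one point where the affine structure genuinely enters.
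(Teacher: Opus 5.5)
Your proof is correct and follows essentially the same route as the paper: an induction along the recursion \cite[Equation~(2.12)]{BCCH}, using that $\partial^\ell$ and derivatives in inhomogeneous variables preserve each $\mcC^j_\mcO$ while derivatives in homogeneous variables lower the degree by one. The difference is only in packaging. The paper runs a short case analysis: at most one homogeneous derivative can survive, and inhomogeneous subtrees must have $[\bar\tau_j]=0$. You encode the same facts in a single uniform degree count with the convention $\mcC^{<0}_\mcO=\{0\}$, and you also spell out why $\partial_i$ preserves the grading, which the paper merely asserts.
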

\begin{proof}
We use the recursive definition \cite[Equation~(2.12)]{BCCH} and prove the claim by induction. In the base case the claim holds by \eqref{eq:affine linear}. In the inductive step we can write $\Upsilon_{\mfl}[\tau]$ as
\begin{equ}
D_{\mco_1}\cdots D_{\mco_n}D^q\partial^\ell F_{\mfl}^{\mft}\Big(\prod_{j=1}^{|q|} \Upsilon_{\bar \mfl_j}[\bar\tau_j]\Big)\Big(\prod_{i=1}^n\Upsilon_{\mfl_i}[\tau_i]\Big)
\end{equ}
where $\mco_i=(\mfl_i,k_i)$ with $\mfl_i\in\mfL_+^{\hom}$, $q$ is a multiindex over the index set $\mfL_+^{\inh}\times\mathbb{N}^{1+d}$, $\ell\in\mathbb{N}^{1+d}$, and $\bar\mfl_j\in\mfL_+^{\inh}$.
Note that $D^q\partial^\ell$ leaves all $\mcC_{\mcO}^j$ invariant. Therefore, we are only left to identify the action of the other operations and prove that they decrease the degree by the sum of the number of $0$-noises among the $\bar \tau_j$-s and $\tau_i$-s.

First consider the multiplication by $\Big(\prod_{j=1}^{|q|} \Upsilon_{\bar \mfl_j}[\bar\tau_j]\Big)$.
If $[\bar\tau_j]=0$ for all $j$, then by the induction hypothesis this is a multiplication by an element of $\mcC_{\mcO}^0$, which leaves all $\mcC_{\mcO}^j$ invariant, as desired. If $[\bar\tau_j]>0$ for some $j$, then by the induction hypothesis $\Upsilon_{\bar \mfl_j}[\bar\tau_j]=0$, which renders the whole expression $0$, also as desired.

Next consider the differentiation $D_{\mco_1}\cdots D_{\mco_n}$ and multiplication by $\Big(\prod_{i=1}^n\Upsilon_{\mfl_i}[\tau_i]\Big)$ together. In fact, since the highest possible degree in the homogeneous variables is $1$, for $n>1$ the differentiation renders the whole expression $0$ and there is nothing to prove, while the $n=0$ case is also trivial. So it is only left to consider $n=1$. If $\mfl\in\mfL_+^{\inh}$, the derivative makes the constant function vanish, which is sufficient for the claim. The same holds if $\mfl\in \mfL_+^{\hom}$ but $\mft\in\hat \mfL_-^0$.
If $\mfl\in\mfL_+^{\hom}$ and $\mft\in\hat \mfL_-^1$, then the derivative decreases the degree by $1$. It remains to use the induction hypothesis on $\Upsilon_{\mfl_1}[\tau_1]$: if $[\tau_1]=0$, then multiplying by a $\mcC_{\mcO}^{1}$ function gains the degree back, if $[\tau_1]=1$, then the decrease of degree was exactly the aim of the inductive step, while if $[\tau_1]>1$, then the whole expression vanishes. 
\end{proof}
The following corollary is then immediate.
\begin{corollary}\label{cor:homogeneity}
Assume \eqref{eq:affine linear}.
Let $\lambda\in\mathbb{R}$ and let $\mathbf{u}^1=(u^1_{\mfl})_{\mfl\in\mfL_+}$, $\mathbf{u}^2=(u^2_{\mfl})_{\mfl\in\mfL_+}$ be the spatially periodic solutions of the systems 
\begin{align*}
   (\partial_t - \mathcal{L}_\mfl)u_\mfl^i = \sum_{\mft \in \mathfrak{L}_- \sqcup \{ \mathbf{0} \}} F_\mfl^\mft (\mathbf{u}^i) \xi_\mft^i+\sum_{\tau\in\mcT_-}c^i(\tau)\Upsilon_{\mfl}[\tau](\mathbf{u}^i), \qquad \mfl \in \mathfrak{L}_+,
\end{align*}
where the following are assumed:
\begin{itemize}
\item The initial conditions $\psi^i_\mfl$ satisfy $\psi^1_\mfl=\psi^2_\mfl$ for $\mfl\in\mfL_+^{\inh}$ and $\lambda\psi^1_\mfl=\psi^2_\mfl$ for $\mfl\in\mfL_+^{\hom}$;
\item The set $\mcT_-$ is finite and the prefactors $c^i:\mcT_-\to\mathbb{R}$ satisfy $c^2(\tau)=\lambda^{[\tau]}c^1(\tau)$;
\item $\xi^i_{\mft}$ are smooth functions that satisfy $\xi^1_{\mft}=\xi^2_\mft$ for $\mft\in\hat\mfL_-^{1}$ and $\lambda \xi^1_{\mft}=\xi^2_\mft$ for $\mft\in\mfL_-^{0}$.
\end{itemize}
Then $u^2_{\mfl}=\lambda u^1_{\mfl}$ for all $\mfl\in\mfL_+^{\hom}$ on their common interval of existence.
\end{corollary}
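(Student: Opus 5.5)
The plan is to compare the two systems through the scaling map $\mathbf{u}\mapsto \bar{\mathbf{u}}$ defined by $\bar u_\mfl = \lambda^{-1} u^2_\mfl$ for $\mfl\in\mfL_+^{\hom}$ and $\bar u_\mfl = u^2_\mfl$ for $\mfl\in\mfL_+^{\inh}$. We first treat $\lambda\neq 0$, and show that $\bar{\mathbf{u}}$ solves exactly the system satisfied by $\mathbf{u}^1$ with the same initial data. Classical uniqueness for smooth data then gives $\bar{\mathbf{u}}=\mathbf{u}^1$ on the common interval of existence. This is precisely the claim for the homogeneous components. The case $\lambda=0$ is handled separately at the end, either by continuity in $\lambda$ or by a direct argument.

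The key algebraic input is the proposition just proved. For $\mathbf{u}$ and $\bar{\mathbf{u}}$ related as above, a function $G\in\mcC_\mcO^j$ satisfies $G(\mathbf{u}) = \lambda^j G(\bar{\mathbf{u}})$. This holds because $G$ is $j$-homogeneous in the variables $\mcX_{(\mft,k)}$ with $\mft\in\mfL_+^{\hom}$, and derivatives of the homogeneous components scale in the same way. We then take each term on the right-hand side of the equation for $u^2_\mfl$ and rewrite it in terms of $\bar{\mathbf{u}}$.

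For the noise terms, \eqref{eq:affine linear} says that $F^\mft_\mfl\in\mcC^i_\mcO$ for $\mft\in\hat\mfL^i_-$ when $\mfl$ is homogeneous, and $F^\mft_\mfl\in\mcC^{i-1}_\mcO$ when $\mfl$ is inhomogeneous. In both cases $\xi^2_\mft = \lambda^{1-i}\xi^1_\mft$. Combining these, a noise term $F^\mft_\mfl(\mathbf{u}^2)\xi^2_\mft$ equals $\lambda F^\mft_\mfl(\bar{\mathbf{u}})\xi^1_\mft$ when $\mfl$ is homogeneous, and equals $F^\mft_\mfl(\bar{\mathbf{u}})\xi^1_\mft$ when $\mfl$ is inhomogeneous.

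For the counterterms, the proposition gives $\Upsilon_\mfl[\tau]\in\mcC^{1-[\tau]}_\mcO$ for $\mfl$ homogeneous and $\Upsilon_\mfl[\tau]\in\mcC^{-[\tau]}_\mcO$ for $\mfl$ inhomogeneous. Together with $c^2=\lambda^{[\tau]}c^1$, each counterterm $c^2(\tau)\Upsilon_\mfl[\tau](\mathbf{u}^2)$ equals $\lambda c^1(\tau)\Upsilon_\mfl[\tau](\bar{\mathbf{u}})$ in the homogeneous case and $c^1(\tau)\Upsilon_\mfl[\tau](\bar{\mathbf{u}})$ in the inhomogeneous case. If $[\tau]$ exceeds the allowed degree, $\Upsilon_\mfl[\tau]$ vanishes by convention, so both sides are zero.

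Putting these together: dividing the homogeneous equations by $\lambda$, which commutes with the linear operator $\partial_t-\mathcal{L}_\mfl$, shows that $\bar{\mathbf{u}}$ solves the system for $\mathbf{u}^1$. The initial data match by the first bullet point. This completes the case $\lambda\neq0$.

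For $\lambda=0$ we argue directly. The assumptions on $\xi^2$, $c^2$ and $\psi^2$ make $\mathbf{u}^2$, with its homogeneous components set to zero, consistent with the system. By uniqueness, $u^2_\mfl=0$ for homogeneous $\mfl$. To see consistency, observe that when all homogeneous variables vanish, only terms of degree $0$ in those variables survive. Each such term in a homogeneous equation carries either a zero noise or a zero coefficient $c^2$. Indeed, a degree-$0$ term requires $i=0$ (so $\xi^2_\mft=0$) or $[\tau]=1$ (so $c^2(\tau)=0$).

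The only delicate point is this consistency at $\lambda=0$ and the justification of uniqueness and a common existence interval for the smooth classical system. Both are standard.
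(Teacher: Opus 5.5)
Your proposal is correct and is exactly the rescaling-plus-uniqueness argument the paper has in mind when it calls the corollary immediate from the preceding proposition: the degrees $i$ or $i-1$ of $F^{\mathfrak t}_{\mathfrak l}$ and $1-[\tau]$ or $-[\tau]$ of $\Upsilon_{\mathfrak l}[\tau]$ are exactly compensated by the powers of $\lambda$ in the noises and constants. In your $\lambda=0$ case you should also check the inhomogeneous equations after zeroing the homogeneous components; this holds because, by the same degree count, the coefficients $F^{\mathfrak t}_{\mathfrak l}$ and $\Upsilon_{\mathfrak l}[\tau]$ appearing there lie in $\mathcal{C}^0_{\mathcal{O}}$ and so do not depend on the homogeneous variables. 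Equivalently, the system is triangular and affine in the homogeneous components, which lets one treat all $\lambda$, including $0$, at once.
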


\section{Some Properties of Besov Spaces}\label{app:Besov}

In this section of the appendix, we record some properties of the scale of (local) Besov spaces used in this article. Analogous statements for alternative characterisations of the global variants of these spaces are standard in the literature. Unfortunately, we are not aware of a reference containing proofs of the local analogues. Therefore we provide proofs of the relevant results here for completeness. We begin with an appropriate Schauder estimate.

\begin{lemma}\label{lem:Schauder}
	Suppose that $f \in \cB_{p,q}^{\alpha - \beta}$ where $\alpha \in \mbR$ and $p,q \in [1,\infty]$. Then given a $\beta$-regularising kernel $K$ in the sense of \cite[Assumption 5.1]{H0}, we have that \begin{align*}
		\| 2^{n \alpha} & \| \sup_{\psi \in \mcB_{\bar{\alpha}}^r} \langle K \ast f, \psi_y^n \rangle \|_{L^p(\mfK; dz')} \|_{\ell^q(n)} + \| \sup_{\psi \in \mcB^r} \langle K \ast f, \psi_y \rangle \|_{L^p(\mfK ; dy)}
		\\ 
		\lesssim& 	\| 2^{n (\alpha - \beta)} \| \sup_{\psi \in \mcB_{\bar{\alpha} - \beta}^r} \langle f, \psi_y^n \rangle \|_{L^p(\bar{\mfK}; dy)} \|_{\ell^q(n)} + \| \sup_{\psi \in \mcB^r} \langle f, \psi_y \rangle \|_{L^p(\bar{\mfK} ; dy)}
	\end{align*}
	for each $\bar{\alpha} > \alpha$ where $\mcB_\alpha^r$ denotes the set of elements of $\mcB^r$ that annihilate polynomials of degree less than $\alpha$.
\end{lemma}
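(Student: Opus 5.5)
The plan is to decompose $K$ dyadically, $K=\sum_{m\ge0}K_m$ with $K_m$ supported in $\{|z|_\fs\le 2^{-m}\}$, $\int K_m(z)z^k\,dz=0$ for $|k|_\fs\le r$ and $m\ge1$, and $\|D^kK_m\|_\infty\lesssim 2^{m(|\fs|-\beta+|k|_\fs)}$, as in \cite[Assumption 5.1]{H0}. I will then estimate $\langle K\ast f,\psi^n_y\rangle=\sum_m\langle f,\check K_m\ast\psi^n_y\rangle$ for each fixed $n$ and $y$.

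The second (large-scale) term on the left is the easier one. Here $\check K_m\ast\psi_y$ equals $2^{-m\beta}$ times a test function at scale $2^{-m}$ that annihilates polynomials of degree up to $r$. By the definition of the Besov seminorm at scale $2^{-m}$, its $L^p(dy)$ norm is bounded by $2^{-m\beta}2^{-m(\alpha-\beta)}$ times the $m$-th term of the right-hand side. This sum is controlled once $\alpha>0$; otherwise I use the trivial regime below.

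For the main term I split the sum over $m$ into two regimes.

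\begin{itemize}
\item When $m\ge n$ (the kernel scale is finer than the test function), I write $\check K_m\ast\psi^n_y$ and use the vanishing moments of $K_m$. This exhibits the function as $2^{-m\beta}2^{-(m-n)\bar r}$ times a test function at scale $2^{-n}$, suitably averaged over points $y'$ near $y$. A Taylor expansion of $\psi^n$ gives this gain. Since $\psi\in\mcB^r_{\bar\alpha}$, the resulting function still annihilates polynomials of degree $<\bar\alpha-\beta$.
\item When $m<n$ (the kernel scale is coarser), I instead Taylor-expand $K_m$ around $y$ against $\psi^n$. Because $\psi$ kills polynomials of degree $<\bar\alpha$, this produces $2^{-n\bar\alpha}2^{m(\bar\alpha-\beta)}$ times a test function at scale $2^{-m}$.
\end{itemize}

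In both regimes the localised $L^p(dy)$ norms are handled in the same way. Averaging over nearby base points is dominated via Minkowski's inequality by $L^p$ norms over the enlarged compact set $\bar\mfK$. I then multiply by $2^{n\alpha}$ and sum. The first regime yields a discrete convolution $\sum_{m\ge n}2^{-(m-n)(\bar r+\alpha-\beta)}a_m$, and the second yields $\sum_{m<n}2^{-(n-m)(\bar\alpha-\alpha)}a_m$, where $a_m$ denotes the $m$-th term of the right-hand side. Young's inequality on $\ell^q$ then closes the estimate, using that $\bar\alpha>\alpha$ and that $r$ is large.

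The main obstacle I expect is the second regime. There the rescaled test function coming from the Taylor remainder must lie in $\mcB_{\bar\alpha-\beta}^r$, or else be reduced to the low-frequency term $\sup_\psi|\langle f,\psi_y\rangle|$ at $m=0$ and on the boundary. Getting this right requires keeping track of which moments are annihilated; the integer-level convention chosen for the spaces is exactly what makes this bookkeeping consistent. The remainder of $K$, namely the non-compactly-singular part, is smooth and contributes only to the low-frequency term.
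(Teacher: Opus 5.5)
Your proposal is correct and follows essentially the paper's proof: a dyadic decomposition of $K$, a split at $m=n$, a Taylor expansion of $K_m$ against the vanishing moments of $\psi$ giving the factor $2^{-(n-m)(\bar\alpha-\alpha)}$ in the far field, Young's inequality on $\ell^q$ using $\bar\alpha>\alpha$, and the scale-one term treated like the near field (this works for every $\alpha$, so your case distinction there is unnecessary). The paper is simpler in two places: in the near field it uses only $\|K_m\|_{L^1}\lesssim 2^{-m\beta}$ with $\beta>0$, needs no moments of $K_m$, and gets a test function at scale $2^{-n}$ (so that regime yields the $n$-th term directly, not a convolution in $m$); and the obstacle you flag does not arise, because $\check K_m\ast\psi^n_y$ annihilates every polynomial that $\psi$ does, which is what the citation of \cite[Proposition 14.11]{FH20} packages.
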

\begin{remark}
	The subtlety in this statement is that it is not a typographical error that we allow $\alpha \in \mathbb{N}$ and $p = q = \infty$, where the conventional wisdom is that Schauder estimates fail by a logarithmic factor. The reason that this wisdom is not in contradiction with the superficially stronger statement given here is that the various characterisations of (local) Besov spaces are inequivalent at non-negative integer regularity and we work with a very weak choice amongst the inequivalent seminorms. The logarithmic correction would then appear when trying to pass from our weak seminorm to a more conventional, stronger choice. 
\end{remark}
\begin{proof}
	We write $K = \sum_{\ell \ge 0} K_\ell$ and bound each contribution to the left-hand side separately. For the first term on the left-hand side, we consider separately the near-field regime $\ell \ge n$ and the far-field regime $\ell < n$. In the near-field regime , we have that
	\begin{align*}
		\Bigl \| 2^{n\alpha} \sum_{\ell \ge n} \bigl \| & \sup_{\psi \in \mcB_{\bar{\alpha}}^r} \langle K_\ell \ast f, \psi_{z'}^n \rangle \bigr \|_{L^p(\mfK; dz')} \Bigr \|_{\ell^q(n)} 
		\\
		&\lesssim 	\Bigl \| 2^{n\alpha} \sum_{\ell \ge n} 2^{-\beta \ell} \bigl \| \sup_{\tilde{\psi} \in \mcB_{\bar{\alpha}}^r} \langle  f, \tilde{\psi}_{z'}^{n-1} \rangle \bigr \|_{L^p(\mfK; dz')} \Bigr \|_{\ell^q(n)}
		\\
		& \lesssim 	\Bigl \| 2^{n(\alpha - \beta)} \bigl \| \sup_{\tilde{\psi} \in \mcB_{\bar{\alpha}}^r} \langle  f, \tilde{\psi}_{z'}^{n-1} \rangle \bigr \|_{L^p(\mfK; dz')} \Bigr \|_{\ell^q(n)}
	\end{align*}
	where to reach the second line we appealed to \cite[Proposition 14.11]{FH20}. To treat the corresponding term in the far-field regime, we appeal to the second part of \cite[Proposition 14.11]{FH20} followed by Young's convolution inequality with respect to the counting measure to write
	\begin{align*}
			\Bigl \| 2^{n\alpha} \sum_{\ell < n}  \bigl \| & \sup_{\psi \in \mcB_{\bar{\alpha} }^r} \langle K_\ell \ast f, \psi_{z'}^n \rangle \bigr \|_{L^p(\mfK; dz')} \Bigr \|_{\ell^q(n)} 
			\\
			&\lesssim 	\Bigl \| \sum_{\ell < n} 2^{(\alpha - \gamma) (n- \ell)} 2^{\ell (\alpha - \beta)} \bigl \| \sup_{\tilde{\psi} \in \mcB_{\bar{\alpha}}^{\lfloor r - \gamma \rfloor }} \langle  f, \tilde{\psi}_{z'}^{\ell - 1} \rangle \bigr \|_{L^p(\mfK; dz')} \Bigr \|_{\ell^q(n)}
			\\
			&\lesssim \Bigl \| 2^{\ell (\alpha - \beta)} \bigl \| \sup_{\tilde{\psi} \in \mcB_{\bar{\alpha}}^{\lfloor r - \gamma \rfloor }} \langle  f, \tilde{\psi}_{z'}^{\ell - 1} \rangle \bigr \|_{L^p(\mfK; dz')} \Bigr \|_{\ell^q(\ell)}
	\end{align*}
	for any $\alpha < \gamma < \bar{\alpha}$. 
	
	Since different (sufficiently large) values of $r$ lead to equivalent norms,
	it remains only to treat the scale $1$ term on the left hand side of the inequality in the statement. This term is treated analogously to the near-field regime above, where we did not make use of the fact that the test functions appearing there annihilated certain polynomials. Since the proof is almost a repetition of the previous treatment, we omit the details.
\end{proof}

In the main body of this article, we also use the fact that with our characterisation of the local Besov scale, one has that $L_\mathrm{loc}^2 \simeq \mcB_{2,2}^0$. Since we are not aware of a proof of this fact with our characterisation of local Besov spaces in the literature, we provide a sketch proof below.

\begin{lemma}\label{lem:Bes_to_L2}
	One has that $f \in \mcB_{2,2}^0$ if and only if $f \in L_\mathrm{loc}^2$. Furthemore, for every compact set $\mfK$, 
	\begin{align*}
		\|f\|_{\mcB_{2,2}^0; \mfK} \lesssim \|f\|_{L^2(\bar \mfK)} \lesssim \|f\|_{\mcB_{2,2}^0; \bar{\bar{\mfK}}}.
	\end{align*}
\end{lemma}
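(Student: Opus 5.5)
The plan is to prove the two inequalities separately, working scale by scale, with the local $L^2$ norm compared against the $\ell^2$-sum over dyadic scales $n$ of $\|\sup_\psi |\langle f,\psi^{2^{-n}}_z\rangle|\|_{L^2(\mfK;dz)}$. Throughout, the test functions at positive scales annihilate constants (polynomials of degree $0$), since we are at regularity $0$ with the convention that $\mcB^0$ requires annihilating polynomials up to degree $0$.

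\emph{Upper bound} $\|f\|_{\mcB^0_{2,2};\mfK}\lesssim\|f\|_{L^2(\bar\mfK)}$.

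1. The scale-$1$ term is bounded by Young's inequality, exactly as in the chain of embeddings used in Lemma~\ref{lem:Malliavin_base}.

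2. The supremum over $\psi$ is the main issue, because it cannot be pulled outside the $L^2(dz)$ norm for free. To handle it, decompose $f$ in a localized Littlewood--Paley fashion. Fix a smooth cutoff $\chi$ equal to $1$ near $\mfK$ and supported in $\bar\mfK$, and write $\chi f=\sum_{m\ge0}\Delta_m(\chi f)$, with $\Delta_m$ parabolic Littlewood--Paley blocks.

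3. For $\psi\in\mcB^r$ with $\int\psi=0$, use the standard bound
$$|\langle \Delta_m g,\psi^{2^{-n}}_z\rangle|\lesssim 2^{-|m-n|}\,\mathcal{M}(\Delta_m g)(z),$$
uniformly in $\psi$. Here $\mathcal{M}$ is the Hardy--Littlewood maximal function. The gain for $m<n$ comes from the vanishing mean of $\psi$ and a Taylor expansion of $\Delta_m g$. The gain for $m>n$ comes from the smoothness of $\psi$ (since $r\ge1$) together with the Fourier localization of $\Delta_m g$.

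4. Sum over $m$, apply Schur's test in $\ell^2(n)$ and the $L^2$-boundedness of $\mathcal{M}$. Then orthogonality gives $\sum_m\|\Delta_m(\chi f)\|_{L^2}^2\simeq\|\chi f\|_{L^2}^2$.

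\emph{Lower bound} $\|f\|_{L^2(\bar\mfK)}\lesssim\|f\|_{\mcB^0_{2,2};\bar{\bar\mfK}}$, which also yields the ``only if'' direction.

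1. Use a Calder\'on-type reproducing formula with compactly supported kernels. Choose $\varphi\in\mcB^r$ with $\int\varphi=1$ and set $\phi=\varphi-\varphi^{1/2}$, which has zero mean. This gives the telescoping identity
$$f=f\ast\varphi+\sum_{n\ge0}f\ast\phi^{2^{-n}}$$
in $\mathcal{D}'$.

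2. To recover orthogonality, replace this by the quadratic version $f=\sum_n \tilde\phi_n\ast\phi_n\ast f$ plus a smooth remainder. Here $\phi,\tilde\phi$ are compactly supported, mean-zero, and satisfy $\sum_n\hat{\tilde\phi}(2^{-n}\cdot)\hat\phi(2^{-n}\cdot)=1-\hat\theta$ (a compactly supported version of Calder\'on's formula). Pairing with $g\in L^2$ supported in $\bar\mfK$ gives
$$|\langle f,g\rangle|\le\sum_n\|f\ast\phi_n\|_{L^2(\bar{\bar\mfK})}\,\|\tilde\phi_n\ast g\|_{L^2}+(\text{scale-}1\text{ term}).$$

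3. Apply Cauchy--Schwarz in $n$, and use the square-function bound $\sum_n\|\tilde\phi_n\ast g\|_{L^2}^2\lesssim\|g\|_{L^2}^2$ from Plancherel, valid because $\tilde\phi$ has mean zero. Since $|f\ast\phi_n(z)|$ is dominated by the supremum over test functions at $z$, the result follows.

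The remaining steps are routine bookkeeping of the support enlargements $\mfK\subset\bar\mfK\subset\bar{\bar\mfK}$. I expect the main obstacle to be step 3 of the upper bound, namely controlling the supremum over $\psi$ uniformly via the maximal function at the endpoint regularity $0$.
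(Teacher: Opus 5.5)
Your proposal is correct in outline, but it takes a different route from the paper on both inequalities.

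\textbf{Upper bound.} The paper linearises the supremum over $\psi$ by choosing, measurably in $z$, the maximiser among the first $M$ elements of a countable dense subset of $\mathcal{B}^r_0$. This produces integral operators $T^M_n$ whose kernels are localised, and smooth and mean-zero in the integration variable. The scales are then summed with the Cotlar--Knapp--Stein lemma, using $\|T^M_j (T^M_k)^*\|\lesssim 2^{-|j-k|}$.

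You instead dominate the supremum pointwise by $2^{-|m-n|}\mathcal{M}(\Delta_m(\chi f))$ and conclude with Schur's test, the parabolic maximal theorem and Plancherel. This is the more standard Littlewood--Paley route and needs no selection argument; the paper's route avoids Fourier multipliers and maximal functions. Two small corrections:
\begin{itemize}
\item For the gain when $m>n$ in parabolic scaling you need control of $\partial_t\psi$, so $r\ge 2$ rather than $r\ge1$.
\item $\chi$ must equal $1$ on the whole unit neighbourhood of $\mathfrak{K}$, since test functions at scale $1$ reach that far.
\end{itemize}

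\textbf{Lower bound.} Here the routes differ more. The paper uses no reproducing formula. It takes a refinable kernel $\phi=\phi^{1/2}\ast\varrho^{1/2}$ with $\varrho\ge 0$ and sets $f_n=f\ast\phi^{2^{-n}}$. Using $\tfrac12(a^2+b^2)-ab=\tfrac12(a-b)^2$, it shows that $\|f_{n+1}\|^2_{L^2(\mathfrak{K}_{n+1})}-\|f_n\|^2_{L^2(\mathfrak{K}_n)}$ is bounded by the square of the $n$-th Besov increment. It then telescopes and obtains $f\in L^2$ by weak compactness. This is entirely elementary.

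Your duality argument also works, but its one nontrivial input is a compactly supported Calder\'on formula in which both factors have mean zero. You need this respectively to land in $\mathcal{B}^r_0$ and for the square-function bound, so it should be justified, not just asserted. One way:
\begin{itemize}
\item Take $\mu$ even and $\theta=\mu\ast\mu$.
\item Write $\mu^{1/2}-\mu=\partial_x A+\partial_t B$ with $A,B$ smooth, compactly supported and mean-zero. This is possible because $\mu^{1/2}-\mu$ integrates to zero against $1$, $x$ and $t$.
\item Then $\theta^{1/2}-\theta=A\ast\partial_x(\mu^{1/2}+\mu)+B\ast\partial_t(\mu^{1/2}+\mu)$, and telescoping gives $\delta_0=\theta+\sum_{n\ge 0}(\theta^{1/2}-\theta)^{2^{-n}}$.
\end{itemize}
In the parabolic setting this yields a sum of two products rather than a single one, which is harmless for your argument.

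Finally, pair with $g\in C^\infty_c$ rather than a general $g\in L^2$, since the latter pairing is not defined before you know $f\in L^2$. Then conclude by the Riesz representation theorem.
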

\begin{proof}[Sketch Proof]
	It suffices to prove the inequality. To prove the first inequality, we note that by separability of $L^2$-spaces, we can fix a sequence $(\psi_j)_{j \ge 0} \in \mcB_0^r$ that is dense in that set with respect to the $L^2$ topology. Then, by continuity of $\psi \mapsto \langle f, \psi_z^m \rangle$, for every $m$ and $x$, we have that
	\begin{align*}
		 \sup_{\psi \in \mcB_{0}^r} |\langle f, \psi_z^m \rangle | = \sup_{j \ge 0} |\langle f, (\psi_{j})_z^m \rangle|.
	\end{align*}
	For $M \ge 1$, we let $j_{n,M}(z)$ maximimise $\max_{1 \le j \le M} | f \ast \psi_j^n(z)|$ and define $\hat{\psi}_{n,z}^M = \psi_{j_{n,M}(z)}$.
	We then define $T_n^M h(z) = h \ast (\hat{\psi}_{n,z}^M)^n(z)$. Note that $z \mapsto \hat{\psi}_{n,z}^M$ is measurable since it is the maximiser of a measurable family of maps. This set-up is defined in such a way that
	\begin{align*}
		\sum_{n = 0}^N \| \sup_{\psi \in \mcB_{0}^r} f \ast \psi^n \|_{L^2(\mfK)}^2 = \lim_{M \to \infty} \sum_{n = 0}^N \|T_{n}^M f \|_{L^2(\mfK)}^2.
	\end{align*}
	By construction, $T_n^M$ is an integral operator defined by the kernel $K_n^M(z',z) = (\hat{\psi}^M_{n,z'})_z^n$ which has the properties
	\begin{align*}
		|K_n^M(z',z)| \lesssim 2^{n|\fs|} \mathbf{1}_{|z'-z| \lesssim 2^{-n}}, & \qquad |\partial_{z_i} K_n^M(z',z)|
		\lesssim 2^{n(|\fs|+\fs_i)} \mathbf{1}_{|z'-z| \lesssim 2^{-n}}
		\\
		& \int_{\bar{\mfK}} K_n^M(z',z) dz = 0
	\end{align*}
	uniformly in $n, M$. 
	Viewing $N$ and $M$ as fixed (and thus suppressing them in the notation),
	we then define $T : L^2(\bar \mfK) \to \ell^2(\{0, \dots, N\}, L^2(\mfK))$ by $Th = (T_n^M h)_{n = 0}^N$. We also define $S_j : L^2(\bar \mfK) \to \ell^2(\{0, \dots, N\}, L^2(\mfK))$ by $(S_j h)_n = \delta_{n,j} T_j^M h$ so that $T = \sum_{j = 0}^N S_j$.

	It follows from a short calculation using the properties of the kernel $K_n^M$ that $\|T_j^M (T_k^M)^*\| \lesssim 2^{-|j-k|}$. Using this bound and the definition of $S_j$, we find that
	\begin{align*}
		\|S_j^* S_k\| \lesssim \delta_{j,k}, \qquad \|S_j S_k^* \| \lesssim 2^{-|j-k|}.
	\end{align*}
	This in turn implies that
	\begin{align*}
		\sup_j \sum_k \|S_j^* S_k \|^{1/2} \lesssim 1, \qquad \sup_j \sum_k \|S_j S_k^* \|^{1/2} \lesssim \sum_{\ell \in \mathbb{Z}} 2^{-|\ell|/2} \lesssim 1
	\end{align*}
	uniformly in $N$ and $M$. It then follows from the Cotlar--Knapp--Stein Lemma (see e.g. \cite[Lemma 4.5.1]{Grafakos}) that $\|T\| = \| \sum_j S_j \| \lesssim 1$ uniformly in $N,M$. This implies the required inequality.

	Therefore, it remains to argue that if $f \in \mcB_{2,2}^0$ then $f \in L_\mathrm{loc}^2$. To do this, we fix smooth, compactly supported test functions $\phi, \varrho$ with $\varrho \ge 0$, $\int \varrho = 1 = \int \phi$ and $\phi = \phi^{1/2} \ast \varrho^{1/2}$. Writing $f_n = f \ast \phi^{2^{-n}}$, we then note if $\mfK_n = \mfK + B(0,2^{-n})$ then 
	\begin{align*}
		\| f_{n+1} \|_{L^2(\mfK_{n+1})}^2 &= \iiint \mathbf{1}_{x \in \mfK_{n+1} - u} \varrho^{2^{-n-1}}(u) \varrho^{2^{-n-1}}(v) f_{n+1}(x - u)^2 dx du dv
		\\
		& \le \int_{\mfK_n} \iint \varrho^{2^{-n-1}}(u) \varrho^{2^{-n-1}}(v) f_{n+1}(x-u)^2 du dv dx.
	\end{align*}
	By interchanging the roles of $u$ and $v$ and averaging the resulting estimates, we get that 
	\begin{align*}
		\| f_{n+1} \|_{L^2(\mfK_{n+1})}^2 \le \frac12 \int_{\mfK_n} \iint \varrho^{2^{-n-1}}(u) \varrho^{2^{-n-1}}(v) ( f_{n+1}(x-u)^2 + f_{n+1}(x-v)^2) du dv dx.
	\end{align*}
	Since we also have that
	\begin{align*}
		f_n(x)^2 = \iint f_{n+1}(x-u) f_{n+1}(x-v) \varrho^{2^{-n-1}}(u) \varrho^{2^{-n-1}}(v) du dv
	\end{align*}
	we see that
		\begin{align*}
		\| f_{n+1} & \|_{L^2(\mfK_{n+1})}^2 - \|f_n\|_{L^2(\mfK_n)}^2 
		\\
		& \le \frac12 \int_{\mfK_n} \iint \varrho^{2^{-n-1}}(u) \varrho^{2^{-n-1}}(v) ( f_{n+1}(x-u) - f_{n+1}(x-v))^2 du dv dx
		\\
		& \lesssim \| \sup_{\psi \in \mcB_{0}^r} f \ast \psi^n \|_{L^2(\mfK_0)}^2
	\end{align*}
	where we used the fact that $\phi_u^{2^{-n-1}} - \phi_v^{2^{-n-1}}$ integrates to $0$. Formally speaking, the desired bound then follows by writing
	\begin{align*}
		\|f\|_{L^2(\mfK)}^2 = \|f_0\|_{L^2(\mfK_0)}^2 + \sum_{n = 0}^\infty ( \|f_{n+1}\|_{L^2(\mfK_{n+1})}^2 - \|f_n\|_{L^2(\mfK_n)}^2 ).
	\end{align*}
	Since the fact that $f \in L^2(\mfK)$ is to be proven, this can be made rigorous by using the above estimates to see that $f_n$ is a uniformly bounded sequence in $L^2(\mfK)$ and therefore admits convergent subsequences in the weak topology on $L^2(\mfK)$. Since all weak subsequential limits must agree with $f$ in the sense of distributions, $f_n$ converges to an $L^2$-function which is equal to $f$ in the sense of distributions. This function satisfies the required bound by lower semicontinuity of the $L^2$ norm with respect to weak convergence.
\end{proof}

\section{Compact Embeddings in Regularity Structures}
\label{app:degree_shift}
In this section we sketch a proof of the folklore fact that slightly decreasing the homogeneities of noise symbols yields compact embeddings for spaces of models and (singular) modelled distributions. {An important point for the present work is that this can be done while preserving both the BPHZ model and, for modelled distributions, the property of solving a given fixed-point problem.}
The main subtlety is that the structure group may change when the shifted homogeneity of a planted tree crosses an
integer. Thus the naive identification of the corresponding regularity structures need not define an embedding. We sidestep this by restricting to suitably chosen sectors {(which come with an inherent notion of BPHZ model)} and working with a subgroup of the structure group, for which the relevant coproducts and coactions are stable under sufficiently small shifts. 

We mention that an alternative route to establishing compact embeddings would be to pass through the homeomorphism with H\"older spaces provided in \cite[Theorem 1]{BH21}. However, this would not immediately provide the embedding for singular modelled distributions. In addition, it would be necessary to unpack the construction to see that after truncation it has the additional properties that we outlined above. Therefore we prefer to give a more direct argument.

\subsection*{Compact embeddings under a shift of homogeneities}

Let $R$ be a complete subcritical rule with respect to the degree assignment $|\cdot|_\fs$. A common situation encountered in the literature is that one wishes to give up an infinitesimal amount of regularity which is usually achieved by perturbing the degree assignment $|\cdot|_\fs$ to define a degree assignment $|\cdot|_\fs^\zeta$ by setting

    \begin{align*}
        |\tau|_\fs^\zeta = |\tau|_\fs - n_\tau \zeta
    \end{align*}

    where $n_\tau$ is the number of noise edges in the tree $\tau$ and $\zeta$ is taken to be sufficiently small. Indeed, one has the following result which follows from the definitions.

    \begin{lemma}\label{lem:comp_sub}

        Suppose that $R$ is a complete and subcritical rule with respect to $|\cdot|_\fs$ which has the property that if $\tau \in \mcT$ has $n_\tau > 0$ then $|\tau|_\fs + k \neq 0$ for all $k \in \mathbb{N}$. Then there exists a $\zeta_0 > 0$ such that for all $\zeta < \zeta_0$, $R$ is complete and subcritical with respect to $|\cdot|_\fs^\zeta$. 

    \end{lemma}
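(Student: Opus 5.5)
The plan is to reduce completeness and subcriticality to a finite list of strict inequalities and integer-avoidance conditions, each stable under small perturbations of the degrees. First I would recall that, for a rule $R$, subcriticality with respect to a degree assignment means there is a map $\mathrm{reg}:\mfL\to\mathbb{R}$ with $\mathrm{reg}(\mfl) < |\mfl| + \inf_{N\in R(\mfl)} \mathrm{reg}(N)$ for every kernel type $\mfl$. Here $\mathrm{reg}(N)$ is the sum of $\mathrm{reg}$ over the entries of the node type $N$, minus the derivative decorations. There are only finitely many kernel types, and since $R(\mfl)$ is finite (or, as in \cite{BHZ}, the infimum is attained on finitely many minimal node types), this is a finite system of strict inequalities. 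The key observation is that replacing $|\cdot|_\fs$ by $|\cdot|_\fs^\zeta$ only changes the degrees of the noise types $\mft\in\mfL_-$, by $-\zeta$. Keeping the same $\mathrm{reg}$ on kernel types and setting $\mathrm{reg}^\zeta(\mft)=\mathrm{reg}(\mft)-\zeta$ on noises, each inequality is perturbed by at most $C\zeta$, where $C$ bounds the number of noise entries in a node type. Since the inequalities are strict and finite in number, they persist for $\zeta<\zeta_0$.

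Second, I would address completeness. In \cite[Definition 5.20]{BHZ}, completeness is defined in terms of the set of trees of negative degree conforming to $R$ and their contractions: for each such tree $\tau$ and each of its nodes, the node types obtained after contraction must again lie in $R$. Completeness is preserved provided the set $\{\tau\in\mcT: |\tau|_\fs^\zeta<0\}$, together with the relevant polynomial decorations (with degree constraints of the form $|\tau|+|k|<0$), coincides with the corresponding set for $|\cdot|_\fs$. By subcriticality, only finitely many trees have degree below any fixed threshold, uniformly in $\zeta\in[0,\zeta_1]$, because the $\zeta$-degrees are bounded below by the $0$-degrees minus $n_\tau\zeta_1$ and $n_\tau$ is controlled by subcriticality. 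Among these finitely many trees, each condition ``$|\tau|_\fs+|k|_\fs<0$'' with $n_\tau>0$ has $|\tau|_\fs+|k|_\fs\neq0$ by the hypothesis, so it holds strictly one way or the other and is unchanged by a shift of size $n_\tau\zeta$ for $\zeta$ small. Trees with $n_\tau=0$ are purely polynomial and have unshifted degrees. Taking $\zeta_0$ to be the minimum over these finitely many margins, together with the subcriticality margin, completes the argument.

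The main obstacle is the uniform finiteness in the second step. One has to make sure the relevant trees form a finite set independent of $\zeta$, so that the minimum of margins is positive. I would obtain this from the quantitative form of subcriticality: with $\mathrm{reg}^\zeta$ as above, $|\tau|^\zeta_\fs\ge c\,(\#E(\tau))-C$ uniformly for $\zeta\le\zeta_1$, as in the proof of \cite[Proposition 5.15]{BHZ}. If this uniformity turns out to be delicate, the fallback is to fix a threshold, enumerate the negative-degree trees at $\zeta=0$ with margin $1$, and check directly that no new tree enters for small $\zeta$.
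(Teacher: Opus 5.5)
Your proposal is correct and matches the paper, which gives no argument beyond saying that the lemma follows from the definitions. Your reduction is exactly the intended unpacking: subcriticality becomes finitely many strict inequalities that survive the shift $|\mathfrak{t}|_{\mathfrak{s}} \mapsto |\mathfrak{t}|_{\mathfrak{s}} - \zeta$ on noise types, and completeness becomes a uniformly finite, $\zeta$-independent family of negative-degree subtrees and admissible derivative increments, fixed by the integer-avoidance hypothesis.

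Two small points of precision:
\begin{itemize}
\item Noise-free trees need not be purely polynomial, since they can contain kernel edges. You only use that their degrees are unshifted, which is true.
\item The bound on noise entries per node type follows from normality together with subcriticality, not from finiteness of $R(\mathfrak{l})$. You can also avoid it altogether by keeping $\mathrm{reg}$ unchanged on every type: the kernel-type inequalities then do not change, and the noise-type ones $\mathrm{reg}(\mathfrak{t}) < |\mathfrak{t}|_{\mathfrak{s}}$ are strict and finite in number.
\end{itemize}
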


    \begin{remark}

        The assumption on the degree assignment in the previous lemma is harmless in practice since one typically defines the degree on noise symbols via $|\Xi_\mfl|_\fs = q - \kappa$ where $q$ is the annealed regularity and $\kappa$ is an infinitesimal loss due to the use of a Kolmogorov criterion. In particular, due to subcriticality, one can always choose $\kappa$ such that our assumption is satisfied.

    \end{remark}

We write $(\langle \cT^\zeta \rangle,\cG^\zeta)$ for the (reduced) regularity structure obtained by applying the constructions of \cite{BHZ} to the rule $R$ with respect to the degree assignment $|\cdot|_\fs^\zeta$ for $\zeta \le \zeta_0$
Fix a finite historic sector $\cV^0\subset\cT^0$, where the notion of historic sector is as introduced in \cite[Definition 5.4]{BSS25}. After decreasing $\zeta_0>0$ if necessary, its basis of decorated trees defines a historic sector $\cV^\zeta\subset\cT^\zeta$ for every $\zeta\in[0,\zeta_0)$, and under this identification both $\Delta_r^-$ and $\Delta$ are independent of $\zeta$.

For each $\zeta\in[0,\zeta_0)$ set
\begin{align*}
    C_\zeta(\cV^\zeta)
    &:=
    \operatorname{span}
    \left\{
        (f\otimes\Id)\Delta^\zeta\tau:
        \tau\in\cV^\zeta,\;
        f\in(\cV^\zeta)'
    \right\},\\
    \cV^\zeta_+
    &:=
    \operatorname{Alg}\bigl(C_\zeta(\cV^\zeta)\bigr)
    \subset\cT^\zeta_+,
    \qquad
    \cG_{\cV}^\zeta
    :=
    \operatorname{Char}(\cV^\zeta_+).
\end{align*}
Since $\cV^\zeta$ is a subcomodule, the comodule identity implies that $C_\zeta(\cV^\zeta)$ is a subcoalgebra of $\cT^\zeta_+$. Hence
$\cV^\zeta_+=\operatorname{Alg}(C_\zeta(\cV^\zeta))$ is connected (in the sense of the coradical filtration) by \cite[Lemma 5.2.12]{Mon93}, and therefore a Hopf subalgebra by \cite[Lemma 5.2.10]{Mon93}. Consequently $\cG^\zeta_\cV=\operatorname{Char}(\cV^\zeta_+)$ is a group and $(\cV^\zeta,\cG^\zeta_\cV)$ is a regularity structure.
$(\cV^\zeta, \cG_{\cV}^\zeta)$ is therefore a regularity structure.
For $\zeta,\bar\zeta\in[0,\zeta_0)$, let
\begin{align*}
    \iota_{\zeta,\bar\zeta}:
    \cV^{\bar\zeta}\longrightarrow\cV^\zeta
\end{align*}
be the identity on underlying decorated trees. Stability of the positive
coaction gives a Hopf algebra isomorphism
$
    j_{\zeta,\bar\zeta}:
    \cV^{\bar\zeta}_+\longrightarrow\cV^\zeta_+
$
such that
\begin{align*}
    \Delta^\zeta\iota_{\zeta,\bar\zeta}
    =
    (\iota_{\zeta,\bar\zeta}\otimes
    j_{\zeta,\bar\zeta})\Delta^{\bar\zeta}.
\end{align*}
Consequently pullback induces an isomorphism
\begin{align*}
    j_{\zeta,\bar\zeta}^*:
    \cG^\zeta_{\cV}\longrightarrow\cG^{\bar\zeta}_{\cV},
    \qquad
    j_{\zeta,\bar\zeta}^*g
    =
    g\circ j_{\zeta,\bar\zeta},
\end{align*}
such that
$
    \iota_{\zeta,\bar\zeta}
    \Gamma^{\bar\zeta}_{j_{\zeta,\bar\zeta}^*g}
    =
    \Gamma^\zeta_g
    \iota_{\zeta,\bar\zeta}.
$

We write $\cM_{\mathrm{adm}}(\mcV^\zeta)$ for the space of admissible models on
$(\cV^\zeta,\cG^\zeta_{\cV})$. If
$0\leq\bar\zeta\leq\zeta<\zeta_0$, define
\begin{align*}
    \iota^{\cM}_{\zeta,\bar\zeta}:
    \cM_{\mathrm{adm}}(\mcV^{\bar \zeta})
    \longrightarrow
    \cM_{\mathrm{adm}}(\mcV^\zeta)
\end{align*}
by setting, for $Z=(\Pi,\Gamma)$,
\begin{align*}
    \Pi^\zeta_x
    &=
    \Pi^{\bar\zeta}_x\iota_{\zeta,\bar\zeta}^{-1},\qquad 
    \Gamma^\zeta_{xy}
    =
    \iota_{\zeta,\bar\zeta}
    \Gamma^{\bar\zeta}_{xy}
    \iota_{\zeta,\bar\zeta}^{-1}.
\end{align*}
Thus
$
    \Pi^\zeta_x\iota_{\zeta,\bar\zeta}\tau
    =
    \Pi^{\bar\zeta}_x\tau.
$

\begin{prop}\label{prop:comp_mod}
For $0\leq\bar\zeta<\zeta<\zeta_0$, the map
\begin{align*}
     \iota^{\cM}_{\zeta,\bar\zeta}:
    \cM_{\mathrm{adm}}(\mcV^{\bar \zeta})
    \longrightarrow
    \cM_{\mathrm{adm}}(\mcV^\zeta)
\end{align*}
is compact for the local model topologies.
\end{prop}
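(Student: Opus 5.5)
The strategy is an Arzelà--Ascoli argument carried out at the level of models. Compactness of $\iota^{\cM}_{\zeta,\bar\zeta}$ means that bounded sets of $\cM_{\mathrm{adm}}(\mcV^{\bar\zeta})$ are sent to relatively compact sets in the local topology on $\cM_{\mathrm{adm}}(\mcV^\zeta)$. So we fix a sequence $Z^n=(\Pi^n,\Gamma^n)$ with $\sup_n\|Z^n\|^{(\bar\zeta)}_{\gamma;\mfK}<\infty$ for every compact $\mfK$. We then extract a subsequence whose images converge in $\|\cdot;\cdot\|^{(\zeta)}_{\gamma;\mfK}$ for every $\mfK$; a diagonal argument over an exhausting sequence of compacts handles all $\mfK$ at once. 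Under the identification $\iota_{\zeta,\bar\zeta}$, the only changes are the following.

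\begin{itemize}
\item The bound required on $\Pi_x\tau$ is weakened from scale $\lambda^{|\tau|^{\bar\zeta}_\fs}$ to $\lambda^{|\tau|^{\zeta}_\fs}$. This gains a factor $\lambda^{n_\tau(\zeta-\bar\zeta)}$, which is strictly positive whenever $n_\tau>0$.
\item The bound on $\Gamma_{xy}\tau$ in the $\sigma$ component is weakened from $|x-y|^{|\tau|^{\bar\zeta}_\fs-|\sigma|^{\bar\zeta}_\fs}$ to $|x-y|^{|\tau|^\zeta_\fs-|\sigma|^\zeta_\fs}$. This gains $|x-y|^{(n_\tau-n_\sigma)(\zeta-\bar\zeta)}$, and since $\sigma$ appearing in $\Gamma\tau$ has no more noises than $\tau$, we have $n_\tau-n_\sigma\ge0$.
\end{itemize}

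The structure group relations are preserved exactly, by $\iota_{\zeta,\bar\zeta}\Gamma^{\bar\zeta}_{j^*g}=\Gamma^\zeta_g\iota_{\zeta,\bar\zeta}$. So the images are genuine admissible models, and the only task is the compactness itself.

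\textbf{Step 1 (extraction).} We use the $\Gamma$ bounds to extract a limit. For each pair $\tau,\sigma$ of the finite basis of $\mcV$, the maps $(x,y)\mapsto\langle\Gamma^n_{xy}\tau,\sigma\rangle$ are uniformly bounded and uniformly Hölder on compacts, where uniformity follows from the $\Gamma$ bound together with the multiplicative identity $\Gamma_{xz}=\Gamma_{xy}\Gamma_{yz}$. Arzelà--Ascoli therefore gives a locally uniformly convergent subsequence.

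For $\Pi$, each $\Pi^n_x\tau$ is bounded uniformly in $n$ in a local Hölder space of exponent $\alpha_\tau=\min_{\sigma\le\tau}|\sigma|^{\bar\zeta}_\fs$. This follows from the recentering identity $\Pi_y=\Pi_x\Gamma_{xy}$ and the bound on $\Pi_x\sigma$, exactly as in \cite[Proposition 3.31]{H0}. Since $\mcC^{\alpha}\hookrightarrow\mcC^{\alpha-}$ is compact on compacts, we can extract a subsequence along which $\Pi^n_x\tau$ converges in $\mcC^{\alpha_\tau-\kappa'}$ locally, for all $\tau$ in the finite basis, at one fixed base point $x_0$. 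The identity $\Pi_y=\Pi_{x_0}\Gamma_{x_0y}$ then propagates this convergence to all base points.

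\textbf{Step 2 (convergence in the weaker model norm).} This is the main step. For $\lambda\le\lambda_0$ we use the uniform $\bar\zeta$ bound together with the gain factor computed above:
\begin{align*}
\lambda^{-|\tau|^\zeta_\fs}|(\Pi^n_x-\Pi^m_x)\tau(\varphi^\lambda_x)|\lesssim\lambda_0^{n_\tau(\zeta-\bar\zeta)}.
\end{align*}
For $\lambda\ge\lambda_0$, the distributional convergence from Step 1, which holds uniformly over $x\in\mfK$ and $\varphi\in\mcB^r$, makes the difference small once $n,m$ are large. The same split into $|x-y|\le\lambda_0$ and $|x-y|>\lambda_0$ handles the $\Gamma$ components with $n_\tau>n_\sigma$ (the first case uses the gain factor, the second uses locally uniform convergence).

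The expected obstacle is the case where no gain is available: the $\Gamma$ components with $n_\tau=n_\sigma$, and the polynomial or noise-free trees with $n_\tau=0$. In these the trees differ only by integrations and polynomial decorations. For these components I would first try to argue that they are determined by admissibility. The relevant data are polynomial coefficients of the form $D^{k}K*\Pi_x\bar\tau(x)$ with $n_{\bar\tau}=n_\sigma$, or trivial polynomial shifts. Such coefficients inherit the gain through the extension/Schauder bounds of \cite[Section 5]{H0}, because the kernel bounds are governed by the degree of the integrated tree. I would set up an induction in $n_\tau$ and in the size of the tree.

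The identity $|x-y|^{|\tau|-|\sigma|}$ with strictly positive exponent in the $\zeta$ grading could fail where a planted tree's degree crosses an integer. This is exactly excluded by the choice of $\zeta_0$ in Lemma~\ref{lem:comp_sub} and by the sector $\mcV$, on which $\Delta$ is $\zeta$-independent. Hence the Cauchy property, and therefore convergence in $\cM_{\mathrm{adm}}(\mcV^\zeta)$, follows.
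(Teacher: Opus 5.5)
Your proposal is correct and follows essentially the same route as the paper's sketch. Both arguments use Arzel\`a--Ascoli on the coefficients of $\Gamma$, extract a limit of $\Pi$ at one base point and propagate it by $\Pi_z=\Pi_0\Gamma_{0z}$, and interpolate the uniform $\bar\zeta$-bounds at small scales (gain $n_\tau(\zeta-\bar\zeta)>0$) with convergence at scales bounded away from zero, the gain-free components being fixed by admissibility (with the paper's convention that integration annihilates polynomials, these are just polynomial recentering components, so the Schauder induction you sketch is not needed).

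The one point the paper makes explicit that you leave implicit in ``Cauchy implies convergence'' is that the limiting $\Gamma$ still comes from a character in $\cG^\zeta_{\cV}$. This holds because $\cV^\zeta_+$ is generated by $C_\zeta(\cV^\zeta)$, so a character can be recovered continuously from its action on $\cV$.
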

\begin{proof}[Sketch]
Let $(Z_m)_m$ be locally bounded in $\cM_{\mathrm{adm}}(\mcV^{\bar \zeta})$. Since $\cV$ is finite, Arzel\`a-Ascoli applied coefficientwise to $x\mapsto\Gamma^m_{0x}$ yields, after passing to a subsequence, a locally uniform limit. This limit still lies in the image of $\cG^\zeta_\cV$: indeed, if
\begin{align*}
    \tau
    =
    \sum_{i=1}^N(f_i\otimes\Id)\Delta\tau_i,
\end{align*}
then
\begin{align*}
    g(\tau)
    =
    \sum_{i=1}^N f_i(\Gamma_g\tau_i),
\end{align*}
so that the character can be continuously recovered from its action on $\cV$.

The model bounds and separability of $\mathcal{B}^r$ in the $C^q$-norm for $q<r$ allow us, after passing to a further subsequence, to assume that $\Pi_0^m\tau\to\Pi_0\tau$ in $\cD'$ for every $\tau\in\cV$. Setting $\Pi_z=\Pi_0\Gamma_{0z}$ gives a limiting admissible model. The resulting convergence at scales bounded away from zero, interpolated with the uniform $\bar\zeta$-model bounds, gives convergence in the $\zeta$-model topology since
\begin{align*}
    |\tau|_\fs^{\bar\zeta}-|\tau|_\fs^\zeta
    =
    n_\tau(\zeta-\bar\zeta)>0
\end{align*}
whenever $n_\tau>0$; the remaining components are fixed by admissibility.
\end{proof}

Set
$
    N_{\cV}=\max_{\tau\in\cV}n_\tau.
$
If
$
    \gamma-\gamma'
    >
    N_{\cV}(\zeta-\bar\zeta)$ and
$
    \delta-\delta'
    >
    N_{\cV}(\zeta-\bar\zeta)
	$
then
\begin{align*}
    f
    \longmapsto
    Q^\zeta_{<\gamma'}
    \iota_{\zeta,\bar\zeta}f
\end{align*}
defines a continuous map
\begin{align*}
    \cD^{\gamma,\delta}_{T}(Z)
    \longrightarrow
    \cD^{\gamma',\delta'}_{T}
    \bigl(\iota^{\cM}_{\zeta,\bar\zeta}Z\bigr)
\end{align*}
where we recall that these spaces denote the singular modelled distributions with respect to the time-zero hyperplane.
Notice that this map leaves the coefficient of each underlying tree unchanged.
\begin{prop}\label{prop:compact_embedding_models}
Suppose that $(Z_m)_m$ is locally bounded in $\cM_{\mathrm{adm}}(\cV^{\bar{\zeta}})$ and $f_m\in\cD^{\gamma,\delta}_{T}(Z_m)$ is locally bounded uniformly in $m$.
Then, after passing to a subsequence, there exist $Z\in\cM_{\mathrm{adm}}(\cV^{\bar{\zeta}})$ and $f\in\cD^{\gamma,\delta}_{T}(Z)$ such that
\begin{align*}
    \iota^{\cM}_{\zeta,\bar\zeta}Z_m
    &\longrightarrow
    \iota^{\cM}_{\zeta,\bar\zeta}Z,\qquad
    Q^\zeta_{<\gamma'}
    \iota_{\zeta,\bar\zeta}f_m
    \longrightarrow
    Q^\zeta_{<\gamma'}
    \iota_{\zeta,\bar\zeta}f
\end{align*}
locally in the corresponding model and singular modelled-distribution topologies corresponding to the shift $\zeta$.
\end{prop}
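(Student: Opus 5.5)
The plan is to combine the compactness of $\iota^{\cM}_{\zeta,\bar\zeta}$ from Proposition~\ref{prop:comp_mod} with an Arzel\`a--Ascoli argument for the coefficients of $f_m$. Afterwards we interpolate the uniform $(\gamma,\delta)$ bounds against the convergence obtained at a fixed positive scale.

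\textbf{Step 1: limiting model.} By Proposition~\ref{prop:comp_mod} and its proof, after passing to a subsequence the $\Gamma^m_{0x}$ converge locally uniformly. Also $\Pi^m_0\tau\to\Pi_0\tau$ in $\cD'$ for each $\tau\in\cV$. The limit $Z=(\Pi,\Gamma)$ satisfies the $\bar\zeta$-model bounds by lower semicontinuity, so $Z\in\cM_{\mathrm{adm}}(\cV^{\bar\zeta})$. Moreover $\iota^{\cM}_{\zeta,\bar\zeta}Z_m\to\iota^{\cM}_{\zeta,\bar\zeta}Z$.

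\textbf{Step 2: coefficients.} Since $\cV$ is finite, write $f_m(x)=\sum_\tau f_m^\tau(x)\tau$. The uniform $\cD^{\gamma,\delta}_T$ bound gives two pieces of local control on $\{t>0\}$ away from the hyperplane. First, $|f_m^\tau(x)|\lesssim |t|^{(\delta-|\tau|)\wedge0}$. Second, $f_m(x)-\Gamma^m_{xy}f_m(y)$ has $\tau$-component of size $|x-y|^{\gamma-|\tau|}$, up to the singular weights. Combined with the local uniform bounds on $\Gamma^m$, this makes each $f_m^\tau$ equicontinuous on compacts of $\{t>0\}$. This follows by downward induction on $|\tau|$: the polynomial top components are H\"older directly, and the lower components pick up contributions $\Gamma^m_{xy}$ applied to higher components, which are continuous and bounded. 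A diagonal Arzel\`a--Ascoli argument over an exhaustion of $\{t>0\}$ then yields locally uniform limits $f^\tau$. Passing to the limit in the defining inequalities, using $\Gamma^m\to\Gamma$ locally uniformly, shows $f=\sum f^\tau\tau\in\cD^{\gamma,\delta}_T(Z)$.

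\textbf{Step 3: convergence in the weaker topology.} This is the main step. We must show
\[
\|Q^\zeta_{<\gamma'}\iota f_m;Q^\zeta_{<\gamma'}\iota f\|_{\gamma',\delta'}\to0 .
\]
We split according to distance. When $|x-y|\ge r$ and $|t|,|s|\ge r$, the quantities converge by the locally uniform convergence of coefficients and of $\Gamma$. When $|x-y|<r$, or near $t=0$, we bound each term by the uniform $(\gamma,\delta)$ bound. This bound carries an extra factor $r^{\gamma-\gamma'}$, respectively $r^{\delta-\delta'}$, relative to the $(\gamma',\delta')$ normalisation, and we use the degree shifts $|\tau|^{\bar\zeta}-|\tau|^\zeta\le N_\cV(\zeta-\bar\zeta)$ together with the assumed gaps $\gamma-\gamma'$, $\delta-\delta'>N_\cV(\zeta-\bar\zeta)$. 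For each $\tau$ the exponent bookkeeping involves $\gamma-|\tau|^{\bar\zeta}$ against $\gamma'-|\tau|^{\zeta}$, and the stated gaps guarantee a positive power of $r$ remains. Hence these contributions are $O(r^{\theta})$ for some $\theta>0$, uniformly in $m$. Sending $m\to\infty$ and then $r\to0$ gives the claim.

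The main obstacle is checking that truncation by $Q^\zeta_{<\gamma'}$ interacts correctly with $\Gamma$ in the comparison quantity $\|\cdot;\cdot\|$, since the models differ. For this we use the identity $\iota\Gamma^{\bar\zeta}=\Gamma^\zeta\iota$ established before the statement. The truncation error $Q^\zeta_{<\gamma'}\Gamma\,Q^\zeta_{\ge\gamma'}$ is controlled by $|x-y|^{\gamma-|\tau|}$, which already falls within the small-scale regime treated above.
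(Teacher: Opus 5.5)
Your route is the same as the paper's sketch: compactness of the models via Proposition~\ref{prop:comp_mod}, coefficientwise Arzel\`a--Ascoli on an exhaustion of the complement of the hyperplane, and interpolation of the locally uniform convergence against the uniform weighted bounds using the strict margins. The H\"older part of your Step~3 is fine. There the pairs satisfy $|x-y|\le|x,y|_P$ (notation of \cite{H0}), so $|x-y|\ge r$ forces both points away from $t=0$. For $|x-y|<r$ the ratio of the two normalisations is $|x-y|^{a}|x,y|_P^{b}$ with $a>0$ and $a+b>0$, hence $O(r^\theta)$. One step, however, does not go through as written, and the paper's sketch is equally terse about it.

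In the pointwise part of the norm you claim a gain $r^{\delta-\delta'}$ near $t=0$. For a component $\tau$ the relevant weights are $|x|_P^{(\delta-|\tau|^{\bar\zeta})\wedge0}$ and $|x|_P^{(\delta'-|\tau|^{\zeta})\wedge0}$. When $|\tau|^\zeta\le\delta'$ both weights equal $1$, so there is no gain at all. This always happens for the coefficient of $\mathbf{1}$ when $\delta'\ge0$, for example for the $\mathcal{H}$-components in the application, where $\delta=\vartheta>0$. For such $\tau$ you need $f^\tau_m\to f^\tau$ uniformly up to $t=0$, and Arzel\`a--Ascoli on compacts of $\{t>0\}$ does not give this.

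The missing ingredient is equicontinuity of these coefficients up to the hyperplane. By the margin, $|\tau|^{\bar\zeta}<\delta$ for these $\tau$. Hence for $|x-y|\le|x,y|_P$, with degrees measured by $|\cdot|^{\bar\zeta}$,
\[
|f^\tau_m(x)-f^\tau_m(y)|\le\bigl|[f_m(x)-\Gamma^m_{xy}f_m(y)]_\tau\bigr|+\sum_{|\sigma|>|\tau|}|f^\sigma_m(y)|\,\bigl|[\Gamma^m_{xy}\sigma]_\tau\bigr|\lesssim|x-y|^{\theta},
\]
where $\theta=\min\{\delta-|\tau|,\min_{|\sigma|>|\tau|}(|\sigma|-|\tau|)\}>0$. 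Here you use $|x,y|_P\ge|x-y|$ to absorb the singular weights $|y|_P^{(\delta-|\sigma|)\wedge 0}$ and $|x,y|_P^{\delta-\gamma}$. A dyadic chain of points moving away from $t=0$ extends this to arbitrary pairs. Applying Arzel\`a--Ascoli to these coefficients on compacts of $\{t\ge0\}$ then gives the uniform convergence up to $t=0$ that you need. Components with $|\tau|^\zeta>\delta'$ do gain a positive power of $|x|_P$ and are handled as you describe.
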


\begin{proof}[Sketch]
By Proposition~\ref{prop:comp_mod}, after passing to a subsequence the models converge locally in the weaker model topology. Arzel\`a-Ascoli applied coefficientwise on a countable exhaustion of the complement of $\cP$ then yields, after passing to a further subsequence, a locally uniform limit $f$ of the sequence of modelled distributions. The modelled-distribution bounds pass to this limit. The strict margins
\begin{align*}
    \gamma-\gamma'
        -n_\tau(\zeta-\bar\zeta)&>0,\qquad
    \delta-\delta'
        -n_\tau(\zeta-\bar\zeta)>0
\end{align*}
for every $\tau\in\cV$ allow one to interpolate this locally uniform convergence with the original weighted estimates, giving convergence in the weaker singular topology.
\end{proof}

Finally, $\zeta_0$ was chosen so that the instances of $\Delta_r^{-,\zeta}$ entering the BPHZ construction on the historic sectors $\cV^\zeta$ are independent of $\zeta$ under the identification of decorated trees. Since historic sectors are closed under the recursive operations required for the BPHZ construction, the corresponding preparation maps and recursive model constructions agree. Hence
\begin{align*}
    \iota^{\cM}_{\zeta,\bar\zeta}
    \left(
        Z^{\bar\zeta}_{\mathrm{BPHZ}}\big|_{\cV^{\bar\zeta}}
    \right)
    =
    Z^\zeta_{\mathrm{BPHZ}}\big|_{\cV^\zeta}.
\end{align*}

\bibliographystyle{Martin.bst}
\bibliography{BPHZ.bib}
\end{document}

%% file: macros.tex
\usetikzlibrary{snakes}
\usetikzlibrary{decorations}
\usetikzlibrary{decorations.markings}
\usetikzlibrary{positioning}
\usetikzlibrary{shapes}
\usetikzlibrary{arrows}
\usetikzlibrary{arrows.meta} 

\colorlet{symbols}{black!50}
\colorlet{testcolor}{green!60!black}
\definecolor{connection}{rgb}{0.7,0.1,0.1}
\definecolor{lblue}{rgb}{0.1,0.5,1}
\definecolor{airforceblue}{rgb}{0.36, 0.54, 0.66} % For multx

\makeatletter
\pgfdeclareshape{crosscircle}
{
	\inheritsavedanchors[from=circle] % this is nearly a circle
	\inheritanchorborder[from=circle]
	\inheritanchor[from=circle]{north}
	\inheritanchor[from=circle]{north west}
	\inheritanchor[from=circle]{north east}
	\inheritanchor[from=circle]{center}
	\inheritanchor[from=circle]{west}
	\inheritanchor[from=circle]{east}
	\inheritanchor[from=circle]{mid}
	\inheritanchor[from=circle]{mid west}
	\inheritanchor[from=circle]{mid east}
	\inheritanchor[from=circle]{base}
	\inheritanchor[from=circle]{base west}
	\inheritanchor[from=circle]{base east}
	\inheritanchor[from=circle]{south}
	\inheritanchor[from=circle]{south west}
	\inheritanchor[from=circle]{south east}
	\inheritbackgroundpath[from=circle]
	\foregroundpath{
		\centerpoint%
		\pgf@xc=\pgf@x%
		\pgf@yc=\pgf@y%
		\pgfutil@tempdima=\radius%
		\pgfmathsetlength{\pgf@xb}{\pgfkeysvalueof{/pgf/outer xsep}}%  
		\pgfmathsetlength{\pgf@yb}{\pgfkeysvalueof{/pgf/outer ysep}}%  
		\ifdim\pgf@xb<\pgf@yb%
		\advance\pgfutil@tempdima by-\pgf@yb%
		\else%
		\advance\pgfutil@tempdima by-\pgf@xb%
		\fi%
		\pgfpathmoveto{\pgfpointadd{\pgfqpoint{\pgf@xc}{\pgf@yc}}{\pgfqpoint{-0.707107\pgfutil@tempdima}{-0.707107\pgfutil@tempdima}}}
		\pgfpathlineto{\pgfpointadd{\pgfqpoint{\pgf@xc}{\pgf@yc}}{\pgfqpoint{0.707107\pgfutil@tempdima}{0.707107\pgfutil@tempdima}}}
		\pgfpathmoveto{\pgfpointadd{\pgfqpoint{\pgf@xc}{\pgf@yc}}{\pgfqpoint{-0.707107\pgfutil@tempdima}{0.707107\pgfutil@tempdima}}}
		\pgfpathlineto{\pgfpointadd{\pgfqpoint{\pgf@xc}{\pgf@yc}}{\pgfqpoint{0.707107\pgfutil@tempdima}{-0.707107\pgfutil@tempdima}}}
	}
}

\pgfdeclareshape{cross2}
{
	\inheritsavedanchors[from=rectangle]
	\inheritanchorborder[from=rectangle]
	\inheritanchor[from=rectangle]{center}
	\inheritanchor[from=rectangle]{north}
	\inheritanchor[from=rectangle]{south}
	\inheritanchor[from=rectangle]{east}
	\inheritanchor[from=rectangle]{west}
	\inheritanchor[from=rectangle]{north east}
	\inheritanchor[from=rectangle]{north west}
	\inheritanchor[from=rectangle]{south east}
	\inheritanchor[from=rectangle]{south west}
	
	\inheritbackgroundpath[from=rectangle]
	
	\behindforegroundpath{%
		\pgfextractx{\pgf@xa}{\southwest}%
		\pgfextracty{\pgf@ya}{\southwest}%
		\pgfextractx{\pgf@xb}{\northeast}%
		\pgfextracty{\pgf@yb}{\northeast}%
		
		\pgfsetlinewidth{0.5pt} % Thicker lines
		
		\pgfpathmoveto{\pgfpoint{0.95\pgf@xa}{0.95\pgf@ya}}%
		\pgfpathlineto{\pgfpoint{0.95\pgf@xb}{0.95\pgf@yb}}%
		
		\pgfpathmoveto{\pgfpoint{0.95\pgf@xa}{0.95\pgf@yb}}%
		\pgfpathlineto{\pgfpoint{0.95\pgf@xb}{0.95\pgf@ya}}%
		
		\pgfusepath{stroke}
	}
}
\makeatother

\def\drawx{\draw[-,solid] (-3pt,-3pt) -- (3pt,3pt);\draw[-,solid] (-3pt,3pt) -- (3pt,-3pt);}
\tikzset{
	xi/.style={very thin,circle,fill=white,draw=black,inner sep=0pt,minimum size=1.1mm},
	zeta1/.style={very thin,circle,fill=black!18,draw=black,inner sep=0pt,minimum size=1.1mm},
	zeta2/.style={very thin,circle,fill=black,draw=black,inner sep=0pt,minimum size=1.1mm},
	eta/.style={thin,rectangle,fill=red!20,draw=red,inner sep=0pt,minimum size=1.1mm},
	aeta/.style={very thin,rectangle,fill=white,draw=black,inner sep=0pt,minimum size=1.1mm},
	etab/.style={thin,rectangle,fill=red!20,draw=red,inner sep=0pt,minimum size=1.6mm},
	etabx/.style={cross2,fill=red!20,draw=red,inner sep=0pt,minimum size=1.6mm},
	aetabx/.style={cross2,fill=white,draw=black,inner sep=0pt,minimum size=1.6mm},
	aetab/.style={very thin,rectangle,fill=white,draw=black,inner sep=0pt,minimum size=1.6mm},
	xix/.style={crosscircle,fill=white,draw=black,inner sep=0pt,minimum size=1.2mm},
	xibx/.style={crosscircle,fill=white,draw=black,inner sep=0pt,minimum size=1.6mm},
	xib/.style={very thin,circle,fill=white,draw=black,inner sep=0pt,minimum size=1.6mm},
	zeta2b/.style={very thin,circle,fill=black,draw=black,inner sep=0pt,minimum size=1.6mm},	
	zeta1b/.style={very thin,circle,fill=black!18,draw=black,inner sep=0pt,minimum size=1.6mm},	
	xibx/.style={crosscircle,fill=white,draw=black,inner sep=0pt,minimum size=1.6mm},
	not/.style={thin,circle,fill=black,draw=black,inner sep=0pt,minimum size=0.3mm},
	>=stealth,
	root/.style={circle,fill=testcolor,inner sep=0pt, minimum size=2mm},
	sroot/.style={circle,fill=testcolor,inner sep=0pt, minimum size=1.3mm},
	dot/.style={circle,fill=black,inner sep=0pt, minimum size=1mm},
	var/.style={circle,fill=black!10,draw=black,inner sep=0pt, minimum size=2mm},
	dotred/.style={circle,fill=black!50,inner sep=0pt, minimum size=2mm},
	generic/.style={semithick,shorten >=1pt,shorten <=1pt},
	dist/.style={ultra thick,draw=testcolor,shorten >=1pt,shorten <=1pt},
	testfcn/.style={ultra thick,testcolor,shorten >=1pt,shorten <=1pt,<-},
	testfcnx/.style={ultra thick,testcolor,shorten >=1pt,shorten <=1pt,<-,
		postaction={decorate,decoration={markings,mark=at position 0.6 with {\drawx}}}},
	dtestfcn/.style={ultra thick,testcolor,shorten >=1pt,shorten <=1pt,<-,
		postaction={decorate,decoration={markings,mark=at position 0.6 with {\draw[-, black] (0,-0.1) -- (0,0.1);}}}},
	kprime/.style={semithick,shorten >=1pt,shorten <=1pt,densely dashed,->},
	kprimex/.style={semithick,shorten >=1pt,shorten <=1pt,densely dashed,->,
		postaction={decorate,decoration={markings,mark=at position 0.4 with {\drawx}}}},
	kernel/.style={semithick,shorten >=1pt,shorten <=1pt,->},
	multx/.style={ultra thick, airforceblue, <-, shorten >=1pt,shorten <=1pt,
		},
	kernelx/.style={semithick,shorten >=1pt,shorten <=1pt,->,
		postaction={decorate,decoration={markings,mark=at position 0.4 with {\drawx}}}},
	Keps/.style={densely dashed,semithick,shorten >=1pt,shorten <=1pt,->},
	dKeps/.style={densely dashed,semithick,shorten >=1pt,shorten <=1pt,->,postaction={decorate,decoration={markings,mark=at position 0.45 with {\draw[-] (0,-0.1) -- (0,0.1);}}}},
	ddKeps/.style={densely dashed,semithick,shorten >=1pt,shorten <=1pt,->,postaction={decorate,decoration={markings,mark=at position 0.45 with {\draw[-] (0.05,-0.1) -- (0.05,0.1);\draw[-] (-0.05,-0.1) -- (-0.05,0.1);}}}},
	dkernel/.style={->,semithick,shorten >=1pt,shorten <=1pt,postaction={decorate,decoration={markings,mark=at position 0.5 with {\draw[-] (0,-0.1) -- (0,0.1);}}}},
	dkernelx/.style={->,semithick,shorten >=1pt,shorten <=1pt,
		postaction={decorate,decoration={markings,mark=at position 0.4 with {\draw[-] (0,-0.15) -- (0,0.15);\drawx}}}},
	ddkernel/.style={->,semithick,shorten >=1pt,shorten <=1pt,postaction={decorate,decoration={markings,mark=at position 0.45 with {\draw[-] (0.05,-0.1) -- (0.05,0.1);\draw[-] (-0.05,-0.1) -- (-0.05,0.1);}}}},
	kernel1/.style={->,semithick, Cerulean, shorten >=1pt,shorten <=1pt},
	Keps1/.style={->,densely dashed,semithick, Cerulean, shorten >=1pt,shorten <=1pt},
	dkernel1/.style={->,semithick, Cerulean, shorten >=1pt,shorten <=1pt,postaction={decorate,decoration={markings,mark=at position 0.45 with {\draw[-, black] (0,-0.1) -- (0,0.1);}}}},
	dkernel2/.style={->,semithick, Red, shorten >=1pt,shorten <=1pt,postaction={decorate,decoration={markings,mark=at position 0.45 with {\draw[-, black] (0,-0.1) -- (0,0.1);}}}},
	dKeps1/.style={->,densely dashed,semithick,Cerulean,shorten >=1pt,shorten <=1pt,postaction={decorate,decoration={markings,mark=at position 0.45 with {\draw[-, black] (0,-0.1) -- (0,0.1);}}}},
	ddkernel1/.style={->,semithick, Cerulean, shorten >=1pt,shorten <=1pt,postaction={decorate,decoration={markings,mark=at position 0.45 with {\draw[-, black] (0.05,-0.1) -- (0.05,0.1);\draw[-, black] (-0.05,-0.1) -- (-0.05,0.1);}}}},
	BigG/.style={semithick,shorten >=1pt,shorten <=1pt,decorate, decoration={coil,aspect=0.7,amplitude=1.9pt,segment length = 3pt,pre length=2pt,post length=2pt}},
	kernel2/.style={->,semithick, Red, shorten >=1pt,shorten <=1pt},
	kernelBig/.style={->, semithick,shorten >=1pt,shorten <=1pt,decorate, decoration={zigzag,amplitude=1pt,segment length = 3pt,pre length=2pt,post length=5pt}},
	rho/.style={dotted,semithick,shorten >=1pt,shorten <=1pt},
	drho/.style={->,dotted,semithick,shorten >=1pt,shorten <=1pt,postaction={decorate,decoration={markings,mark=at position 0.3 with {\draw[-, solid] (0,-0.1) -- (0,0.1);}}}},
	ddrho/.style={-,dotted,semithick,shorten >=1pt,shorten <=1pt,postaction={decorate,decoration={markings,mark=at position 0.5 with {\draw[-, solid] (0.05,-0.1) -- (0.05,0.1);\draw[-,solid] (-0.05,-0.1) -- (-0.05,0.1);}}}},
	ddrho-shift/.style={-,dotted,semithick,shorten >=1pt,shorten <=1pt,postaction={decorate,decoration={markings,mark=at position 0.3 with {\draw[-, solid] (0.05,-0.1) -- (0.05,0.1);\draw[-,solid] (-0.05,-0.1) -- (-0.05,0.1);}}}}, % To prevent overlap of decorations
	renorm/.style={shape=circle,fill=white,inner sep=1pt},
	labl/.style={shape=rectangle,fill=white,inner sep=1pt},
}

\makeatletter
\def\DeclareSymbol#1#2#3{\expandafter\gdef\csname MH@symb@#1\endcsname{\tikz[baseline=#2,scale=0.15,draw=symbols,line join=round]{#3}}\expandafter\gdef\csname MH@symb@#1s\endcsname{\scalebox{0.7}{\tikz[baseline=#2,scale=0.15,draw=symbols,line join=round]{#3}}}}
\def\<#1>{\csname MH@symb@#1\endcsname}
\makeatother

\DeclareSymbol{Xi}{-2.8}{\node[xib] {};}

\DeclareSymbol{XiX}{-2.8}{\node[xibx] {};}

\DeclareSymbol{Eta}{-2.8}{\node[etab] {};}

\DeclareSymbol{Zeta1}{-2.8}{\node[zeta1b] {};}

\DeclareSymbol{Zeta2}{-2.8}{\node[zeta2b] {};}

\DeclareSymbol{AEta}{-2.8}{\node[aetab] {};}

\DeclareSymbol{XAEta}{-2.8}{\node[aetabx] {};}

\DeclareSymbol{XXi}{-2.8}{\node[xibx] {};}

\DeclareSymbol{XEta}{-2.8}{\node[etabx] {};}

\DeclareSymbol{IXi}{0}{\draw (0,0) node {} -- (0,1.5) node[xi] {};}

\DeclareSymbol{IXXi}{0}{\draw (0,0) node {} -- (0,1.5) node[xix] {};}

\DeclareSymbol{IEta}{0}{\draw[red] (0,0) node {} -- (0,1.5) node[eta] {};}

\DeclareSymbol{IAEta}{0}{\draw (0,0) node {} -- (0,1.5) node[aeta] {};}

\DeclareSymbol{EtaIEta}{0}{\draw[red] (0,0) node[eta] {} -- (1.2,1.2) node[eta] {};}

\DeclareSymbol{AEtaIAEta}{0}{\draw (0,0) node[aeta] {} -- (1.2,1.2) node[aeta] {};}

\DeclareSymbol{IXi2}{0}{\draw (-1,1) node[xi]{}-- (0,0) node {} -- (1,1) node[xi] {};}

\DeclareSymbol{IXi3}{0}{\draw (-1,1) node[xi]{}-- (0,0) node {} -- (1,1) node[xi] {};\draw (0,0)--(0,1.5) node[xi]{};}

\DeclareSymbol{XiIXi}{0}{\draw (0,0) node[xi] {} -- (1,1) node[xi] {};}

\DeclareSymbol{XiIZeta2}{0}{\draw (0,0) node[zeta1] {} -- (1,1) node[zeta2] {};}

\DeclareSymbol{Zeta2IXi}{0}{\draw (0,0) node[zeta2] {} -- (1,1) node[zeta1] {};}

\DeclareSymbol{XiIXXi}{0}{\draw (0,0) node[xi] {} -- (1,1) node[xix] {};}

\DeclareSymbol{XXiIXi}{0}{\draw (0,0) node[xix] {} -- (1,1) node[xi] {};}

\DeclareSymbol{IXiIXi}{0}{\draw (1,-1) node  {} -- (0,0) node[xi] {} -- (1,1) node[xi] {};}

\DeclareSymbol{IXiIXiNEW}{0}{\draw (1,-1) node  {} -- (0,0) node[zeta2] {} -- (1,1) node[zeta1] {};}

\DeclareSymbol{XiIXiIXi}{-2}{\draw (1,-1) node [xi] {} -- (0,0) node[xi] {} -- (1,1) node[xi] {};}

\DeclareSymbol{dontneed}{-2}{\draw (1,-1) node [xi] {} -- (0,0) node {} -- (1,1) node[xi] {};}

\DeclareSymbol{XiIXiIXiNEW}{-2}{\draw (1,-1) node [zeta1] {} -- (0,0) node[zeta2] {} -- (1,1) node[zeta1] {};}

\DeclareSymbol{IXiIXiIXi}{0}{\draw (0,-2) node {}-- (1,-1) node [xi] {} -- (0,0) node[xi] {} -- (1,1) node[xi] {};}

\DeclareSymbol{XiIXi2}{0}{\draw (-1,1) node [xi] {} -- (0,0) node[xi] {} -- (1,1) node[xi] {};}

\DeclareSymbol{XiIXi2NEW}{0}{\draw (-1,1) node [zeta1] {} -- (0,0) node[zeta2] {} -- (1,1) node[zeta1] {};}

\DeclareSymbol{IXiIXi2}{0}{\draw(0,-1.5) node {}--(0,0);\draw (-1,1) node [xi] {} -- (0,0) node[xi] {} -- (1,1) node[xi] {};}

\DeclareSymbol{XiIXiIXi2}{-3}{\draw(0,-1.5) node[xi]{}--(0,0);\draw (-1,1) node [xi] {} -- (0,0) node[xi] {} -- (1,1) node[xi] {};}

\DeclareSymbol{XiIXiIXi2NEW}{-3}{\draw(0,-1.5) node[zeta1]{}--(0,0);\draw (-1,1) node [zeta1] {} -- (0,0) node[zeta2] {} -- (1,1) node[zeta1] {};}

\DeclareSymbol{XiIXi3}{0}{\draw (0,0)--(0,1.5) node[xi]{}; \draw (-1,1) node [xi] {} -- (0,0) node[xi] {} -- (1,1) node[xi] {};}

\DeclareSymbol{XiIXi3NEW}{0}{\draw (0,0)--(0,1.5) node[zeta1]{}; \draw (-1,1) node [zeta1] {} -- (0,0) node[zeta2] {} -- (1,1) node[zeta1] {};}

\DeclareSymbol{XiIXiIXiIXi}{-4}{\draw (0,-2) node [xi]{}-- (1,-1) node [xi] {} -- (0,0) node[xi] {} -- (1,1) node[xi] {};}

\DeclareSymbol{lastone}{-2}{\draw(2,0) node[xi]{}-- (1,-1) node [xi] {} -- (0,0) node[xi] {} -- (1,1) node[xi] {};}

\DeclareSymbol{XiIIXi}{-2}{\draw (1,-1) node [xi] {} -- (0,0) node {} -- (1,1) node[xi] {};}